\documentclass[11pt]{article}

\usepackage[utf8]{inputenc}
\usepackage[T1]{fontenc}
\usepackage{lmodern}
\usepackage{newunicodechar}
\newunicodechar{，}{,}

\usepackage[margin=1in]{geometry}
\usepackage{microtype}

\usepackage{amsmath, amssymb, amsfonts, amsthm, mathtools, bm}
\usepackage{enumitem}
\usepackage{cases}
\usepackage{xcolor}
\usepackage{tikz}
\usetikzlibrary{arrows.meta,positioning,calc}
\usepackage{placeins}
\usepackage{csquotes}

\usepackage[
    colorlinks=true,
    linkcolor=blue!50!black,
    citecolor=blue!50!black,
    urlcolor=blue!50!black
]{hyperref}
\usepackage[nameinlink,noabbrev]{cleveref}
\usepackage{aliascnt}

\numberwithin{equation}{section}

\theoremstyle{plain}
\newtheorem{theorem}{Theorem}[section]

\newaliascnt{lemma}{theorem}
\newtheorem{lemma}[lemma]{Lemma}
\aliascntresetthe{lemma}

\newaliascnt{proposition}{theorem}
\newtheorem{proposition}[proposition]{Proposition}
\aliascntresetthe{proposition}

\newaliascnt{corollary}{theorem}
\newtheorem{corollary}[corollary]{Corollary}
\aliascntresetthe{corollary}

\newaliascnt{conjecture}{theorem}

\aliascntresetthe{conjecture}

\newaliascnt{claim}{theorem}

\aliascntresetthe{claim}

\theoremstyle{definition}
\newaliascnt{definition}{theorem}
\newtheorem{definition}[definition]{Definition}
\aliascntresetthe{definition}

\theoremstyle{remark}
\newaliascnt{remark}{theorem}
\newtheorem{remark}[remark]{Remark}
\aliascntresetthe{remark}

\crefname{theorem}{Theorem}{Theorems}
\crefname{lemma}{Lemma}{Lemmas}
\crefname{proposition}{Proposition}{Propositions}
\crefname{corollary}{Corollary}{Corollaries}
\crefname{definition}{Definition}{Definitions}
\crefname{conjecture}{Conjecture}{Conjectures}
\crefname{claim}{Claim}{Claims}
\crefname{remark}{Remark}{Remarks}
\crefname{researchproblem}{Research Problem}{Research Problems}

\Crefname{theorem}{Theorem}{Theorems}
\Crefname{lemma}{Lemma}{Lemmas}
\Crefname{proposition}{Proposition}{Propositions}
\Crefname{corollary}{Corollary}{Corollaries}
\Crefname{definition}{Definition}{Definitions}
\Crefname{conjecture}{Conjecture}{Conjectures}
\Crefname{claim}{Claim}{Claims}
\Crefname{remark}{Remark}{Remarks}
\Crefname{researchproblem}{Research Problem}{Research Problems}

\setlist{itemsep=2pt, topsep=6pt, parsep=2pt}

\newcommand{\N}{\mathbb{N}}
\newcommand{\Z}{\mathbb{Z}}

\newcommand{\1}{\mathbf{1}}

\definecolor{roadmaproot}{RGB}{246,246,255}
\definecolor{roadmapsplit}{RGB}{248,248,248}
\definecolor{roadmapminor}{RGB}{241,247,254}
\definecolor{roadmapmajor}{RGB}{254,249,236}
\definecolor{roadmapmerge}{RGB}{242,250,243}

\newcommand{\roadmapkey}[1]{\textcolor{blue!60!black}{\textbf{#1}}}

\tikzset{
  roadmap arrow/.style={
    ->,
    >=Stealth,
    line width=.65pt,
    draw=black!78
  },
  roadmap box/.style={
    draw=black!72,
    rounded corners=3pt,
    line width=.55pt,
    align=center,
    inner xsep=6pt,
    inner ysep=6pt,
    font=\small
  },
  roadmap root/.style={
    roadmap box,
    fill=roadmaproot,
    text width=12.6cm
  },
  roadmap split/.style={
    roadmap box,
    fill=roadmapsplit,
    text width=8.2cm
  },
  roadmap minor/.style={
    roadmap box,
    fill=roadmapminor,
    text width=6.05cm
  },
  roadmap major/.style={
    roadmap box,
    fill=roadmapmajor,
    text width=6.05cm
  },
  roadmap merge/.style={
    roadmap box,
    fill=roadmapmerge,
    text width=11.8cm
  },
  roadmap final/.style={
    roadmap box,
    fill=roadmaproot,
    text width=12.8cm
  },
  roadmap edge label/.style={
    fill=white,
    inner sep=1.2pt,
    font=\footnotesize,
    text=black!75
  }
}

\title{Primes with Restricted-Digit Differences}
\author{Rui Han, Yaghoub Rahimi, Fan Yang}

\date{}

\begin{document}

\maketitle

\begin{abstract}
Fix a base \(b\ge2\) and a digit set
\(\mathcal D\subset\{0,1,\dots,b-1\}\), and let \(\mathcal C_k\) be the set of
integers in \([0,b^k)\) whose \(k\) base-\(b\) digits all lie in
\(\mathcal D\).  We study prime pairs and three-term arithmetic progressions
in primes whose difference, or common difference, belongs to
\(\mathcal C_k\).  Under a natural digit non-resonance condition, we establish a localized
Fourier criterion guaranteeing asymptotic formulae with explicit leading
constants for both problems.  Using a transfer-operator argument, we verify
this criterion in explicit parameter ranges for several natural families of
digit sets.
\end{abstract}

\tableofcontents

\section{Introduction}
\label{sec:introduction}

Additive patterns in the primes form a central theme of analytic number theory.
Hardy--Littlewood heuristics, developed through the circle method, predict
asymptotic formulae for many additive configurations in the primes, with main
terms governed by products of local density factors
\cite{HardyLittlewood,Vaughan}. In the special case of arithmetic
progressions, van der Corput proved that the primes contain infinitely many
three-term progressions \cite{VanderCorput}. Later work revealed a much richer
structure: Green proved a Roth theorem inside the primes \cite{GreenRoth},
Green and Tao proved that the primes contain arbitrarily long arithmetic
progressions \cite{GreenTaoAP}, and developed a general asymptotic theory for
systems of affine-linear forms in the primes \cite{GreenTaoLinear}.

In this paper we study a difference-set variant of these problems.  Rather than
restricting the prime variables themselves, we impose a sparse arithmetic
constraint on the \emph{difference}.  More precisely, we ask for prime pairs
and prime three-term arithmetic progressions whose difference, or common
difference, belongs to a restricted-digit set.

Our Fourier-analytic approach closely follows Maynard's work on primes and polynomial values with restricted digits \cite{MaynardRestrictedDigitsInv,MaynardRestrictedDigits}. Our setting is
closely related but distinct: the prime variables are unrestricted, while the
restricted-digit condition is imposed on their difference.

The study of sparse sets of allowable differences goes back to S\'ark\"ozy's
work on intersective sequences and difference sets
\cite{SarkozyDifferenceSetsIII}.  For the shifted-prime set
\(\{p-1:p\in\mathbb P\}\), quantitative density bounds were subsequently
obtained and refined by Lucier, Ruzsa--Sanders, Wang, and Green
\cite{LucierShiftedPrimes,RuzsaSanders,WangShiftedPrimes,
GreenSarkozyShiftedPrimes}.  These works ask when a dense set must realize a
prescribed difference and, quantitatively, how large a set avoiding such
differences can be.  By contrast, our goal is a quantitative
Hardy--Littlewood asymptotic for prime configurations after averaging over all
restricted-digit differences of a given length.

Fix a base \(b\ge2\), let
\[
        \mathcal D\subset\{0,1,\dots,b-1\},
        \qquad
        r:=|\mathcal D|,
\]
with \(r\ge2\), and put \(X:=b^k\).  We write
\(e(t):=\exp(2\pi i t)\).

{
\begin{definition}[Digit non-resonance]
\label{def:digit-nonresonance}
We say that \(\mathcal D\) is non-resonant with the base \(b\) if every
prime divisor of
\[
        g_{\mathcal D}:=\gcd\{a-a':a,a'\in\mathcal D\}
\]
divides \(b\).
\end{definition}
Throughout the paper we assume this digit non-resonance condition; see
\cref{rem:digit-resonances} for the obstruction it excludes.}

{Define}
\[
        \mathcal C_k
        :=
        \left\{
        \sum_{j=0}^{k-1} d_j b^j : d_j\in\mathcal D
        \right\}
        \subset [0,X),
\]
and 
\[
        \mathcal C
        :=
        \bigcup_{k\ge1}\mathcal C_k
        \subset \mathbb Z_{\ge0}.
\]
Clearly $|\mathcal{C}_k|=r^k$.

\begin{remark}
The set \(\mathcal C_k\) should not always be interpreted as the truncation of
the infinite restricted-digit set at height \(b^k\).  When \(0\in\mathcal D\),
the sets \(\mathcal C_k\) are nested and
\[
        \mathcal C_k=\mathcal C\cap[0,b^k).
\]
However, when \(0\notin\mathcal D\), the sets \(\mathcal C_k\) are fixed-length
layers rather than nested truncations.  In that case the layers are disjoint and
\[
        \mathcal C\cap[0,b^k)
        =
        \bigsqcup_{1\le j\le k}\mathcal C_j.
\]
Thus, in this case, \(\mathcal C_k\) does not include the smaller
restricted-digit numbers of shorter length.  
In fact, if
\(a_*:=\min\mathcal D\), then the smallest element of \(\mathcal C_k\) is
\[
        a_*\frac{b^k-1}{b-1}.
\]

We keep the fixed-length convention because it gives the exact product formula
for the Fourier transform,
\[
        \widehat C_k(\theta)
        =
        \prod_{j=0}^{k-1}
        \sum_{d\in\mathcal D} e(db^j\theta),
\]
and hence is the natural object for the circle-method and transfer-operator
estimates used below.
\end{remark}

The arithmetic of integers with restricted digits has a substantial earlier
literature.  Their distribution in residue classes was initiated by
Erd\H{o}s, Mauduit, and S\'ark\"ozy and refined by Konyagin
\cite{ErdosMauduitSarkozyI,KonyaginMissingDigits}.  Related work treats prime
factors, almost-primes, character sums, and more general arithmetic properties
of digitally restricted sets
\cite{ErdosMauduitSarkozyII,DartygeMauduitAlmostPrimes,
DartygeMauduitDensityZero,BanksConflittiShparlinski,
BanksShparlinskiRestrictedDigits}.  Drmota and Mauduit developed Weyl-sum
estimates for affine digit restrictions \cite{DrmotaMauduit}, while the recent
work of Saavedra-Araya gives a general residue-class criterion via Markov
chains \cite{SaavedraAraya}.  These results emphasize that the base and the
congruence structure of the allowed digits can create genuine local
obstructions, a point reflected in the non-resonance hypothesis used below.

There is also a growing additive-combinatorial theory of integer Cantor sets.
Walker and Walker studied arithmetic progressions lying inside restricted-digit
sets \cite{WalkerWalker}, and Yu investigated additive intersections between
sets defined by different digital expansions \cite{HanYuRestrictedDigits}.
Glasscock, Moreira, and Richter placed such examples in a broader class of
fractal sets in the integers \cite{GlasscockMoreiraRichter}.  More recently,
Burgin, Fragkos, Lacey, Mena, and Reguera studied the intersective,
uniform-distribution, ergodic, and pair-correlation properties of integer
Cantor sets \cite{BurginEtAlIntegerCantor}; in a companion work they proved a
Szemer\'edi theorem along Cantor sets, giving arithmetic progressions in
positive-density sets whose common differences belong to an integer Cantor set
\cite{BurginEtAlSzemerediCantor}. The latter is especially close to our setting, since it also requires the common difference to lie in an integer Cantor set. However, it is a qualitative density theorem, whereas the present paper seeks asymptotic counts in the sparse ambient set of primes. 
Green's work on Waring's problem with restricted digits gives another recent
circle-method application in which the summation variables themselves have
restricted expansions \cite{GreenWaringRestrictedDigits}.

Digital restrictions have also been imposed directly on prime variables.  The
literature on primes with prescribed or restricted digits includes work of
Wolke, Harman, Harman--K\'atai, Bourgain, and Swaenepoel
\cite{WolkePreassignedDigits,HarmanPreassignedDigits,HarmanKatai,
BourgainBinaryDigitsII,SwaenepoelPreassignedDigits}; related joint digital
statistics along the primes were studied by Mauduit--Rivat and
Martin--Mauduit--Rivat \cite{MauduitRivat,MartinMauduitRivat}.  Maynard proved
the existence and asymptotic distribution of primes and polynomial values with
one or more forbidden digits
\cite{MaynardRestrictedDigitsInv,MaynardRestrictedDigits}.  Building on Maynard's Fourier-analytic method, Burgin proved a
S\'ark\"ozy-type theorem in which the prescribed difference is of the form
\(p-1\), with \(p\) a prime having restricted digits
\cite{BurginSarkozyRestrictedDigits}.  Nath established
Bombieri--Vinogradov-type distribution results for primes with a missing digit
\cite{NathMissingDigitBV}, Leng and Sawhney proved a Vinogradov theorem with all
three prime summands restricted by their digits \cite{LengSawhney}, and Pratt
combined a missing-digit condition with primes represented by a quadratic form
\cite{PrattMissingDigits}.  In a different direction, Cumberbatch proved that
almost all even \emph{target integers} in a restricted-digit set are sums of
two unrestricted primes \cite{CumberbatchGoldbachDigits}; Kim obtained a
conditional short-interval version for targets with a missing digit
\cite{KimGoldbachMissingDigit}.

{In this paper, we study the weighted counting functions} 
\[
        T_k^{(2)}
        :=
        \sum_{0\le n<X}\mathbf 1_{\mathcal C_k}(n)
        \sum_{0\le m<X-n}\Lambda(m)\Lambda(m+n)
\]
{for prime pairs with restricted-digit difference, and}
\[
        T_k^{(3)}
        :=
        \sum_{0\le n\le X/2}\mathbf 1_{\mathcal C_k}(n)
        \sum_{0\le m<X-2n}\Lambda(m)\Lambda(m+n)\Lambda(m+2n)
\]
{for three-term arithmetic progressions in primes with
restricted-digit common difference.}

A central quantitative input is a localized packetwise \(L^1\)-bound for
the restricted-digit Fourier transform.  This type of hybrid estimate was
introduced by Maynard in his work on primes with restricted digits
\cite{MaynardRestrictedDigitsInv,MaynardRestrictedDigits}, and it is a main analytic innovation in studying restricted-digit integers.  We use this
packetwise \(L^1\) framework in a form adapted to the present
configuration.  Related applications appear in the work
of Nath and Burgin
\cite{NathMissingDigitBV,BurginSarkozyRestrictedDigits}; see also
\cite{DrmotaMauduit} for closely related Weyl-sum estimates for digital
restrictions.

{Throughout the rest of the paper, \(d\sim D\) means
\(D\le d<2D\).  For \(B\ge1\), the notation \(|\eta|\sim B\) means
\(B\le|\eta|<2B\), while \(|\eta|\sim0\) means \(|\eta|<1\).
When the endpoint and nonzero shifts are treated on a common scale, we write
\(\max(1,|\eta|)\sim B\) to mean
\(B\le\max(1,|\eta|)<2B\).}

\begin{definition}[\(\alpha_{b,\mathcal D}\)-admissibility]
\label{def:alpha-admissible}
{For \(\alpha_{b,\mathcal D}>0\), we say} that \(\widehat C_k\) is \(\alpha_{b,\mathcal D}\)-admissible if there
exists a constant \(K_{b,\mathcal D}>0\), independent of \(k,D,B,N\), such
that, for every integer \(N\) with \(3X\le N\le4X\), and for all \(D,B\ge1\)
satisfying
\[
        D^2B\le8X,
\]
one has
\begin{equation}\label{eq:hybrid-digit-general}
\sum_{d\sim D}
\sum_{\substack{1\le\ell\le d\\(\ell,d)=1}}
\sum_{\substack{|\eta|<B\\N\ell/d+\eta\in\mathbb Z}}
\left|
\widehat C_k\!\left(\frac{\ell}{d}+\frac{\eta}{N}\right)
\right|
\le
K_{b,\mathcal D}\,
|\mathcal C_k|\,(D^2B)^{\alpha_{b,\mathcal D}}.
\end{equation}
Here \(\eta\) ranges over the shifted lattice
\(\mathbb Z-N\ell/d\), and need not be an integer. The fixed
constant \(8\) only provides room for dyadic endpoints; replacing it by
another fixed constant changes only \(K_{b,\mathcal D}\).
\end{definition}

The exponent \(\alpha_{b,\mathcal D}\) measures the strength of the available
hybrid control on the digit side.  {In the one-missing-digit
setting, Maynard's work provides the underlying \(b\)-adic \(\ell^1\)
mechanism and the explicit benchmark
\[
\frac{\log\!\left(C_b\,\dfrac{b}{b-1}\log b\right)}{\log b},
\qquad C_b\le1+\frac3{\log b},
\]
on the natural \(b\)-adic grid
\cite{MaynardRestrictedDigitsInv,MaynardRestrictedDigits}.  The uniform
shifted-grid admissibility used here is established in
\cref{sec:transfer-model}.}

For \(d\ge1\) and \(a\in\mathbb Z\), let
\begin{equation}\label{eq:ramanujan-sum}
        c_d(a)
        :=
        \sum_{\substack{1\le \ell\le d\\(\ell,d)=1}}
        e\!\left(\frac{\ell a}{d}\right)
\end{equation}
denote the Ramanujan sum.

Our first theorem gives an asymptotic formula for prime pairs with
restricted-digit difference.

\begin{theorem}[Prime pairs with restricted-digit difference]
\label{thm:intro-two-prime}
Let \(X=b^k\).  {Assume that \(\mathcal D\) is non-resonant
with \(b\) in the sense of \cref{def:digit-nonresonance}, and that}
\(\widehat C_k\) is \(\alpha_{b,\mathcal D}\)-admissible with
\[
        0<\alpha_{b,\mathcal D}<\frac25.
\]
Then, for every fixed \(A>0\),
\[
        T_k^{(2)}
        =
        {\left(
        1-\frac{\sum_{a\in\mathcal D}a}{(b-1)|\mathcal D|}
        \right)
        \overline{\mathfrak S}^{(2)}_{b,\mathcal D}\,
        X|\mathcal C_k|}
        +
        O_{b,\mathcal D,A}\!\left(
        |\mathcal C_k|X(\log X)^{-A}
        \right),
\]
where
\[
        \overline{\mathfrak S}^{(2)}_{b,\mathcal D}
        :=
        \frac{1}{|\mathcal D|}
        \sum_{a\in\mathcal D}
        \sum_{d\mid b}
        \frac{\mu(d)^2}{\phi(d)^2}c_d(a).
\]
Moreover, {the
leading coefficient is positive if and only if either \(b\) is odd or
\(\mathcal D\) contains an even digit.}
\end{theorem}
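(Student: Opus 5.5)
We will prove \cref{thm:intro-two-prime} by the circle method. Write \(S(\theta)=\sum_{m<X'}\Lambda(m)e(m\theta)\) on a range of length \(N\asymp X\), with \(3X\le N\le 4X\) so that there is no wraparound modulo \(N\). The truncation \(m<X-n\) is handled by a smooth or sharp cutoff, so that
\[
T_k^{(2)}=\frac1N\sum_{j\bmod N}\widehat C_k(j/N)\,\big|S_{X}(j/N)\big|^2\text{-type expression}.
\]
More precisely, we expand \(\mathbf 1_{\mathcal C_k}(n)=N^{-1}\sum_j\widehat C_k(j/N)e(-jn/N)\). This gives
\[
T_k^{(2)}=\frac1N\sum_{j}\widehat C_k(j/N)\sum_{m+n<X}\Lambda(m)\Lambda(m+n)e(-jn/N).
\]
Each frequency \(j/N\) is written via Dirichlet approximation as \(\ell/d+\eta/N\), with \(d\le Q\) and \(|\eta|\le N/(dQ)\). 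This is exactly the parametrisation appearing in \eqref{eq:hybrid-digit-general}.

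\textbf{Major arcs.} Take \(d\le(\log X)^{C}\) and \(|\eta|\le(\log X)^C\). Here the Siegel--Walfisz theorem gives the prime exponential sums in terms of \(\mu(d)/\phi(d)\) times a smooth integral. The inner pair-sum then contributes \(\frac{\mu(d)^2}{\phi(d)^2}c_d(n)\) times a length factor \(X-n\), up to errors \(X^2(\log X)^{-A}\). The \(\widehat C_k\) factor, summed over \(\eta\) near \(0\), recovers \(\sum_n \mathbf 1_{\mathcal C_k}(n)(X-n)\,c_d(n)\).

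We then evaluate \(\sum_{n\in\mathcal C_k}c_d(n)(X-n)\) for small \(d\). By non-resonance, for \(d\) coprime to \(b\) with \(d>1\) squarefree, the digit sum \(\sum_{n\in\mathcal C_k}e(\ell n/d)\) decays exponentially in \(k\). The reason is that \(b\) has finite order mod \(d\) and the digits are not all congruent modulo any prime \(p\mid d\). Hence only squarefree \(d\) whose prime factors all divide \(b\), i.e.\ \(d\mid\mathrm{rad}(b)\) with \(\mu(d)^2=1\), survive. For those, \(n\bmod d\) is determined by the last digit \(d_0\). Averaging \(c_d(d_0)\) over \(d_0\in\mathcal D\) gives \(\overline{\mathfrak S}^{(2)}_{b,\mathcal D}\), and the sum over \(d\mid b\) with \(\mu^2\) matches the statement. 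The weight \(X-n\) averaged over \(\mathcal C_k\) gives \(X\big(1-\mathbb E[n]/X\big)\). Here \(\mathbb E[n]/X\to\frac{\sum a}{(b-1)|\mathcal D|}\), since \(\mathbb E n=\bar a(b^k-1)/(b-1)\). The last-digit term decouples from the higher digits up to \(O(1/X)\) relative error, so the two factors multiply.

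\textbf{Minor arcs.} This is the main obstacle. We bound
\[
\frac1N\sum_{j\in\mathfrak m}|\widehat C_k(j/N)|\,|S(j/N)|^2 .
\]
Split \(\mathfrak m\) dyadically into \(d\sim D\) and \(\max(1,|\eta|)\sim B\) with \(D^2B\le 8X\). On each piece we use Vaughan's bound, which in hybrid form gives roughly
\[
|S(\ell/d+\eta/N)|\ll (\log X)^4\big(X(D(1+B))^{-1/2}+X^{4/5}+X^{1/2}(D(1+B))^{1/2}\big).
\]
We combine this with admissibility \eqref{eq:hybrid-digit-general}: the packet sum of \(|\widehat C_k|\) is \(\ll|\mathcal C_k|(D^2B)^{\alpha}\). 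Pulling out a sup of \(|S|^2\) is too lossy. Instead, we pull out one factor \(\sup|S|\) and handle the other by combining the \(L^1\) packet bound with Cauchy--Schwarz against Parseval \(\sum|S|^2\ll NX\log X\). Alternatively, following Maynard, we use a large-sieve bound over packets: \(\sum_{d\sim D}\sum_\ell\sum_\eta|S|^2\ll (X+D^2B)X\log X\), which reduces to \(X^2\log X\). Balancing the three terms is exactly where \(\alpha<2/5\) should enter. The \(X^{4/5}\) term, against the \((D^2B)^\alpha\le X^{\alpha}\) loss and an \(X^{1/5}\) saving on the large range, is the binding constraint \(\alpha<2/5\) once one factor of \(|S|\) is squared through the sup bound. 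The first term handles small \(DB\) with a power-of-log saving once \(DB>(\log X)^C\). I expect this bookkeeping to be the delicate step, in particular interpolating between the sup bound and the mean-square bound so that the exponent \(\alpha<2/5\) suffices.

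\textbf{Positivity.} The factor \(1-\bar a/(b-1)>0\) unless all digits equal \(b-1\), which is excluded by \(r\ge2\). It remains to analyse \(\overline{\mathfrak S}^{(2)}\). Since \(\sum_{d\mid b}\mu^2(d)c_d(a)/\phi(d)^2\) is multiplicative over primes \(p\mid b\), it factors as
\[
\prod_{p\mid b}\Big(1+\frac{c_p(a)}{(p-1)^2}\Big),
\]
where \(c_p(a)=p-1\) if \(p\mid a\) and \(c_p(a)=-1\) otherwise. Each factor is positive for \(p\ge3\), and for \(p=2\) it vanishes exactly when \(a\) is odd. So each term is nonnegative, and the sum is positive iff some \(a\in\mathcal D\) makes every factor positive. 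That holds iff \(2\nmid b\) or some \(a\in\mathcal D\) is even.
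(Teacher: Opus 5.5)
Your architecture matches the paper's: the exact identity with $N=3X$, the Dirichlet dissection at $X^{1/2}$, Siegel--Walfisz on the major arcs, the reduction to squarefree $d\mid b$, completion of the $\eta$-sum, last-digit averaging, and the Euler-product positivity argument are all as in the paper. However, the two load-bearing estimates are not established as written.

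\textbf{Minor arcs.} Your claim that pulling out $\sup|S|^2$ is too lossy is false; it is exactly what the paper does in \cref{prop:dyadic-minor-general}. Write $\alpha=\alpha_{b,\mathcal D}$. On each packet one has $B\ge1$, $D^2B\le 8X$ and $DB\ll X^{1/2}$. Squaring the bound of \cref{lem:minor_Mangoldt} and multiplying by the admissible packet sum gives
\begin{align*}
\frac1N\sup_{\mathrm{packet}}|S|^2\sum_{\mathrm{packet}}|\widehat C_k|
&\ll|\mathcal C_k|(\log X)^8\Bigl(X^{3/5}(D^2B)^{\alpha}+\frac{X(D^2B)^{\alpha}}{DB}+(D^2B)^{\alpha}DB\Bigr)\\
&\ll|\mathcal C_k|X(\log X)^8\Bigl((D^2B)^{\alpha-2/5}+X^{\alpha-1/2}\Bigr),
\end{align*}
using $X^{-2/5}\ll(D^2B)^{-2/5}$ and $DB\ge(D^2B)^{1/2}$. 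Since $D^2B\gg L$ on the minor arcs, summing over $O((\log X)^2)$ packets finishes. The threshold $2/5$ is simply the squared $X^{4/5}$ term, and no interpolation is needed. Your Cauchy--Schwarz/Parseval variant would also close: its $X^{4/5}$ term becomes $X^{4/5+\alpha/2}$, giving the same threshold. The large-sieve variant does not: paired with $|\widehat C_k|\le|\mathcal C_k|$ it only returns the trivial $|\mathcal C_k|X\log X$. As written, your account of where $2/5$ enters is inconsistent (you call the sup bound too lossy, then derive $2/5$ from it), and the step is never carried out.

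\textbf{Bad denominators.} The finite order of $b$ modulo $d$ only covers $d$ coprime to $b$. You need every $d\le(\log X)^A$ with some prime factor not dividing $b$ (e.g. $d=6$, $b=10$). It also gives no rate uniform in $d$. The separation $\|g_{\mathcal D}b^j\ell/d\|\ge 1/d$ saves only $1-O(d^{-2})$ per digit, i.e. $\exp(-ck/d^2)$, which is worthless once $d\ge(\log X)^{1/2}$.

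The missing idea is the expanding-map argument behind \cref{lem:digit-bad-major-general}. Since $\|by\|=b\|y\|$ whenever $\|y\|<1/(2b)$, every block of $\asymp\log_b(bd)$ consecutive indices contains some $j$ with $\|g_{\mathcal D}b^j\ell/d\|\ge 1/(10b)$. The shift $g_{\mathcal D}b^j\eta/N$ is negligible for $j\le k-O(\log\log X)$. Together these give $|\widehat C_k(\ell/d+\eta/N)|\le|\mathcal C_k|\exp(-c\log X/\log\log X)$ uniformly in $|\eta|\le L$.

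There is also an ordering problem. You apply decay after completing the $\eta$-sum, where the physical-space sum carries the weight $X-n$. Decay of the unweighted $\sum_{n\in\mathcal C_k}e(\ell n/d)$ does not directly control $\sum_{n\in\mathcal C_k}(X-n)e(\ell n/d)$. The paper avoids this by discarding bad arcs on the Fourier side, using $|\widehat\Lambda_X|\ll X$ on $O(L^3)$ points. It then completes the $\eta$-sum only for the $O_b(1)$ pairs with $d\mid b$.
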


The two-term problem is comparatively favorable.  On the Fourier side, the
restricted-digit condition appears through a single factor \(\widehat C_k\),
and the minor-arc analysis reduces to estimating packets in which the prime
side is controlled by classical exponential-sum bounds.  The threshold
\(\alpha_{b,\mathcal D}<2/5\) is exactly the point at which these packet
estimates are strong enough to yield a power-saving total minor-arc
contribution.

Our main theorem concerns three-term arithmetic progressions in primes.
Here the same restricted-digit Fourier transform appears, but the prime side is
encoded by a genuinely new three-frequency kernel, and the analysis becomes
substantially more delicate.

\begin{theorem}[Three-term progressions in primes with restricted-digit common difference]
\label{thm:intro-three-prime}
Let \(X=b^k\).  {Assume that \(\mathcal D\) is non-resonant
with \(b\) in the sense of \cref{def:digit-nonresonance}, and that}
\(\widehat C_k\) is \(\alpha_{b,\mathcal D}\)-admissible with
\[
        0<\alpha_{b,\mathcal D}<\frac16.
\]
Then, for every fixed \(A>0\),
\[
        T_k^{(3)}
        =
        \kappa_{b,\mathcal D}\, \overline{\mathfrak S}^{(3)}_{b,\mathcal D}\,X|\mathcal C_k|
        +
        O_{b,\mathcal D,A}\!\left(
        |\mathcal C_k|X(\log X)^{-A}
        \right),
\]
{where}
\[
        \overline{\mathfrak S}^{(3)}_{b,\mathcal D}
        :=
        \frac{1}{|\mathcal D|}
        \sum_{a\in\mathcal D}\mathfrak S_b^{(3)}(a)
\]
is the averaged projected three-prime local factor, and
\(\kappa_{b,\mathcal D}\) is the archimedean factor arising from the restriction
\(m,m+n,m+2n<X\).  These constants are evaluated explicitly in
\cref{subsec:averaging-three-term-local-factor}.  Moreover,
\[
        \kappa_{b,\mathcal D}\, \overline{\mathfrak S}^{(3)}_{b,\mathcal D}>0
\]
if and only if
\[
        \min\mathcal D < \frac{b-1}{2}
        \qquad\text{and}\qquad
        \mathcal D\cap \gcd(b,6)\mathbb Z\neq\emptyset.
\]
\end{theorem}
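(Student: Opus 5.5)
We would prove \cref{thm:intro-three-prime} by the circle method, writing the count on the Fourier side. Let \(S(\theta):=\sum_{m<X}\Lambda(m)e(m\theta)\) and let \(N\) be an integer with \(3X\le N\le 4X\). Working on \(\mathbb Z/N\mathbb Z\), which avoids wraparound because all relevant integers lie below \(N\), we have
\[
T_k^{(3)}=\frac1{N^2}\sum_{\theta_1,\theta_2}\widehat C_k\!\Bigl(\tfrac{\theta_1+2\theta_2}{N}\Bigr)\,\mathcal K(\theta_1,\theta_2),
\]
where \(\mathcal K\) is the three-frequency prime kernel. Concretely, \(\mathcal K\) is obtained by summing \(\Lambda(m)\Lambda(m+n)\Lambda(m+2n)\) against the restriction \(m+2n<X\), \(n\le X/2\), after expanding \(\mathbf 1_{\mathcal C_k}(n)\) in Fourier series. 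The key structural point is that only the single linear combination \(\theta:=\theta_1+2\theta_2\) meets the digit side, so after a change of variables the count becomes \(\sum_{\theta}\widehat C_k(\theta/N)\,\mathcal P(\theta)\). Here \(\mathcal P(\theta)\) counts weighted prime triples \((m,m+n,m+2n)\) with \(n\) twisted by \(e(n\theta/N)\); equivalently, \(\mathcal P\) is a convolution of three copies of \(S\). We then decompose each \(\theta/N=\ell/d+\eta/N\) dyadically in \(d\sim D\) and \(\max(1,|\eta|)\sim B\). The major arcs are \(D,B\le(\log X)^{C}\), and everything else is minor.

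On the major arcs we insert the Siegel--Walfisz approximation \(S(a/q+\beta)\approx\frac{\mu(q)}{\phi(q)}\sum_{m<X}e(m\beta)\) into \(\mathcal P\). This gives the expected singular series in \(d\) together with an archimedean integral, and the error is \(O(X|\mathcal C_k|(\log X)^{-A})\) after summing over the polylogarithmically many packets, each bounded trivially by \(|\mathcal C_k|\). The digit side then contributes \(\sum_{d}\sum_{\ell}\widehat C_k(\ell/d)\times(\text{local factor})\).

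The non-resonance hypothesis ensures that \(\widehat C_k(\ell/d)\) is negligible unless the prime factors of \(d\) all divide \(b\). To see this, write \(d=d_1d_2\) with \(d_1\mid b^\infty\) and \((d_2,b)=1\). If \(d_2>1\), multiplication by \(b\) permutes residues mod \(d_2\), and the per-digit factors \(|\frac1r\sum_{a\in\mathcal D}e(ab^j\ell/d)|\) are bounded away from \(1\) for a positive proportion of the \(j\). Otherwise every such factor would force \(g_{\mathcal D}b^j\ell/d_2\in\mathbb Z\), which contradicts the assumption that the primes of \(g_{\mathcal D}\) divide \(b\). This gives a saving of \(e^{-ck}\). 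For \(d\mid b^\infty\) with \(d\) squarefree (the prime kernel kills non-squarefree \(d\)), the product collapses. Indeed \(b^j\ell/d\in\mathbb Z\) for \(j\ge1\), so \(\widehat C_k(\ell/d)=r^{k-1}\sum_{a\in\mathcal D}e(a\ell/d)\). Because of this, only the last digit \(a=d_0\) of \(n\) feels the modulus, and the whole contribution is \(|\mathcal C_k|\cdot\frac1{|\mathcal D|}\sum_a\mathfrak S_b^{(3)}(a)\).

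The archimedean factor comes from averaging the length \(X-2n\) over \(n\in\mathcal C_k\). Since \(n/X\) is asymptotically distributed like a random variable with independent digits, the leading term is \(\kappa_{b,\mathcal D}=\mathbb E\,(1-2\xi)_+\), where \(\xi=\sum_{j\ge1}\delta_j b^{-j}\) and the \(\delta_j\) are independent and uniform on \(\mathcal D\). Replacing the discrete average by this expectation costs only \(O(b^{-k/2})\).

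For positivity, first note \(\kappa>0\) iff \(\xi<1/2\) with positive probability. Taking all digits minimal shows this holds iff \(\min\mathcal D/(b-1)<1/2\). Next, \(\mathfrak S^{(3)}_b(a)\) is the usual product over \(p\mid b\) of the local densities for \(m,m+a,m+2a\) all coprime to \(p\). This factor vanishes exactly when \(p=2\) and \(a\) is odd, or \(p=3\) and \(3\nmid a\). Hence the average over \(a\in\mathcal D\) is positive iff some digit is divisible by \(\gcd(b,6)\).

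The main obstacle is the minor arcs, where the threshold \(\alpha<1/6\) must emerge. The plan is to bound each packet \((D,B)\) by
\[
\sup_{\text{packet}}|\mathcal P|\cdot\sum_{\text{packet}}|\widehat C_k|\;\le\;\sup|\mathcal P|\cdot K|\mathcal C_k|(D^2B)^{\alpha},
\]
using \(\alpha\)-admissibility (\cref{def:alpha-admissible}), and to show that \(\mathcal P\) at \(\ell/d+\eta/N\) is \(\ll X(\log X)^{O(1)}(D^2B)^{-1/6}\). First, Cauchy--Schwarz plus Parseval in one frequency reduce \(\mathcal P\) to \(\sup|S|\cdot\|S\|_2^2/X\) restricted to a neighbourhood of the packet. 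We would then apply Vaughan's bound \(S(\ell/d+\beta)\ll(\log X)^4\bigl(X/\sqrt{q}+X^{4/5}+\sqrt{Xq}\bigr)\), with \(q\approx D\) and the \(\eta\)-dependence handled by the standard major-arc reshuffle \(q\approx D\max(1,B)\). This should give a saving of \((D^2B)^{1/6-\varepsilon}\) in the range \(D^2B\le 8X\). Whether a genuine power \(1/6\) survives the three-frequency coupling is the delicate point. If a direct sup bound loses too much, I would instead split \(\mathcal P\) into packets in the pair \((\theta_1,\theta_2)\) and use Hölder with an \(L^4\)/\(L^{2}\) restriction estimate for \(S\) in place of a pointwise bound. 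Summing \((D^2B)^{\alpha-1/6}\) over dyadic packets with \(D^2B\ge(\log X)^C\) then gives the required \((\log X)^{-A}\) saving once \(\alpha<1/6\).
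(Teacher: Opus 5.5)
You set up the same architecture as the paper: the Fourier identity with kernel $\mathcal P=\widehat H_\Lambda$, the packetwise bound ``$\sup|\mathcal P|$ times admissibility'', Siegel--Walfisz on the major arcs, non-resonance to discard bad denominators, last-digit averaging with $\kappa_{b,\mathcal D}=\mathbb E(1-2\xi)_+$, and the two positivity criteria. However, you do not supply the step that carries the whole theorem, a pointwise saving for $\mathcal P(\alpha)$ when $\alpha$ is minor, and the mechanism you sketch cannot produce one. Since $\mathcal P(\alpha)=\frac1N\sum_{\beta}S(\beta)S(\alpha-2\beta)S(\beta-\alpha)$, none of the three arguments is $\alpha$. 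As $\beta$ runs over $\mathcal G_N$, each of them visits every major arc, including the one at $0$. Pulling a fixed factor out in sup norm and applying Cauchy--Schwarz and Parseval to the other two therefore gives only $\sup_\theta|S(\theta)|\cdot X\log X\asymp X^2\log X$, which is the trivial bound. There is no ``neighbourhood of the packet'' containing the argument of a single factor. Vaughan's estimate at denominator $\approx D$ (or height $\approx DB$) is a bound at $\alpha$, which is not an argument of any factor. Your H\"older/$L^4$-restriction fallback has the same defect: it never uses that $\alpha$ is minor, and it also returns $X^2$. What is required is a dichotomy uniform in $\beta$: for every $\beta$, at least one of $\beta$, $\alpha-2\beta$, $\beta-\alpha$ has large height. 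Then $|S(\theta_1)S(\theta_2)S(\theta_3)|\le\min_j|S(\theta_j)|\sum_{i<j}|S(\theta_i)S(\theta_j)|$, and Cauchy--Schwarz and Parseval finish.

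That dichotomy is the paper's three-point lemma (Lemma~\ref{lem:optimal-three-point}), and its exponent is exactly what sets the threshold. A naive two-frequency version only goes part of the way: if $\beta$ and $\beta-\alpha$ both have height $\le Q$, then $\alpha$ has denominator $\le d_1d_3$ and $DB\ll Q^2$. This gives only $\min_j|S(\theta_j)|\ll X(DB)^{-1/4}$ up to logarithms, which would require $\alpha_{b,\mathcal D}<1/8$. The paper instead assumes all three frequencies satisfy $D_jB_j\ll(DB)^{2/3-2\epsilon}$. It applies rational separation to $\beta+(\alpha-2\beta)+(\beta-\alpha)=0$ to force $\sum_j\ell_j/d_j\in\mathbb Z$ and $\sum_j\eta_j=0$, and deduces $\mathrm{lcm}(d_1,d_2,d_3)^2\mid d_1d_2d_3$. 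It then writes $\alpha$ through the two largest denominators to get $d\le(d_1d_2d_3)^{1/2}$ and $B\ll B_2+B_3$, whence $DB\ll(DB)^{1-3\epsilon}$, a contradiction. The result is $|\widehat H_\Lambda(\alpha)|\ll(X^{-1/5}+(DB)^{-1/3+\epsilon})X^2(\log X)^5$. With $D^2B\le(DB)^2$ the packet sum is $(DB)^{2\alpha_{b,\mathcal D}-1/3+\epsilon}$, and this is where $1/6$ comes from. Your target saving $(D^2B)^{-1/6}$ follows from this lemma, but you give no argument for it.

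Smaller points:
\begin{itemize}
\item Inserting Siegel--Walfisz into $\mathcal P$ on the major arcs needs an inner major/minor split in $\beta$ and completion of the $d'$- and $\eta'$-sums.
\item The weight $X-2n$ arises only after pairing with $\widehat C_k(\ell/d+\eta/N)$ and completing the outer $\eta$-sum, not from $\widehat C_k(\ell/d)$ alone.
\item The bad-denominator saving is $\exp(-ck/\log(2d))$ rather than $e^{-ck}$; for $d=b^m-1$, $\ell=1$, only $O(1)$ of every $m$ consecutive digit positions are non-resonant. This weaker saving still suffices.
\item $\mathfrak S_b^{(3)}(a)$ is not just the product of local densities over $p\mid b$. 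It also carries the averaged factors $2$ at $p=2$ when $b$ is odd and $1-(p-1)^{-2}$ at odd $p\nmid b$. This changes the constant but not the positivity criterion.
\end{itemize}
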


{

The leading coefficients in
\cref{thm:intro-two-prime,thm:intro-three-prime}
split naturally into an archimedean factor and an arithmetic factor.  In the
two-term theorem,
\[
        1-\frac{\sum_{a\in\mathcal D}a}{(b-1)|\mathcal D|}
\]
is the normalized archimedean factor, while
\(\overline{\mathfrak S}^{(2)}_{b,\mathcal D}\) is the averaged arithmetic
factor.  In the three-term theorem, \(\kappa_{b,\mathcal D}\) is the
archimedean factor, while
\(\overline{\mathfrak S}^{(3)}_{b,\mathcal D}\) is the averaged arithmetic
factor.  The arithmetic factors record the local congruence densities of the
prime configurations.  The archimedean factors arise from requiring all
prime variables to lie in \([0,X)\): for a fixed difference \(n\), there are
\(X-n\) admissible starting points in the two-term problem and \(X-2n\) in the
three-term problem.  Thus only \(n<X/2\) contributes in the three-term count,
which is also the source of the condition
\[
        \min\mathcal D<\frac{b-1}{2}
\]
in its non-vanishing criterion.
}

The main technical difficulty in the paper lies in the proof of
\cref{thm:intro-three-prime}.  It is already visible in the exact Fourier
identity
\[
        T_k^{(3)}
        =
        \frac1N\sum_{\alpha\in\mathcal G_N}
        \widehat H_\Lambda(\alpha)\widehat C_k(\alpha),
\]
where
\[
        \mathcal G_N:=N^{-1}\mathbb Z/\mathbb Z
        =
        \left\{\frac{a}{N}:0\le a<N\right\},
\]
and
\[
        \widehat H_\Lambda(\alpha)
        :=
        \frac1N
        \sum_{\beta\in\mathcal G_N}
        \widehat\Lambda_X(\beta)
        \widehat\Lambda_X(\alpha-2\beta)
        \widehat\Lambda_X(\beta-\alpha).
\]
Thus the prime-side kernel involves the three interacting frequencies
\[
        \beta,
        \qquad
        \alpha-2\beta,
        \qquad
        \beta-\alpha.
\]
The key minor-arc input is a three-point lemma, Lemma \ref{lem:optimal-three-point} showing that, when the outer
frequency \(\alpha\) is minor, at least one of these three prime frequencies
is minor in a quantitatively useful sense.  After Cauchy--Schwarz and
Parseval control the other two factors, the resulting bound is paired with
the packetwise digit estimate.  This yields a summable dyadic contribution
only under
\[
        \alpha_{b,\mathcal D}<\frac16,
\]
which is stricter than the two-term threshold
\(\alpha_{b,\mathcal D}<2/5\).  On the major arcs, the three-term argument
also requires an inner \(\beta\)-decomposition and an approximate decoupling
of the resulting arithmetic and archimedean variables.  These are the two
places where the proof genuinely goes beyond the prime-pair argument.

The two-term proof supplies the basic circle-method template, and the
three-term proof retains this outer architecture while adding the
three-frequency analysis just described.  The two proof roadmaps are shown in
\cref{fig:two-term-roadmap,fig:three-term-roadmap}.

\begin{figure}[p]
\centering
\begin{tikzpicture}[x=1cm,y=1cm]

\node[roadmap root] (pair-root) at (0,0) {
  \textbf{Prime pairs: exact Fourier reduction}\\[2pt]
  \(\displaystyle
    T_k^{(2)}
    =\frac1N\sum_{\alpha\in\mathcal G_N}
      \bigl|\widehat\Lambda_X(\alpha)\bigr|^2
      \widehat C_k(-\alpha)
  \)
};

\node[roadmap split] (pair-split) at (0,-2.25) {
  \textbf{Circle-method decomposition}\\[-1pt]
  \(\displaystyle
    \mathcal G_N
    =\mathfrak m_A^{(N)}\sqcup\mathfrak M_A^{(N)}
  \)
};

\node[roadmap minor] (pair-minor-1) at (-4.0,-4.85) {
  \textbf{Minor arcs}\\[-1pt]
  prime minor-arc decay for
  \(\bigl|\widehat\Lambda_X\bigr|^2\)\\[-1pt]
  \(+\) packetwise \(\alpha_{b,\mathcal D}\)-admissibility
  of \(\widehat C_k\)
};

\node[roadmap major] (pair-major-1) at (4.0,-4.85) {
  \textbf{Major arcs}\\[-1pt]
  non-resonance and bad-denominator decay\\[-1pt]
  \(+\) the prime major-arc approximation
};

\node[roadmap minor] (pair-minor-2) at (-4.0,-7.35) {
  \textbf{Dyadic packet summation}\\[2pt]
  decisive factor\\[2pt]
  \(\displaystyle
     (D^2B)^{\alpha_{b,\mathcal D}-2/5}
  \)
};

\node[roadmap major] (pair-major-2) at (4.0,-7.35) {
  \textbf{Arithmetic reduction}\\[1pt]
  the surviving denominator \(d\) satisfies \(d\mid b\)
};

\node[roadmap minor] (pair-minor-3) at (-4.0,-9.95) {
  \(\displaystyle \alpha_{b,\mathcal D}<\frac25\)\\[2pt]
  \(\displaystyle
    T_k^{(2)}\!\left(\mathfrak m_A^{(N)}\right)
    =O_A\!\left(|\mathcal C_k|X(\log X)^{-A}\right)
  \)
};

\node[roadmap major] (pair-major-3) at (4.0,-9.95) {
  \textbf{Complete the archimedean \(\eta\)-sum}\\[2pt]
  \(\displaystyle
    \sum_{0\le n<X}
    \mathbf 1_{\mathcal C_k}(n)(X-n)
    \mathfrak S_b^{(2)}(n)
  \)
};

\node[roadmap merge] (pair-merge) at (0,-12.75) {
  \textbf{Average the projected local factor over \(\mathcal C_k\)}\\[-1pt]
  \(\mathfrak S_b^{(2)}(n)\) depends only on the least significant digit,
  while \(X-n\) supplies the normalized archimedean factor; hence\\[2pt]
  \(\displaystyle
     \left(
       1-\frac{\sum_{a\in\mathcal D}a}{(b-1)|\mathcal D|}
     \right)
     \overline{\mathfrak S}^{(2)}_{b,\mathcal D}
  \)
};

\node[roadmap final] (pair-final) at (0,-15.30) {
  \(\displaystyle
  T_k^{(2)}
  =
  \left(
       1-\frac{\sum_{a\in\mathcal D}a}{(b-1)|\mathcal D|}
  \right)
  \overline{\mathfrak S}^{(2)}_{b,\mathcal D}
  X|\mathcal C_k|
  +O_{b,\mathcal D,A}\!\left(|\mathcal C_k|X(\log X)^{-A}\right)
  \)
};

\draw[roadmap arrow] (pair-root) -- (pair-split);
\draw[roadmap arrow] (pair-split) --
  node[roadmap edge label,pos=.45] {minor} (pair-minor-1);
\draw[roadmap arrow] (pair-split) --
  node[roadmap edge label,pos=.45] {major} (pair-major-1);
\draw[roadmap arrow] (pair-minor-1) -- (pair-minor-2);
\draw[roadmap arrow] (pair-minor-2) -- (pair-minor-3);
\draw[roadmap arrow] (pair-major-1) -- (pair-major-2);
\draw[roadmap arrow] (pair-major-2) -- (pair-major-3);
\draw[roadmap arrow] (pair-minor-3) -- (pair-merge);
\draw[roadmap arrow] (pair-major-3) -- (pair-merge);
\draw[roadmap arrow] (pair-merge) -- (pair-final);

\end{tikzpicture}
\caption{Roadmap of the two-term argument.  This is the basic
circle-method skeleton reused in the three-term proof.  The minor arcs combine
prime-side decay with the localized hybrid estimate for \(\widehat C_k\),
while the major arcs reduce to squarefree divisors of the base and then to a
projected local factor in physical space.}
\label{fig:two-term-roadmap}
\end{figure}

\begin{figure}[p]
\centering
\begin{tikzpicture}[x=1cm,y=1cm]

\node[roadmap root] (three-root) at (0,0) {
  \textbf{Three-term progressions: exact Fourier reduction}\\[2pt]
  \(\displaystyle
    T_k^{(3)}
    =\frac1N\sum_{\alpha\in\mathcal G_N}
      \widehat H_\Lambda(\alpha)\widehat C_k(\alpha)
  \),\\[2pt]
  \(\displaystyle
    \widehat H_\Lambda(\alpha)
    =\frac1N\sum_{\beta\in\mathcal G_N}
      \widehat\Lambda_X(\beta)
      \widehat\Lambda_X(\alpha-2\beta)
      \widehat\Lambda_X(\beta-\alpha)
  \)
};

\node[roadmap split] (three-split) at (0,-2.55) {
  \textbf{Outer circle-method decomposition for $\alpha$}\\[-1pt]
  \(\displaystyle
    \mathcal G_N
    =\mathfrak m_A^{(N)}\sqcup\mathfrak M_A^{(N)}
  \)
};

\node[roadmap minor] (three-minor-1) at (-4.0,-5.15) {
  \textbf{Minor arcs: three prime frequencies}\\[2pt]
  \(\displaystyle
    \beta,\qquad \alpha-2\beta,\qquad \beta-\alpha
  \)
};

\node[roadmap major] (three-major-1) at (4.0,-5.15) {
  \textbf{Major arcs}\\[-1pt]
  discard bad outer denominators\\[-1pt]
  \(+\) split the inner \(\beta\)-sum into major and minor arcs
};

\node[roadmap minor] (three-minor-2) at (-4.0,-8.05) {
  \roadmapkey{Three-point lemma}\\[-1pt]
  one of the three prime sums gains\\[1pt]
  \(\displaystyle X^{-1/5}+(DB)^{-1/3+\epsilon}\)
};

\node[roadmap major] (three-major-2) at (4.0,-7.80) {
  inner minor arcs are negligible;\\[-1pt]
  inner major arcs give \(\widehat H_{\mathrm{good}}\);\\[-1pt]
  the three M\"obius factors force the outer denominator $d$ to be
  squarefree, hence \(d\mid b\)
};

\node[roadmap minor] (three-minor-3) at (-4.0,-11.40) {
  \textbf{Dyadic packet summation}\\[2pt]
  decisive factor\\[2pt]
  \(\displaystyle
  (DB)^{2\alpha_{b,\mathcal D}-1/3+\epsilon}
  \)\\[2pt]
  \(\displaystyle \alpha_{b,\mathcal D}<\frac16\)
  \(\Longrightarrow\) the \linebreak minor arcs are negligible
};

\node[roadmap major] (three-major-3) at (4.0,-11.40) {
  \roadmapkey{Approximate decoupling}\\[1pt]
  \(\displaystyle
    \widehat H_{\mathrm{good}}\!\left(\frac\ell d+\frac\eta N\right)
    =S_{1,\mathrm{full}}\!\left(\frac\ell d\right)
     S_{2,\mathrm{full}}(\eta)
     +\text{error}
  \)\\[2pt]
  completing the outer \(\eta\)-sum gives\\[-1pt]
  \(\displaystyle
    \sum_{0\le n<X/2}
    \mathbf 1_{\mathcal C_k}(n)(X-2n)
    \mathfrak S_b^{(3)}(n)
  \)
};

\node[roadmap merge] (three-merge) at (0,-14.85) {
  \textbf{Average the projected local factor over \(\mathcal C_k\)}\\[-1pt]
  the arithmetic factor depends on the least significant digit, while the
  cutoff \(n<X/2\) and the weight \(X-2n\) determine the archimedean factor;
  hence\\[2pt]
  \(\displaystyle
     \kappa_{b,\mathcal D}\,
     \overline{\mathfrak S}_{b,\mathcal D}^{(3)}
  \)
};

\node[roadmap final] (three-final) at (0,-17.35) {
  \(\displaystyle
  T_k^{(3)}
  =\kappa_{b,\mathcal D}\,
   \overline{\mathfrak S}_{b,\mathcal D}^{(3)}
   X|\mathcal C_k|
  +O_{b,\mathcal D,A}\!\left(|\mathcal C_k|X(\log X)^{-A}\right)
  \)
};

\draw[roadmap arrow] (three-root) -- (three-split);
\draw[roadmap arrow] (three-split) --
  node[roadmap edge label,pos=.45] {minor} (three-minor-1);
\draw[roadmap arrow] (three-split) --
  node[roadmap edge label,pos=.45] {major} (three-major-1);
\draw[roadmap arrow] (three-minor-1) -- (three-minor-2);
\draw[roadmap arrow] (three-minor-2) -- (three-minor-3);
\draw[roadmap arrow] (three-major-1) -- (three-major-2);
\draw[roadmap arrow] (three-major-2) -- (three-major-3);
\draw[roadmap arrow] (three-minor-3) -- (three-merge);
\draw[roadmap arrow] (three-major-3) -- (three-merge);
\draw[roadmap arrow] (three-merge) -- (three-final);

\end{tikzpicture}
\caption{Roadmap of the three-term argument.  Its outer structure is the same
as in \cref{fig:two-term-roadmap}.  The genuinely new inputs are the
three-point lemma on the minor arcs and, on the major arcs, the inner
\(\beta\)-decomposition together with approximate arithmetic--archimedean
decoupling.  The arithmetic coefficient in the projected local factor is
evaluated in \cref{sec:three-term-local-factor}.}
\label{fig:three-term-roadmap}
\end{figure}
\FloatBarrier

{

We also consider alternative-range variants.  Here only the starting point
\(m\) and the restricted-digit difference \(n\) are required to lie in
\([0,X)\), while the remaining prime variables are allowed to lie in the
larger intervals forced by these choices:
}
\[
        T_{k,\mathrm{alt}}^{(2)}
        :=
        \sum_{0\le n,m<X}\mathbf 1_{\mathcal C_k}(n)
        \Lambda(m)\Lambda(m+n),
\]
and
\[
        T_{k,\mathrm{alt}}^{(3)}
        :=
        \sum_{0\le n,m<X}\mathbf 1_{\mathcal C_k}(n)
        \Lambda(m)\Lambda(m+n)\Lambda(m+2n).
\]
{

For every \(n\in\mathcal C_k\), both alternative-range counts have exactly
\(X\) admissible starting points \(m\).  Hence the normalized archimedean
factor is \(1\), while the arithmetic factors are unchanged.  The resulting
main terms therefore involve only the averaged projected local factors. 
The proofs of the variants are essentially the same as those of the main theorems, so we only sketch the necessary modifications in
\cref{sec:alternative-range}.
}

\begin{theorem}[Prime pairs with restricted-digit difference, alternative range]
\label{thm:intro-two-prime-alternative-range}
Let \(X=b^k\).  {Assume that \(\mathcal D\) is non-resonant
with \(b\) in the sense of \cref{def:digit-nonresonance}, and that}
\(\widehat C_k\) is \(\alpha_{b,\mathcal D}\)-admissible with
\[
        0<\alpha_{b,\mathcal D}<\frac25.
\]
Then, for every fixed \(A>0\),
\[
        T_{k,\mathrm{alt}}^{(2)}
        =
        \overline{\mathfrak S}^{(2)}_{b,\mathcal D}\,X|\mathcal C_k|
        +
        O_{b,\mathcal D,A}\!\left(
        |\mathcal C_k|X(\log X)^{-A}
        \right),
\]
where
\[
        \overline{\mathfrak S}^{(2)}_{b,\mathcal D}
        :=
        \frac{1}{|\mathcal D|}
        \sum_{a\in\mathcal D}
        \sum_{d\mid b}
        \frac{\mu(d)^2}{\phi(d)^2}c_d(a),
\]
with \(c_d(a)\) denoting the Ramanujan sum.  Moreover,
\[
        \overline{\mathfrak S}^{(2)}_{b,\mathcal D}>0
\]
if and only if either \(b\) is odd or \(\mathcal D\) contains an even digit.
\end{theorem}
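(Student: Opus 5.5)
We run the same circle-method argument as for \cref{thm:intro-two-prime}. The only change is in the Fourier identity and in the archimedean weight. Take \(N\) with \(3X\le N\le4X\) and write
\[
        \widehat\Lambda_X(\alpha):=\sum_{0\le m<X}\Lambda(m)e(m\alpha),
        \qquad
        \widehat\Lambda_{2X}(\alpha):=\sum_{0\le m<2X}\Lambda(m)e(m\alpha).
\]
Since \(m+n<2X<N\) and \(0\le n<X\), there is no wraparound modulo \(N\). Orthogonality on \(\mathcal G_N\) therefore gives the exact identity
\[
        T^{(2)}_{k,\mathrm{alt}}
        =\frac1N\sum_{\alpha\in\mathcal G_N}
        \overline{\widehat\Lambda_X(\alpha)}\,
        \widehat\Lambda_{2X}(\alpha)\,
        \widehat C_k(-\alpha).
\]
The bilinear form \(\overline{\widehat\Lambda_X}\,\widehat\Lambda_{2X}\) replaces \(|\widehat\Lambda_X|^2\). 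We split \(\mathcal G_N=\mathfrak m_A^{(N)}\sqcup\mathfrak M_A^{(N)}\) exactly as in the two-term proof.

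\textbf{Minor arcs.} We use Cauchy--Schwarz in the form \(|\widehat\Lambda_X\widehat\Lambda_{2X}|\le\frac12(|\widehat\Lambda_X|^2+|\widehat\Lambda_{2X}|^2)\), applied packet by packet. This lets us reuse the prime-side bounds of the two-term argument verbatim: Vinogradov-type minor-arc decay together with the large-sieve/Parseval bound over a packet of points \(\ell/d+\eta/N\) with \(d\sim D\), \(|\eta|\sim B\). The bounds for \(\widehat\Lambda_{2X}\) are those for \(\widehat\Lambda_X\) with \(X\) replaced by \(2X\), and we still have \(N\asymp X\). Combining these with the \(\alpha_{b,\mathcal D}\)-admissibility bound \eqref{eq:hybrid-digit-general}, the packet contribution again carries the factor \((D^2B)^{\alpha_{b,\mathcal D}-2/5}\). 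Summing dyadically then gives \(O_A(|\mathcal C_k|X(\log X)^{-A})\) as long as \(\alpha_{b,\mathcal D}<2/5\).

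\textbf{Major arcs.} On \(\alpha=\ell/d+\eta/N\) with \(d\le(\log X)^{C}\), we use Siegel--Walfisz to replace each prime sum by \(\frac{\mu(d)}{\phi(d)}\) times the corresponding interval sum \(\sum_{m<X}e(m\eta/N)\) or \(\sum_{m<2X}e(m\eta/N)\), up to admissible error. The arithmetic part is identical to the one in the two-term proof. Non-resonance combined with bad-denominator decay of \(\widehat C_k(\ell/d+\eta/N)\) removes every \(d\nmid b\), the factor \(\mu(d)^2\) forces \(d\) to be squarefree, and we obtain \(\mathfrak S_b^{(2)}(n)=\sum_{d\mid b}\mu(d)^2\phi(d)^{-2}c_d(n)\).

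Completing the \(\eta\)-sum now produces, for each \(n\), the count of \(m\) with \(0\le m<X\) and \(m+n<2X\). This count is exactly \(X\) for every \(n\in\mathcal C_k\), in place of \(X-n\). The major-arc main term is therefore \(X\sum_{n}\mathbf 1_{\mathcal C_k}(n)\mathfrak S_b^{(2)}(n)\).

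Since \(c_d(n)\) for \(d\mid b\) depends only on \(n\bmod b\), i.e. on the last digit \(d_0\in\mathcal D\), and the last digit is equidistributed over \(\mathcal D\) in \(\mathcal C_k\), this main term equals \(\overline{\mathfrak S}^{(2)}_{b,\mathcal D}X|\mathcal C_k|\). The tails from completing the \(\eta\)-sums and the truncation of \(d\) are handled as before.

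\textbf{Positivity.} The Euler-product form gives \(\sum_{d\mid b}\mu(d)^2\phi(d)^{-2}c_d(a)=\prod_{p\mid b}(1+c_p(a)/(p-1)^2)\). Here \(c_p(a)=p-1\) if \(p\mid a\) and \(c_p(a)=-1\) otherwise. For \(p\ge3\) each factor is positive, and for \(p=2\) the factor is \(0\) exactly when \(a\) is odd. So if \(b\) is odd, every summand is positive. If \(b\) is even, the average is positive if and only if some \(a\in\mathcal D\) is even.

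The only step needing care is the minor-arc packet bound for the mixed product \(\widehat\Lambda_X\widehat\Lambda_{2X}\). We expect the AM--GM reduction to \(|\widehat\Lambda_X|^2\) and \(|\widehat\Lambda_{2X}|^2\) to settle it with no loss in the exponent \(2/5\).
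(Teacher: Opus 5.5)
Your proposal is correct and follows the paper's proof. It uses the same exact identity with \(\widehat\Lambda_X(-\alpha)\widehat\Lambda_{2X}(\alpha)\widehat C_k(-\alpha)\), transfers the two-term major/minor-arc machinery verbatim to the length-\(2X\) prime sum (whose pointwise minor-arc and Siegel--Walfisz bounds have the same shape since \(N\asymp X\)), and completes the \(\eta\)-sum to produce the flat weight \(\#\{0\le m<X: m+n<2X\}=X\). Your AM--GM step is just an equivalent way of combining the two identically shaped pointwise bounds before pairing them with admissibility, and no large-sieve input is needed or used, so the exponent \(2/5\) is indeed unchanged.
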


\begin{theorem}[Three-term progressions, alternative range]
\label{thm:intro-three-prime-alternative-range}
Let \(X=b^k\). {Assume that \(\mathcal D\) is non-resonant
with \(b\) in the sense of \cref{def:digit-nonresonance}, and that}
\(\widehat C_k\) is \(\alpha_{b,\mathcal D}\)-admissible with
\[
        0<\alpha_{b,\mathcal D}<\frac16.
\]
Then, for every fixed \(A>0\),
\[
        T_{k,\mathrm{alt}}^{(3)}
        =
        \overline{\mathfrak S}^{(3)}_{b,\mathcal D}\,X|\mathcal C_k|
        +
        O_{b,\mathcal D,A}\!\left(
        |\mathcal C_k|X(\log X)^{-A}
        \right),
\]
where
\[
        \overline{\mathfrak S}^{(3)}_{b,\mathcal D}
        :=
        \frac{1}{|\mathcal D|}
        \sum_{a\in\mathcal D}\mathfrak S_b^{(3)}(a).
\]
Moreover,
\[
        \overline{\mathfrak S}^{(3)}_{b,\mathcal D}>0
\]
if and only if
\[
        \mathcal D\cap \gcd(b,6)\mathbb Z\neq\emptyset.
\]
\end{theorem}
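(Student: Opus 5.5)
Since the paper derives \cref{thm:intro-three-prime-alternative-range} from \cref{thm:intro-three-prime}, I will rerun the same circle-method argument with the cutoffs changed, and then redo only the non-vanishing analysis. First I write the exact Fourier identity for the alternative count. Put \(f(m)=\Lambda(m)\mathbf 1_{[0,X)}(m)\), and let the second and third prime variables carry the weight \(\Lambda\) on the enlarged ranges \([0,2X)\) and \([0,3X)\). With \(N\in[3X,4X]\), so that no wraparound occurs modulo \(N\), I get
\[
T^{(3)}_{k,\mathrm{alt}}=\frac1N\sum_{\alpha\in\mathcal G_N}\widehat H^{\mathrm{alt}}_\Lambda(\alpha)\widehat C_k(\alpha),
\]
where \(\widehat H^{\mathrm{alt}}_\Lambda\) is the same three-frequency kernel, except that its three prime transforms are now over intervals of lengths \(X,2X,3X\). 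The rest of the proof takes the same form as for \cref{thm:intro-three-prime}.

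\textbf{Minor arcs.} The three-point lemma (\cref{lem:optimal-three-point}) only uses the frequencies \(\beta,\alpha-2\beta,\beta-\alpha\) and Vinogradov-type bounds for prime sums over intervals of length \(\asymp X\), so it holds unchanged. Cauchy--Schwarz and Parseval bound the other two factors by \(O(X\log X)\) each, the enlarged ranges costing only constant factors. Combining this with the \(\alpha_{b,\mathcal D}\)-admissibility packet bound gives the dyadic factor \((DB)^{2\alpha_{b,\mathcal D}-1/3+\epsilon}\), which is summable because \(\alpha_{b,\mathcal D}<1/6\).

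\textbf{Major arcs.} I repeat the inner \(\beta\)-decomposition. The non-resonance condition together with bad-denominator decay reduces the outer denominators to squarefree \(d\mid b\). The approximate decoupling \(\widehat H_{\mathrm{good}}\approx S_{1,\mathrm{full}}(\ell/d)\,S_{2,\mathrm{full}}(\eta)\) still holds, but \(S_{2,\mathrm{full}}\) is now the transform of the archimedean weight \(\#\{0\le m<X\}=X\), constant in \(n\), instead of \((X-2n)_+\). Completing the \(\eta\)-sum then gives
\[
\sum_{0\le n<X}\mathbf 1_{\mathcal C_k}(n)\,X\,\mathfrak S^{(3)}_b(n)+O\bigl(|\mathcal C_k|X(\log X)^{-A}\bigr).
\]
Because \(\mathfrak S_b^{(3)}(n)\) depends only on \(n\bmod \mathrm{rad}(b)\), and hence only on the least significant digit, and because each digit \(a\in\mathcal D\) occurs as that digit for exactly \(|\mathcal C_k|/|\mathcal D|\) elements of \(\mathcal C_k\), the main term is \(\overline{\mathfrak S}^{(3)}_{b,\mathcal D}X|\mathcal C_k|\). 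The cutoff \(n<X/2\) disappears, which is why no condition on \(\min\mathcal D\) appears in the statement.

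\textbf{Positivity.} I would evaluate \(\mathfrak S^{(3)}_b(a)\) as an Euler product over primes \(p\mid b\); for \(p\nmid b\) the projection averages the local factors to \(1\). At \(p=2\) the progression \(m,m+n,m+2n\) can consist of odd numbers only if \(n\) is even. At \(p=3\) it avoids \(0\bmod 3\) only if \(3\mid n\). At \(p\ge5\) the local factor is always positive. Hence \(\mathfrak S^{(3)}_b(a)>0\) exactly when \(\gcd(b,6)\mid a\), and otherwise \(\mathfrak S^{(3)}_b(a)=0\). All summands of \(\overline{\mathfrak S}^{(3)}_{b,\mathcal D}\) are nonnegative, so the average is positive if and only if some \(a\in\mathcal D\) is divisible by \(\gcd(b,6)\).

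\textbf{Main obstacle.} I expect the hardest step to be re-checking the decoupling error on the major arcs with unequal interval lengths. The weight \(S_{2,\mathrm{full}}\) is now a convolution of indicators of different lengths, and its tail decay in \(\eta\) must still be good enough to sum against the \(\widehat C_k\) packets. I would first verify that it has the same \(\min(X,|\eta|^{-1}X)\)-type decay as before.
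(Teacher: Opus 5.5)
Your proposal is correct and follows the paper's proof essentially step for step: the three-frequency kernel with prime sums over $[0,X)$, $[0,2X)$, $[0,3X)$, the unchanged three-point lemma and Parseval bounds on the minor arcs, the unchanged arithmetic reduction to squarefree $d\mid b$ on the major arcs with an archimedean weight equal to $X$ for $0\le n<X$, and positivity from \cref{lem:three-term-local-factor-evaluation}. Three small fixes are needed: (i) take $N=4X$, as the paper does, because completing the inner $\beta$-sum uses $m_1-2m_2+m_3=0$ exactly, and with ranges $X,2X,3X$ one only has $|m_1-2m_2+m_3|\le 4X-2$; (ii) the archimedean weight is the trapezoid $W_X(h)=\#\{0\le m<X:\ 0\le m+h<2X,\ 0\le m+2h<3X\}$, supported on $-X/2<h<3X/2$ and equal to $X$ only on $[0,X)$, and since it is piecewise affine with bounded second differences it gives the same quadratic decay $X^2(1+|\eta|)^{-2}$ as in the original argument (which was quadratic, not $\min(X,X/|\eta|)$-type), so the step you flagged as the main obstacle is immediate; (iii) the primes $p\nmid b$ contribute the positive constant $S_0$ rather than $1$, which does not affect the positivity criterion.
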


For each of these four counts, whenever the corresponding leading constant is
positive, configurations in which at least one von Mangoldt argument is a
proper prime power contribute negligibly.  If \(0\in\mathcal C_k\), the
diagonal \(n=0\) is negligible as well; if \(0\notin\mathcal C_k\), it is absent
from the count.  Removing these contributions yields nontrivial configurations
of genuine primes; see \cref{sec:true-prime}.

The paper is organized as follows.  In \cref{sec:prime-pairs} we prove the
prime-pair asymptotic formula.  In \cref{sec:three-term} we prove a
three-point minor-arc lemma and establish the three-term asymptotic formula.
In \cref{sec:alternative-range} we prove the alternative-range variants.
In \cref{sec:true-prime} we remove configurations involving proper prime
powers and, when present, the diagonal, thereby passing from the
von Mangoldt-weighted asymptotic formulae above to
nontrivial configurations of genuine primes.  In
\cref{sec:transfer-model} we derive explicit sufficient bounds for the
admissibility exponent.  The appendices prove the bad-denominator decay
estimate and evaluate the projected three-prime local factor.

\section{Preliminaries}
For a function \(h:\mathbb Z/N\mathbb Z\to\mathbb C\), define its Fourier
transform by
\[
        \widehat h\!\left(\frac{a}{N}\right)
        :=
        \sum_{x\in\mathbb Z/N\mathbb Z}
        h(x)e\!\left(\frac{ax}{N}\right),
        \qquad
        a\in\mathbb Z/N\mathbb Z.
\]
With this convention, Fourier inversion takes the form
\[
        h(x)
        =
        \frac1N
        \sum_{a=0}^{N-1}
        \widehat h\!\left(\frac{a}{N}\right)
        e\!\left(-\frac{ax}{N}\right).
\]

\subsection{Exact Fourier representation for the two-term counting problem}
\label{subsec:fourier-setup-pairs}

If
\[
        \Lambda_X(t)
        :=
        \begin{cases}
        \Lambda(t), & 0\le t<X,\\
        0, & \text{otherwise},
        \end{cases}
\]
then
\[
        T_k^{(2)}
        =
        \sum_{0\le n<X}
        \mathbf 1_{\mathcal C_k}(n)
        \sum_{m\in\mathbb Z}
        \Lambda_X(m)\Lambda_X(m+n).
\]
This form makes the support restriction \(m+n<X\) automatic.

For the two-term problem we take \(N=3X\), and we view \(\Lambda_X\) and
\(\mathbf 1_{\mathcal C_k}\) as functions on \(\mathbb Z/N\mathbb Z\), both
supported in \([0,X)\).  Thus
\[
        \widehat\Lambda_X(\theta)
        :=
        \sum_{0\le m<X}\Lambda(m)e(m\theta),
        \qquad
        \widehat C_k(\theta)
        :=
        \sum_{0\le n<X}\mathbf 1_{\mathcal C_k}(n)e(n\theta).
\]

\begin{proposition}[Exact discrete Fourier representation for prime pairs]
\label{prop:two_fourier-representation}
One has
\begin{align} \label{eq:prime-pair-sum}
        T_k^{(2)}
        =
        \frac1N
        \sum_{a=0}^{N-1}
        \left|
        \widehat\Lambda_X\!\left(\frac{a}{N}\right)
        \right|^2
        \widehat C_k\!\left(-\frac{a}{N}\right).
\end{align}
\end{proposition}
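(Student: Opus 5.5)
The plan is to insert the Fourier expansion of each factor and use orthogonality of additive characters on \(\mathbb Z/N\mathbb Z\). The only point needing care is that the cyclic convolution on \(\mathbb Z/N\mathbb Z\) with \(N=3X\) agrees with the integer sum defining \(T_k^{(2)}\). In other words, no wrap-around occurs.

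First I will expand the right-hand side using the definitions of \(\widehat\Lambda_X\) and \(\widehat C_k\). Since \(\Lambda_X\) is real, \(\overline{\widehat\Lambda_X(a/N)}=\sum_{m'}\Lambda_X(m')e(-am'/N)\). Hence the right-hand side of \eqref{eq:prime-pair-sum} equals
\[
        \sum_{0\le m,m',n<X}\Lambda(m)\Lambda(m')\mathbf 1_{\mathcal C_k}(n)\,
        \frac1N\sum_{a=0}^{N-1}e\!\left(\frac{a(m-m'-n)}{N}\right).
\]
The inner average over \(a\) equals \(1\) if \(m-m'-n\equiv0\pmod N\), and \(0\) otherwise.

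Next I will show that this congruence forces the integer equality \(m=m'+n\). Indeed, \(m-m'-n\) lies in the interval \((-2X,X)\), which is strictly contained in \((-N,N)=(-3X,3X)\). The only multiple of \(N\) in that range is \(0\).

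So the right-hand side collapses to
\[
        \sum_{0\le n<X}\mathbf 1_{\mathcal C_k}(n)\sum_{m'}\Lambda_X(m')\Lambda_X(m'+n).
\]
This is exactly the reformulation of \(T_k^{(2)}\) given just before the proposition, where the support of \(\Lambda_X\) enforces \(m'+n<X\).

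There is no real obstacle here. The only step to state carefully is the no-wrap-around check, which is where the choice \(N=3X\) is used; any \(N\ge 2X\) would suffice. I also need to keep the sign conventions consistent, so that the conjugate falls on the \(m'\)-variable and the \(\widehat C_k\) factor is evaluated at \(-a/N\).
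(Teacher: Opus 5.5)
Your proof is correct and is essentially the paper's argument: both rest on orthogonality of additive characters modulo \(N\) together with a no-wrap-around check. The only difference is the direction of the computation. You expand the right-hand side and sum over \(a\), verifying \(m-m'-n\in(-2X,X)\). The paper instead starts from the cyclic form of \(T_k^{(2)}\), using \(m+n<2X<N\), inserts Fourier inversion for all three factors, and sums over \(m,n\).
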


\begin{proof}
Since \(N>2X\), one has the exact cyclic identity
\[
        T_k^{(2)}
        =
        \sum_{m,n\;(\mathrm{mod}\,N)}
        \Lambda_X(m)\Lambda_X(m+n)\mathbf 1_{\mathcal C_k}(n).
\]
Indeed, the factors \(\Lambda_X(m)\) and \(\mathbf 1_{\mathcal C_k}(n)\) force
\(0\le m,n<X\).  Hence \(0\le m+n<2X<N\), so the residue class
\(m+n\pmod N\) is represented by the ordinary integer \(m+n\).  The factor
\(\Lambda_X(m+n)\) is therefore nonzero precisely when \(m+n<X\), which is
exactly the truncation in the definition of \(T_k^{(2)}\).

Applying Fourier inversion to the three factors gives
\begin{align*}
T_k^{(2)}
&=
\frac1{N^3}
\sum_{m,n\;(\mathrm{mod}\,N)}
\sum_{a_1,a_2,a_3=0}^{N-1}
\widehat\Lambda_X\!\left(\frac{a_1}{N}\right)
\widehat\Lambda_X\!\left(\frac{a_2}{N}\right)
\widehat C_k\!\left(\frac{a_3}{N}\right)
\\
&\qquad\qquad\qquad\times
e\!\left(
-\frac{(a_1+a_2)m+(a_2+a_3)n}{N}
\right).
\end{align*}
The sums over \(m\) and \(n\) impose the congruences
\[
        a_1+a_2\equiv0\pmod N,
        \qquad
        a_2+a_3\equiv0\pmod N.
\]
Writing \(a=a_2\), we obtain
\[
        T_k^{(2)}
        =
        \frac1N
        \sum_{a=0}^{N-1}
        \widehat\Lambda_X\!\left(-\frac{a}{N}\right)
        \widehat\Lambda_X\!\left(\frac{a}{N}\right)
        \widehat C_k\!\left(-\frac{a}{N}\right),
\]
and the stated identity follows.
\end{proof}

\subsection{Exact Fourier representation for the three-term counting problem}

For the three-term problem we take \(N=4X\), and again view
\(\mathbf 1_{\mathcal C_k}\) and \(\Lambda_X\) as functions on
\(\mathbb Z/N\mathbb Z\).  Since \(N>3X\), there is no wraparound in the
expressions \(m\), \(m+n\), and \(m+2n\).  Indeed, if \(0\le m,n<X\), then
\[
        0\le m+2n<3X<N,
\]
so the residue classes \(m+n\pmod N\) and \(m+2n\pmod N\) are represented by
the corresponding ordinary integers.

\begin{proposition}[Exact Fourier representation for three-term counting]
\label{prop:fourier-Tk-3AP}
One has
\begin{equation}\label{eq:fourier-Tk-3AP}
        T_k^{(3)}
        =
        \frac1{N^2}
        \sum_{a,b=0}^{N-1}
        \widehat\Lambda_X\!\left(\frac{a}{N}\right)
        \widehat\Lambda_X\!\left(\frac{b-2a}{N}\right)
        \widehat\Lambda_X\!\left(\frac{a-b}{N}\right)
        \widehat C_k\!\left(\frac{b}{N}\right).
\end{equation}
\end{proposition}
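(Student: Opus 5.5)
The argument mirrors the proof of \cref{prop:two_fourier-representation}: first rewrite \(T_k^{(3)}\) as an exact cyclic sum over \(\mathbb Z/N\mathbb Z\), then apply Fourier inversion to each factor and use orthogonality.

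\textbf{Step 1: the cyclic identity.} We claim
\[
        T_k^{(3)}
        =
        \sum_{m,n\;(\mathrm{mod}\,N)}
        \Lambda_X(m)\Lambda_X(m+n)\Lambda_X(m+2n)\mathbf 1_{\mathcal C_k}(n).
\]
The factors \(\Lambda_X(m)\) and \(\mathbf 1_{\mathcal C_k}(n)\) force \(0\le m,n<X\). By the no-wraparound remark preceding the statement, the residues \(m+n\) and \(m+2n\) are then the ordinary integers, since \(m+2n<3X<N\). Consequently \(\Lambda_X(m+n)\Lambda_X(m+2n)\neq0\) forces \(m+2n<X\). This in turn forces \(n\le X/2\), and it also gives \(m+n<X\) automatically. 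So the cyclic sum is exactly the truncated sum defining \(T_k^{(3)}\).

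\textbf{Step 2: Fourier inversion and orthogonality.} Expand each of the four factors by inversion, using frequencies \(a_1,a_2,a_3\) for the three copies of \(\Lambda_X\) and \(a_4\) for \(\mathbf 1_{\mathcal C_k}\). This gives \(N^{-4}\) times a sum over the \(a_i\) of the product of transforms, times
\[
e\bigl(-[(a_1+a_2+a_3)m+(a_2+2a_3+a_4)n]/N\bigr).
\]
Summing over \(m,n\bmod N\) produces a factor \(N^2\) together with the constraints
\[
a_1+a_2+a_3\equiv0, \qquad a_2+2a_3+a_4\equiv0 \pmod N.
\]

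\textbf{Step 3: reparametrise to match the statement.} Set \(a:=a_1\) and \(b:=a_4\). Solving the two congruences gives \(a_3\equiv a-b\) and \(a_2\equiv b-2a\). As a check, \(a_1+a_2+a_3=a+(b-2a)+(a-b)=0\) and \(a_2+2a_3+a_4=(b-2a)+2(a-b)+b=0\). Since the transforms are \(N\)-periodic in the numerator, the result is exactly \eqref{eq:fourier-Tk-3AP} with prefactor \(N^{-2}\).

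The only point requiring care is the bookkeeping in Step 1: one must check that the cyclic sum reproduces precisely the range \(0\le n\le X/2\), \(0\le m<X-2n\), and that no spurious wrapped terms appear. This is guaranteed by \(N=4X>3X\). The rest is routine linear algebra of frequencies.
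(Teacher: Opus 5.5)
Your proposal is correct and follows the paper's proof: Fourier inversion on all four factors, orthogonality in $m$ and $n$ giving the two congruences, and the reparametrisation $a=a_1$, $b=a_4$. Your Step 1 makes explicit the cyclic identity that the paper leaves implicit through the no-wraparound remark before the statement; the boundary value $n=X/2$ causes no discrepancy, since the inner $m$-range in the definition of $T_k^{(3)}$ is then empty.
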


\begin{proof}
By Fourier inversion,
\begin{align*}
T_k^{(3)}
&=
\frac1{N^4}
\sum_{m,n\;(\mathrm{mod}\,N)}
\sum_{a_1,a_2,a_3,a_4=0}^{N-1}
\widehat\Lambda_X\!\left(\frac{a_1}{N}\right)
\widehat\Lambda_X\!\left(\frac{a_2}{N}\right)
\widehat\Lambda_X\!\left(\frac{a_3}{N}\right)
\widehat C_k\!\left(\frac{a_4}{N}\right)
\\
&\qquad\qquad\times
e\!\left(
-\frac{(a_1+a_2+a_3)m+(a_2+2a_3+a_4)n}{N}
\right).
\end{align*}
The sums over \(m\) and \(n\) impose
\[
        a_1+a_2+a_3\equiv0\pmod N,
        \qquad
        a_2+2a_3+a_4\equiv0\pmod N.
\]
Writing \(a:=a_1\) and \(b:=a_4\), equivalently
\[
        a_1\equiv a,
        \qquad
        a_2\equiv b-2a,
        \qquad
        a_3\equiv a-b
        \pmod N,
\]
gives \eqref{eq:fourier-Tk-3AP}.
\end{proof}

It is convenient to introduce the prime-side three-term kernel
\begin{equation}\label{eq:def-HLambda}
        \widehat H_\Lambda(\alpha)
        :=
        \frac1N
        \sum_{\beta\in\mathcal G_N}
        \widehat\Lambda_X(\beta)
        \widehat\Lambda_X(\alpha-2\beta)
        \widehat\Lambda_X(\beta-\alpha),
        \qquad
        \alpha\in\mathcal G_N.
\end{equation}
Then \cref{prop:fourier-Tk-3AP} becomes the one-dimensional identity
\begin{equation}\label{eq:Tk3-via-HLambda}
        T_k^{(3)}
        =
        \frac1N
        \sum_{\alpha\in\mathcal G_N}
        \widehat H_\Lambda(\alpha)\widehat C_k(\alpha).
\end{equation}
The restricted-digit set enters only through \(\widehat C_k\), while the prime
variables are encoded {by} \(\widehat H_\Lambda\).

We also record the combinatorial form of \(\widehat H_\Lambda\).

\begin{proposition}[Combinatorial form of \(\widehat H_\Lambda\)]
\label{prop:H-Lambda-combinatorial}
For every \(\alpha\in\mathcal G_N\),
\begin{equation}\label{eq:H-Lambda-combinatorial}
        \widehat H_\Lambda(\alpha)
        =
        \sum_{\substack{0\le m_1,m_2,m_3<X\\ m_1+m_3=2m_2}}
        \Lambda(m_1)\Lambda(m_2)\Lambda(m_3)
        e\!\left((m_2-m_3)\alpha\right).
\end{equation}
\end{proposition}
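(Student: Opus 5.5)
We expand all three Fourier transforms in the definition \eqref{eq:def-HLambda} and use orthogonality of additive characters on \(\mathcal G_N\).  Concretely, we substitute
\[
        \widehat\Lambda_X(\theta)=\sum_{0\le m<X}\Lambda(m)e(m\theta)
\]
for each of the three factors, with summation variables \(m_1\) for the frequency \(\beta\), \(m_3\) for \(\alpha-2\beta\), and \(m_2\) for \(\beta-\alpha\).  The phase then becomes
\[
        e\bigl(m_1\beta+m_3(\alpha-2\beta)+m_2(\beta-\alpha)\bigr)
        =
        e\bigl((m_3-m_2)\alpha\bigr)\,e\bigl((m_1+m_2-2m_3)\beta\bigr).
\]
Interchanging the finite sums, the inner average \(\frac1N\sum_{\beta\in\mathcal G_N}e((m_1+m_2-2m_3)\beta)\) equals \(1\) if \(m_1+m_2\equiv 2m_3\pmod N\) and \(0\) otherwise.

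The only point requiring care is upgrading this congruence to an equality of integers.  Since \(0\le m_i<X\), we have \(|m_1+m_2-2m_3|<2X<N=4X\), so the congruence holds exactly when \(m_1+m_2=2m_3\).  Thus
\[
        \widehat H_\Lambda(\alpha)=\sum_{\substack{0\le m_1,m_2,m_3<X\\ m_1+m_2=2m_3}}\Lambda(m_1)\Lambda(m_2)\Lambda(m_3)\,e\bigl((m_3-m_2)\alpha\bigr).
\]

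The last step is a relabeling to match \eqref{eq:H-Lambda-combinatorial}.  We swap the names of \(m_2\) and \(m_3\): the constraint becomes \(m_1+m_3=2m_2\), the phase becomes \(e((m_2-m_3)\alpha)\), and the product of von Mangoldt weights is symmetric, so it is unchanged.  This is exactly \eqref{eq:H-Lambda-combinatorial}.

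There is no real obstacle here.  The only things to keep track of are the sign and variable bookkeeping, and the fact that \(N>2X\) rules out any wraparound in the congruence.  As a consistency check, the resulting formula should reproduce \eqref{eq:Tk3-via-HLambda} through the parametrization \(m=m_3\), \(n=m_2-m_3\), \(m+2n=m_1\).
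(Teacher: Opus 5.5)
Your proof is correct and is the same argument as the paper's: expand the three factors, apply orthogonality over $\mathcal G_N$, and use $|m_1+m_2-2m_3|<2X<N$ to rule out wraparound. The only difference is labeling: the paper assigns $m_2$ to $\alpha-2\beta$ and $m_3$ to $\beta-\alpha$ from the start, so it needs no final relabeling.

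There is one small slip in your closing consistency check, which is not part of the proof. Pairing with $\widehat C_k(\alpha)$ forces the restricted-digit difference to equal $m_3-m_2$, so the correct parametrization is $m=m_1$, $n=m_3-m_2$, $m+2n=m_3$. Your choice $n=m_2-m_3$ gives the negative of that difference.
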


\begin{proof}
Expanding the definition \eqref{eq:def-HLambda}, we obtain
\begin{align*}
\widehat H_\Lambda(\alpha)
&=
\frac1N
\sum_{\beta\in\mathcal G_N}
\sum_{0\le m_1,m_2,m_3<X}
\Lambda(m_1)\Lambda(m_2)\Lambda(m_3)
\\
&\qquad\qquad\times
e\!\left(
m_1\beta+m_2(\alpha-2\beta)+m_3(\beta-\alpha)
\right)
\\
&=
\sum_{0\le m_1,m_2,m_3<X}
\Lambda(m_1)\Lambda(m_2)\Lambda(m_3)
e\!\left((m_2-m_3)\alpha\right)
\\
&\qquad\qquad\times
\frac1N
\sum_{\beta\in\mathcal G_N}
e\!\left((m_1-2m_2+m_3)\beta\right).
\end{align*}
By orthogonality on \(\mathcal G_N\), the inner sum is \(1\) when
\[
        m_1-2m_2+m_3\equiv0\pmod N
\]
and \(0\) otherwise.  Since \(0\le m_i<X\) and \(N=4X\), one has
\[
        |m_1-2m_2+m_3|<2X<N,
\]
so the congruence is equivalent to the equality \(m_1+m_3=2m_2\).  This gives
\eqref{eq:H-Lambda-combinatorial}.
\end{proof}

\subsection{Major and minor arcs}\label{def:major_minor_arc}

Let \(N\) be a large integer, and let \(D_0\ge1\) be a parameter.  By
Dirichlet's approximation theorem, for each \(\alpha\in\mathbb T\) there is a
reduced fraction \(\ell/d\), interpreted in \(\mathbb R/\mathbb Z\), such that
\[
        1\le d\le D_0,
        \qquad
        1\le\ell\le d,
        \qquad
        (\ell,d)=1,
        \qquad
        \left\|\alpha-\frac{\ell}{d}\right\|_{\mathbb R/\mathbb Z}
        \le\frac1{dD_0}.
\]
For the dyadic decompositions below, choose one such fraction by first
minimizing \(d\) and then choosing the least representative
\(\ell\in\{1,\dots,d\}\).  Then there is a unique
\(\eta\in(-N/2,N/2]\) for which
\[
        \alpha=\frac\ell d+\frac\eta N\pmod1,
        \qquad
        |\eta|\le\frac N{dD_0}.
\]
This convention includes the zero frequency as
\((d,\ell,\eta)=(1,1,0)\).  Throughout the paper, unless otherwise stated, a
sum over primitive numerators means \(1\le\ell\le d\) and
\((\ell,d)=1\).

If \(\alpha\in\mathcal G_N\), then \(\eta\) need not be an integer.  For fixed
\((d,\ell)\), it lies in the shifted lattice
\[
        \eta\in\mathbb Z-\frac{N\ell}{d}.
\]
In particular, \(\eta\in\mathbb Z\) when \(d\mid N\).  All estimates below
are uniform in the selected Dirichlet triple.

Let
\[
        L:=(\log X)^A,
\]
where the cutoff exponent \(A>0\) will be chosen sufficiently large for the
required final logarithmic saving.  We define the major arcs by existence:
\begin{align}\label{def:major}
\mathfrak M_A^{(N)}
:=\left\{\alpha\in\mathcal G_N:
\alpha=\frac\ell d+\frac\eta N\pmod1
\text{ for some }d\le L,\ |\eta|\le L,
\ 1\le\ell\le d,\ (\ell,d)=1\right\}.
\end{align}
The minor arcs are
\begin{align}\label{def:minor}
        \mathfrak m_A^{(N)}:=\mathcal G_N\setminus\mathfrak M_A^{(N)}.
\end{align}
For all sufficiently large \(X\), it is clear that a major-arc representation in the displayed
range is unique.
Also, if \(\alpha\in\mathfrak m_A^{(N)}\), every selected Dirichlet triple has
either \(d>L\) or \(|\eta|>L\).

\subsection{Prime exponential sums on the minor arcs}

We next record the minor-arc estimate for the prime exponential sum
\(\widehat\Lambda_X\).  
We use Maynard's nonzero-shift estimate
\cite[Lemma~4.2]{MaynardRestrictedDigits}, together with the exact-rational
endpoint used in the proof of \cite[Lemma~6.1]{MaynardRestrictedDigits}.

\begin{lemma}[Prime minor-arc decay]\label{lem:minor_Mangoldt}
Assume that
\[
        \theta=\frac{\ell}{d}+\beta,
        \qquad
        (\ell,d)=1,
        \qquad
        0<|\beta|<\frac1{d^2}.
\]
Then
\[
        \widehat\Lambda_X(\theta)
        \ll
        \left(
        X^{4/5}
        +
        \frac{X^{1/2}}{|d\beta|^{1/2}}
        +
        X|d\beta|^{1/2}
        \right)(\log X)^4.
\]
In particular, if \(d\sim D\), \(|\eta|\sim B\), and
\(
        D^2B\ll N,
\)
then
\[
        \left|
        \widehat\Lambda_X\!\left(\frac{\ell}{d}+\frac{\eta}{N}\right)
        \right|
        \ll
        \left(
        X^{4/5}
        +
        \frac{X}{(DB)^{1/2}}
        +
        X^{1/2}(DB)^{1/2}
        \right)(\log X)^4.
\]

At the exact rational point \(\beta=0\), one has
\begin{equation}\label{eq:maynard-prime-sum-bound-rational-two-term}
        \widehat\Lambda_X\!\left(\frac{\ell}{d}\right)
        \ll
        \left(
        X^{4/5}
        +
        \frac{X}{d^{1/2}}
        +
        X^{1/2}d^{1/2}
        \right)(\log X)^4.
\end{equation}
Consequently, by partial summation, if \(d\sim D\) and \(|\eta|\sim0\), then
\begin{equation}\label{eq:maynard-prime-sum-bound-perturbed-two-term}
        \left|
        \widehat\Lambda_X\!\left(\frac{\ell}{d}+\frac{\eta}{N}\right)
        \right|
        \ll
        \left(
        X^{4/5}
        +
        \frac{X}{D^{1/2}}
        +
        X^{1/2}D^{1/2}
        \right)(\log X)^4.
\end{equation}
\end{lemma}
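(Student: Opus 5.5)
The plan is to derive the whole lemma from Maynard's nonzero-shift estimate \cite[Lemma~4.2]{MaynardRestrictedDigits} and from the exact-rational endpoint used in the proof of \cite[Lemma~6.1]{MaynardRestrictedDigits}, followed by elementary bookkeeping. Maynard's lemma is essentially a Vaughan-identity bound of the shape
\[
X^{4/5}+\frac{X^{1/2}}{|d\beta|^{1/2}}+X|d\beta|^{1/2}
\]
with logarithmic losses, valid for \(0<|\beta|<1/d^2\). The first display of the lemma is a direct quotation of it, with the logarithmic power normalised to \((\log X)^4\); we do not reprove it. The exact-rational bound \eqref{eq:maynard-prime-sum-bound-rational-two-term} is the classical estimate
\[
X^{4/5}+\frac{X}{d^{1/2}}+X^{1/2}d^{1/2}
\]
for \(\widehat\Lambda_X(\ell/d)\). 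Maynard obtains it inside his Lemma~6.1 from the type I/II decomposition with \(\beta=0\), where the quantity \(d|\beta|X\) is replaced by \(d\). We also take this as given.

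For the dyadic consequence, set \(\beta=\eta/N\) with \(|\eta|\sim B\ge1\) and \(d\sim D\). Then \(|d\beta|\asymp DB/N\asymp DB/X\), because \(N\asymp X\). The hypothesis \(D^2B\ll N\) gives \(|\beta|<1/d^2\) after adjusting the implied constant; if \(D^2B\) is within a constant factor of \(N\), split the range into boundedly many pieces. Substituting \(|d\beta|\asymp DB/X\) gives
\[
\frac{X^{1/2}}{|d\beta|^{1/2}}\asymp \frac{X}{(DB)^{1/2}},
\qquad
X|d\beta|^{1/2}\asymp X^{1/2}(DB)^{1/2},
\]
which is the stated bound.

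For the perturbed endpoint, \(|\eta|<1\) means \(|\beta|<1/N\), and we use partial summation. Write
\[
\widehat\Lambda_X\!\left(\frac{\ell}{d}+\beta\right)=\sum_{m<X}\Lambda(m)e(m\ell/d)\,e(m\beta),
\qquad
S(t):=\sum_{m<t}\Lambda(m)e(m\ell/d).
\]
Abel summation then gives
\[
\widehat\Lambda_X\!\left(\frac{\ell}{d}+\beta\right)=S(X)e(X\beta)-2\pi i\beta\int_0^X S(t)e(t\beta)\,dt,
\]
so the left side is bounded by \(\bigl(1+2\pi|\beta|X\bigr)\sup_{t\le X}|S(t)|\ll\sup_{t\le X}|S(t)|\). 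Apply \eqref{eq:maynard-prime-sum-bound-rational-two-term} with \(X\) replaced by \(t\le X\). Each term there is monotone in \(t\), and for small \(t\) the trivial bound \(S(t)\ll t\) suffices. This yields \eqref{eq:maynard-prime-sum-bound-perturbed-two-term} with \(D\asymp d\).

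The main obstacle is making sure the cited estimates are stated in exactly the required normalisation. Two points need checking: that Maynard's Lemma~4.2 applies for every \(X\) rather than only for powers of the base, and that the exact-rational bound holds uniformly for truncations \(S(t)\) with \(t\le X\), which the partial summation step needs. If the cited rational bound is only available at \(X\) itself, I would instead rederive it from Vaughan's identity \cite{Vaughan}. The standard type I/II bounds for \(\sum_{m\le t}\Lambda(m)e(m\ell/d)\) give
\[
\Bigl(t\,d^{-1/2}+t^{4/5}+t^{1/2}d^{1/2}\Bigr)(\log t)^4
\]
uniformly in \(t\), and this is all the argument requires.
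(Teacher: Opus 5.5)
Your proposal is correct and follows the paper. The paper likewise quotes Maynard's Lemma~4.2 for the nonzero shift and the classical exact-rational Vaughan bound from the proof of his Lemma~6.1. It then gets the dyadic form from $|d\beta|\asymp DB/X$ and the $|\eta|<1$ case by partial summation, exactly as you do.

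One small point: your remark about ``splitting into boundedly many pieces'' when $D^2B\asymp N$ does not restore the hypothesis $|\beta|<1/d^2$. That hypothesis is instead guaranteed by the Dirichlet normalisation $|\eta|\le N/(dD_0)\le N/d^2$ wherever the lemma is used. Alternatively, you can run the Vaughan argument with $|\beta|\le C/d^2$ at the cost of a $C$-dependent constant.
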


\subsection{Prime exponential sums on the major arcs}

We next record the major-arc approximation for the von Mangoldt exponential
sum.  Since the Siegel--Walfisz input yields an arbitrary power saving in
\(\log X\), we record a stronger form than will ultimately be needed.

\begin{lemma}[Prime major-arc approximation]
\label{lem:prime-major-general}
Uniformly for
\[
        (\ell,d)=1,
        \qquad
        d\le L^2,
        \qquad
        |\eta|\le 3L,
\]
one has
\[
        \widehat\Lambda_X\!\left(\frac{\ell}{d}+\frac{\eta}{N}\right)
        =
        \frac{\mu(d)}{\phi(d)}
        U_X\!\left(\frac{\eta}{N}\right)
        +
        O_A\!\left(X(\log X)^{-4A}\right),
\]
where
\begin{equation}\label{def:UX}
        U_X(t):=\sum_{0\le m<X}e(mt),  
\end{equation}
and
\begin{align}\label{eq:UX_estimate}
|U_X(t)|\leq \min\left(X,\frac{1}{2\|t\|}\right).
\end{align}
\end{lemma}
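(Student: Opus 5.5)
The plan is the standard Siegel--Walfisz major-arc computation: split \(\widehat\Lambda_X\) into residue classes modulo \(d\), remove the twist \(e(m\eta/N)\) by partial summation, and use Siegel--Walfisz in each class.

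\textbf{Step 1 (residue classes).} Write \(\theta=\ell/d+t\) with \(t=\eta/N\). Split the sum by the residue class \(m\equiv c\pmod d\):
\[
\widehat\Lambda_X(\theta)=\sum_{c\bmod d}e\!\left(\frac{\ell c}{d}\right)\sum_{\substack{0\le m<X\\ m\equiv c\,(d)}}\Lambda(m)e(mt).
\]
The classes with \((c,d)>1\) contain only prime powers \(p^j\) with \(p\mid d\). Their total contribution is \(\ll \omega(d)\log X\cdot\log X\ll(\log X)^3\), which is negligible.

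\textbf{Step 2 (Siegel--Walfisz).} For \((c,d)=1\), set \(\psi(y;d,c)=y/\phi(d)+E(y;d,c)\). Siegel--Walfisz gives \(E(y;d,c)\ll_A X(\log X)^{-5A-1}\) uniformly for \(y\le X\) and \(d\le L^2=(\log X)^{2A}\); this is a fixed power of \(\log X\), so it lies in the admissible range. The approximation itself is ineffective, but that is harmless here.

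\textbf{Step 3 (partial summation).} Apply Abel summation with the weight \(e(yt)\). The error term contributes
\[
\ll |E(X)|+\int_0^X|E(y)|\,2\pi|t|\,dy\ll (1+X|t|)\max_{y\le X}|E(y)|.
\]
Since \(|t|=|\eta|/N\le 3L/(3X)\), we have \(X|t|\ll L\). The error is therefore \(\ll L\cdot X(\log X)^{-5A-1}\) per class. Summing over at most \(d\le L^2\) classes gives at most \(X(\log X)^{-5A-1+3A}\). This is within the target \(X(\log X)^{-4A}\) after choosing the Siegel--Walfisz exponent appropriately; any larger fixed power is available.

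\textbf{Step 4 (main term).} The main term is
\[
\frac1{\phi(d)}\sum_{(c,d)=1}e\!\left(\frac{\ell c}{d}\right)\int_0^X e(yt)\,dy=\frac{c_d(\ell)}{\phi(d)}\int_0^X e(yt)\,dy .
\]
Since \((\ell,d)=1\), we have \(c_d(\ell)=\mu(d)\). Replacing the integral by the sum \(U_X(t)\) costs \(O(1+X|t|)=O(L)\), which is negligible.

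\textbf{Step 5 (bound for \(U_X\)).} The bound \(|U_X(t)|\le X\) is trivial. The geometric-series formula gives
\[
|U_X(t)|\le\frac{2}{|1-e(t)|}=\frac1{|\sin\pi t|}.
\]
Using \(|\sin \pi t|\ge 2\|t\|\) then yields \(|U_X(t)|\le 1/(2\|t\|)\).

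The only delicate point is the bookkeeping of logarithm powers: uniformity in \(d\le L^2\) and the loss of \(L\) from partial summation and of \(L^2\) from summing over classes must both be absorbed. This works because Siegel--Walfisz saves an arbitrary power of \(\log X\).
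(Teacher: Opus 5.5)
Your proposal is correct and follows the same route as the paper: the paper proves this lemma only by citing the standard Siegel--Walfisz major-arc approximation, valid for $d,|\eta|\le(\log X)^C$, and your residue-class split, partial summation, Ramanujan-sum evaluation $c_d(\ell)=\mu(d)$, and geometric-series bound fill in exactly that argument. One bookkeeping fix is needed in Step~3: with a saving of $(\log X)^{-5A-1}$, the losses $L\cdot L^2$ leave only $X(\log X)^{-2A-1}$, so take the Siegel--Walfisz saving to be $(\log X)^{-7A-1}$; also, for $y\le X^{1/2}$, where $d\le(\log y)^{C}$ may fail, use the trivial bound $|E(y;d,c)|\ll X^{1/2}$ instead.
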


\begin{proof}
This is the standard Siegel--Walfisz major-arc approximation for
\(\widehat\Lambda_X\); see \cite[Ch.~5]{IwaniecKowalski}.  It holds uniformly
for \(d\le (\log X)^C\) and \(|\eta|\le (\log X)^C\) for any fixed \(C\).  The
stated form follows by taking \(C\) sufficiently large in terms of \(A\).
\end{proof}

\subsection{Decay away from \texorpdfstring{$b$}{b}-smooth denominators}

We conclude the digit-side input with a Maynard-type decay estimate away from
denominators supported on primes dividing \(b\).  There is one small resonance
issue which should be separated from the estimate itself.

{The following remark explains why the digit
non-resonance condition is necessary for decay at denominators containing a
prime factor outside the base.}
\begin{remark}[Digit resonances]
\label{rem:digit-resonances}
Let
\[
        g_{\mathcal D}
        :=
        \gcd\{a-a':a,a'\in\mathcal D\}.
\]
Since \(|\mathcal D|\ge2\), this is a positive integer.  Let
\[
P_{\mathcal D}(x)
=
\sum_{a\in\mathcal D}e(ax).
\]
The points at which
the one-digit polynomial $P_{\mathcal D}(x)$ has maximal modulus are exactly
\[
        \left\{x\in\mathbb T: |P_{\mathcal D}(x)|=|\mathcal D|\right\}
        =
        \left\{x\in\mathbb T: g_{\mathcal D}x\in\mathbb Z\right\}.
\]
Indeed, equality in the triangle inequality occurs precisely when all phases
\(e(ax)\), \(a\in\mathcal D\), are equal, which is equivalent to
\[
        (a-a')x\in\mathbb Z
        \qquad
        \text{for all }a,a'\in\mathcal D.
\]

Thus, if \(g_{\mathcal D}>1\), the digit polynomial can have full modulus at
non-integral rational points.  For example, take \(b=10\) and
\[
        \mathcal D=\{0,3,6,9\}.
\]
Then all digits in \(\mathcal D\) are congruent modulo \(3\), and therefore
\[
        |P_{\mathcal D}(1/3)|=|\mathcal D|=4.
\]
Since \(10\equiv1\pmod 3\), we also have
\[
        |P_{\mathcal D}(10^j/3)|=4
        \qquad(j\ge0).
\]
Consequently,
\[
        |\widehat C_k(1/3)|
        =
        \prod_{j=0}^{k-1}|P_{\mathcal D}(10^j/3)|
        =
        4^k
        =
        |\mathcal C_k|.
\]
Thus there is no decay at the denominator \(3\), even though \(3\nmid b\).

For this reason, the bad-denominator decay below is valid, in the stated
\(b\)-smooth form, under the following non-resonance condition: every prime
divisor of \(g_{\mathcal D}\) already divides the base \(b\).  Equivalently,
\(g_{\mathcal D}\) has no prime factor outside the prime factors of \(b\).
Under this condition, every prime \(p\nmid b\) is genuinely non-resonant for
the digit polynomial, and denominators involving such a prime give decay.
Without this condition, one should instead regard primes dividing
\(g_{\mathcal D}\) as additional good primes.  Closely related congruence
obstructions occur in the residue-class distribution theory of restricted-digit
sets; see \cite{ErdosMauduitSarkozyI,KonyaginMissingDigits,
SaavedraAraya,BurginEtAlIntegerCantor}.
\end{remark}

The following is a slightly adapted form of
\cite[Lemma~5.4]{MaynardRestrictedDigits}; we give the proof in Appendix \ref{app:digit-bad-major-general}.

\begin{lemma}[Decay away from \(b\)-smooth denominators]
\label{lem:digit-bad-major-general}
{Assume that \(\mathcal D\) is non-resonant with \(b\) in the sense of \cref{def:digit-nonresonance}.}  Let
\(A>0\) be fixed.  There exists \(c_{b,\mathcal D,A}>0\) such that the
following holds.  Suppose {\(3X\le N\le4X\)} and
\[
        \theta=\frac{\ell}{d}+\frac{\eta}{N},
        \qquad
        (\ell,d)=1,
        \qquad
        1\le d\le L,
        \qquad
        |\eta|\le L,
        \qquad
        L=(\log X)^A.
\]
If \(d\) has a prime factor not dividing \(b\), then
\[
        |\widehat C_k(\theta)|
        \ll_{b,\mathcal D,A}
        |\mathcal C_k|
        \exp\!\left(
        -c_{b,\mathcal D,A}\frac{\log X}{\log\log X}
        \right).
\]
Equivalently, since \(X=b^k\),
\[
        |\widehat C_k(\theta)|
        \ll_{b,\mathcal D,A}
        |\mathcal C_k|
        \exp\!\left(
        -c_{b,\mathcal D,A}\frac{k}{\log k}
        \right).
\]
\end{lemma}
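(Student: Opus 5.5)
We will use the product formula
\[
        |\widehat C_k(\theta)|=\prod_{j=0}^{k-1}|P_{\mathcal D}(b^j\theta)|,
\]
and show that a positive proportion (at least \(\gg k/\log k\)) of the digit positions \(j\) have \(\|g_{\mathcal D}b^j\theta\|\) bounded below by a quantity \(\gg 1/L\). Each such factor then loses a definite fraction of its trivial size \(|\mathcal D|\).

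\textbf{Step 1 (reduction to a prime \(p\nmid b\)).} Write \(d=d_1d_2\), where \(d_1\) is the \(b\)-smooth part of \(d\) and \(d_2>1\) is coprime to \(b\). Fix a prime \(p\mid d_2\) with \(p\nmid b\). By non-resonance, \(p\nmid g_{\mathcal D}\). Hence for each \(j\) the fraction \(g_{\mathcal D}b^j\ell/d\), reduced mod \(1\), still has a denominator divisible by \(p\), since \(p\nmid g_{\mathcal D}b^j\ell\). Consequently
\[
        \|g_{\mathcal D}b^j\ell/d\|\ge 1/d\ge 1/L .
\]

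\textbf{Step 2 (controlling the archimedean perturbation).} For \(j\le J:=k-\lceil\log_b(L^3)\rceil\) we have
\[
        |g_{\mathcal D}b^j\eta/N|\le g_{\mathcal D}b^{J}L/(3X)\le 1/(2L)
\]
for large \(X\). Hence \(\|g_{\mathcal D}b^j\theta\|\ge 1/(2L)\) for all \(j\le J\), and \(J=k-O_{b,A}(\log k)\).

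\textbf{Step 3 (a quantitative one-digit bound).} Fix \(a_0,a_1\in\mathcal D\). We write
\[
        |\mathcal D|^2-|P_{\mathcal D}(x)|^2=\sum_{a,a'}\bigl(1-\cos 2\pi(a-a')x\bigr).
\]
Since \(g_{\mathcal D}\) is an integer combination of the differences \(a-a'\) with bounded coefficients depending only on \(\mathcal D\), a lower bound \(\|g_{\mathcal D}x\|\ge\delta\) forces \(\|(a-a')x\|\gg_{\mathcal D}\delta\) for some pair. This gives
\[
        |P_{\mathcal D}(x)|\le|\mathcal D|\bigl(1-c_{\mathcal D}\delta^2\bigr).
\]
Applying this with \(\delta=1/(2L)\) to every \(j\le J\) would only yield the bound \(\exp(-ck/L^2)\). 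That is still superpolylogarithmic decay, but it is weaker than the claimed \(\exp(-ck/\log k)\) when \(A\) is large. So this step needs refinement.

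\textbf{Step 4 (the refinement, which is the main obstacle).} We exploit the fact that the orbit \(b^j\ell/d\) moves through residues mod \(p\). The plan is to partition \(0\le j\le J\) into blocks of length \(m\approx C\log L\asymp\log\log X\). Within each block we expect at least one \(j\) with \(\|g_{\mathcal D}b^j\theta\|\ge c_b\), a constant independent of \(L\). The heuristic is expansion: if \(\|y\|\) is tiny with \(y\notin\mathbb Z\), then multiplying by \(b\) repeatedly grows \(\|y\|\) by a factor \(b\) until it reaches size \(\ge 1/(b+1)\). By Step 2 the starting point has \(\|y\|\ge 1/(2L)\), so this happens within \(O(\log L)\) steps, and while \(j\le J\) the \(\eta\)-term remains below that threshold. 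Each block then contributes a factor at most \(1-c'_{\mathcal D,b}\). With \(\gg k/\log\log X\) blocks, and \(k\asymp\log X\), this gives
\[
        |\widehat C_k(\theta)|\le|\mathcal C_k|\exp\!\bigl(-c\,k/\log k\bigr),
\]
where the bounded number of positions \(j>J\) are estimated trivially.

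The delicate point is the expansion argument near the large value. We must check that after multiplication by \(b\) the point \(\|b y\|\) cannot jump back near \(0\) before leaving the small region. This holds because \(\|y\|<1/(2b)\) implies \(\|by\|=b\|y\|\). We must also ensure the \(\eta\)-perturbation is uniformly small in the chosen range \(j\le J\), which is exactly what Step 2 provides.
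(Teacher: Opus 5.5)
Your proposal is correct and is essentially the paper's proof. Non-resonance gives $\|g_{\mathcal D}b^j\ell/d\|\ge 1/d$, the expansion property $\|by\|=b\|y\|$ for $\|y\|<1/(2b)$ forces an index with $\|g_{\mathcal D}b^j\theta\|\gg_b 1$ in every block of $O(\log L)$ consecutive indices $j\le k-O_{b,A}(\log\log X)$, and each such index costs a fixed factor $1-\vartheta_{b,\mathcal D}$. The differences are cosmetic: the paper runs the expansion on the rational orbit $b^j\ell/d$ with blocks of length about $\log_b(10bd)$ and adds the $\eta$-perturbation only at the good indices, and it obtains $\vartheta$ by compactness rather than your explicit quadratic bound.

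One correction to a side remark: the discarded Step~3 bound $\exp(-ck/L^2)=\exp\bigl(-c'(\log X)^{1-2A}\bigr)$ gives no decay at all once $A\ge 1/2$. So the block refinement in Step~4 is necessary, not merely an improvement.
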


\section{Prime pairs with restricted-digit difference}
\label{sec:prime-pairs}

For any subset \(\Omega\subset \mathcal G_N\), define
\[
T_k^{(2)}(\Omega)
:=
\frac1N
\sum_{\alpha\in\Omega}
\widehat C_k(-\alpha)
\left|
\widehat\Lambda_X(\alpha)
\right|^2.
\]
Recall the definitions of major and minor arcs in \eqref{def:major} and \eqref{def:minor} and consider 
$$D_0:=X^{1/2}.$$
We decompose the prime pair sum \eqref{eq:prime-pair-sum} into major arc and minor arc sums:
\[
T_k^{(2)}
=
T_k^{(2)}\!\left(\mathfrak M_A^{(N)}\right)
+
T_k^{(2)}\!\left(\mathfrak m_A^{(N)}\right).
\]

\subsection{Minor arcs}
\label{subsec:minor-arcs-pairs}

As in \cite{MaynardRestrictedDigits}, we decompose the minor arcs into dyadic pieces.
For \(B\ge1\), let
\[
\mathcal E^{(2)}(D,B)
:=
\frac1N
\sum_{d\sim D}
\sum_{\substack{1\le\ell\le d\\(\ell,d)=1}}
\sum_{\substack{|\eta|\sim B\\
|\eta|<N/d^2\\
N\ell/d+\eta\in\mathbb Z}}
\left|\widehat C_k\!\left(\frac\ell d+\frac\eta N\right)\right|
\left|\widehat\Lambda_X\!\left(\frac\ell d+\frac\eta N\right)\right|^2,
\]
and let
\[
\mathcal E^{(2)}(D,0)
:=
\frac1N
\sum_{d\sim D}
\sum_{\substack{1\le\ell\le d\\(\ell,d)=1}}
\sum_{\substack{|\eta|\sim0\\
N\ell/d+\eta\in\mathbb Z}}
\left|\widehat C_k\!\left(\frac\ell d+\frac\eta N\right)\right|
\left|\widehat\Lambda_X\!\left(\frac\ell d+\frac\eta N\right)\right|^2.
\]

\begin{proposition}[Dyadic minor-arc estimate]
\label{prop:dyadic-minor-general}
Let \(1\le D\le D_0\), and let \(B\ge1\) satisfy 
$B\leq \frac{N}{DD_0}$.
Assume that \(\widehat C_k\) is
\(\alpha_{b,\mathcal D}\)-admissible.
Then
\begin{equation}
\label{eq:dyadic-minor-nonzero-eta}
\mathcal E^{(2)}(D,B)
\ll
|\mathcal C_k|\,X(\log X)^8
\left(
(D^2B)^{\alpha_{b,\mathcal D}-\frac25}
+
\frac{X^{\alpha_{b,\mathcal D}}}{D_0}
\right),
\end{equation}
and the endpoint range satisfies:
\begin{equation}
\label{eq:dyadic-minor-small-eta}
\mathcal E^{(2)}(D,0)
\ll
|\mathcal C_k|\,X(\log X)^8
\left(
D^{2\alpha_{b,\mathcal D}-\frac45}
+
\frac{D_0^{2\alpha_{b,\mathcal D}+1}}{X}
\right).
\end{equation}
\end{proposition}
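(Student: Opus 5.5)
The plan is to bound each packet by pulling out the supremum of the prime factor and then applying admissibility to the remaining digit sum. Concretely, for the nonzero-shift range I write
\[
\mathcal E^{(2)}(D,B)\le \frac1N\Bigl(\sup_{d\sim D,\ |\eta|\sim B}\Bigl|\widehat\Lambda_X\!\Bigl(\frac\ell d+\frac\eta N\Bigr)\Bigr|^2\Bigr)\sum_{d\sim D}\sum_{\ell}\sum_{\substack{|\eta|<2B\\ N\ell/d+\eta\in\mathbb Z}}\Bigl|\widehat C_k\!\Bigl(\frac\ell d+\frac\eta N\Bigr)\Bigr|.
\]
To apply \cref{def:alpha-admissible} I need \(D^2B\le 8X\). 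The hypothesis \(B\le N/(DD_0)\) together with \(D\le D_0=X^{1/2}\) gives \(D^2B\le N D/D_0\le N\le 4X\), so admissibility holds with parameters \(D\) and \(2B\) (the fixed constant only enters \(K_{b,\mathcal D}\)). The digit sum is therefore \(\ll|\mathcal C_k|(D^2B)^{\alpha_{b,\mathcal D}}\).

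For the prime factor I use the second form of \cref{lem:minor_Mangoldt}. Its hypothesis \(D^2B\ll N\) has just been checked, and \(|\beta|=|\eta|/N<1/d^2\) holds because of the restriction \(|\eta|<N/d^2\) built into \(\mathcal E^{(2)}(D,B)\). Squaring gives
\[
|\widehat\Lambda_X|^2\ll\bigl(X^{8/5}+X^2(DB)^{-1}+X\,DB\bigr)(\log X)^8 .
\]
Multiplying by \(N^{-1}\asymp X^{-1}\) and by the digit bound, I get three terms:
\begin{itemize}
\item \(X^{3/5}(D^2B)^{\alpha}\). Since \(D^2B\le 4X\), this is \(\le X\,(D^2B)^{\alpha-2/5}\) up to constants.
\item \(X(DB)^{-1}(D^2B)^{\alpha}\). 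Here I compare \(DB\) with \(D^2B\). Writing \(DB=(D^2B)^{1/2}B^{1/2}\ge (D^2B)^{1/2}\) gives decay exponent \(\alpha-1/2\le\alpha-2/5\), which suffices. One could also use \(DB\ge(D^2B)^{2/5}\), which holds when \(D^{1/5}B^{3/5}\ge1\), i.e.\ always.
\item \(DB\,(D^2B)^{\alpha}\). Using \(D\le D_0\) and \(B\le N/(DD_0)\), I get \(DB\le N/D_0\), so this term is \(\ll X D_0^{-1}(D^2B)^{\alpha}\le X\cdot X^{\alpha}/D_0\).
\end{itemize}
These are exactly the two terms in \eqref{eq:dyadic-minor-nonzero-eta}.

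For the endpoint range \(|\eta|\sim0\), I use \eqref{eq:maynard-prime-sum-bound-perturbed-two-term} in the same way. Admissibility with \(B=1\) gives a digit sum \(\ll|\mathcal C_k|D^{2\alpha}\), and the squared prime bound is \((X^{8/5}+X^2/D+XD)(\log X)^8\). The three resulting terms are handled as follows:
\begin{itemize}
\item \(X^{3/5}D^{2\alpha}\). Using \(D^2\le X\), this is \(\le XD^{2\alpha-4/5}\).
\item \(XD^{2\alpha-1}\). This is \(\le XD^{2\alpha-4/5}\).
\item \(D^{2\alpha+1}\). This is \(\le D_0^{2\alpha+1}=X\cdot D_0^{2\alpha+1}/X\).
\end{itemize}
This gives \eqref{eq:dyadic-minor-small-eta}.

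The main obstacle is purely bookkeeping: checking that every packet genuinely satisfies the hypotheses of \cref{lem:minor_Mangoldt}, namely \(0<|\beta|<1/d^2\) and \(D^2B\ll N\), and that the admissibility constraint \(D^2B\le 8X\) holds. The subtle point is that \(\eta\) lies on a shifted lattice, so the sup-times-\(L^1\) split must be taken over the same \(\eta\)-set that \cref{def:alpha-admissible} sums over. Since \(\{|\eta|\sim B\}\subset\{|\eta|<2B\}\) and the terms are nonnegative, enlarging the set is harmless.
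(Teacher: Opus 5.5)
Your proposal is correct and follows essentially the same route as the paper. In both ranges you bound $|\widehat\Lambda_X|^2$ uniformly on the packet via \cref{lem:minor_Mangoldt}, using the perturbed rational-endpoint bound when $|\eta|\sim0$, apply admissibility on the cumulative range $|\eta|<2B$, and handle the three resulting terms exactly as the paper does; your explicit checks that $|\eta|<N/d^2$ gives $0<|\beta|<1/d^2$ and that $D^2(2B)\le 8X$ are steps the paper uses only tacitly.
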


\begin{proof}
We first treat the range {\(|\eta|\sim B\)}, with \(B\ge 1\).
\cref{lem:minor_Mangoldt} yields, recall $N=3X$,
\[
\left|
\widehat\Lambda_X\!\left(\frac{\ell}{d}+\frac{\eta}{N}\right)
\right|
\ll
\left(
X^{4/5}
+
\frac{X}{(DB)^{1/2}}
+
X^{1/2}(DB)^{1/2}
\right)(\log X)^4 .
\]
The shell is contained in the cumulative packet \(|\eta|<2B\), so
\(\alpha_{b,\mathcal D}\)-admissibility gives
\begin{align*}
&\mathcal E^{(2)}(D,B)\\
&\ll
\frac{1}{X}
\left(
X^{4/5}
+
\frac{X}{(DB)^{1/2}}
+
X^{1/2}(DB)^{1/2}
\right)^2
(\log X)^8 \cdot\left(\sum_{d\sim D}
\sum_{\substack{1\le\ell\le d\\(\ell,d)=1}}
\sum_{\substack{|\eta|<2B\\N\ell/d+\eta\in\mathbb Z}}
\left|\widehat C_k\!\left(\frac\ell d+\frac\eta N\right)\right|\right)
\\
&\ll
\frac{|\mathcal{C}_k|}{X} (D^2B)^{\alpha_{b,\mathcal{D}}}
\left(
X^{4/5}
+
\frac{X}{(DB)^{1/2}}
+
X^{1/2}(DB)^{1/2}
\right)^2
(\log X)^8 \\
&\ll
|\mathcal C_k|(\log X)^8
\left(
X^{3/5}(D^2B)^{\alpha_{b,\mathcal D}}
+
X\frac{(D^2B)^{\alpha_{b,\mathcal D}}}{DB}
+
(D^2B)^{\alpha_{b,\mathcal D}}DB
\right).
\end{align*}
Since \(D^2B\lesssim X\), the first term is bounded by
\[
X^{3/5}(D^2B)^{\alpha_{b,\mathcal D}}
=
X\cdot X^{-2/5}(D^2B)^{\alpha_{b,\mathcal D}}
\lesssim
X(D^2B)^{\alpha_{b,\mathcal D}-\frac25}.
\]
also, since $B\geq 1,$
\[
X\frac{(D^2B)^{\alpha_{b,\mathcal D}}}{DB}
\le
X(D^2B)^{\alpha_{b,\mathcal D}-\frac12}
\le
X(D^2B)^{\alpha_{b,\mathcal D}-\frac25}.
\]
Finally, from \(B\leq N/(DD_0)\) and \(N=3X\), we have $
DB\lesssim X/D_0$, and hence
\[
(D^2B)^{\alpha_{b,\mathcal D}}DB
\lesssim
\frac{X}{D_0}(D^2B)^{\alpha_{b,\mathcal D}}
\lesssim
\frac{X^{1+\alpha_{b,\mathcal D}}}{D_0}.
\]
This proves \eqref{eq:dyadic-minor-nonzero-eta}.

It remains to treat \(|\eta|\sim0\). In this endpoint range, \cref{lem:minor_Mangoldt} gives
\[
\left|
\widehat\Lambda_X\!\left(\frac{\ell}{d}+\frac{\eta}{N}\right)
\right|
\ll
\left(
X^{4/5}
+
\frac{X}{D^{1/2}}
+
X^{1/2}D^{1/2}
\right)(\log X)^4 ,
\]
uniformly for \(d\sim D\) and \(|\eta|\sim0\).  Applying
\(\alpha_{b,\mathcal D}\)-admissibility, we get
\begin{align*}
\mathcal E^{(2)}(D,0)
&\ll
\frac{|\mathcal C_k|}{X}
D^{2\alpha_{b,\mathcal D}}
\left(
X^{4/5}
+
\frac{X}{D^{1/2}}
+
X^{1/2}D^{1/2}
\right)^2
(\log X)^8
\\
&\ll
|\mathcal C_k|(\log X)^8
\left(
X^{3/5}D^{2\alpha_{b,\mathcal D}}
+
XD^{2\alpha_{b,\mathcal D}-1}
+
D^{2\alpha_{b,\mathcal D}+1}
\right).
\end{align*}
Since \(D\le D_0\le X^{1/2}\), we have 
\[
X^{3/5}D^{2\alpha_{b,\mathcal D}}
=
X\cdot X^{-2/5}D^{2\alpha_{b,\mathcal D}}
\le
X D^{2\alpha_{b,\mathcal D}-\frac45}.
\]
Also, since $D\geq 1$,
\[
XD^{2\alpha_{b,\mathcal D}-1}
\le
X D^{2\alpha_{b,\mathcal D}-\frac45}.
\]
Finally,
\[
D^{2\alpha_{b,\mathcal D}+1}
\le
D_0^{2\alpha_{b,\mathcal D}+1}
=
X\frac{D_0^{2\alpha_{b,\mathcal D}+1}}{X}.
\]
This proves \eqref{eq:dyadic-minor-small-eta}.
\end{proof}

\begin{proposition}[Total minor-arc contribution]
\label{prop:minor-total-general}
Assume that
\(
\alpha_{b,\mathcal D}<\frac25.
\)
Then, after choosing \(A\) sufficiently large,
\[
T_k^{(2)}\!\left(\mathfrak m_A^{(N)}\right)
\ll_{b,\mathcal D,A}
|\mathcal C_k|\,X(\log X)^{-A}.
\]
\end{proposition}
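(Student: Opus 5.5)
We cover the minor arcs \(\mathfrak m_A^{(N)}\) by the dyadic packets \(\mathcal E^{(2)}(D,B)\) and \(\mathcal E^{(2)}(D,0)\), and then sum the bounds of \cref{prop:dyadic-minor-general} over dyadic \(D\) and \(B\).

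\textbf{Covering.} With \(D_0=X^{1/2}\), every \(\alpha\in\mathcal G_N\) has a selected Dirichlet triple \((d,\ell,\eta)\) with \(d\le D_0\) and \(|\eta|\le N/(dD_0)\). Since \(d\le D_0\), this gives \(|\eta|\le N/(dD_0)\le N/d^2\), so the constraint \(|\eta|<N/d^2\) in \(\mathcal E^{(2)}(D,B)\) holds up to the harmless boundary case. If \(\alpha\in\mathfrak m_A^{(N)}\), then \(d>L\) or \(|\eta|>L\).

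Put \(d\sim D\), with \(D\) a power of \(2\) in \([1,D_0]\). Either \(|\eta|\sim0\), or \(|\eta|\sim B\) with \(B\) a power of \(2\). In the latter case \(B\le N/(dD_0)\le 2N/(DD_0)\), which agrees with the hypothesis \(B\le N/(DD_0)\) up to an adjustment of constants. The dyadic shells may slightly overshoot \(D_0\) and \(N/(DD_0)\). This is absorbed because admissibility allows \(D^2B\le8X\) and the proof of \cref{prop:dyadic-minor-general} only uses \(DB\ll X/D_0\).

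Since all terms are nonnegative,
\[
T_k^{(2)}(\mathfrak m_A^{(N)})\le\sum_{D,B}{}^{'}\mathcal E^{(2)}(D,B)+\sum_{D}{}^{'}\mathcal E^{(2)}(D,0).
\]
Here the primes indicate that only packets meeting the minor arcs are kept: either \(D\gg L\) or \(B\gg L\) in the first sum, and \(D\gg L\) in the second. Packets with \(|\eta|<1\) have \(|\eta|\le L\), so \(d>L\) is forced there.

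\textbf{Summation.} For the nonzero-shift packets, the term \((D^2B)^{\alpha-2/5}\) with \(\alpha=\alpha_{b,\mathcal D}<2/5\) forms a geometric series in both \(D\) and \(B\). Restricted to \(\max(D,B)\gg L\), it is dominated by its largest term, giving
\[
\ll L^{\alpha-2/5}=(\log X)^{-A(2/5-\alpha)}.
\]
The second term \(X^{\alpha}/D_0=X^{\alpha-1/2}\) is a power saving, because \(\alpha<2/5<1/2\). Multiplying by the \(O((\log X)^2)\) dyadic pairs leaves a power saving.

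For the endpoint packets, \(\sum_{D\gg L}D^{2\alpha-4/5}\ll L^{2\alpha-4/5}\). The remaining term is
\[
D_0^{2\alpha+1}/X=X^{\alpha-1/2},
\]
again a power saving.

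Collecting these bounds,
\[
T_k^{(2)}(\mathfrak m_A^{(N)})\ll|\mathcal C_k|X(\log X)^{8}\left((\log X)^{-A(2/5-\alpha)}+X^{\alpha-1/2}(\log X)^2\right).
\]
Given a target exponent \(A'\), we choose the cutoff exponent \(A\) so that
\[
A\left(\tfrac25-\alpha_{b,\mathcal D}\right)\ge A'+8.
\]
This is possible since \(\alpha_{b,\mathcal D}<2/5\), and it yields the claimed bound. This is the sense of the phrase ``after choosing \(A\) sufficiently large'' in the statement.

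\textbf{Main obstacle.} The only delicate point is the covering: each minor-arc \(\alpha\) must land in a packet whose parameters satisfy the hypotheses of \cref{prop:dyadic-minor-general}. In particular, we need the uniqueness of the selected triple (minimal \(d\)) so that nothing is double counted beyond \(O(1)\). We also need the \(|\eta|<N/d^2\) condition, and the overshoot of the dyadic shells handled via the slack constant \(8\) in the admissibility definition. The rest is geometric summation.
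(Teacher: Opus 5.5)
Your proposal is correct and is essentially the paper's argument: cover $\mathfrak m_A^{(N)}$ by the packets $\mathcal E^{(2)}(D,B)$ with $\max(D,B)\gg L$ and $\mathcal E^{(2)}(D,0)$ with $D\gg L$, insert \cref{prop:dyadic-minor-general}, and sum over the dyadic parameters. Your geometric summation (giving $(\log X)^8$ instead of the paper's cruder $(\log X)^{10}$) and your explicit separation of the target saving from the cutoff exponent are harmless refinements, the latter making precise the paper's ``increase $A$ if necessary''; note also that uniqueness of the selected triple is not actually needed, since all terms are nonnegative and it suffices that each minor-arc point is covered by some packet.
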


\begin{proof}
For each \(\theta\in\mathfrak m_A^{(N)}\), consider the Dirichlet approximation
\[
\theta=\frac{\ell}{d}+\frac{\eta}{N},
\qquad
(\ell,d)=1,
\qquad
1\le d\le D_0,
\qquad
|\eta|\leq \frac{N}{dD_0}.
\]
We decompose these chosen approximations into dyadic ranges \(d\sim D\) and
either \(|\eta|\sim B\), with \(B\ge1\), or \(|\eta|\sim0\).
The number of dyadic parameters is at most \(O((\log X)^2)\).

Since \(\theta\) belongs to the minor arcs, it has no representation with
\(d\le L\) and \(|\eta|\le L\).  Hence, for the chosen approximation, either
\(D\gg L\) or \(B\gg L\) in the range \(|\eta|\sim B\).  Therefore
\[
D^2B\gg L.
\]

Therefore 
\begin{align}
    T_k^{(2)}\!\left(\mathfrak m_A^{(N)}\right)
\le
\sum_{D^2B\gg L}\mathcal E^{(2)}(D,B)
+
\sum_{D\gg L}\mathcal E^{(2)}(D,0).
\end{align}

For each $\mathcal{E}^{(2)}(D,B)$ with $D^2B\gg L$: since \(\alpha_{b,\mathcal D}<2/5\), the first term in
\eqref{eq:dyadic-minor-nonzero-eta} satisfies
\[
(D^2B)^{\alpha_{b,\mathcal D}-\frac25}
\ll
(\log X)^{-A(2/5-\alpha_{b,\mathcal D})}.
\]
The second term in \eqref{eq:dyadic-minor-nonzero-eta} gives
\[
\frac{X^{\alpha_{b,\mathcal D}}}{D_0}
=
X^{\alpha_{b,\mathcal D}-\frac12}.
\]
Since \(\alpha_{b,\mathcal D}<2/5\), this is a fixed negative power of \(X\),
and hence is \(O_A((\log X)^{-A})\) for every fixed \(A>0\).

In the endpoint range \(|\eta|\sim0\), the minor-arc condition forces
\(D\gg L\).  Therefore
\[
D^{2\alpha_{b,\mathcal D}-\frac45}
\ll
(\log X)^{-A(4/5-2\alpha_{b,\mathcal D})}.
\]
Also,
\[
\frac{D_0^{2\alpha_{b,\mathcal D}+1}}{X}
=
X^{\alpha_{b,\mathcal D}-\frac12}
=
O_A((\log X)^{-A}).
\]
Summing over \(O((\log X)^2)\) dyadic blocks gives
\[
T_k^{(2)}\!\left(\mathfrak m_A^{(N)}\right)
\ll
|\mathcal C_k|\,X(\log X)^{10}
\left(
(\log X)^{-A(2/5-\alpha_{b,\mathcal D})}
+
O_A((\log X)^{-A-10})
\right).
\]
Increasing the logarithmic parameter \(A\) if necessary completes the proof.
\end{proof}

\subsection{Major arcs and evaluation of the main term}
\label{subsec:major-arcs-pairs}

We now evaluate the major-arc contribution.  The main point is that the
restricted-digit Fourier transform is large only near rationals whose
denominators are supported on the prime factors of the base \(b\).  
We split the major arcs according to \cref{lem:digit-bad-major-general}.  Define the good major arcs by
\[
\mathfrak M_{A,\mathrm{good}}^{(N)}
:=
\left\{
\theta=\frac{\ell}{d}+\frac{\eta}{N}\in\mathfrak M_A^{(N)}:
p\mid d \Longrightarrow p\mid b
\right\},
\]
where $p$ represents a prime number,
and set
\[
\mathfrak M_{A,\mathrm{bad}}^{(N)}
:=
\mathfrak M_A^{(N)}\setminus \mathfrak M_{A,\mathrm{good}}^{(N)}.
\]
For \(X\) sufficiently large, the major arcs are disjoint, since \(L^3\ll N\).
Hence each major-arc point has a unique representation
\[
        \theta=\frac{\ell}{d}+\frac{\eta}{N},
        \qquad
        (\ell,d)=1,
        \qquad
        d\le L,
        \qquad
        |\eta|\le L, 
\]
with $N\ell/d+\eta\in\mathbb Z$.

\begin{lemma}[Bad major arcs]
\label{lem:bad-major-general}
There exists \(c_{b,\mathcal D,A}>0\) such that
\[
T_k^{(2)}\!\left(\mathfrak M_{A,\mathrm{bad}}^{(N)}\right)
\ll_{b,\mathcal D,A}
|\mathcal C_k|\,X\,\exp\!\left(
        -c_{b,\mathcal D,A}\frac{\log X}{\log\log X}
        \right).
\]
\end{lemma}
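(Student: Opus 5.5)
The plan is to bound the bad major-arc sum trivially on the prime side and to use the decay of \cref{lem:digit-bad-major-general} on the digit side. By definition,
\[
T_k^{(2)}\!\left(\mathfrak M_{A,\mathrm{bad}}^{(N)}\right)
=\frac1N\sum_{\theta\in\mathfrak M_{A,\mathrm{bad}}^{(N)}}\widehat C_k(-\theta)\bigl|\widehat\Lambda_X(\theta)\bigr|^2 .
\]
Each \(\theta\) in this set has a unique representation \(\theta=\ell/d+\eta/N\) with \((\ell,d)=1\), \(d\le L\), \(|\eta|\le L\), and \(d\) having a prime factor \(p\nmid b\).

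First I handle the sign of the argument. The negated point is
\[
-\theta=\frac{d-\ell}{d}+\frac{-\eta}{N}\pmod 1 .
\]
Here \((d-\ell,d)=1\), the denominator is the same \(d\), and \(|-\eta|\le L\). When \(d=1\) the numerator is read modulo \(1\); this case is vacuous anyway, since \(d=1\) has no bad prime factor. So \(-\theta\) also satisfies the hypotheses of \cref{lem:digit-bad-major-general}, and uniformly over the bad arcs
\[
|\widehat C_k(-\theta)|\ll_{b,\mathcal D,A}|\mathcal C_k|\exp\!\left(-c_{b,\mathcal D,A}\frac{\log X}{\log\log X}\right).
\]
The lemma applies because \(3X\le N=3X\le 4X\).

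Next I pull out this uniform bound and estimate the remaining prime sum by Parseval on \(\mathcal G_N\):
\[
\frac1N\sum_{\theta\in\mathcal G_N}\bigl|\widehat\Lambda_X(\theta)\bigr|^2=\sum_{0\le m<X}\Lambda(m)^2\ll X\log X .
\]
This gives
\[
T_k^{(2)}\!\left(\mathfrak M_{A,\mathrm{bad}}^{(N)}\right)\ll |\mathcal C_k|\,X\log X\,\exp\!\left(-c\frac{\log X}{\log\log X}\right).
\]
Alternatively, one can use \(|\widehat\Lambda_X|\le\sum_{m<X}\Lambda(m)\ll X\) together with the count \(\#\mathfrak M_A^{(N)}\ll L^3\). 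This yields the same bound with an extra factor \(L^3/N\cdot X^2\ll XL^3\) in place of \(X\log X\).

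Finally, the stray factor \(\log X\) (or \(L^3=(\log X)^{3A}\)) is \(\exp(O_A(\log\log X))\). It is absorbed by replacing \(c\) with \(c/2\), since \(\log\log X=o(\log X/\log\log X)\); this proves the lemma with the constant \(c_{b,\mathcal D,A}/2\).

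The only genuine point to check is that the reflection \(\theta\mapsto-\theta\) preserves both the badness of the denominator and the major-arc window, which it does as shown above. Everything else is routine.
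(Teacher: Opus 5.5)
Your proof is correct and is essentially the paper's argument: the paper uses exactly your alternative (the pointwise bound $|\widehat\Lambda_X|\ll X$ together with $\#\mathfrak M_A^{(N)}\ll L^3$), while your Parseval variant loses only a factor $\log X$ instead of $L^3$, and either loss is absorbed into the sub-exponential saving. Your reflection check is fine, though it is immediate from $|\widehat C_k(-\theta)|=|\widehat C_k(\theta)|$, since $\mathbf 1_{\mathcal C_k}$ is real-valued.
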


\begin{proof}
On \(\mathfrak M_{A,\mathrm{bad}}^{(N)}\), \cref{lem:digit-bad-major-general}
gives
\[
        |\widehat C_k(-\theta)|
        \ll_{b,\mathcal D,A}
        |\mathcal C_k|\exp\!\left(
        -c_{b,\mathcal D,A}\frac{\log X}{\log\log X}
        \right).
\]
Using the trivial estimate
\[
        |\widehat\Lambda_X(\theta)|\ll X
\]
and the fact that there are \(O(L^3)\) admissible triples \((d,\ell,\eta)\), we
obtain
\[
T_k^{(2)}\!\left(\mathfrak M_{A,\mathrm{bad}}^{(N)}\right)
\ll
\frac1N
L^3
|\mathcal C_k|\exp\!\left(
        -c_{b,\mathcal D,A}\frac{\log X}{\log\log X}
        \right)
X^2.
\]
Since \(N\asymp X\) and \(L=(\log X)^A\), the factor \(L^3\) is absorbed into
the exponential saving.
\end{proof}

\begin{proposition}[Major-arc reduction]
\label{prop:major-reduction-general}
One has
\begin{align}
T_k^{(2)}\!\left(\mathfrak M_A^{(N)}\right)
&=
\frac1N
\sum_{d\mid b}
\frac{\mu(d)^2}{\phi(d)^2}
\sum_{\substack{0\le \ell<d\\(\ell,d)=1}}
\sum_{\substack{|\eta|\le L \\ \eta \in \mathbb Z}}
\widehat C_k\!\left(-\frac{\ell}{d}-\frac{\eta}{N}\right)
\left|
U_X\!\left(\frac{\eta}{N}\right)
\right|^2
\notag\\
&\qquad
+
O_{b,\mathcal D,A}\!\left(|\mathcal C_k|X(\log X)^{-A}\right).
\label{eq:major-reduction-general}
\end{align}
\end{proposition}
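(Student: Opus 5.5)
We first discard the bad major arcs using \cref{lem:bad-major-general}. Its error term \(\exp(-c\log X/\log\log X)\) is smaller than \((\log X)^{-A}\), so
\[
T_k^{(2)}\bigl(\mathfrak M_A^{(N)}\bigr)=T_k^{(2)}\bigl(\mathfrak M_{A,\mathrm{good}}^{(N)}\bigr)+O\bigl(|\mathcal C_k|X(\log X)^{-A}\bigr).
\]
On a good major arc \(\theta=\ell/d+\eta/N\), we have \(d\le L\le L^2\) and \(|\eta|\le L\le 3L\). We then apply \cref{lem:prime-major-general} to \(\theta\) and to \(-\theta\); the latter has the representation \((d,-\ell,-\eta)\), and \(\mu\) and \(\phi\) depend only on \(d\). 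Writing \(M:=\mu(d)\phi(d)^{-1}U_X(\eta/N)\) and \(E=O(X(\log X)^{-4A})\), this gives
\[
|\widehat\Lambda_X(\theta)|^2=|M+E|^2=|M|^2+O(X|E|+|E|^2),
\]
since \(|M|\le X\). Hence \(|\widehat\Lambda_X(\theta)|^2=\frac{\mu(d)^2}{\phi(d)^2}|U_X(\eta/N)|^2+O(X^2(\log X)^{-4A})\).

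Next we estimate the contribution of this error, weighted by \(\frac1N|\widehat C_k(-\theta)|\), summed over the good major arcs. Using the trivial bound \(|\widehat C_k|\le|\mathcal C_k|\) and the fact that there are \(O(L^3)\) triples \((d,\ell,\eta)\), the total error is
\[
\ll \frac1N L^3|\mathcal C_k|X^2(\log X)^{-4A}\ll |\mathcal C_k|X(\log X)^{-A}.
\]

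It remains to reshape the main term. Three points need care.

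\begin{enumerate}
\item \emph{Integrality of \(\eta\).} For \(d\mid b\) we have \(d\mid b^k=X\mid N=3X\), so \(N\ell/d\in\mathbb Z\) and \(\eta\) ranges over \(\mathbb Z\). For a general good \(d\) we have \(d\le L\) and every prime factor of \(d\) divides \(b\), so \(d\mid b^{k}\) once \(k\) is large enough. Hence \(d\mid X\) in all cases, and again \(\eta\in\mathbb Z\).

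\item \emph{Reduction to \(d\mid b\).} The factor \(\mu(d)^2\) vanishes unless \(d\) is squarefree. A squarefree \(d\) all of whose prime factors divide \(b\) is exactly a divisor of \(b\). So only \(d\mid b\) survives, and for large \(X\) we automatically have \(d\le b\le L\).

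\item \emph{Choice of numerator range.} The sum over \(1\le\ell\le d\) with \((\ell,d)=1\) can be replaced by \(0\le\ell<d\). Only \(d=1\) is affected, where \(\ell=1\) and \(\ell=0\) give the same point modulo \(1\).
\end{enumerate}

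By uniqueness of the major-arc representation, this reparametrization is a bijection onto the good major arcs with \(d\mid b\). This yields \eqref{eq:major-reduction-general}.

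The main thing to check is that the loss from the \(O(L^3)\) triples is absorbed. This holds because the Siegel--Walfisz saving in \cref{lem:prime-major-general} is stated with exponent \(4A\), which leaves room to cover \(L^3=(\log X)^{3A}\). Apart from that, the argument needs only the divisibility observation \(d\mid X\), which is what makes \(\eta\) integral.
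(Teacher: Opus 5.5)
Your proposal is correct and follows the paper's proof essentially step for step. You discard the bad arcs via the sub-exponential decay, apply the Siegel--Walfisz approximation on the good arcs (with the $O(L^3)$ triple count absorbed by the $(\log X)^{-4A}$ saving), and then use $\mu(d)^2$ together with $b$-smoothness to force $d\mid b$, hence $d\mid N$ and $\eta\in\mathbb Z$. Your extra remarks are correct but not needed: integrality of $\eta$ for every good $d$, the application at $-\theta$ (superfluous since $\Lambda$ is real), and the numerator-range convention.
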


\begin{proof}
The contribution of \(\mathfrak M_{A,\mathrm{bad}}^{(N)}\) is handled by
\cref{lem:bad-major-general}, and its sub-exponential saving is stronger than
any fixed power of \(\log X\).

On \(\mathfrak M_{A,\mathrm{good}}^{(N)}\), apply
\cref{lem:prime-major-general} to get
\[
        |\widehat\Lambda_X(\frac{\ell}{d}+\frac{\eta}{N})|^2
        =
        \frac{\mu(d)^2}{\phi(d)^2}
        \left|U_X\!\left(\frac{\eta}{N}\right)\right|^2
        +
        O_A\!\left(X^2(\log X)^{-4A}\right).
\]
Consequently, the total contribution of the error term
is then
\[
\ll
\frac1N
(\sum_{d\le L}
\sum_{\substack{(\ell,d)=1}}
\sum_{\substack{|\eta|\le L \\ N\ell/d+\eta\in\mathbb Z}}
|\widehat C_k(-\ell/d-\eta/N)|)\cdot 
X^2(\log X)^{-4A}.
\]
Using the trivial bound,
\[
        |\widehat C_k(\theta)|\le |\mathcal C_k|
\]
and counting \(O(L^3)\) triples \((d,\ell,\eta)\), this is
\[
        \ll
        |\mathcal C_k|X(\log X)^{-4A}L^3
        =
        |\mathcal C_k|X(\log X)^{-A},
\]
since \(L=(\log X)^A\).

It remains to simplify the denominator condition in the main term.  On the good
major arcs, every prime divisor of \(d\) divides \(b\).  Moreover, the main
coefficient contains the factor \(\mu(d)^2\), so only squarefree \(d\) contribute.
Hence the nonzero contribution has
\[
        d\mid b.
\]
For \(X\) sufficiently large, all such divisors satisfy \(d\le L\).
Finally, since \(d\mid b\), \(X=b^k\), and \(N=3X\), one has
\(d\mid N\).  Hence the condition \(N\ell/d+\eta\in\mathbb Z\) implies that
\(\eta\in\mathbb Z\).  This gives the displayed main term.
\end{proof}

Define the projected two-prime local factor by
\begin{equation}\label{def:local-factor-Sb}
        \mathfrak S_b^{(2)}(n)
        :=
        \sum_{d\mid b}
        \frac{\mu(d)^2}{\phi(d)^2}c_d(n),
\end{equation}
where $c_d(n)$
is the Ramanujan sum defined in
\eqref{eq:ramanujan-sum}.  The factor
\(\mathfrak S_b^{(2)}(n)\) is the \(b\)-local part of the usual
Hardy--Littlewood prime-pair singular series, whose Ramanujan-sum expansion, {for \(n\ne0\),} is
\[
        \mathfrak S^{(2)}(n)
        =
        \sum_{q\ge1}
        \frac{\mu(q)^2}{\phi(q)^2}c_q(n).
\]
See, for example, \cite[Ch.~17]{Davenport} and
\cite[Ch.~17]{MontgomeryVaughan}.

Indeed,  \(\mathfrak S_b^{(2)}(n)\) retains only the squarefree moduli \(d\) whose
prime factors divide \(b\).  Equivalently, since \(\mu(d)^2\) restricts the
sum to squarefree \(d\), one may view \(\mathfrak S_b^{(2)}(n)\) as the projection
of the singular series onto the squarefree divisors of \(b\). Additionally, \(\mathfrak S_b^{(2)}(n)\) is only dependent on the least significant digit of $n$ in base $b.$

\begin{proposition}[Evaluation of the major arcs]
\label{prop:major-main-general}
One has
\[
T_k^{(2)}\!\left(\mathfrak M_A^{(N)}\right)
=
\sum_{0\le n<X}
\mathbf 1_{\mathcal C_k}(n)(X-n)\mathfrak S_b^{(2)}(n)
+
O_{b,\mathcal D,A}\!\left(|\mathcal C_k|X(\log X)^{-A}\right).
\]
\end{proposition}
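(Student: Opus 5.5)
Starting from \cref{prop:major-reduction-general}, the main term is
\[
M:=\frac1N\sum_{d\mid b}\frac{\mu(d)^2}{\phi(d)^2}\sum_{\substack{0\le\ell<d\\(\ell,d)=1}}\sum_{\substack{|\eta|\le L\\ \eta\in\mathbb Z}}\widehat C_k\!\left(-\frac\ell d-\frac\eta N\right)\left|U_X\!\left(\frac\eta N\right)\right|^2 ,
\]
and the plan is to complete the \(\eta\)-sum to all of \(\mathbb Z/N\mathbb Z\) and then undo the Fourier transform in physical space. First I expand \(\widehat C_k(-\ell/d-\eta/N)=\sum_{n}\mathbf 1_{\mathcal C_k}(n)e(-n\ell/d)e(-n\eta/N)\). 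Summing over primitive \(\ell\) gives \(c_d(-n)=c_d(n)\), and summing over \(d\mid b\) with the weights yields \(\mathfrak S_b^{(2)}(n)\). Thus, for the completed sum over \(\eta\in\mathbb Z/N\mathbb Z\), one gets
\[
\sum_{n}\mathbf 1_{\mathcal C_k}(n)\,\mathfrak S_b^{(2)}(n)\,\frac1N\sum_{\eta\bmod N}\left|U_X\!\left(\frac\eta N\right)\right|^2 e\!\left(-\frac{n\eta}N\right).
\]

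The inner expression is evaluated by orthogonality, exactly as in \cref{prop:two_fourier-representation} with \(\Lambda\) replaced by \(\mathbf 1_{[0,X)}\). One has \(|U_X(\eta/N)|^2=\sum_{0\le m,m'<X}e((m-m')\eta/N)\), so the normalized \(\eta\)-sum counts pairs with \(m-m'\equiv n\pmod N\). Since \(N=3X>2X\) and \(0\le n<X\), this congruence is the equality \(m=m'+n\), and the count is \(X-n\). This produces the stated main term \(\sum_n\mathbf 1_{\mathcal C_k}(n)(X-n)\mathfrak S_b^{(2)}(n)\).

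The main obstacle is controlling the tail \(L<|\eta|\le N/2\) in the completion, because \(\widehat C_k\) need not decay there. I will bound it trivially: \(|\widehat C_k|\le|\mathcal C_k|\), and \(|U_X(\eta/N)|^2\le (N/(2|\eta|))^2\) by \eqref{eq:UX_estimate}. The tail is then
\[
\ll \frac1N\sum_{d\mid b}\frac{\mu(d)^2}{\phi(d)^2}\,d\,|\mathcal C_k|\sum_{|\eta|>L}\frac{N^2}{\eta^2}\ll_b |\mathcal C_k|\,\frac{X}{L}=|\mathcal C_k|X(\log X)^{-A}.
\]
This holds because the number of divisors of \(b\), and each \(d\le b\), are \(O_b(1)\). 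With \(L=(\log X)^A\), this is admissible, and combining with the error term already in \cref{prop:major-reduction-general} completes the proof.

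Two small points need checking. First, the \(\eta\)-range \(|\eta|\le L\) versus a full period modulo \(N\) must be counted without double counting. This is fine because \(L<N/2\) and \(\eta\in\mathbb Z\), using \(d\mid N\). Second, the shift between \(0\le\ell<d\) and \(1\le\ell\le d\) in the Ramanujan sum is harmless, since it is a full residue system.
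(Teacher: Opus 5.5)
Your proposal is correct and follows essentially the same route as the paper. You complete the $\eta$-sum to a full residue system modulo $N$, bound the tail $L<|\eta|\le N/2$ trivially via $|\widehat C_k|\le|\mathcal C_k|$ and $|U_X(\eta/N)|\le N/(2|\eta|)$, and then use orthogonality in $\eta$ (with $N>2X$ ruling out wraparound) to produce the weight $X-n$, while the primitive $\ell$-sums assemble into $c_d(n)$ and hence $\mathfrak S_b^{(2)}(n)$. The only differences are cosmetic: you expand $|U_X|^2$ as a double sum over $(m,m')$ and sum over $\ell$ before applying orthogonality, whereas the paper uses the Fej\'er form $\sum_{|h|<X}(X-|h|)e(h\eta/N)$ and sums over $d$ and $\ell$ at the end.
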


\begin{proof}
Starting from \cref{prop:major-reduction-general}, we extend the
\(\eta\)-sum in the main term from \(|\eta|\le L\) to all residue classes modulo
\(N\).  Since the main term now involves only the finite set \(d\mid b\), the
omitted tail is easy to control.  

For each fixed \(d\mid b\) and \(\ell\),

\[
\begin{aligned}
\frac1N
\sum_{L<|\eta|\le N/2}
\left|
\widehat C_k\!\left(-\frac{\ell}{d}-\frac{\eta}{N}\right)
\right|
\left|U_X\!\left(\frac{\eta}{N}\right)\right|^2       
\ll
\frac{|\mathcal C_k|}{N}
\sum_{|\eta|>L}\frac{N^2}{\eta^2}
\ll
|\mathcal C_k|\frac{N}{L}
\ll
|\mathcal C_k|\frac{X}{L},
\end{aligned}
\]
where we used \eqref{eq:UX_estimate} to estimate $U_X$.
After summing over the \(O_b(1)\) possible \(d,\ell\), this contributes
\[
        O_b(|\mathcal C_k|X/L)
        =
        O_{b,\mathcal D,A}\!\left(|\mathcal C_k|X(\log X)^{-A}\right).
\]

Thus
\begin{align*}
T_k^{(2)}\!\left(\mathfrak M_A^{(N)}\right)
&=
\frac1N
\sum_{d\mid b}
\frac{\mu(d)^2}{\phi(d)^2}
\sum_{\substack{0\le \ell<d\\(\ell,d)=1}}
\sum_{\eta\;(\mathrm{mod}\,N)}
\widehat C_k\!\left(-\frac{\ell}{d}-\frac{\eta}{N}\right)
\left|U_X\!\left(\frac{\eta}{N}\right)\right|^2
\\
&\qquad
+
O_{b,\mathcal D,A}\!\left(|\mathcal C_k|X(\log X)^{-A}\right).
\end{align*}

Now
\[
        |U_X(\eta/N)|^2
        =
        \sum_{h=-(X-1)}^{X-1}
        (X-|h|)
        e\!\left(\frac{h\eta}{N}\right),
\]
and
\[
        \widehat C_k\!\left(-\frac{\ell}{d}-\frac{\eta}{N}\right)
        =
        \sum_{0\le n<X}
        \mathbf 1_{\mathcal C_k}(n)
        e\!\left(-\frac{\ell n}{d}\right)
        e\!\left(-\frac{\eta n}{N}\right).
\]
Orthogonality in the \(\eta\)-variable gives
\[
\frac1N
\sum_{\eta\;(\mathrm{mod}\,N)}
\widehat C_k\!\left(-\frac{\ell}{d}-\frac{\eta}{N}\right)
|U_X(\eta/N)|^2
=
\sum_{0\le n<X}
\mathbf 1_{\mathcal C_k}(n)
e\!\left(-\frac{\ell n}{d}\right)(X-n).
\]
Indeed, the congruence \(h\equiv n\pmod N\), with
\[
        |h|<X,\qquad 0\le n<X,\qquad N>2X,
\]
forces \(h=n\).  Summing over \(d\mid b\) and \(\ell\) gives the desired formula.
\end{proof}

Combining \cref{prop:major-main-general} with the minor-arc estimate
from \cref{prop:minor-total-general}, we obtain the following global
formula.

\begin{theorem}[Major and minor arcs]
\label{thm:major-plus-minor-general}
Assume
\[
        0<\alpha_{b,\mathcal D}<\frac25.
\]
Then, after choosing \(A\) sufficiently large,
\[
T_k^{(2)}
=
\sum_{0\le n<X}
\mathbf 1_{\mathcal C_k}(n)(X-n)\mathfrak S^{(2)}_b(n)
+
O_{b,\mathcal D,A}\!\left(
|\mathcal C_k|X(\log X)^{-A}
\right).
\]
\end{theorem}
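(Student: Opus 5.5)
The plan is to combine the exact decomposition
\[
T_k^{(2)}=T_k^{(2)}\!\left(\mathfrak M_A^{(N)}\right)+T_k^{(2)}\!\left(\mathfrak m_A^{(N)}\right)
\]
with the two preceding results: \cref{prop:two_fourier-representation} gives the identity, and $\mathcal G_N$ is the disjoint union of the major and minor arcs. Nothing new has to be estimated; the work is choosing the parameter $A$ consistently.

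First, I will apply \cref{prop:minor-total-general}. Its hypothesis $\alpha_{b,\mathcal D}<2/5$ is exactly what is assumed. It gives $T_k^{(2)}(\mathfrak m_A^{(N)})\ll|\mathcal C_k|X(\log X)^{-A}$, provided $A$ is large. The proof there produces a saving of $(\log X)^{10-A(2/5-\alpha_{b,\mathcal D})}$. So, given a target exponent $A'$, I will run the arc decomposition with $A$ satisfying $A(2/5-\alpha_{b,\mathcal D})-10\ge A'$. This is possible because $2/5-\alpha_{b,\mathcal D}>0$ is a fixed positive constant depending only on $b,\mathcal D$.

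Second, I will apply \cref{prop:major-main-general} with the same $A$. It gives the main term $\sum_{0\le n<X}\mathbf 1_{\mathcal C_k}(n)(X-n)\mathfrak S_b^{(2)}(n)$ with error $O(|\mathcal C_k|X(\log X)^{-A})$, which is at most the target error since $A\ge A'$. That error already includes the bad major arcs (\cref{lem:bad-major-general}), the Siegel--Walfisz approximation error, and the completion of the $\eta$-tail. Both results are stated uniformly for $3X\le N\le 4X$ and use $L=(\log X)^A$ in the arc definitions, so the two pieces fit together with no overlap and no gap.

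The only point needing care, which I expect to be the (mild) obstacle, is this bookkeeping of $A$. The same $A$ defines the arcs in both propositions, while the minor-arc saving is only a fixed fraction $2/5-\alpha_{b,\mathcal D}$ of $A$, minus a loss of $(\log X)^{10}$. I will make this explicit by fixing $A'$, choosing $A=A(A',b,\mathcal D)$ as above, and renaming $A'$ as $A$ in the final statement. The implied constant then depends on $b,\mathcal D,A$, as claimed.
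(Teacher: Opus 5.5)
Your proposal is correct and matches the paper, which obtains this theorem by combining \cref{prop:major-main-general} with \cref{prop:minor-total-general} over the exact split $\mathcal G_N=\mathfrak M_A^{(N)}\sqcup\mathfrak m_A^{(N)}$. Your explicit reparametrization (choosing the arc parameter so that $A(2/5-\alpha_{b,\mathcal D})-10$ is at least the target exponent) makes precise the ``after choosing $A$ sufficiently large'' that the paper leaves implicit. One small inaccuracy: in the two-term problem $N=3X$ is fixed, and the range $3X\le N\le 4X$ matters only for the admissibility hypothesis.
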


\subsection{Averaging the local factor}
\label{subsec:averaging-local-factor-general}

{It remains to average the arithmetic local factor over \(\mathcal C_k\).}  Since
\(d\mid b\), the residue class of \(n\) modulo \(d\) is determined by the
final base-\(b\) digit of \(n\).  Thus this computation is finite and
explicit.

Define
\[
        \overline{\mathfrak S}^{(2)}_{b,\mathcal D}
        :=
        \frac{1}{|\mathcal D|}
        \sum_{a\in\mathcal D}\mathfrak S^{(2)}_b(a)
        =
        \sum_{d\mid b}
        \frac{\mu(d)^2}{\phi(d)^2}
        \frac{1}{|\mathcal D|}
        \sum_{a\in\mathcal D}c_d(a).
\]
{Thus \(\overline{\mathfrak S}^{(2)}_{b,\mathcal D}\) is the
averaged arithmetic factor over the final digit.}  We also write
\[
        S_{\mathcal D}:=\sum_{a\in\mathcal D}a.
\]

\begin{proposition}[Weighted average of the local factor]
\label{prop:weighted-main-term-general}
One has
\[
\sum_{0\le n<X}
\mathbf 1_{\mathcal C_k}(n)(X-n)\mathfrak S^{(2)}_b(n)
=
\left(
1-\frac{S_{\mathcal D}}{(b-1)|\mathcal D|}
\right)
\overline{\mathfrak S}^{(2)}_{b,\mathcal D}\,
X|\mathcal C_k|
+
O_{b,\mathcal D}(|\mathcal C_k|).
\]
Moreover, the leading coefficient
\[
        \left(
        1-\frac{S_{\mathcal D}}{(b-1)|\mathcal D|}
        \right)
        \overline{\mathfrak S}^{(2)}_{b,\mathcal D}
\]
is positive if and only if either \(b\) is odd or
\(\mathcal D\) contains an even digit.
\end{proposition}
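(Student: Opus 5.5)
Since \(\mathfrak S^{(2)}_b(n)\) depends only on \(n\bmod b\), i.e. only on the least significant digit \(d_0\) of \(n\), I will parametrize \(n\in\mathcal C_k\) by its digit vector \((d_0,\dots,d_{k-1})\in\mathcal D^k\). Then
\[
\sum_{n\in\mathcal C_k}(X-n)\mathfrak S^{(2)}_b(n)
= X\sum_{n\in\mathcal C_k}\mathfrak S^{(2)}_b(d_0)-\sum_{n\in\mathcal C_k} n\,\mathfrak S^{(2)}_b(d_0).
\]
The first sum equals \(X|\mathcal D|^{k-1}\sum_{a\in\mathcal D}\mathfrak S^{(2)}_b(a)=X|\mathcal C_k|\,\overline{\mathfrak S}^{(2)}_{b,\mathcal D}\). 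For the second, I write \(n=\sum_j d_jb^j\). The terms with \(j\ge1\) factor, since \(d_j\) is independent of \(d_0\), as \(\sum_{j\ge1}b^j\cdot(S_{\mathcal D}/|\mathcal D|)\cdot|\mathcal C_k|\,\overline{\mathfrak S}^{(2)}_{b,\mathcal D}\). The \(j=0\) term is \(|\mathcal D|^{k-1}\sum_a a\,\mathfrak S^{(2)}_b(a)=O_{b,\mathcal D}(|\mathcal C_k|)\). Using \(\sum_{j=1}^{k-1}b^j=(X-b)/(b-1)=X/(b-1)+O_b(1)\), the total is \(\frac{S_{\mathcal D}}{(b-1)|\mathcal D|}X|\mathcal C_k|\overline{\mathfrak S}^{(2)}_{b,\mathcal D}+O_{b,\mathcal D}(|\mathcal C_k|)\), because \(\mathfrak S^{(2)}_b\) is bounded. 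This gives the asymptotic formula.

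For positivity, I first note that the archimedean factor satisfies \(1-S_{\mathcal D}/((b-1)|\mathcal D|)>0\). Indeed, every digit is at most \(b-1\), and since \(|\mathcal D|\ge2\) with distinct digits, not all of them equal \(b-1\), so the average is strictly less than \(b-1\). It therefore remains to show that \(\overline{\mathfrak S}^{(2)}_{b,\mathcal D}\ge0\), with equality exactly when \(b\) is even and all digits are odd. I factor the local series as an Euler product over \(p\mid b\): for squarefree \(d\), both \(\mu(d)^2/\phi(d)^2\) and \(c_d(a)\) are multiplicative in \(d\), so
\[
\mathfrak S^{(2)}_b(a)=\prod_{p\mid b}\Bigl(1+\frac{c_p(a)}{(p-1)^2}\Bigr).
\]
Here \(c_p(a)=p-1\) if \(p\mid a\) and \(c_p(a)=-1\) otherwise. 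The factor is therefore \(p/(p-1)>0\) when \(p\mid a\), and \(1-1/(p-1)^2\) when \(p\nmid a\); the latter is positive for \(p\ge3\) and equals \(0\) for \(p=2\).

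Consequently each \(\mathfrak S^{(2)}_b(a)\ge0\), and \(\mathfrak S^{(2)}_b(a)=0\) exactly when \(2\mid b\) and \(a\) is odd. Since \(\overline{\mathfrak S}^{(2)}_{b,\mathcal D}\) is the average of these nonnegative terms, it is positive if and only if some term is nonzero. That happens exactly when \(b\) is odd (all terms are then positive) or \(\mathcal D\) contains an even digit, which proves the criterion.

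No step is a real obstacle. The only point needing care is the claim that \(\mathfrak S^{(2)}_b(n)=\mathfrak S^{(2)}_b(d_0)\): this holds because \(d\mid b\) implies \(c_d(n)\) depends only on \(n\bmod d\), hence only on \(n\bmod b\). One should also track the \(j=0\) term and the \(-b\) in \(\sum_{j\ge1} b^j\) as errors of size \(O_{b,\mathcal D}(|\mathcal C_k|)\).
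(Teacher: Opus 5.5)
Your proof is correct and matches the paper's argument. The paper writes \(n=a+bm\) with \(a\in\mathcal D\), \(m\in\mathcal C_{k-1}\), and evaluates \(\sum_{m\in\mathcal C_{k-1}}m\) via the average digit, which is the same computation as your digit-by-digit expansion. Its positivity criterion comes from the same Euler product over \(p\mid b\), with the same local factors \(p/(p-1)\) and \(1-1/(p-1)^2\).
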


\begin{proof}
Write
\[
        n=a+bm,
        \qquad
        a\in\mathcal D,
        \qquad
        m\in\mathcal C_{k-1}.
\]
Since \(d\mid b\), the value of \(c_d(n)\), and hence of
\(\mathfrak S_b^{(2)}(n)\), depends only on the final digit \(a\).  Thus
\[
        \mathfrak S_b^{(2)}(n)=\mathfrak S_b^{(2)}(a).
\]
It follows that
\[
\begin{aligned}
&\sum_{0\le n<X}
\mathbf 1_{\mathcal C_k}(n)(X-n)\mathfrak S^{(2)}_b(n)       \\
&\qquad
=
\sum_{a\in\mathcal D}\mathfrak S_b^{(2)}(a)
\sum_{m\in\mathcal C_{k-1}}(X-a-bm)                          \\
&\qquad
=
|\mathcal C_{k-1}|
\sum_{a\in\mathcal D}(X-a)\mathfrak S^{(2)}_b(a)
-
b\left(
\sum_{a\in\mathcal D}\mathfrak S^{(2)}_b(a)
\right)
\left(
\sum_{m\in\mathcal C_{k-1}}m
\right).
\end{aligned}
\]
The average digit is \(S_{\mathcal D}/|\mathcal D|\), and hence
\[
        \sum_{m\in\mathcal C_{k-1}}m
        =
        |\mathcal C_{k-1}|
        \frac{S_{\mathcal D}}{|\mathcal D|}
        \frac{b^{k-1}-1}{b-1}.
\]
Using
\[
        b(b^{k-1}-1)=X-b,
\]
we obtain the exact identity
\begin{align}
&\sum_{0\le n<X}
\mathbf 1_{\mathcal C_k}(n)(X-n)\mathfrak S^{(2)}_b(n)
\notag\\
&\qquad
=
|\mathcal C_{k-1}|
\sum_{a\in\mathcal D}
\left(
X-a-
\frac{S_{\mathcal D}}{|\mathcal D|}
\frac{X-b}{b-1}
\right)
\mathfrak S_b^{(2)}(a).
\label{eq:weighted-main-term-exact-general}
\end{align}

Since
\[
        \sum_{a\in\mathcal D}\mathfrak S^{(2)}_b(a)
        =
        |\mathcal D|\,
        \overline{\mathfrak S}^{(2)}_{b,\mathcal D},
        \qquad
        |\mathcal C_k|
        =
        |\mathcal D|\,|\mathcal C_{k-1}|,
\]
the leading term in
\eqref{eq:weighted-main-term-exact-general} is
\[
        \left(
        1-\frac{S_{\mathcal D}}{(b-1)|\mathcal D|}
        \right)
        \overline{\mathfrak S}^{(2)}_{b,\mathcal D}\,
        X|\mathcal C_k|.
\]
All remaining terms depend only on \(b\) and \(\mathcal D\), and hence
contribute
\[
        O_{b,\mathcal D}(|\mathcal C_k|).
\]
This proves the asymptotic formula.

It remains to determine when the leading coefficient is positive.  Since every
digit lies in \(\{0,1,\dots,b-1\}\),
\[
        1-\frac{S_{\mathcal D}}{(b-1)|\mathcal D|}
        \ge0.
\]
Equality could hold only if every digit in \(\mathcal D\) were equal to
\(b-1\).  This is excluded by the standing assumption
\(|\mathcal D|\ge2\).  Hence the {normalized archimedean factor} is strictly positive,
and it remains only to determine when
\(\overline{\mathfrak S}^{(2)}_{b,\mathcal D}\) is positive.

By multiplicativity,
\[
        \mathfrak S^{(2)}_b(n)
        =
        \prod_{p\mid b}
        \left(
        1+\frac{c_p(n)}{(p-1)^2}
        \right).
\]
For \(p\mid b\),
\[
        c_p(n)
        =
        \begin{cases}
        p-1, & p\mid n,\\
        -1, & p\nmid n.
        \end{cases}
\]
Therefore
\[
        1+\frac{c_p(n)}{(p-1)^2}
        =
        \begin{cases}
        \dfrac{p}{p-1}, & p\mid n,\\[2ex]
        \dfrac{p(p-2)}{(p-1)^2}, & p\nmid n.
        \end{cases}
\]
For \(p>2\), both values are strictly positive.  For \(p=2\), the
corresponding factor is
\[
        \begin{cases}
        2, & 2\mid n,\\
        0, & 2\nmid n.
        \end{cases}
\]
Consequently,
\[
        \mathfrak S^{(2)}_b(n)\ge0
        \qquad(n\in\mathbb Z),
\]
and it can vanish only when \(2\mid b\) and \(n\) is odd.  Hence
\[
        \overline{\mathfrak S}^{(2)}_{b,\mathcal D}
        =
        \frac{1}{|\mathcal D|}
        \sum_{a\in\mathcal D}\mathfrak S^{(2)}_b(a)
        \ge0.
\]
Moreover,
\[
        \overline{\mathfrak S}^{(2)}_{b,\mathcal D}>0
\]
if and only if at least one digit \(a\in\mathcal D\) satisfies
\[
        \mathfrak S^{(2)}_b(a)>0.
\]
By the preceding discussion, this is equivalent to saying that either \(b\)
is odd, or \(b\) is even and \(\mathcal D\) contains an even digit.  Thus
\[
        \overline{\mathfrak S}^{(2)}_{b,\mathcal D}>0
        \qquad\Longleftrightarrow\qquad
        b\text{ is odd or }\mathcal D\text{ contains an even digit}.
\]
This proves the proposition.
\end{proof}

We are now in a position to complete the proof of
\cref{thm:intro-two-prime}.

\begin{proof}[Proof of Theorem~\ref{thm:intro-two-prime}]
By \cref{thm:major-plus-minor-general},
\[
        T_k^{(2)}
        =
        \sum_{0\le n<X}
        \mathbf 1_{\mathcal C_k}(n)
        (X-n)\mathfrak S_b^{(2)}(n)
        +
        O_{b,\mathcal D,A}\!\left(
        |\mathcal C_k|X(\log X)^{-A}
        \right).
\]
Applying \cref{prop:weighted-main-term-general} to the main term gives
\[
\begin{aligned}
        T_k^{(2)}
        &=
        \left(
        1-\frac{S_{\mathcal D}}{(b-1)|\mathcal D|}
        \right)
        \overline{\mathfrak S}^{(2)}_{b,\mathcal D}\,
        X|\mathcal C_k| 
        +
        O_{b,\mathcal D,A}\!\left(
        |\mathcal C_k|X(\log X)^{-A}
        \right),
\end{aligned}
\]
where the error \(O_{b,\mathcal D}(|\mathcal C_k|)\) from
\cref{prop:weighted-main-term-general} has been absorbed into the displayed
error term.

The positivity criterion follows from
\cref{prop:weighted-main-term-general}.  Under this criterion the leading
coefficient is a fixed positive constant depending only on
\(b\) and \(\mathcal D\), while the error is
\(o_{b,\mathcal D}(X|\mathcal C_k|)\).  Therefore
\[
        T_k^{(2)}
        \asymp_{b,\mathcal D}
        X|\mathcal C_k|
\]
for all sufficiently large \(k\).
\end{proof}

\begin{remark}[The missing last digit case]
\label{rem:missing-last-digit-constant}
Suppose that
\[
        \mathcal D=\{0,1,\dots,b-2\},
\]
so that the only missing digit is \(a_0=b-1\).  Then
\(|\mathcal D|=b-1\), and the averaged local factor is
\[
        \overline{\mathfrak S}^{(2)}_{b,\mathcal D}
        =
        1-\frac{1}{b-1}
        \sum_{\substack{d\mid b\\ d>1}}
        \frac{\mu(d)}{\phi(d)^2}.
\]
Indeed, for \(d>1\) with \(d\mid b\), the complete average of \(c_d(a)\)
over \(a\in\{0,1,\dots,b-1\}\) vanishes.  Removing the digit \(b-1\)
therefore leaves
\[
        \sum_{a=0}^{b-2}c_d(a)
        =
        -c_d(b-1)
        =
        -c_d(-1)
        =
        -\mu(d),
\]
and hence
\[
        \frac{1}{b-1}
        \sum_{a=0}^{b-2}c_d(a)
        =
        -\frac{\mu(d)}{b-1}.
\]

The {normalized archimedean factor} is
\[
\begin{aligned}
        1-\frac{S_{\mathcal D}}{(b-1)|\mathcal D|}
        &=
        1-\frac{0+1+\cdots+(b-2)}{(b-1)^2}  \\
        &=
        1-\frac{b-2}{2(b-1)}
        =
        \frac{b}{2(b-1)}.
\end{aligned}
\]
Therefore the leading coefficient in the weighted pair count is
\[
        \left(
        1-\frac{S_{\mathcal D}}{(b-1)|\mathcal D|}
        \right)
        \overline{\mathfrak S}^{(2)}_{b,\mathcal D}
        =
        \frac{b}{2(b-1)}
        \left(
        1-\frac{1}{b-1}
        \sum_{\substack{d\mid b\\ d>1}}
        \frac{\mu(d)}{\phi(d)^2}
        \right).
\]

For example, when \(b=10\) and
\(\mathcal D=\{0,1,\dots,8\}\), the divisors \(d>1\) of \(10\) are
\(2,5,10\), and
\[
        \sum_{\substack{d\mid10\\d>1}}
        \frac{\mu(d)}{\phi(d)^2}
        =
        -1-\frac1{16}+\frac1{16}
        =
        -1.
\]
Thus the leading coefficient is
\[
        \frac{10}{18}\left(1+\frac19\right)
        =
        \frac{50}{81}
        \approx0.62.
\]
\end{remark}

\section{Three-term progressions in primes with restricted-digit common difference}
\label{sec:three-term}

\subsection{Minor arcs for the three-term problem}
\label{subsec:three-term-minor-arcs}

We use the same major/minor arcs as in {\eqref{def:major} and \eqref{def:minor}}. Take
\[
        D_0:=X^{1/2}.
\]
For every \(\alpha\in\mathcal G_N\), fix one Dirichlet approximation
\[
        \alpha=\frac{\ell}{d}+\frac{\eta}{N},
        \qquad
        (\ell,d)=1,
        \qquad
        1\le d\le D_0,
        \qquad
        |\eta|\leq \frac{N}{dD_0}.
\]
The major arcs \(\mathfrak M_A^{(N)}\) are defined by the existence of such a
representation with
\[
        d\le L,
        \qquad
        |\eta|\le L,
        \qquad
        L:=(\log X)^A,
\]
and the minor arcs are
\[
        \mathfrak m_A^{(N)}
        :=
        \mathcal G_N\setminus\mathfrak M_A^{(N)}.
\]
For dyadic analysis, we use the fixed approximations above and decompose
\(\mathfrak m_A^{(N)}\) into packets
\(\mathfrak m_A^{(N)}(D,B)\), where
\[
        d\sim D,
\qquad
\max(1,|\eta|)\sim B.
\]
On every nonempty packet,
\[
        1\le D\le D_0,
        \qquad
        B\ge1,
        \qquad
        \max(D,B)\gg L.
\]
Moreover,
\begin{equation}\label{eq:DB_D2B<<}
        DB\ll X^{1/2},
        \qquad
        D^2(2B)\le 8X.
\end{equation}
Indeed, if \(|\eta|\sim0\), then \(B=1\), and hence
\[
        DB\le D_0=X^{1/2},
        \qquad
        D^2(2B)\le 2D_0^2=2X.
\]
If \(|\eta|\ge1\), then
\[
        B\le |\eta|\le \frac{N}{dD_0},
\]
so
\begin{align}
        DB\le \frac{N}{D_0}\ll X^{1/2},
        \qquad
        D^2(2B)
        \le \frac{2D^2N}{dD_0}
        \le 2N=8X.
\end{align}

For \(\Omega\subset\mathcal G_N\), write
\begin{align}\label{def:Tk3_Omega}
        T_k^{(3)}(\Omega)
        :=
        \frac1N
        \sum_{\alpha\in\Omega}
        \widehat H_\Lambda(\alpha)\widehat C_k(\alpha),
\end{align}
where \(\widehat H_\Lambda(\alpha)\) is defined by
\eqref{eq:def-HLambda}. Then
\[
        T_k^{(3)}
        =
        T_k^{(3)}(\mathfrak M_A^{(N)})
        +
        T_k^{(3)}(\mathfrak m_A^{(N)}).
\]

\subsubsection{A three-point minor-arc lemma}

The key new ingredient is a pointwise estimate for the prime-side kernel
\(\widehat H_\Lambda\).  Fix
\[
        \alpha\in\mathfrak m_A^{(N)}(D,B),
        \qquad
        \alpha=\frac{\ell}{d}+\frac{\eta}{N},
        \qquad
        (\ell,d)=1,
        \qquad
        d\sim D,
        \qquad
        \max(1,|\eta|)\sim B.
\]

\begin{lemma}[Three-point minor-arc lemma]
\label{lem:optimal-three-point}
{For every \(\epsilon>0\), uniformly for
\(\alpha\in\mathfrak m_A^{(N)}(D,B)\) and
\(\beta\in\mathcal G_N\), one has}
\begin{equation}\label{eq:optimal-three-point-small}
\min\!\Bigl(
|\widehat\Lambda_X(\beta)|,
|\widehat\Lambda_X(\alpha-2\beta)|,
|\widehat\Lambda_X(\beta-\alpha)|
\Bigr)
\ll_\epsilon
\Bigl(
X^{-1/5}
+
(DB)^{-1/3+\epsilon}
\Bigr)X(\log X)^4.
\end{equation}
\end{lemma}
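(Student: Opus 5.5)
The plan is to reduce \cref{lem:optimal-three-point} to a one-frequency bound and then argue by contradiction through the relation among the three frequencies. Write
\[
        \theta_1:=\beta,\qquad \theta_2:=\alpha-2\beta,\qquad \theta_3:=\beta-\alpha .
\]
These satisfy the two linear relations
\[
        \theta_1+\theta_2+\theta_3=0,\qquad
        \alpha=\theta_1-\theta_3=2\theta_1+\theta_2=-\theta_2-2\theta_3 .
\]

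\textbf{Step 1 (one-frequency bound).} For each \(i\), take a Dirichlet approximation \(\theta_i=a_i/q_i+\eta_i/N\) with \(q_i\le D_0=X^{1/2}\) and \(|\eta_i|\le N/(q_iD_0)\). Put \(M_i:=\max(1,|\eta_i|)\). By \cref{lem:minor_Mangoldt}, using both the nonzero-shift form and the endpoint form \eqref{eq:maynard-prime-sum-bound-perturbed-two-term}, we get
\[
        |\widehat\Lambda_X(\theta_i)|\ll\bigl(X^{-1/5}+(q_iM_i)^{-1/2}\bigr)X(\log X)^4 .
\]
The term \(X^{1/2}(q_iM_i)^{1/2}\) is absorbed, because \(q_iM_i\ll N/D_0\ll X^{1/2}\), which gives \(X^{3/4}\le X^{4/5}\). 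Fix a threshold \(T:=c\,(DB)^{2/3-2\epsilon}\). If some \(q_iM_i\ge T\), then \eqref{eq:optimal-three-point-small} follows at once. So it remains to show that \(q_iM_i<T\) for all three \(i\) is impossible for \(\alpha\in\mathfrak m_A^{(N)}(D,B)\), at least once \(DB\) is large; bounded \(DB\) is trivial.

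\textbf{Step 2 (arithmetic of the three denominators).} Suppose all \(q_iM_i<T\). The relation \(\sum\theta_i=0\) gives
\[
        \frac{a_1}{q_1}+\frac{a_2}{q_2}+\frac{a_3}{q_3}\equiv-\frac{\eta_1+\eta_2+\eta_3}{N}\pmod 1 .
\]
The left side has denominator at most \(q_1q_2q_3<T^3\ll X^{2/3}\) when \(\epsilon\) is small, while the right side is \(\ll T/N\). Hence both sides vanish exactly. Consequently, for each prime \(p\), the two largest of \(v_p(q_1),v_p(q_2),v_p(q_3)\) coincide. Now set \(\ell'/q'\) to be the reduced form of \(a_1/q_1-a_3/q_3\). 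By the three expressions for \(\alpha\), \(q'\) divides \(\mathrm{lcm}(q_1,q_3)\), divides \(\mathrm{lcm}(q_1,q_2)\) up to the factor \(2\), and divides \(\mathrm{lcm}(q_2,q_3)\) up to the factor \(2\). So \(v_p(q')\) is at most the median of the \(v_p(q_i)\), apart from a bounded contribution at \(p=2\). Since the top two valuations are equal, the median is at most half the total, and so
\[
        q'\ll(q_1q_2q_3)^{1/2}.
\]
Meanwhile \(\alpha=\ell'/q'+\eta'/N\) with \(|\eta'|\le 3\max_iM_i\). Taking \(M_j=\max_iM_i\) and \(q_jM_j<T\), together with \(q_i\le q_iM_i<T\) for the two other indices, gives
\[
        q'\max(1,|\eta'|)\ll(q_1q_2q_3)^{1/2}M_j\ll T^{3/2}.
\]

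\textbf{Step 3 (contradiction with the packet of \(\alpha\)).} We compare \(\ell'/q'\) with the selected approximation \(\ell/d\) of \(\alpha\).
\begin{itemize}
\item If \(\ell'/q'\ne\ell/d\), then \(1/(dq')\le|\eta|/N+|\eta'|/N\). This fails, because \(dq'(|\eta|+|\eta'|)\ll D_0\cdot(N/D_0)^{1/2}\cdots\ll N^{1-\delta}\) in our ranges, using \eqref{eq:DB_D2B<<} and \(q'M\ll T^{3/2}\ll DB\ll X^{1/2}\).
\item If \(\ell'/q'=\ell/d\), then \(\eta=\eta'\) up to an integer shift that is excluded by size. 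Minimality of \(d\) in the selection gives \(d\le q'\). Hence \(DB\ll q'\max(1,|\eta'|)\ll T^{3/2}=c^{3/2}(DB)^{1-3\epsilon}\), a contradiction for large \(DB\).
\end{itemize}
With \(T\) as chosen, \(T^{-1/2}\ll(DB)^{-1/3+\epsilon}\), which is the claimed bound.

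The step I expect to be the main obstacle is Step 2. The difficulty is making the \(p\)-adic ``median'' bound \(q'\ll(q_1q_2q_3)^{1/2}\) rigorous, including the factor \(2\) coming from \(2\theta_1\) and \(2\theta_3\), and checking that the exact-vanishing argument stays within the ranges allowed by \(D_0=X^{1/2}\). The naive bound \(q'\le q_1q_3\) only yields the weaker exponent \((DB)^{-1/4}\). The \(\epsilon\)-loss simply absorbs the implied constants in these comparisons.
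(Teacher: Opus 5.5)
Your architecture is the same as the paper's: threshold $T\asymp (DB)^{2/3-2\epsilon}$, exact vanishing of the three-term relation, $\operatorname{lcm}(q_1,q_2,q_3)^2\mid q_1q_2q_3$, then comparison with the selected approximation $\ell/d+\eta/N$ of $\alpha$. Your valuation argument for $q'\le (q_1q_2q_3)^{1/2}$ is also correct.

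\textbf{The gap.} It is the next inequality, $(q_1q_2q_3)^{1/2}M_j\ll T^{3/2}$ with $M_j=\max_iM_i$. From $q_jM_j<T$ and $q_i,q_k<T$ you only get
$(q_1q_2q_3)^{1/2}M_j=(q_jM_j)(q_iq_k/q_j)^{1/2}<T^2q_j^{-1/2}$.
This is of size $T^2$ when the largest shift sits on a small denominator. For example, take $q_j=1$, $M_j\asymp T$, $q_i=q_k\asymp T$. That configuration is compatible with the rational relation and with the lcm condition.

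With $T^2$ in place of $T^{3/2}$, Step 3 only yields $DB\ll T^2$, i.e.\ the exponent $-1/4$. So this step, not the median bound, is where $1/3$ is won or lost.

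\textbf{The fix.} The missing input is the exact relation $\eta_1+\eta_2+\eta_3=0$, which you derived but never used here. Order $q_a\le q_b\le q_c$. The relation gives $\max_iM_i\le M_b+M_c$. Equivalently, express $\alpha$ through the identity involving only the two largest denominators, as the paper does with $\eta=-\eta_2-2\eta_3$ when $d_1\le d_2\le d_3$. Then
$(q_aq_bq_c)^{1/2}M_b\le (q_bM_b)\,q_c^{1/2}<T^{3/2}$ and $(q_aq_bq_c)^{1/2}M_c\le (q_cM_c)\,q_a^{1/2}<T^{3/2}$,
which is the bound you need. It is exactly this relation that rules out the bad configuration above.

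\textbf{A second, smaller error.} The exact-vanishing step also fails as written. The claim $T^3\ll X^{2/3}$ is false: $DB$ may be $\asymp X^{1/2}$, so $T^3$ can be $\asymp X^{1-3\epsilon}$. Pairing the denominator bound $T^3$ with the crude shift bound $T/N$ would then need $T^4=o(N)$, which fails for small $\epsilon$. Bound term by term instead: $q_1q_2q_3|\eta_i|\le (q_iM_i)\prod_{j\ne i}q_j<T^3$. Non-vanishing would then force $1\ll T^3/N\ll X^{-3\epsilon}$, a contradiction.

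With both repairs, the two bullets of your Step 3 go through as you describe. In the equal-fraction case you simply have $q'=d$, since both fractions are reduced.
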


\begin{proof} 
{It is enough to consider \(0<\epsilon<1/3\), since for \(\epsilon\ge1/3\) the claim follows from the trivial estimate.}
If \(DB\) is bounded, the claim
also follows from the trivial estimate.  We may therefore assume that
\(DB\) is sufficiently large.

Consider the Dirichlet approximations
\[
\beta=\frac{\ell_1}{d_1}+\frac{\eta_1}{N},
\qquad
\alpha-2\beta=\frac{\ell_2}{d_2}+\frac{\eta_2}{N},
\qquad
\beta-\alpha=\frac{\ell_3}{d_3}+\frac{\eta_3}{N},
\]
with
\[
        (\ell_i,d_i)=1,
        \qquad
        1\le d_i\le D_0,
        \qquad
        \frac{|\eta_i|}{N}
        \leq \frac{1}{d_iD_0}
        \leq\frac{1}{d_i^2}.
\]
{The boundary case \(|\eta_i|/N=1/d_i^2\) follows by continuity.}
Let \(D_i,B_i\) be the dyadic parameters such that
\[
        d_i\sim D_i,
        \qquad
        \max(1,|\eta_i|)\sim B_i.
\]

Applying \cref{lem:minor_Mangoldt}, we have, for \(j=1,2,3\),
\begin{align}\label{eq:LambdaX_etaj}
\left|
\widehat\Lambda_X\!\left(
\frac{\ell_j}{d_j}+\frac{\eta_j}{N}
\right)
\right|
\ll
\left(
X^{-1/5}
+
(D_jB_j)^{-1/2}
+
\left(\frac{D_jB_j}{N}\right)^{1/2}
\right)X(\log X)^4.
\end{align}
Using \(B_j\ll N/(D_jD_0)\) and \(D_0=X^{1/2}\), we have
\[
        D_jB_j\ll X^{1/2},
        \qquad
        \left(\frac{D_jB_j}{N}\right)^{1/2}
        \ll X^{-1/4}.
\]
Thus the last term in \eqref{eq:LambdaX_etaj} is absorbed by \(X^{-1/5}\).  Therefore
\begin{align}\label{eq:Lambda_etaj_2}
\left|
\widehat\Lambda_X\!\left(
\frac{\ell_j}{d_j}+\frac{\eta_j}{N}
\right)
\right|
\ll
\left(
X^{-1/5}
+
(D_jB_j)^{-1/2}
\right)X(\log X)^4.
\end{align}
Suppose, for contradiction, that all three quantities in
\eqref{eq:Lambda_etaj_2} are larger than the asserted
right-hand side of \eqref{eq:optimal-three-point-small}. Then, for each \(j=1,2,3\),
\begin{equation}\label{eq:DjBj<DB}
        D_jB_j\ll (DB)^{2/3-2\epsilon}.
\end{equation}
We will show that this leads to a contradiction.

{\subsubsection[The identity involving three inner frequencies]{The identity involving three inner frequencies: $\{\beta, \alpha-2\beta, \beta-\alpha\}$.}}
We first use the identity
\begin{align}\label{eq:three_inner=0}
        \beta+(\alpha-2\beta)+(\beta-\alpha)=0
        \qquad\text{in }\mathbb R/\mathbb Z.
\end{align}
We will show that this implies:
\begin{proposition}\label{prop:123}
\begin{align}\label{eq:goal_three_inner}
    \begin{cases}
        \frac{\ell_1}{d_1}
        +
        \frac{\ell_2}{d_2}
        +
        \frac{\ell_3}{d_3}
        \in\mathbb Z\\
        \eta_1+\eta_2+\eta_3=0\\
        \mathrm{lcm}(d_1,d_2,d_3)^2\mid d_1d_2d_3.
    \end{cases}
\end{align}    
\end{proposition}
\begin{proof}
Note that \eqref{eq:three_inner=0} is the same as:
\begin{align}
    \frac{\ell_1}{d_1}+\frac{\ell_2}{d_2}+\frac{\ell_3}{d_3}=-\frac{\eta_1+\eta_2+\eta_3}{N} \qquad\text{in }\mathbb R/\mathbb Z.
\end{align}
By \eqref{eq:DjBj<DB} and \eqref{eq:DB_D2B<<},
\begin{align}\label{eq:Bi<N1/3}
        B_i\le D_iB_i
        \ll (DB)^{2/3-2\epsilon}
        \ll N^{1/3-\epsilon},
        \qquad 1\le i\le3.
\end{align}
Thus \(B_1+B_2+B_3=o(N)\).  If
\[
        \frac{\ell_1}{d_1}
        +
        \frac{\ell_2}{d_2}
        +
        \frac{\ell_3}{d_3}
        \notin\mathbb Z,
\]
then rational separation gives
\begin{align}
    \frac{1}{d_1d_2d_3}\leq \frac{|\eta_1|+|\eta_2|+|\eta_3|}{N},
\end{align}
which yields
\[
\begin{aligned}
1
&\ll
\frac{
(D_1B_1)D_2D_3
+
D_1(D_2B_2)D_3
+
D_1D_2(D_3B_3)
}{N} \\
&\ll
\frac{(DB)^{2-6\epsilon}}{N}
=o(1),
\end{aligned}
\]
where we used \(D_i\le D_iB_i\ll (DB)^{2/3-2\epsilon}\) as in \eqref{eq:DjBj<DB}, and $DB\ll N^{1/2}$ as in
\eqref{eq:DB_D2B<<}.  This is impossible.  Therefore
\begin{align}\label{eq:goal_three_inner_ld}
        \frac{\ell_1}{d_1}
        +
        \frac{\ell_2}{d_2}
        +
        \frac{\ell_3}{d_3}
        \in\mathbb Z.
\end{align}
The same identity gives
\[
        \eta_1+\eta_2+\eta_3\in N\mathbb Z.
\]
By \eqref{eq:Bi<N1/3},
\[
        |\eta_1+\eta_2+\eta_3|
        \ll B_1+B_2+B_3=o(N),
\]
and hence
\begin{align}\label{eq:goal_three_inner_eta}
        \eta_1+\eta_2+\eta_3=0.
\end{align}

Fix a prime \(p\) dividing
\(\operatorname{lcm}(d_1,d_2,d_3)\), and let \(p^v\) be the largest
power of \(p\) dividing one of \(d_1,d_2,d_3\).  We claim that \(p^v\)
divides at least two of the three denominators.  Otherwise, suppose for
example that the full power \(p^v\) occurs only in \(d_1\).  After
multiplying \eqref{eq:goal_three_inner_ld}
by \(\operatorname{lcm}(d_1,d_2,d_3)\) and reducing modulo \(p\), the
term involving \(\ell_1/d_1\) is nonzero, since
\((\ell_1,d_1)=1\), while the other two terms and the right-hand side
vanish modulo \(p\).  This is impossible.  Thus \(p^v\) divides at
least two of \(d_1,d_2,d_3\), and hence
\[
        {p^{2v}}\mid d_1d_2d_3.
\]
Applying this for every prime \(p\) gives
\begin{align}\label{eq:goal_three_inner_lcm}
        \operatorname{lcm}(d_1,d_2,d_3)^2
        \mid d_1d_2d_3.
\end{align}
Combining \eqref{eq:goal_three_inner_ld}, \eqref{eq:goal_three_inner_eta}, \eqref{eq:goal_three_inner_lcm} gives \eqref{eq:goal_three_inner}.
\end{proof}

{\subsubsection[The identity involving two inner frequencies and the outer frequency]{The identity involving two inner frequencies and the outer frequency $\alpha$.}}
We now use one of the three identities
\[
\alpha=\beta-(\beta-\alpha),
\qquad
\alpha=(\alpha-2\beta)+2\beta,
\qquad
\alpha=-(\alpha-2\beta)-2(\beta-\alpha)
\]
that involves the two largest of \(d_1,d_2,d_3\).  Suppose, for
definiteness and for the purpose of the following Proposition \ref{prop:023}, that \(d_1\le d_2\le d_3\).
We use the last identity, which is:
\begin{align}
    \frac{\ell}{d}+\frac{\ell_2}{d_2}+2\frac{\ell_3}{d_3}=-\frac{\eta+\eta_2+2\eta_3}{N}, \qquad\text{in }\mathbb R/\mathbb Z.
\end{align}
We will show:
\begin{proposition}\label{prop:023}
    \begin{align}\label{eq:goal_two_inner}
        \begin{cases}
            \frac{\ell}{d}+\frac{\ell_2}{d_2}+2\frac{\ell_3}{d_3}\in \Z\\
            \eta+\eta_2+2\eta_3=0.
        \end{cases}
    \end{align}
\end{proposition}
\begin{proof}
Suppose otherwise that:
\[
        \frac{\ell}{d}
        +
        \frac{\ell_2}{d_2}
        +
        2\frac{\ell_3}{d_3}
        \notin\mathbb Z,
\]
then rational separation gives
\begin{align}\label{eq:1<<eta023}
1
\ll
\frac{
d\,\operatorname{lcm}(d_2,d_3)
\bigl(|\eta|+|\eta_2|+|\eta_3|\bigr)
}{N}.
\end{align}
We estimate the three terms on the right-hand side of \eqref{eq:1<<eta023} separately.

First, note that by \eqref{eq:goal_three_inner}, 
\[
\operatorname{lcm}(d_2,d_3)\leq \operatorname{lcm}(d_1,d_2,d_3)
\ll
(D_1D_2D_3)^{1/2}
\ll
(DB)^{1-3\epsilon}.
\]
Hence, the term involving \(\eta\) is bounded by
\begin{align}\label{eq:eta023_1}
        \frac{d\,\operatorname{lcm}(d_2,d_3)|\eta|}{N}
        \ll
        \frac{DB \operatorname{lcm}(d_2,d_3)}{N}\ll \frac{(DB)^{2-3\epsilon}}{N}\ll N^{-\epsilon},
\end{align}
where we used {$DB\ll N^{1/2}$} as in \eqref{eq:DB_D2B<<}. 

Next, we consider for $j=2,3$ that,
\begin{align}\label{eq:eta023_j}
    \frac{d\,\operatorname{lcm}(d_2,d_3)|\eta_j|}{N}\ll D\frac{\operatorname{lcm}(d_2,d_3)}{d_j} \frac{D_jB_j}{N}\ll \frac{\operatorname{lcm}(d_2,d_3)}{d_j} \frac{(DB)^{5/3-2\epsilon}}{N}.
\end{align}
We have 
\begin{align}
    \frac{\operatorname{lcm}(d_2,d_3)}{d_j}\leq \frac{\operatorname{lcm}(d_1,d_2,d_3)}{d_2}\leq \frac{(d_1d_2d_3)^{1/2}}{d_2}\leq d_3^{1/2}\leq (D_3B_3)^{1/2}\ll (DB)^{1/3-\epsilon}.
\end{align}
Plugging this into \eqref{eq:eta023_j}, we obtain
\begin{align}\label{eq:eta023_23}
    \frac{d\,\operatorname{lcm}(d_2,d_3)|\eta_j|}{N}\ll \frac{(DB)^{2-3\epsilon}}{N}\ll N^{-\epsilon},
\end{align}
where we used {$DB\ll N^{1/2}$} again.

Plugging \eqref{eq:eta023_1} and \eqref{eq:eta023_23} back into \eqref{eq:1<<eta023}, we get a contradiction.
Thus 
\begin{align}\label{eq:goal_023_ld}
    \frac{\ell}{d}+\frac{\ell_2}{d_2}+2\frac{\ell_3}{d_3}\in \Z.
\end{align}
Clearly this also implies
\begin{align}
    \eta+\eta_2+2\eta_3 \in N\Z.
\end{align}
Note that
\begin{align}
    |\eta|+|\eta_2|+2|\eta_3|\ll B+B_2+B_3\ll DB+D_2B_2+D_3B_3\ll DB\ll N^{1/2}=o(N),
\end{align}
where we used $D_jB_j\ll (DB)^{2/3-2\epsilon}$ as in \eqref{eq:DjBj<DB} and $DB\ll N^{1/2}$ as in \eqref{eq:DB_D2B<<}.
This forces 
\[\eta+\eta_2+2\eta_3=0\]
as claimed.
\end{proof}

Now we combine Proposition \ref{prop:123} with Proposition \ref{prop:023}, this gives
\begin{align}\label{eq:ld====}
        \frac{\ell}{d}
        \equiv
        \frac{\ell_1}{d_1}-\frac{\ell_3}{d_3}
        \equiv
        \frac{\ell_2}{d_2}+2\frac{\ell_1}{d_1}
        \equiv
        -\frac{\ell_2}{d_2}-2\frac{\ell_3}{d_3}
        \pmod1,
\end{align}
and
\begin{align}\label{eq:eta====}
        \eta=\eta_1-\eta_3,
        \qquad
        \eta=\eta_2+2\eta_1,
        \qquad
        \eta=-\eta_2-2\eta_3.
\end{align}
The rational identities \eqref{eq:ld====} imply
\[
        d\mid\operatorname{lcm}(d_1,d_3),
        \qquad
        d\mid\operatorname{lcm}(d_1,d_2),
        \qquad
        d\mid\operatorname{lcm}(d_2,d_3).
\]
By \eqref{eq:goal_three_inner}, we have
\begin{align}\label{eq:D<D1D2D3}
    D\leq d\leq \operatorname{lcm}(d_1,d_2,d_3)\leq (d_1d_2d_3)^{1/2}\ll (D_1D_2D_3)^{1/2}.
\end{align}

Among the three identities involving $\eta$'s in \eqref{eq:eta====}. 
Suppose, without loss of generality, that
\(D_1\le D_2\le D_3\).  Then the corresponding  identity $\eta=-\eta_2-2\eta_3$ gives
\begin{align}\label{eq:B<B2+B3}
        B\ll B_2+B_3.
\end{align}
Combining \eqref{eq:D<D1D2D3} and
\eqref{eq:B<B2+B3}, we have
\begin{align}\label{eq:DB<<<}
DB
\ll
(D_1D_2D_3)^{1/2}B_2
+
(D_1D_2D_3)^{1/2}B_3.
\end{align}
Using $D_1\leq D_2$ and \eqref{eq:DjBj<DB}, the first term on the right-hand side of \eqref{eq:DB<<<} satisfies
\[
(D_1D_2D_3)^{1/2}B_2
\le
(D_2B_2)D_3^{1/2}
\ll
(DB)^{1-3\epsilon}.
\]
For the second term on the right-hand side of \eqref{eq:DB<<<}, similarly, we have
\[
\begin{aligned}
(D_1D_2D_3)^{1/2}B_3
=
(D_3B_3)
\left(\frac{D_1D_2}{D_3}\right)^{1/2} 
\le
(D_3B_3)D_2^{1/2}
\ll
(DB)^{1-3\epsilon}.
\end{aligned}
\]
Therefore \eqref{eq:DB<<<} implies
\[
        DB\ll (DB)^{1-3\epsilon}.
\]
For \(DB\) sufficiently large this is impossible.  This contradiction
proves the lemma.
\end{proof}

\begin{corollary}[Pointwise minor-arc bound for \(\widehat H_\Lambda\)]
\label{cor:HLambda-minor-pointwise}
For every \(\epsilon>0\), uniformly for
\(\alpha\in\mathfrak m_A^{(N)}(D,B)\), one has
\[
|\widehat H_\Lambda(\alpha)|
\ll_\epsilon
\Bigl(
X^{-1/5}
+
(DB)^{-1/3+\epsilon}
\Bigr)
X^2(\log X)^5.
\]
\end{corollary}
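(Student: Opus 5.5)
The plan is to combine the three-point lemma (\cref{lem:optimal-three-point}) with Cauchy--Schwarz and Parseval on \(\mathcal G_N\). Fix \(\alpha\in\mathfrak m_A^{(N)}(D,B)\) and set
\[
        \Delta:=\Bigl(X^{-1/5}+(DB)^{-1/3+\epsilon}\Bigr)X(\log X)^4 .
\]
For each \(\beta\in\mathcal G_N\), \cref{lem:optimal-three-point} says that at least one of the three factors
\(|\widehat\Lambda_X(\beta)|\), \(|\widehat\Lambda_X(\alpha-2\beta)|\), \(|\widehat\Lambda_X(\beta-\alpha)|\) is \(\ll_\epsilon\Delta\). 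Accordingly, partition \(\mathcal G_N=\Omega_1\sqcup\Omega_2\sqcup\Omega_3\), where \(\Omega_j\) is the set of \(\beta\) for which the \(j\)-th factor is the first one (in a fixed order) satisfying this bound. Then
\[
|\widehat H_\Lambda(\alpha)|\le \frac1N\sum_{j=1}^3\sum_{\beta\in\Omega_j}
|\widehat\Lambda_X(\beta)||\widehat\Lambda_X(\alpha-2\beta)||\widehat\Lambda_X(\beta-\alpha)|.
\]

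On \(\Omega_j\) we bound the \(j\)-th factor by \(\Delta\) and apply Cauchy--Schwarz to the remaining two factors, extending each sum to all of \(\mathcal G_N\). For each \(j\) this requires that the two remaining frequencies, as functions of \(\beta\), traverse \(\mathcal G_N\) with bounded multiplicity.
\begin{itemize}
\item The maps \(\beta\mapsto\beta\) and \(\beta\mapsto\beta-\alpha\) are bijections of \(\mathcal G_N\).
\item The map \(\beta\mapsto\alpha-2\beta\) is at most \(2\)-to-\(1\), since \(N=4X\) is even.
\end{itemize}
Hence each \(\ell^2\) sum is at most \(2\sum_{\gamma\in\mathcal G_N}|\widehat\Lambda_X(\gamma)|^2\). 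By Parseval on \(\mathbb Z/N\mathbb Z\),
\[
\sum_{\gamma\in\mathcal G_N}|\widehat\Lambda_X(\gamma)|^2=N\sum_{0\le m<X}\Lambda(m)^2\ll NX\log X,
\]
using the prime number theorem (or Chebyshev's bound).

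Combining these estimates gives
\[
|\widehat H_\Lambda(\alpha)|\ll \frac1N\cdot\Delta\cdot NX\log X
=\Bigl(X^{-1/5}+(DB)^{-1/3+\epsilon}\Bigr)X^2(\log X)^5,
\]
which is the claimed bound; the extra factor of \(\log X\) comes from \(\sum\Lambda^2\). Uniformity in \(\alpha\) is inherited directly from the uniformity in \cref{lem:optimal-three-point}. There is no real obstacle here. The only point needing care is the partition: the \(\Delta\) bound must be applied to a different factor for different \(\beta\), so we split according to which factor is small rather than using a single fixed factor. We must also check the \(2\)-to-\(1\) multiplicity of \(\beta\mapsto\alpha-2\beta\) so that the Parseval step remains valid.
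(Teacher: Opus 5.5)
Your proof is correct and is essentially the paper's argument. The paper writes the case split as the pointwise inequality $|xyz|\le\Delta_0(|xy|+|xz|+|yz|)$ rather than partitioning into $\Omega_1,\Omega_2,\Omega_3$, which amounts to the same thing, and then uses Cauchy--Schwarz and Parseval with the same multiplicity-two observation for $\beta\mapsto\alpha-2\beta$.
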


\begin{proof}
For simplicity, let
\[
\Delta_0
:=
\Bigl(
X^{-1/5}
+
(DB)^{-1/3+\epsilon}
\Bigr)X(\log X)^4.
\]
By \cref{lem:optimal-three-point}, we can bound
\begin{align*}
|\widehat H_\Lambda(\alpha)|
&\ll
\frac{\Delta_0}{N}
\sum_{\beta\in\mathcal G_N}
\Bigl(
|\widehat\Lambda_X(\beta)\widehat\Lambda_X(\alpha-2\beta)|
+
|\widehat\Lambda_X(\beta)\widehat\Lambda_X(\beta-\alpha)| \\
&\qquad\qquad\qquad
+
|\widehat\Lambda_X(\alpha-2\beta)
\widehat\Lambda_X(\beta-\alpha)|
\Bigr).
\end{align*}
Each bilinear average is bounded by \(O(X\log X)\) by
Cauchy--Schwarz and Parseval.  For instance,
\[
\frac1N
\sum_{\beta\in\mathcal G_N}
|\widehat\Lambda_X(\beta)\widehat\Lambda_X(\alpha-2\beta)|
\ll X\log X.
\]
Here the map \(\beta\mapsto\alpha-2\beta\) has fibers of cardinality
\(2\), since \(N=4X\), and this contributes only an absolute constant.
Thus
\[
        |\widehat H_\Lambda(\alpha)|
        \ll
        \Delta_0\cdot X\log X,
\]
which is the claimed bound.
\end{proof}

\subsubsection{Minor arc contributions for the three-term problem}

Define the dyadic minor-arc contribution
\[
\mathcal E^{(3)}_{\mathrm{minor}}(D,B)
:=
\frac1N
\sum_{\alpha\in\mathfrak m_A^{(N)}(D,B)}
|\widehat C_k(\alpha)\widehat H_\Lambda(\alpha)|.
\]
Inserting the pointwise estimate for \(\widehat H_\Lambda\) in
\cref{cor:HLambda-minor-pointwise} into the outer sum against the
\(\alpha_{b,\mathcal D}\)-admissible \(\widehat C_k\) yields the
following.

\begin{proposition}[Dyadic three-term minor-arc contribution]
\label{prop:dyadic-minor-HLambda}
Assume that \(\widehat C_k\) is
\(\alpha_{b,\mathcal D}\)-admissible.  For every \(\epsilon>0\),
\[
\mathcal E^{(3)}_{\mathrm{minor}}(D,B)
\ll_\epsilon
|\mathcal C_k|X(\log X)^5
\left(
X^{-1/5}(D^2B)^{\alpha_{b,\mathcal D}}
+
(DB)^{-1/3+\epsilon}(D^2B)^{\alpha_{b,\mathcal D}}
\right).
\]
\end{proposition}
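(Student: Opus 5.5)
The plan is to combine the pointwise bound of \cref{cor:HLambda-minor-pointwise} with the packetwise $L^1$ estimate supplied by $\alpha_{b,\mathcal D}$-admissibility, exactly as in the proof of \cref{prop:dyadic-minor-general}. Uniformly for $\alpha\in\mathfrak m_A^{(N)}(D,B)$, \cref{cor:HLambda-minor-pointwise} gives
\[
|\widehat H_\Lambda(\alpha)|\ll_\epsilon \bigl(X^{-1/5}+(DB)^{-1/3+\epsilon}\bigr)X^2(\log X)^5 .
\]
Pulling this supremum out of the sum defining $\mathcal E^{(3)}_{\mathrm{minor}}(D,B)$ leaves
\[
\mathcal E^{(3)}_{\mathrm{minor}}(D,B)\ll_\epsilon \frac{X^2(\log X)^5}{N}\bigl(X^{-1/5}+(DB)^{-1/3+\epsilon}\bigr)\sum_{\alpha\in\mathfrak m_A^{(N)}(D,B)}|\widehat C_k(\alpha)|.
\]

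The next step is to bound the remaining $L^1$ sum of $\widehat C_k$ over the packet. Each $\alpha$ in the packet has its fixed Dirichlet representation $\ell/d+\eta/N$ with $d\sim D$, $(\ell,d)=1$, and $\max(1,|\eta|)<2B$. Hence $|\eta|<2B$ and $N\ell/d+\eta\in\mathbb Z$, where $N\ell/d+\eta$ is the integer $a$ with $\alpha=a/N$. Distinct $\alpha$ give distinct triples $(d,\ell,\eta)$, so the sum over the packet is dominated by the left side of \eqref{eq:hybrid-digit-general} with the parameters $D$ and $2B$. By \eqref{eq:DB_D2B<<}, $D^2(2B)\le 8X$, so the admissibility hypothesis applies and bounds this sum by $K_{b,\mathcal D}|\mathcal C_k|(2D^2B)^{\alpha_{b,\mathcal D}}\ll|\mathcal C_k|(D^2B)^{\alpha_{b,\mathcal D}}$.

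Finally, using $N=4X$, the prefactor $X^2/N$ is $\asymp X$, and multiplying out yields the two stated terms $X^{-1/5}(D^2B)^{\alpha_{b,\mathcal D}}$ and $(DB)^{-1/3+\epsilon}(D^2B)^{\alpha_{b,\mathcal D}}$. The only point requiring care, and the main obstacle such as it is, is the second step. One must check that the endpoint case $|\eta|\sim0$ (where $B=1$) is covered by the cumulative packet $|\eta|<2B$. One must also check that the injectivity of $\alpha\mapsto(d,\ell,\eta)$ (via the fixed chosen approximation) prevents overcounting, and that the constraint $D^2(2B)\le8X$ from \eqref{eq:DB_D2B<<} matches the admissibility range. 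Everything else is direct substitution.
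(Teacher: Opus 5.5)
Your proposal is correct and follows the paper's proof essentially verbatim. You pull out the pointwise bound from \cref{cor:HLambda-minor-pointwise}, bound the packet sum of $|\widehat C_k|$ by admissibility with cumulative parameter $2B$ (justified by $D^2(2B)\le 8X$ from \eqref{eq:DB_D2B<<}), and use $N\asymp X$. Your explicit checks of the $|\eta|\sim 0$ endpoint and the injectivity of $\alpha\mapsto(d,\ell,\eta)$ simply spell out what the paper leaves implicit.
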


\begin{proof}
The packet is contained in the cumulative range \(|\eta|<2B\).
By {\eqref{eq:DB_D2B<<}}, one has
\[
        D^2(2B)\le8X,
\]
so admissibility with cumulative parameter \(2B\) gives
\[
\sum_{\alpha\in\mathfrak m_A^{(N)}(D,B)}
|\widehat C_k(\alpha)|
\ll_{b,\mathcal D}
|\mathcal C_k|(D^2B)^{\alpha_{b,\mathcal D}}.
\]
Multiplying by \cref{cor:HLambda-minor-pointwise} and using
\(N\asymp X\) proves the proposition.
\end{proof}

\begin{theorem}[Three-term minor-arc bound]
\label{thm:minor-arc-final}
Assume that \(\widehat C_k\) is
\(\alpha_{b,\mathcal D}\)-admissible and that
\[
        \alpha_{b,\mathcal D}<\frac16.
\]
Then, for every \(\epsilon>0\),
\[
T_k^{(3)}(\mathfrak m_A^{(N)})
\ll_\epsilon
|\mathcal C_k|X
(\log X)^{
7
-
A\left(\frac16-\alpha_{b,\mathcal D}-\epsilon\right)
}.
\]
In particular, after choosing \(\epsilon>0\) sufficiently small and then
taking \(A\) sufficiently large, the minor-arc contribution is smaller
than \(|\mathcal C_k|X\) by an arbitrarily large power of \(\log X\).
\end{theorem}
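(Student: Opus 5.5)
The plan is to sum the dyadic estimate of \cref{prop:dyadic-minor-HLambda} over all nonempty packets \(\mathfrak m_A^{(N)}(D,B)\). Since every minor-arc point lies in exactly one packet (via its fixed Dirichlet approximation), we have
\[
|T_k^{(3)}(\mathfrak m_A^{(N)})|\le\sum_{D,B}\mathcal E^{(3)}_{\mathrm{minor}}(D,B),
\]
where \(D,B\) run over powers of \(2\) with \(1\le D\le D_0\) and \(1\le B\ll N\). This gives \(O((\log X)^2)\) dyadic pairs. On each nonempty packet we also have \(\max(D,B)\gg L\), \(DB\ll X^{1/2}\) and \(D^2B\le 4X\), by \eqref{eq:DB_D2B<<}.

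We bound the two terms of \cref{prop:dyadic-minor-HLambda} separately.

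\textbf{First term.} Using \(D^2B\ll X\), we get \(X^{-1/5}(D^2B)^{\alpha_{b,\mathcal D}}\ll X^{\alpha_{b,\mathcal D}-1/5}\). This is a negative power of \(X\) because \(\alpha_{b,\mathcal D}<1/6<1/5\). It is therefore \(O_A((\log X)^{-A})\) for any \(A\) and is harmless.

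\textbf{Second term (the main step).} We need to compare \((D^2B)^{\alpha_{b,\mathcal D}}\) with \((DB)^{-1/3+\epsilon}\). The packet constraint \(D\le D_0\) alone does not let us replace \(D^2B\) by \(DB\). The plan is to use the crude bound \(D^2B\le (DB)^2\), valid since \(D,B\ge1\). This gives
\[
(DB)^{-1/3+\epsilon}(D^2B)^{\alpha_{b,\mathcal D}}\le (DB)^{2\alpha_{b,\mathcal D}-1/3+\epsilon},
\]
which is exactly the decisive factor in the roadmap. Since \(\alpha_{b,\mathcal D}<1/6\), the exponent \(-2(1/6-\alpha_{b,\mathcal D}-\epsilon/2)\) is negative once \(\epsilon\) is small. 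Then \(DB\ge\max(D,B)\gg L=(\log X)^A\) yields
\[
(DB)^{2\alpha_{b,\mathcal D}-1/3+\epsilon}\ll(\log X)^{-2A(1/6-\alpha_{b,\mathcal D}-\epsilon/2)}\le(\log X)^{-A(1/6-\alpha_{b,\mathcal D}-\epsilon)}.
\]
The last inequality holds because \(2(1/6-\alpha-\epsilon/2)\ge 1/6-\alpha-\epsilon\) whenever the right side is positive; if it is not positive, the claim is trivial.

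\textbf{Combining.} Multiplying by the prefactor \(|\mathcal C_k|X(\log X)^5\) and by the \(O((\log X)^2)\) count of dyadic blocks gives the exponent \(7-A(1/6-\alpha_{b,\mathcal D}-\epsilon)\). The \(X^{\alpha-1/5}\) contribution is absorbed into this. The ``in particular'' clause follows by fixing \(\epsilon<(1/6-\alpha_{b,\mathcal D})/2\) and then taking \(A\) large.

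\textbf{Main obstacle.} The only delicate point is ensuring \(DB\gg L\) on every minor packet, including the endpoint case \(|\eta|\sim0\), where \(B=1\) and the minor-arc condition forces \(D\gg L\). The rest is bookkeeping, with the implied constants depending only on \(\epsilon\), \(b\) and \(\mathcal D\).
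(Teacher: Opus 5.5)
Your proposal is correct and follows essentially the same argument as the paper. You sum \cref{prop:dyadic-minor-HLambda} over the \(O((\log X)^2)\) packets, use \(D^2B\le(DB)^2\) to reach \((DB)^{2\alpha_{b,\mathcal D}-1/3+\epsilon}\), and invoke \(DB\ge\max(D,B)\gg L\); for large \(\epsilon\) the bound follows from a smaller choice of \(\epsilon\), which is what your ``trivial'' case amounts to. The one small difference is the first term: the paper uses \(X^{-1/5}\ll(D^2B)^{-1/5}\) to get \(L^{-(1/5-\alpha_{b,\mathcal D})}\) with a constant uniform in \(A\). Your bound \(X^{\alpha_{b,\mathcal D}-1/5}\) instead needs an \(A\)-dependent constant, or the remark that nonempty minor packets force \(L\ll X^{1/2}\), to match the stated \(\ll_\epsilon\); either way this is harmless.
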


\begin{proof}
It is enough to consider
\(0<\epsilon<1/6-\alpha_{b,\mathcal D}\); the estimate for larger
\(\epsilon\) follows from any smaller choice.  We sum
\cref{prop:dyadic-minor-HLambda} over all dyadic packets
\(\mathfrak m_A^{(N)}(D,B)\).  There are \(O((\log X)^2)\) such packets.
On the minor arcs,
\[
        \max(D,B)\gg L.
\]
Therefore
\[
        D^2B\gg L,
        \qquad
        DB\gg L.
\]
Also \(D^2B\ll X\).  Hence
\[
X^{-1/5}(D^2B)^{\alpha_{b,\mathcal D}}
\ll
(D^2B)^{\alpha_{b,\mathcal D}-1/5}
\ll
L^{-(1/5-\alpha_{b,\mathcal D})}
\le
L^{-(1/6-\alpha_{b,\mathcal D}-\epsilon)}.
\]
For the second term, since the admissible exponent is nonnegative and
\(D^2B\le(DB)^2\),
\[
\begin{aligned}
(DB)^{-1/3+\epsilon}(D^2B)^{\alpha_{b,\mathcal D}}
&\leq
(DB)^{2\alpha_{b,\mathcal D}-1/3+\epsilon} \\
&\ll
L^{-(1/3-2\alpha_{b,\mathcal D}-\epsilon)} \\
&\le
L^{-(1/6-\alpha_{b,\mathcal D}-\epsilon)}.
\end{aligned}
\]
Thus
\[
T_k^{(3)}(\mathfrak m_A^{(N)})
\ll_\epsilon
|\mathcal C_k|X(\log X)^7
L^{-(1/6-\alpha_{b,\mathcal D}-\epsilon)}.
\]
Using \(L=(\log X)^A\) proves the theorem.
\end{proof}

\subsection{Major arcs for the three-term problem}
\label{subsec:three-term-major-arcs}

We now evaluate {the major-arc contribution}.
Recall that the major arcs are
\[
        \alpha=\frac{\ell}{d}+\frac{\eta}{N},
        \qquad
        (\ell,d)=1,
        \qquad
        d\le L,
        \qquad
        |\eta|\le L, 
        \qquad
        L=(\log X)^A,
\]
with $N\ell/d+\eta\in\mathbb Z$.
As before, we separate the major arcs according to the arithmetic of the
denominator.  Let
\[
\mathfrak M_{A,\mathrm{good}}^{(N)}
:=
\left\{
\alpha=\frac{\ell}{d}+\frac{\eta}{N}\in\mathfrak M_A^{(N)}:
        p\mid d \Longrightarrow p\mid b
\right\},
\]
and
\[
        \mathfrak M_{A,\mathrm{bad}}^{(N)}
        :=
        \mathfrak M_A^{(N)}\setminus \mathfrak M_{A,\mathrm{good}}^{(N)}.
\]

First, we show the bad major arcs are negligible because \(\widehat C_k\) is exponentially small there. 
On the bad major arcs, we simply invoke the trivial upper bound for $\widehat H_{\Lambda}$, see \cref{prop:H-Lambda-combinatorial}, that 
\begin{align}\label{eq:HLambda-zero-trivial}
        |\widehat H_\Lambda(\alpha)|
        \le
        \widehat H_\Lambda(0)\ll X^2(\log X)^3,
\end{align}
where one uses trivial estimate \(\Lambda(m)\ll\log X\).

\begin{lemma}[Bad major arcs]
\label{lem:bad-major-three-term}
There exists \(c_{b,\mathcal D,A}>0\) such that
\[
        T_k^{(3)}(\mathfrak M_{A,\mathrm{bad}}^{(N)})
        \ll_{b,\mathcal D,A}
        |\mathcal C_k|X \exp\!\left(
        -c_{b,\mathcal D,A}\frac{\log X}{\log\log X}
        \right).
\]
\end{lemma}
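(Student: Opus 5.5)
The plan mirrors the two-term proof of \cref{lem:bad-major-general}, replacing $|\widehat\Lambda_X|^2$ by the trivial bound \eqref{eq:HLambda-zero-trivial} for the kernel.

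Start from the definition \eqref{def:Tk3_Omega} and apply the triangle inequality:
\[
|T_k^{(3)}(\mathfrak M_{A,\mathrm{bad}}^{(N)})|
\le
\frac1N\sum_{\alpha\in\mathfrak M_{A,\mathrm{bad}}^{(N)}}
|\widehat H_\Lambda(\alpha)|\,|\widehat C_k(\alpha)|.
\]

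Each $\alpha$ in the bad major arcs has a representation $\alpha=\ell/d+\eta/N$ with $d\le L$, $|\eta|\le L$, $(\ell,d)=1$, and $d$ divisible by some prime $p\nmid b$. For such $\alpha$, \cref{lem:digit-bad-major-general} applies directly, since $N=4X$ lies in $[3X,4X]$. It gives
\[
|\widehat C_k(\alpha)|\ll_{b,\mathcal D,A}|\mathcal C_k|\exp\!\left(-c\frac{\log X}{\log\log X}\right).
\]
The same bound holds for $\widehat C_k(-\alpha)$ by conjugation, although only $\widehat C_k(\alpha)$ appears here. For the prime-side kernel we use \eqref{eq:HLambda-zero-trivial}, namely $|\widehat H_\Lambda(\alpha)|\ll X^2(\log X)^3$. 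This follows from \cref{prop:H-Lambda-combinatorial}: the combinatorial sum has at most $X^2$ terms, since $m_1$ and $m_2$ determine $m_3$, and each term is at most $(\log X)^3$.

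Next, count the points. Each major-arc point corresponds to a triple $(d,\ell,\eta)$ with $d\le L$, $\ell\le d$, and $\eta$ in a shifted lattice of spacing $1$ with $|\eta|\le L$. So $\eta$ takes $O(L)$ values, and there are $O(L^3)$ such points in total. Multiplying the three bounds and dividing by $N\asymp X$ gives
\[
\ll |\mathcal C_k|\,X\,L^3(\log X)^3\exp\!\left(-c\frac{\log X}{\log\log X}\right).
\]
Since $L^3(\log X)^3=(\log X)^{3A+3}=\exp\bigl(O_A(\log\log X)\bigr)$, this factor is absorbed by halving the constant $c$. This yields the claim with $c_{b,\mathcal D,A}$ replaced by $c/2$.

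There is no real obstacle. The only point to check is that the hypotheses of \cref{lem:digit-bad-major-general} are met, namely the range of $N$ and the badness of the denominator, and both follow from the definition of $\mathfrak M_{A,\mathrm{bad}}^{(N)}$.
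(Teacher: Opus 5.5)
Your proposal is correct and is essentially the paper's argument. You bound $|\widehat C_k(\alpha)|$ on the bad major arcs via \cref{lem:digit-bad-major-general} and $|\widehat H_\Lambda(\alpha)|$ trivially by $X^2(\log X)^3$; you then count the $O(L^3)$ major-arc points and absorb $(\log X)^{3A+3}$ into the sub-exponential saving. Your extra remarks — the $X^2$ term count in the combinatorial form and halving the constant — only make explicit what the paper leaves implicit.
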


\begin{proof}
If the reduced denominator of \(\alpha\) has a prime factor not dividing \(b\),
then by \cref{lem:digit-bad-major-general} the restricted-digit product structure gives
\[
        |\widehat C_k(\alpha)|
        \ll_{b,\mathcal D,A}
        |\mathcal C_k|\exp\!\left(
        -c_{b,\mathcal D,A}\frac{\log X}{\log\log X}
        \right),
\]
uniformly for \(d\le L\) and \(|\eta|\le L\).  Therefore, by
\eqref{eq:HLambda-zero-trivial},
\[
\left|
T_k^{(3)}(\mathfrak M_{A,\mathrm{bad}}^{(N)})
\right|
\ll
\frac1N
\#\mathfrak M_A^{(N)}\cdot 
X^2(\log X)^3
|\mathcal C_k|\exp\!\left(
        -c_{b,\mathcal D,A}\frac{\log X}{\log\log X}
        \right).
\]
Since \(\#\mathfrak M_A^{(N)}\ll L^3\) and \(N\asymp X\), the powers of
\(\log X\) are absorbed into the exponential saving.
\end{proof}

\subsubsection{Reduction on the good major arcs}
Set
\[
        A':=10(A+1),
        \qquad
        L':=(\log X)^{A'}.
\]
Fix $\alpha \in\mathfrak M_{A,\mathrm{good}}^{(N)}$.
We carry out another major/minor arc decomposition in the $\beta$ variable, by splitting the \(\beta\)-sum defining \(\widehat H_\Lambda(\alpha)\), see \eqref{eq:def-HLambda}, according to
\[
        \beta\in\mathfrak M_{A'}^{(N)}
        \qquad\text{and}\qquad
        \beta\in\mathfrak m_{A'}^{(N)}.
\]
If \(\beta\in\mathfrak m_{A'}^{(N)}\), its selected Dirichlet
approximation has \(d'\max(1,|\eta'|)\gg L'\).  Hence
\cref{lem:minor_Mangoldt}, including the rational endpoint, gives
\[
        |\widehat\Lambda_X(\beta)|
        \ll_A X(\log X)^{-A'/2+4}.
\]
By Cauchy--Schwarz and Parseval, allowing for the multiplicity-two map
\(\beta\mapsto\alpha-2\beta\),
\[
\frac1N
\sum_{\beta\in\mathcal G_N}
|\widehat\Lambda_X(\alpha-2\beta)
  \widehat\Lambda_X(\beta-\alpha)|
\ll X\log X.
\]
Thus the contribution of the terms with \(\beta\in\mathfrak m_{A'}^{(N)}\) is
\begin{align}\label{eq:H-beta-minor}
        \ll_A X^2(\log X)^{-A'/2+5}
        \ll_A X^2(\log X)^{-5A}.
\end{align}

It remains to consider \(\beta\in\mathfrak M_{A'}^{(N)}\).  Write
\[
        \beta=\frac{\ell'}{d'}+\frac{\eta'}{N},
        \qquad
        (\ell',d')=1,
        \qquad
        d'\le L',
        \qquad
        |\eta'|\le L'.
\]
Then
\[
        \alpha-2\beta
        =
        \frac{\ell d'-2\ell'd}{dd'}+\frac{\eta-2\eta'}{N},
\]
and
\[
        \beta-\alpha
        =
        \frac{\ell'd-\ell d'}{dd'}+\frac{\eta'-\eta}{N}.
\]
After reducing rational parts, write
\begin{align}\label{def:ell1d1_ell2d2}
        \frac{\ell d'-2\ell'd}{dd'}=\frac{\ell_1}{d_1},
        \qquad
        (\ell_1,d_1)=1, \text{ and, } 
        \frac{\ell'd-\ell d'}{dd'}=\frac{\ell_2}{d_2},
        \qquad
        (\ell_2,d_2)=1.
\end{align}
Clearly,
\[
        d_1\mid dd',
        \qquad
        d_2\mid dd'.
\]
Hence
\[
        d_1,d_2\le LL',
\]
and the major-arc approximation \cref{lem:prime-major-general}
applies to the three frequencies
\[
        \beta,
        \qquad
        \alpha-2\beta,
        \qquad
        \beta-\alpha,
\]
with a logarithmic parameter depending only on \(A'\).  This yields the following main term in major arc:
\begin{align}\label{def:H_good}
\widehat H_{\mathrm{good}}(\alpha)
:=
\frac1N
\sum_{\substack{\beta\in\mathfrak M_{A'}^{(N)}\\
\beta=\frac{\ell'}{d'}+\frac{\eta'}{N}}}
&\frac{\mu(d')}{\phi(d')}
\frac{\mu(d_1)}{\phi(d_1)}
\frac{\mu(d_2)}{\phi(d_2)}
U_X\!\left(\frac{\eta'}{N}\right)
U_X\!\left(\frac{\eta-2\eta'}{N}\right)
U_X\!\left(\frac{\eta'-\eta}{N}\right),
\end{align}
where \(d_1,d_2\) are the reduced denominators, see \eqref{def:ell1d1_ell2d2}.

\begin{proposition}[Reduction to the good-major model]
\label{prop:H-good-direct-reduction}
For every \(A>0\),
\[
\sup_{\alpha\in\mathfrak M_{A,\mathrm{good}}^{(N)}}
\left|
\widehat H_\Lambda(\alpha)-\widehat H_{\mathrm{good}}(\alpha)
\right|
\ll_A X^2(\log X)^{-A'/2+5}.
\]
\end{proposition}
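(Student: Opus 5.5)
We split the \(\beta\)-sum in \eqref{eq:def-HLambda} into \(\beta\in\mathfrak m_{A'}^{(N)}\) and \(\beta\in\mathfrak M_{A'}^{(N)}\). The minor part is already bounded in \eqref{eq:H-beta-minor} by \(X^2(\log X)^{-A'/2+5}\), uniformly in \(\alpha\). That bound uses only \cref{lem:minor_Mangoldt} for \(\beta\), together with Cauchy--Schwarz and Parseval for the remaining two factors, so nothing about \(\alpha\) beyond \(\alpha\in\mathcal G_N\) enters. It therefore remains to compare, for \(\beta\in\mathfrak M_{A'}^{(N)}\), the product \(\widehat\Lambda_X(\beta)\widehat\Lambda_X(\alpha-2\beta)\widehat\Lambda_X(\beta-\alpha)\) with the model product
\[
\frac{\mu(d')\mu(d_1)\mu(d_2)}{\phi(d')\phi(d_1)\phi(d_2)}\,U_X\!\left(\frac{\eta'}{N}\right)U_X\!\left(\frac{\eta-2\eta'}{N}\right)U_X\!\left(\frac{\eta'-\eta}{N}\right).
\]

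For each of the three frequencies we apply \cref{lem:prime-major-general} with a logarithmic parameter \(C\) depending on \(A'\). We have \(d'\le L'\) and \(|\eta'|\le L'\). Also \(d_1,d_2\le LL'\le (\log X)^{2A'}\), and \(|\eta-2\eta'|,|\eta'-\eta|\le 3L'\). The representations \(\alpha-2\beta=\ell_1/d_1+(\eta-2\eta')/N\) and \(\beta-\alpha=\ell_2/d_2+(\eta'-\eta)/N\) are legitimate inputs to the lemma, since the lemma only needs \((\ell_i,d_i)=1\) and small \(\eta_i\); we do not need them to be the selected Dirichlet triples. Choosing \(C\) large enough, each factor equals its model term plus an error \(O(X(\log X)^{-E})\), where \(E\) can be taken as large as we like in terms of \(A'\).

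We then telescope the product of the three factors, replacing one factor at a time. Each difference term is (one error) \(\times\) (two factors bounded by \(O(X)\)), using \(|\widehat\Lambda_X|\ll X\) and \(|U_X|\le X\). Pointwise this is \(O(X^3(\log X)^{-E})\). Summing over \(\beta\in\mathfrak M_{A'}^{(N)}\), a set of size \(O(L'^3)\), and dividing by \(N\asymp X\) gives a total of \(O(X^2 L'^3(\log X)^{-E})\). With \(E\) chosen so that \(E-3A'\ge A'/2-5\), this is within the claimed bound \(X^2(\log X)^{-A'/2+5}\).

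Two points need care. First, the major arcs \(\mathfrak M_{A'}^{(N)}\) must be disjoint, so that each \(\beta\) has a unique representation \((\ell',d',\eta')\) and the sum \eqref{def:H_good} is well defined. This holds because \(L'^3\ll N\). Second, we must check that the Siegel--Walfisz input in \cref{lem:prime-major-general} is genuinely available at the larger scale \(d\le(\log X)^{2A'}\), \(|\eta|\le 3L'\). Its proof asserts uniformity for any fixed power of \(\log X\), so we simply invoke it with a larger exponent. The main obstacle is this bookkeeping, namely ensuring that the error exponent beats \(L'^3\) while staying uniform in \(\alpha\). Since \(A'\) is fixed in terms of \(A\), this is routine.
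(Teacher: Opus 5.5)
Your proposal is correct and follows the paper's proof step for step. The paper takes the $\beta$-minor contribution from \eqref{eq:H-beta-minor} and applies \cref{lem:prime-major-general} with parameter $A'$ to all three frequencies, after checking $d_1,d_2\le LL'\le (L')^2$ and $|\eta-2\eta'|\le 3L'$. It then telescopes so that each summand carries exactly one remainder, and sums over the $O((L')^3)$ points of $\mathfrak M_{A'}^{(N)}$; the only difference is that it uses the lemma's stated error $X(\log X)^{-4A'}$, giving $X^2(\log X)^{-A'}$, instead of an adjustable exponent $E$.
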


\begin{proof}
The contribution of \(\beta\in\mathfrak m_{A'}^{(N)}\) was estimated above.
We therefore restrict to \(\beta\in\mathfrak M_{A'}^{(N)}\), and write
\[
        \theta_0:=\beta,
        \qquad
        \theta_1:=\alpha-2\beta,
        \qquad
        \theta_2:=\beta-\alpha.
\]
For such a \(\beta\), define the corresponding major-arc main terms by
\[
\begin{aligned}
        \mathcal M_0
        &:=
        \frac{\mu(d')}{\phi(d')}
        U_X\!\left(\frac{\eta'}{N}\right),\\
        \mathcal M_1
        &:=
        \frac{\mu(d_1)}{\phi(d_1)}
        U_X\!\left(\frac{\eta-2\eta'}{N}\right),\\
        \mathcal M_2
        &:=
        \frac{\mu(d_2)}{\phi(d_2)}
        U_X\!\left(\frac{\eta'-\eta}{N}\right).
\end{aligned}
\]

We first verify the uniformity needed to apply
\cref{lem:prime-major-general} with the logarithmic parameter \(A'\).
Since \(A'\ge A\), one has \(L\le L'\), and hence
\[
        d'\le L'\le (L')^2,
        \qquad
        d_1,d_2\le LL'\le (L')^2.
\]
Moreover,
\[
        |\eta'|\le L',
        \qquad
        |\eta-2\eta'|\le L+2L'\le 3L',
        \qquad
        |\eta'-\eta|\le L'+L\le 2L'.
\]
Thus, uniformly in
\(\alpha\in\mathfrak M_{A,\mathrm{good}}^{(N)}\) and
\(\beta\in\mathfrak M_{A'}^{(N)}\),
\[
        \widehat\Lambda_X(\theta_j)
        =
        \mathcal M_j+\mathcal R_j,
        \qquad
        \mathcal R_j
        \ll_{A'}X(\log X)^{-4A'}
        \qquad (j=0,1,2).
\]
Since
\[
        \left|\frac{\mu(q)}{\phi(q)}\right|\le 1
        \qquad\text{and}\qquad
        |U_X(t)|\le X,
\]
we also have
\[
        |\mathcal M_j|\le X,
        \qquad
        |\widehat\Lambda_X(\theta_j)|
        \ll_{A'}X
        \qquad (j=0,1,2).
\]

We now compare the two triple products using the exact telescoping identity
\[
\begin{aligned}
&\widehat\Lambda_X(\theta_0)
 \widehat\Lambda_X(\theta_1)
 \widehat\Lambda_X(\theta_2)
 -
 \mathcal M_0\mathcal M_1\mathcal M_2\\
&\quad=
 \bigl(\widehat\Lambda_X(\theta_0)-\mathcal M_0\bigr)
 \widehat\Lambda_X(\theta_1)
 \widehat\Lambda_X(\theta_2)\\
&\qquad\quad+
 \mathcal M_0
 \bigl(\widehat\Lambda_X(\theta_1)-\mathcal M_1\bigr)
 \widehat\Lambda_X(\theta_2)\\
&\qquad\quad+
 \mathcal M_0\mathcal M_1
 \bigl(\widehat\Lambda_X(\theta_2)-\mathcal M_2\bigr).
\end{aligned}
\]
Consequently, for every
\(\beta\in\mathfrak M_{A'}^{(N)}\),
\[
\begin{aligned}
&\left|
 \widehat\Lambda_X(\beta)
 \widehat\Lambda_X(\alpha-2\beta)
 \widehat\Lambda_X(\beta-\alpha)
 -
 \mathcal M_0\mathcal M_1\mathcal M_2
\right|\\
&\hspace{4cm}
\ll_{A'}X^3(\log X)^{-4A'}.
\end{aligned}
\]
The point is that this is a telescoping expansion: each displayed
summand contains one major-arc remainder, while the two remaining
factors are \(O_{A'}(X)\).  In particular, no product of the three
remainders is being used.

It follows that the total error from the \(\beta\)-major arcs is
\[
\begin{aligned}
&\frac1N
\sum_{\beta\in\mathfrak M_{A'}^{(N)}}
\left|
 \widehat\Lambda_X(\beta)
 \widehat\Lambda_X(\alpha-2\beta)
 \widehat\Lambda_X(\beta-\alpha)
 -
 \mathcal M_0\mathcal M_1\mathcal M_2
\right|\\
&\qquad\ll_{A'}
\frac{\#\mathfrak M_{A'}^{(N)}}{N}
X^3(\log X)^{-4A'}\\
&\qquad\ll_A
X^2(\log X)^{-A'},
\end{aligned}
\]
because
\[
        \#\mathfrak M_{A'}^{(N)}
        \ll (L')^3
        =
        (\log X)^{3A'},
        \qquad
        N\asymp X.
\]
Since \(A'=10(A+1)\), the \(\beta\)-major-arc error is smaller than
\(X^2(\log X)^{-A'/2+5}\).  Combining it with the previously
established contribution \eqref{eq:H-beta-minor} of \(\beta\in\mathfrak m_{A'}^{(N)}\) gives
\[
\sup_{\alpha\in\mathfrak M_{A,\mathrm{good}}^{(N)}}
\left|
\widehat H_\Lambda(\alpha)-\widehat H_{\mathrm{good}}(\alpha)
\right|
\ll_A X^2(\log X)^{-A'/2+5},
\]
which proves the proposition.
\end{proof}

Since
\(\#\mathfrak M_A^{(N)}\ll L^3\), \(|\widehat C_k(\alpha)|\le
|\mathcal C_k|\), and \(N\asymp X\), the error in
\cref{prop:H-good-direct-reduction} contributes at most
\[
\frac{L^3}{N}|\mathcal C_k|X^2(\log X)^{-A'/2+5}
\ll_A |\mathcal C_k|X(\log X)^{-2A}.
\]
Consequently,
\begin{equation}\label{eq:good-major-reduction}
T_k^{(3)}(\mathfrak M_{A,\mathrm{good}}^{(N)})
=
\frac1N
\sum_{\alpha\in\mathfrak M_{A,\mathrm{good}}^{(N)}}
\widehat H_{\mathrm{good}}(\alpha)\widehat C_k(\alpha)
+
O_{b,\mathcal D,A}\!\left(|\mathcal C_k|X(\log X)^{-A}\right).
\end{equation}

\subsubsection[Reduction to smooth alpha]{Reduction to smooth \(\alpha\)}
We can actually reduce from all $\alpha=\frac{\ell}{d}+\frac{\eta}{N} \in \mathfrak{M}_{A,\text{good}}^{(N)}$ to a smaller collection of $\alpha$ where $d$ is squarefree and $d\mid b$.

Due to the appearance of
\(\mu(d'),\mu(d_1),\mu(d_2)\), every nonzero summand has
\(d',d_1,d_2\) squarefree.  Also, every prime factor of the outer
denominator \(d\) divides \(b\).

We claim that \(d\) is squarefree.  Suppose that \(v:=v_p(d)\ge2\) for
some prime \(p\), where \(v_p(n)\) denotes the exponent of \(p\) in the
prime factorization of \(n\). If \(p\nmid d'\), then
\[
\frac{\ell'}{d'}-\frac{\ell}{d}
=\frac{\ell'd-\ell d'}{dd'},
\qquad
\ell'd-\ell d'\equiv-\ell d'\not\equiv0\pmod p.
\]
Thus the reduced denominator \(d_2\) contains \(p^v\), contradicting
\(\mu(d_2)\ne0\).  If \(p\mid d'\), write \(d'=pr'\), with \(p\nmid r'\).
After cancelling the evident common factor \(p\), one has
\[
\frac{\ell'}{pr'}-\frac{\ell}{p^vr}
=
\frac{p^{v-1}\ell'r-\ell r'}{p^vrr'},
\qquad
p^{v-1}\ell'r-\ell r'\equiv-\ell r'\not\equiv0\pmod p.
\]
Again \(p^v\mid d_2\), a contradiction.  Thus \(d\) is squarefree.  Since
all its prime factors divide \(b\), it follows that \(d\mid b\). Hence
\eqref{eq:good-major-reduction} yields the following.
\begin{proposition}[Reduction to squarefree \(b\)-smooth denominators]\label{prop:Tk3=good_smooth_alpha}
    \begin{align}\label{eq:good-major-reduction_smooth}
    T_k^{(3)}(\mathfrak M_{A,\mathrm{good}}^{(N)})
=
\frac1N \sum_{d\mid b}\sum_{\substack{1\le\ell\le d\\(\ell,d)=1}}\sum_{\substack{|\eta|\leq L\\ N\frac{\ell}{d}+\eta\in \Z}} 
\widehat H_{\mathrm{good}}(\frac{\ell}{d}+\frac{\eta}{N})\widehat C_k(\frac{\ell}{d}+\frac{\eta}{N})
+
O_{b,\mathcal D,A}\!\left(|\mathcal C_k|X(\log X)^{-A}\right)
\end{align}
\end{proposition}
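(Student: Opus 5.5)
The plan is to start from \eqref{eq:good-major-reduction} and show that the only outer frequencies \(\alpha=\ell/d+\eta/N\in\mathfrak M_{A,\mathrm{good}}^{(N)}\) for which \(\widehat H_{\mathrm{good}}(\alpha)\) can be nonzero are those with \(d\mid b\). Once this is established, the sum over the good major arcs becomes a sum over the finitely many pairs \((d,\ell)\) with \(d\mid b\), \((\ell,d)=1\), and over \(|\eta|\le L\) with \(N\ell/d+\eta\in\mathbb Z\). The error term is inherited unchanged from \eqref{eq:good-major-reduction}.

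The first step is to fix \(\alpha\) and examine each summand of \eqref{def:H_good}, indexed by \(\beta=\ell'/d'+\eta'/N\in\mathfrak M_{A'}^{(N)}\). The summand carries the factor \(\mu(d')\mu(d_1)\mu(d_2)\). Hence if it is nonzero, then \(d'\), \(d_1\) and \(d_2\) are all squarefree. Here \(d_2\) is the reduced denominator of \(\ell'/d'-\ell/d\), as in \eqref{def:ell1d1_ell2d2}.

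The second step, which is the only substantive point, is a \(p\)-adic valuation argument showing that \(d\) is squarefree. Suppose \(v=v_p(d)\ge2\). Since \(d'\) is squarefree, \(v_p(d')\in\{0,1\}\), so \(v_p(d')<v\). For rationals with distinct \(p\)-adic valuations, the valuation of the difference is the minimum, so \(v_p(\ell'/d'-\ell/d)=-v\). Concretely, one clears the common factor and checks that the numerator \(\ell'd-\ell d'\), or \(p^{v-1}\ell'r-\ell r'\) when \(p\mid d'\), is congruent to \(-\ell\cdot(\text{unit})\not\equiv 0\pmod p\); this uses \((\ell,d)=1\). Therefore \(p^v\mid d_2\), which contradicts \(\mu(d_2)\ne0\). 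So every \(\alpha\) with non-squarefree \(d\) has \(\widehat H_{\mathrm{good}}(\alpha)=0\) identically.

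The third step combines this with the definition of the good arcs. There, every prime factor of \(d\) divides \(b\). A squarefree integer all of whose prime factors divide \(b\) must itself divide \(b\), so \(d\mid b\). For large \(X\) each such \(d\le b\le L\), and the major-arc representation is unique because \(L^3\ll N\). Consequently the surviving set of \(\alpha\) is exactly the one indexed in \eqref{eq:good-major-reduction_smooth}, with the \(\eta\)-condition \(N\ell/d+\eta\in\mathbb Z\) kept as stated. I expect no real obstacle beyond handling the valuation cases \(p\nmid d'\) and \(p\mid d'\) cleanly.
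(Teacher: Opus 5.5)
Your proposal is correct and matches the paper's argument. The factor $\mu(d_2)$ forces $d_2$ to be squarefree, and the same two-case numerator check ($p\nmid d'$ versus $p\mid d'$, using $(\ell,d)=1$) gives $p^{v_p(d)}\mid d_2$, so $d$ is squarefree and, being $b$-smooth, divides $b$; one small point is that the indexed set in the statement also includes any non-squarefree divisors of $b$, so it contains the surviving $\alpha$ rather than equalling them, but those extra terms vanish by the same argument.
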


We next introduce the projected three-prime local factor that will arise from
the good major arcs.  Throughout the decoupling argument below, we write
\[
        L':=(\log X)^{A'}
\]
for the auxiliary \(\beta\)-major arc cutoff.  For \(d\ge1\) and
\((\ell,d)=1\), set
\begin{equation}\label{def:S1full-three-term}
S_{1,\mathrm{full}}\!\left(\frac{\ell}{d}\right)
:=
\sum_{d'=1}^{\infty}
\sum_{\substack{1\le \ell'\le d'\\(\ell',d')=1}}
\frac{\mu(d')}{\phi(d')}
\frac{\mu(d_1)}{\phi(d_1)}
\frac{\mu(d_2)}{\phi(d_2)},
\end{equation}
where \(d_1,d_2\) are the reduced denominators determined by \eqref{def:ell1d1_ell2d2}.
We then define
\begin{equation}\label{def:projected-three-prime-local-factor}
        \mathfrak S_b^{(3)}(n)
        :=
        \sum_{d\mid b}
        \sum_{\substack{1\le \ell\le d\\(\ell,d)=1}}
        S_{1,\mathrm{full}}\!\left(\frac{\ell}{d}\right)
        e\!\left(\frac{n\ell}{d}\right).
\end{equation}

\medskip

The goal of this subsection is to prove the following:
\begin{proposition}[Three-term major-arc formula]
\label{prop:three-term-major-projected-local-factor}
For every sufficiently large \(A>0\),  one has
\begin{equation}\label{eq:three-term-major-projected-local-factor}
T_k^{(3)}\!\left(\mathfrak M_A^{(N)}\right)
=
\sum_{0\le n<X/2}
\mathbf 1_{\mathcal C_k}(n)(X-2n)\mathfrak S_b^{(3)}(n)
+
O_{b,\mathcal D,A}\!\left(
|\mathcal C_k|X(\log X)^{-A}
\right).
\end{equation}
\end{proposition}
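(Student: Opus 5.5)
Starting from \cref{lem:bad-major-three-term} and \cref{prop:Tk3=good_smooth_alpha}, it suffices to evaluate
\[
\frac1N\sum_{d\mid b}\sum_{\substack{1\le\ell\le d\\(\ell,d)=1}}\sum_{|\eta|\le L}
\widehat H_{\mathrm{good}}\!\left(\tfrac{\ell}{d}+\tfrac{\eta}{N}\right)\widehat C_k\!\left(\tfrac{\ell}{d}+\tfrac{\eta}{N}\right).
\]
Since \(d\mid b\mid N\), we have \(\eta\in\mathbb Z\). The plan is to decouple \(\widehat H_{\mathrm{good}}\) into an arithmetic factor depending only on \(\ell/d\) and an archimedean factor depending only on \(\eta\). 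Then we complete the \(\eta\)-sum and invert the Fourier transform, as in the proof of \cref{prop:major-main-general}.

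\emph{Decoupling.} In \eqref{def:H_good} the sum over \(\beta\in\mathfrak M_{A'}^{(N)}\) runs over triples \((d',\ell',\eta')\) with \(d'\le L'\) and \(|\eta'|\le L'\). The arithmetic weight \(\mu(d')\mu(d_1)\mu(d_2)/(\phi(d')\phi(d_1)\phi(d_2))\) depends only on \((\ell/d,\ell'/d')\). The archimedean weight \(U_X(\eta'/N)U_X((\eta-2\eta')/N)U_X((\eta'-\eta)/N)\) depends only on \((\eta,\eta')\). Hence \(\widehat H_{\mathrm{good}}(\ell/d+\eta/N)=S_1^{\le L'}(\ell/d)\,S_2^{\le L'}(\eta)\) exactly, where
\[
S_2^{\le L'}(\eta):=\frac1N\sum_{|\eta'|\le L'}U_X(\tfrac{\eta'}N)U_X(\tfrac{\eta-2\eta'}N)U_X(\tfrac{\eta'-\eta}N),
\]
and \(S_1^{\le L'}\) is the \(d'\le L'\) truncation of \eqref{def:S1full-three-term}. (The range of \(\eta'\) is independent of \(d'\), given the definition of \(\mathfrak M_{A'}^{(N)}\) and the uniqueness of the representation.)

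Next we replace both truncated factors by their full versions.
\begin{itemize}
\item For \(S_1\): with \(d\mid b\) fixed, \(d_1\) and \(d_2\) are essentially \(d'/\gcd(d',d)\) times bounded factors. The tail \(d'>L'\) is therefore bounded by \(\ll_b\sum_{d'>L'}d'\phi(d')^{-3}\ll (L')^{-1+\epsilon}\). This also shows absolute convergence of \(S_{1,\mathrm{full}}\).
\item For \(S_2\): let \(S_{2,\mathrm{full}}(\eta)\) be the sum over all \(\eta'\) modulo \(N\). By \eqref{eq:UX_estimate}, for \(|\eta|\le L\) the tail \(|\eta'|>L'\) is \(\ll N^{-1}\sum_{|\eta'|>L'}(N/|\eta'|)(N/|\eta'|)X\ll X^2/L'\). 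Together with \(|S_2|\ll X^2\), the total error is \(O(X^2(\log X)^{-A'/2})\) uniformly in \(\alpha\).
\end{itemize}
Multiplying by \(L^3|\mathcal C_k|/N\) keeps these errors acceptable, since \(A'=10(A+1)\).

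\emph{Completing the outer sum.} Orthogonality in \(\eta'\) gives
\[
S_{2,\mathrm{full}}(\eta)=\sum_{\substack{0\le m_1,m_2,m_3<X\\ m_1+m_3=2m_2}}e\!\left(\frac{(m_2-m_3)\eta}{N}\right),
\]
the \(\Lambda\equiv1\) analogue of \cref{prop:H-Lambda-combinatorial}. Writing \(h=m_2-m_3\), the number of progressions with difference \(h\) is \(X-2|h|\) for \(|h|<X/2\), and zero otherwise. We then extend the \(\eta\)-sum from \(|\eta|\le L\) to all residues modulo \(N\).

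This tail extension is the main obstacle. Pointwise \(|S_{2,\mathrm{full}}(\eta)|\) only decays like \(X^2/|\eta|\), which is not summable against the trivial bound \(|\widehat C_k|\le|\mathcal C_k|\). I would first try the Fejér-type structure: in \(h\), the coefficient \(X-2|h|\) is a triangle function, so its Fourier transform is \(\ll \min(X^2,N^2/\eta^2)\). With that bound, the tail \(\sum_{|\eta|>L}\) contributes \(\ll |\mathcal C_k|X/L\), exactly as in \cref{prop:major-main-general}. Once the sum is complete, orthogonality in \(\eta\) picks out \(h\equiv -n\pmod N\). Since \(N=4X\), this forces \(h=-n\), and the summand is \(\mathbf 1_{\mathcal C_k}(n)(X-2n)\,e(n\ell/d)\) for \(0\le n<X/2\) (up to the sign convention, matching \eqref{def:projected-three-prime-local-factor} after \(\ell\mapsto-\ell\) if needed).

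Summing over \(d\mid b\) and \(\ell\) against \(S_{1,\mathrm{full}}(\ell/d)\) produces \(\mathfrak S_b^{(3)}(n)\), which gives \eqref{eq:three-term-major-projected-local-factor}. Finally, the bad-arc bound of \cref{lem:bad-major-three-term} is added to pass from \(\mathfrak M_{A,\mathrm{good}}^{(N)}\) to \(\mathfrak M_A^{(N)}\).
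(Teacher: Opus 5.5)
You follow the paper's route: reduce to $d\mid b$, decouple $\widehat H_{\mathrm{good}}$ into $S_{1,\mathrm{full}}(\ell/d)\,S_{2,\mathrm{full}}(\eta)$, complete the inner sums, then complete the outer $\eta$-sum using the Fej\'er bound $|S_{2,\mathrm{full}}(\eta)|\ll\min(X^2,N^2/\eta^2)$ and invert. Using the trivial bound $|\widehat C_k|\le|\mathcal C_k|$ on the outer tail, giving $|\mathcal C_k|X/L$, is fine; the paper uses admissibility there but does not need it.

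\textbf{The gap is your decoupling step.} The identity $\widehat H_{\mathrm{good}}(\ell/d+\eta/N)=S_1^{\le L'}(\ell/d)\,S_2^{\le L'}(\eta)$ is not exact, because the $\eta'$-range does depend on $(\ell',d')$. Since $\beta\in\mathcal G_N$, writing $\beta=\ell'/d'+\eta'/N$ forces $\eta'\in\mathbb Z-N\ell'/d'$. For $d'\nmid N$ (for instance any $d'>1$ coprime to $2b$, which is most of $d'\le L'$), this is a shifted lattice whose shift $N\ell'/d' \bmod 1$ varies with $\ell'/d'$. Uniqueness of the representation only prevents double counting; it does not make the lattice independent of $\ell'/d'$. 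You appear to have transferred the integrality of the outer $\eta$ (where $d\mid b\mid N$) to the inner $\eta'$. As a result, the truncated archimedean sums $S_{2,L'}(\ell'/d',\eta)$ really differ for different $\ell'/d'$, and no exact product formula holds. This is why the paper's lemma is only an \emph{approximate} decoupling.

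\textbf{Repair (the paper's Step 1).} Fix $(\ell',d')$ and complete the $\eta'$-sum over its own shifted class $\mathbb Z/N\mathbb Z-N\ell'/d'$. Writing $\eta'=t-N\ell'/d'$, orthogonality produces an extra phase $e(-(m_1-2m_2+m_3)\ell'/d')$. This phase equals $1$ on the surviving support $m_1+m_3=2m_2$, so every completed sum equals the same $S_{2,\mathrm{full}}(\eta)$. Your tail estimate $O(X^2/L')$ works unchanged on any unit-spaced lattice. Hence $S_{2,L'}(\ell'/d',\eta)=S_{2,\mathrm{full}}(\eta)+O(X^2/L')$ uniformly in $(\ell',d')$.

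Because this error now sits inside the $(d',\ell')$-sum, you also need absolute summability:
$\sum_{d'\le L'}\sum_{\ell'}|\mathcal A_{\ell,d}(\ell',d')|\ll_b 1$, not merely convergence of $S_1$. Your bound $\sum d'\phi(d')^{-3}$, which comes from $d_1,d_2\gg_b d'$, supplies this. With these two additions you get $\widehat H_{\mathrm{good}}=S_{1,\mathrm{full}}S_{2,\mathrm{full}}+O_b(X^2(L')^{-1+\epsilon})$, and the rest of your argument goes through.
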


For the proof of this proposition, we first need a decoupling lemma as follows.

\begin{lemma}[Approximate decoupling of the good-major kernel]
\label{lem:Hgood-approximate-decoupling}
Let
\[
        \alpha=\frac{\ell}{d}+\frac{\eta}{N}
        \in \mathfrak M_{A,\mathrm{good}}^{(N)},
        \qquad
        d\mid b,
        \qquad
        |\eta|\le L.
\]
Then we can almost decouple the contributions from $\ell/d$ and $\eta$:
\begin{equation}\label{eq:Hgood-S1S2full}
        \widehat H_{\mathrm{good}}\!\left(\frac{\ell}{d}+\frac{\eta}{N}\right)
        =
        S_{1,\mathrm{full}}\!\left(\frac{\ell}{d}\right)
        S_{2,\mathrm{full}}(\eta)
        +
        O_b(X^2(L')^{-1}).
\end{equation}
where
\begin{equation}\label{eq:S2full-compact}
        S_{2,\mathrm{full}}(\eta)
        :=
        \sum_{|h|<X/2}
        (X-2|h|)
        e\!\left(-\frac{h\eta}{N}\right).
\end{equation}
{Thus \(S_{2,\mathrm{full}}\) is the archimedean kernel: it is
built from the physical weight \(X-2|h|\) imposed by the interval
restrictions.}  Moreover,
\begin{equation}\label{eq:S2full-decay}
        |S_{2,\mathrm{full}}(\eta)|
        \ll
        \frac{X^2}{(1+|\eta|)^2}.
\end{equation}
\end{lemma}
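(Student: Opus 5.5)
The plan is to show that $\widehat H_{\mathrm{good}}$, a finite sum over $\beta$-major arcs, factors approximately into an arithmetic sum over $(d',\ell')$ times an archimedean sum over $\eta'$, and then to complete both sums.

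\textbf{Step 1: reindex and separate the variables.} For $d\mid b$ we have $d\mid N$, so $\eta\in\mathbb Z$. Since $d'\le L'$ and $|\eta'|\le L'$, the representation $\beta=\ell'/d'+\eta'/N$ is unique, and the constraint $N\ell'/d'+\eta'\in\mathbb Z$ couples $\eta'$ to $(d',\ell')$. To remove this coupling, we restrict attention to the terms that survive the weights. The factor $\mu(d')\mu(d_1)\mu(d_2)$ forces $d'$ to be squarefree. For primes $p\mid d'$ with $p\nmid b$ we have $p\nmid N$, so $\eta'$ lies in a shifted lattice. I therefore plan to handle general $d'$ as follows: write $\eta'=\eta'_0+m$ with $m\in\mathbb Z$, where $\eta'_0\equiv -N\ell'/d' \pmod 1$. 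Since $U_X$ varies by $O(X^2/N)$-type amounts relative to a shift of size $<1$, I will argue that replacing $\eta'$ by a nearby integer costs relative error $O(1/X)$ per term in the regime $|\eta'|\ll L'$. The arithmetic weights then depend only on $(d',\ell')$, and the archimedean factor depends only on $(\eta',\eta)$. This gives
\[
\widehat H_{\mathrm{good}}\approx \Bigl(\sum_{d'\le L'}\sum_{\ell'}\tfrac{\mu(d')\mu(d_1)\mu(d_2)}{\phi(d')\phi(d_1)\phi(d_2)}\Bigr)\cdot\frac1N\sum_{|\eta'|\le L'}U_X\!\bigl(\tfrac{\eta'}N\bigr)U_X\!\bigl(\tfrac{\eta-2\eta'}N\bigr)U_X\!\bigl(\tfrac{\eta'-\eta}N\bigr).
\]

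\textbf{Step 2: complete both sums.} For the archimedean factor, extend the $\eta'$-sum to all residues mod $N$ using $|U_X(t)|\le 1/(2\|t\|)$. The tail $|\eta'|>L'$ is bounded by $\frac1N\sum_{|\eta'|>L'}(N/\eta')^2\cdot X\ll X^2/L'$. The completed sum, by orthogonality exactly as in the proof of Proposition~\ref{prop:H-Lambda-combinatorial}, counts triples $m_1+m_3=2m_2$ in $[0,X)^3$ weighted by $e((m_2-m_3)\eta/N)$. Putting $h=m_2-m_3$, the number of such triples is $X-2|h|$ for $|h|<X/2$, which yields $S_{2,\mathrm{full}}(\eta)$ up to the sign convention. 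For the arithmetic factor, extend to all $d'$. Here $d_1,d_2$ are the denominators of $\ell/d-2\ell'/d'$ and $\ell'/d'-\ell/d$; since $d\mid b$ is fixed, both are $\gg d'/(2d)$. The summand is therefore $\ll_b d'^{-3+\epsilon}$ up to $\phi$-losses, and summed over $\ell'$ it is $\ll d'^{-2+\epsilon}$. The tail beyond $L'$ is then $O_b((L')^{-1+\epsilon})$, and multiplying by $|S_2|\ll X^2$ gives an error of order $X^2/L'^{1-\epsilon}$, which suffices after adjusting $A'$ slightly. (The statement claims $O_b(X^2/L')$, so I would bound the tail more carefully using $\phi(d_1)\phi(d_2)\gg_b \phi(d')^2$, which gives a tail of $\sum_{d'>L'}\mu^2(d')/\phi(d')^2\ll 1/L'$.)

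\textbf{Step 3: the decay bound.} $S_{2,\mathrm{full}}$ is a Fejér-type kernel, namely $\sum_{|h|<X/2}(X-2|h|)e(-h\eta/N)=2\bigl|\sum_{0\le j<X/2}e(j\eta/N)\bigr|^2$ up to boundary terms. Each factor is $\ll\min(X,N/|\eta|)$, which gives $X^2/(1+|\eta|)^2$.

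The main obstacle is Step 1: the shifted-lattice condition on $\eta'$ when $d'$ has prime factors outside $b$, which makes the $\eta'$-range depend on $(d',\ell')$. My first attempt is the near-integer shift argument described there. If that is too lossy, I will instead split by residue class, observing that the completed $\eta'$-sum over a coset $\mathbb Z+c$ evaluates via orthogonality to a twisted count whose twist $e(c(\cdot))$ is harmless because the constraint $m_1+m_3=2m_2$ absorbs it.
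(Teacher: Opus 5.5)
\textbf{Gap 1: the near-integer shift in your Step 1 fails.} Since $N=4X$, one has $|U_X(\eta'/N)|=|\sin(\pi\eta'/4)/\sin(\pi\eta'/N)|$, so $U_X(\eta'/N)$ oscillates on the unit scale in $\eta'$. For instance, $U_X(4/N)=U_X(1/X)=0$, while $|U_X((4-\delta)/N)|\asymp_\delta X$ for any fixed $\delta\in(0,1)$. The variation $O(X^2/N)$ you quote is $\asymp X$, which is the full size of $U_X$, not a relative $O(1/X)$. Already the terms with $|\eta'|\asymp1$ produce replacement errors of order $X^3/N\asymp X^2$, the size of the main term.

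Your displayed factorization is nevertheless true, but only because all the truncated coset sums approximate the same completed quantity, not because they agree term by term. So your fallback is not optional: it is the argument, and it must be applied for each fixed $(d',\ell')$ before any factoring. Write $\widehat H_{\mathrm{good}}=\sum_{d'\le L'}\sum_{\ell'}\mathcal A_{\ell,d}(\ell',d')\,S_{2,L'}(\ell'/d',\eta)$. Here $\mathcal A_{\ell,d}(\ell',d')=\frac{\mu(d')\mu(d_1)\mu(d_2)}{\phi(d')\phi(d_1)\phi(d_2)}$, and $S_{2,L'}$ is the $\eta'$-sum over $|\eta'|\le L'$, $N\ell'/d'+\eta'\in\mathbb Z$. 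Complete each $\eta'$-sum over its own coset modulo $N$ and substitute $\eta'=t-N\ell'/d'$. The twist $e(-(m_1-2m_2+m_3)\ell'/d')$ then equals $1$ on the support $m_1+m_3=2m_2$ forced by orthogonality in $t$; there is no wraparound since $|m_1-2m_2+m_3|<2X<N$. Every completed coset sum is therefore exactly $S_{2,\mathrm{full}}(\eta)$, and your tail bound $\frac1N\sum_{|\eta'|>L'}(N/|\eta'|)^2X\ll X^2/L'$ holds uniformly in the coset. The separation follows with error $O(X^2/L')\sum_{d',\ell'}|\mathcal A_{\ell,d}(\ell',d')|$. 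This is exactly the paper's route.

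\textbf{Gap 2: the inequality in your Step 2 is unjustified.} That error term, and the $d'$-tail, need absolute convergence with a sharp tail. Your Step 2 asserts $\phi(d_1)\phi(d_2)\gg_b\phi(d')^2$, which is true but does not follow from $d_1,d_2\gg_b d'$. Size alone gives only $\phi(d_i)\gg d'/\log\log d'$ and a tail $(\log\log L')^2/L'$, which misses the stated $O_b(X^2(L')^{-1})$ for fixed $A'$.

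The missing observation is that primes of $d'$ outside $2d$ survive in both reduced denominators. If $p\mid d'$ and $p\nmid 2d$, then $\ell d'-2\ell'd\equiv-2\ell'd\not\equiv0$ and $\ell'd-\ell d'\equiv\ell'd\not\equiv0\pmod p$. Write a squarefree $d'=qr$ with every prime of $q$ dividing $2d$ and $(r,2d)=1$. Then $r\mid d_1$ and $r\mid d_2$, hence $\sum_{\ell'}|\mathcal A_{\ell,d}(\ell',d')|\le\mu^2(d')/\phi(r)^2$. Since $d\mid b$, $q$ ranges over the boundedly many squarefree divisors of $\prod_{p\mid 2d}p$. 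Summing over $r$ then gives both the bound $\ll_b1$ and the tail $\ll_b1/L'$.

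Your Step 3 is correct: $S_{2,\mathrm{full}}=2|U_M|^2$ or $|U_M|^2+|U_{M+1}|^2$ according to the parity of $X$, and the decay bound follows.
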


\begin{proof}
{We first rewrite the good-major model in a form that separates
the arithmetic coefficient from the archimedean kernel.}  For \(d'\ge1\) and
\((\ell',d')=1\), define
\[
        \mathcal A_{\ell,d}(\ell',d')
        :=
        \frac{\mu(d')}{\phi(d')}
        \frac{\mu(d_1)}{\phi(d_1)}
        \frac{\mu(d_2)}{\phi(d_2)},
\]
where \(d_1,d_2\) are the reduced denominators appearing in
\eqref{def:S1full-three-term}.  Then the definition of
\(\widehat H_{\mathrm{good}}\) gives
\begin{align}\label{eq:Hgood-pre-decoupling}
\widehat H_{\mathrm{good}}\!\left(\frac{\ell}{d}+\frac{\eta}{N}\right)
&=
\sum_{d'\le L'}
\sum_{\substack{1\le \ell'\le d'\\(\ell',d')=1}}
\mathcal A_{\ell,d}(\ell',d')\,
S_{2,L'}\!\left(\frac{\ell'}{d'},\eta\right),
\end{align}
where
\[
        S_{2,L'}\!\left(\frac{\ell'}{d'},\eta\right)
        :=
        \frac1N
        \sum_{\substack{|\eta'|\le L'\\N\ell'/d'+\eta'\in\mathbb Z}}
        U_X\!\left(\frac{\eta'}{N}\right)
        U_X\!\left(\frac{\eta-2\eta'}{N}\right)
        U_X\!\left(\frac{\eta'-\eta}{N}\right).
\]

\medskip

\noindent\textbf{Step 1: Completion of the inner \(\eta'\)-sum.}
We claim that, uniformly for \(d'\le L'\), \((\ell',d')=1\), and
\(|\eta|\le L\),
\begin{equation}\label{eq:S2L-S2full}
        S_{2,L'}\!\left(\frac{\ell'}{d'},\eta\right)
        =
        S_{2,\mathrm{full}}(\eta)
        +
        O(X^2(L')^{-1}).
\end{equation}
To prove this, complete the \(\eta'\)-sum to the full residue class
\[
        \eta'\in\mathbb Z/N\mathbb Z-N\ell'/d'.
\]
Expanding the three \(U_X\)-factors gives
\begin{align}\label{eq:complete_UXeta'}
&\frac1N
\sum_{\eta'\in\mathbb Z/N\mathbb Z-N\ell'/d'}
U_X\!\left(\frac{\eta'}{N}\right)
U_X\!\left(\frac{\eta-2\eta'}{N}\right)
U_X\!\left(\frac{\eta'-\eta}{N}\right)      \\
&\qquad =
\sum_{0\le m_1,m_2,m_3<X}
e\!\left(\frac{(m_2-m_3)\eta}{N}\right)
\frac1N
\sum_{\eta'\in\mathbb Z/N\mathbb Z-N\ell'/d'}
e\!\left(\frac{(m_1-2m_2+m_3)\eta'}{N}\right).\notag
\end{align}
Writing
\(\eta'=t-N\ell'/d'\), with \(t\in\mathbb Z/N\mathbb Z\), gives
\[
\begin{aligned}
&\frac1N
\sum_{\eta'\in\mathbb Z/N\mathbb Z-N\ell'/d'}
e\!\left(\frac{(m_1-2m_2+m_3)\eta'}{N}\right)\\
&\qquad=
e\!\left(-\frac{(m_1-2m_2+m_3)\ell'}{d'}\right)
\frac1N\sum_{t\in\mathbb Z/N\mathbb Z}
e\!\left(\frac{(m_1-2m_2+m_3)t}{N}\right).
\end{aligned}
\]
This equals zero unless
\(m_1-2m_2+m_3\equiv0\pmod N\), and equals one when that congruence
holds.  Since \(0\le m_i<X\) and \(N=4X\), one has
\(|m_1-2m_2+m_3|<2X<N\); hence the congruence is equivalent to the
ordinary equality \(m_1-2m_2+m_3=0\).
Writing the resulting three-term progression as
\[
        m_1=m,\qquad m_2=m+h,\qquad m_3=m+2h,
\]
the common difference \(h\) may be positive or negative.  The constraints
\(0\le m_i<X\) imply
\[
        |h|<\frac X2,
\]
and for each such \(h\) there are exactly \(X-2|h|\) admissible choices of
\(m\).  Since \(m_2-m_3=-h\), the completed sum in \eqref{eq:complete_UXeta'} becomes
\[
        \sum_{|h|<X/2}
        (X-2|h|)
        e\!\left(-\frac{h\eta}{N}\right)
        =
        S_{2,\mathrm{full}}(\eta).
\]

It remains to bound the omitted tail \(|\eta'|>L'\).  Choose
representatives of the shifted residue class in \((-N/2,N/2]\).  Since
\(|\eta|\le L\) and \(L'\gg L\), for \(L'<|\eta'|\le N/4\) one has
\[
\left|U_X\!\left(\frac{\eta'}N\right)
U_X\!\left(\frac{\eta-2\eta'}N\right)
U_X\!\left(\frac{\eta'-\eta}N\right)\right|
\ll \frac{X^3}{|\eta'|^3}.
\]
The shifted lattice has unit spacing, so this portion is
\[
\ll\frac{X^3}{N}\sum_{r>L'}r^{-3}
\ll X^2(L')^{-2}.
\]
If \(N/4<|\eta'|\le N/2\), the first and third factors are \(O(1)\), while
\[
\left|U_X\!\left(\frac{\eta-2\eta'}N\right)\right|
\ll
\frac{X}{1+\operatorname{dist}(2\eta'-\eta,N\mathbb Z)}.
\]
The doubling map has bounded multiplicity on the unit-spaced shifted lattice;
after division by \(N\asymp X\), this range contributes \(O(\log X)\).
Consequently the omitted tail is
\[
O\!\left(X^2(L')^{-2}+\log X\right)
=O\!\left(X^2(L')^{-1}\right),
\]
uniformly in \(d',\ell'\), and \(|\eta|\le L\).  This proves
\eqref{eq:S2L-S2full}.

The factor $S_{2,\mathrm{full}}(\eta)$, arising from the interval constraints, can be expressed in terms of Fejér kernels. If $X$ is even, write \(X=2M\), then
\[
S_{2,\mathrm{full}}(\eta)
=2\left|U_M\!\left(\frac\eta N\right)\right|^2,
\]
whereas if $X$ is odd, write \(X=2M+1\), then
\[
S_{2,\mathrm{full}}(\eta)
=\left|U_M\!\left(\frac\eta N\right)\right|^2
+\left|U_{M+1}\!\left(\frac\eta N\right)\right|^2.
\]
The usual geometric-sum estimate now proves
\eqref{eq:S2full-decay}.

\medskip

\noindent\textbf{Step 2: Completion of the \(d'\) sum.}

Define the truncated arithmetic coefficient
\[
        S_{1,L'}\!\left(\frac{\ell}{d}\right)
        :=
        \sum_{d'\le L'}
        \sum_{\substack{1\le \ell'\le d'\\(\ell',d')=1}}
        \mathcal A_{\ell,d}(\ell',d').
\]
We claim that
\begin{equation}\label{eq:S1-S1full}
        S_{1,L'}\!\left(\frac{\ell}{d}\right)
        =
        S_{1,\mathrm{full}}\!\left(\frac{\ell}{d}\right)
        +
        O_b((L')^{-1}),
\end{equation}
where $S_{1,\mathrm{full}}$ is defined in \eqref{def:S1full-three-term},
and the following upper bound
\begin{equation}\label{eq:S1full-bound}
        \left|
        S_{1,\mathrm{full}}\!\left(\frac{\ell}{d}\right)
        \right|
        \ll_b 1.
\end{equation}

We first note that any prime divisor $p$ of $d'$ that is coprime to \(2d\) must divide both $d_1$ and $d_2$.  
In fact, let $p$ be such a prime divisor. 
Since 
\((\ell',d')=1\), we have \(p\nmid\ell'\), and since \(p\nmid d\),
\[
        \ell'd-\ell d'\equiv \ell'd\not\equiv0\pmod p,
\]
while
\[
        \ell d'-2\ell'd\equiv -2\ell'd\not\equiv0\pmod p.
\]
Hence \(p\) does not cancel from the denominator \(dd'\) in either reduced
fraction defining \(d_1\) and \(d_2\), see \eqref{def:ell1d1_ell2d2}.  Thus every prime \(p\nmid 2d\)
dividing \(d'\) also divides both \(d_1\) and \(d_2\).

Set
\[
        r(d'):=\prod_{\substack{p\mid d'\\p\nmid 2d}}p.
\]
If \(\mu(d')\neq0\), then \(d'\) is squarefree, and the preceding paragraph
implies
\[
        r(d')\mid d_1,
        \qquad
        r(d')\mid d_2.
\]
Therefore
\[
\begin{aligned}
\sum_{\substack{1\le \ell'\le d'\\(\ell',d')=1}}
\left|
\mathcal A_{\ell,d}(\ell',d')
\right|
&\le
\sum_{\substack{1\le \ell'\le d'\\(\ell',d')=1}}
\frac{\mu^2(d')}{\phi(d')\phi(r(d'))^2}  \\
&=
\frac{\mu^2(d')}{\phi(r(d'))^2}.
\end{aligned}
\]
Write \(d'=qr\), where every prime factor of \(q\) divides \(2d\), and
\((r,2d)=1\).  If \(\mu^2(d')\neq0\), then \(q\) is a squarefree divisor of
\[
        P_{2d}:=\prod_{p\mid 2d}p,
\]
and \(r(d')=r\).  Hence
\[
\sum_{d'\ge1}
\sum_{\substack{1\le \ell'\le d'\\(\ell',d')=1}}
\left|
\mathcal A_{\ell,d}(\ell',d')
\right|
\ll
\sum_{q\mid P_{2d}}
\sum_{\substack{r\ge1\\(r,2d)=1}}
\frac{\mu^2(r)}{\phi(r)^2}
\ll_b 1,
\]
because \(d\mid b\) and \(b\) is fixed.
This proves
\eqref{eq:S1full-bound}.

The tail is estimated in the same way:
\begin{align}\label{eq:S1full-S1L'}
\left|
S_{1,\mathrm{full}}\!\left(\frac{\ell}{d}\right)
-
S_{1,L'}\!\left(\frac{\ell}{d}\right)
\right|
\le
\sum_{d'>L'}
\frac{\mu^2(d')}{\phi(r(d'))^2}.
\end{align}
Writing again \(d'=qr\), with \(q\mid P_{2d}\), gives
\begin{align}\label{eq:S1full-S1L'_1}
\sum_{d'>L'}
\frac{\mu^2(d')}{\phi(r(d'))^2}
\le
\sum_{q\mid P_{2d}}
\sum_{r>L'/q}
\frac{\mu^2(r)}{\phi(r)^2}
\ll
\sum_{q\mid P_{2d}}\frac{q}{L'}
\ll_b (L')^{-1},
\end{align}
Here we used a standard estimate:
\begin{equation*}
        \sum_{r>Y}\frac{\mu^2(r)}{\phi(r)^2}\ll \frac1Y
        \qquad(Y\ge1).
\end{equation*}
Combining \eqref{eq:S1full-S1L'} with \eqref{eq:S1full-S1L'_1} proves \eqref{eq:S1-S1full}.
\medskip

Finally, we conclude  the decoupling.
Substituting \eqref{eq:S2L-S2full} into
\eqref{eq:Hgood-pre-decoupling} gives
\[
\widehat H_{\mathrm{good}}\!\left(\frac{\ell}{d}+\frac{\eta}{N}\right)
=
S_{2,\mathrm{full}}(\eta)
S_{1,L'}\!\left(\frac{\ell}{d}\right)
+
O(X^2(L')^{-1})
\sum_{d'\le L'}
\sum_{\substack{1\le \ell'\le d'\\(\ell',d')=1}}
\left|\mathcal A_{\ell,d}(\ell',d')\right|.
\]
The absolute-convergence estimate preceding
\eqref{eq:S1-S1full} applies equally to its partial sums, so the last
double sum is \(O_b(1)\).  Hence
\[
\widehat H_{\mathrm{good}}\!\left(\frac{\ell}{d}+\frac{\eta}{N}\right)
=
S_{2,\mathrm{full}}(\eta)
S_{1,L'}\!\left(\frac{\ell}{d}\right)
+
O_b(X^2(L')^{-1}).
\]
Using \eqref{eq:S1-S1full} and the trivial bound
\[
        |S_{2,\mathrm{full}}(\eta)|\ll X^2
\]
from \eqref{eq:S2full-decay}, we obtain
\[
        S_{2,\mathrm{full}}(\eta)
        \left(
        S_{1,L'}\!\left(\frac{\ell}{d}\right)
        -
        S_{1,\mathrm{full}}\!\left(\frac{\ell}{d}\right)
        \right)
        \ll_b
        X^2(L')^{-1}.
\]
Therefore
\[
        \widehat H_{\mathrm{good}}\!\left(\frac{\ell}{d}+\frac{\eta}{N}\right)
        =
        S_{1,\mathrm{full}}\!\left(\frac{\ell}{d}\right)
        S_{2,\mathrm{full}}(\eta)
        +
        O_b(X^2(L')^{-1}),
\]
as claimed.
\end{proof}

\bigskip

We now use the approximate decoupling to evaluate the good-major contribution.

\begin{proof}[Proof of \cref{prop:three-term-major-projected-local-factor}]
By the previous reduction to good squarefree \(b\)-smooth denominators and by
\cref{lem:Hgood-approximate-decoupling}, we have
\begin{align}\label{eq:Tk3-good-before-eta-completion}
T_k^{(3)}\!\left(\mathfrak M_{A,\mathrm{good}}^{(N)}\right)
&=
\frac1N
\sum_{d\mid b}
\sum_{\substack{1\le \ell\le d\\(\ell,d)=1}}
\sum_{\substack{|\eta|\le L\\N\ell/d+\eta\in\mathbb Z}}
\widehat C_k\!\left(\frac{\ell}{d}+\frac{\eta}{N}\right)
S_{1,\mathrm{full}}\!\left(\frac{\ell}{d}\right)
S_{2,\mathrm{full}}(\eta)                                    \notag\\
&\qquad
+
O_{b,\mathcal D}\!\left(
|\mathcal C_k|X L^{-(1-\alpha_{b,\mathcal D})}
\right).
\end{align}
Indeed, the error term in \eqref{eq:Hgood-S1S2full} contributes
\[
        \ll_b
        \frac{X^2(L')^{-1}}{N}
        \sum_{d\mid b}
        \sum_{\substack{1\le \ell\le d\\(\ell,d)=1}}
        \sum_{\substack{|\eta|\le L\\N\ell/d+\eta\in\mathbb Z}}
        \left|
        \widehat C_k\!\left(\frac{\ell}{d}+\frac{\eta}{N}\right)
        \right|.
\]
Since \(d\mid b\), the localized hybrid estimate gives
\[
        \sum_{d\mid b}
        \sum_{\substack{1\le \ell\le d\\(\ell,d)=1}}
        \sum_{\substack{|\eta|\le L\\N\ell/d+\eta\in\mathbb Z}}
        \left|
        \widehat C_k\!\left(\frac{\ell}{d}+\frac{\eta}{N}\right)
        \right|
        \ll_{b,\mathcal D}
        |\mathcal C_k|L^{\alpha_{b,\mathcal D}}.
\]
Since \(N\asymp X\), this contribution is
\[
        \ll_{b,\mathcal D}
        |\mathcal C_k|X(L')^{-1}L^{\alpha_{b,\mathcal D}}
        \ll_{b,\mathcal D}
        |\mathcal C_k|X L^{-(1-\alpha_{b,\mathcal D})},
\]
because \(A'\ge A\).  This proves the error term in
\eqref{eq:Tk3-good-before-eta-completion}.

\noindent\textbf{Step 3: Completion of the remaining \(\eta\) sum.}

We now complete the remaining \(\eta\) sum in the main term of
\eqref{eq:Tk3-good-before-eta-completion}.  Fix \(d\mid b\) and
\((\ell,d)=1\).  By \eqref{eq:S2full-compact}, the completed \(\eta\) sum is

\[
\begin{aligned}
&\sum_{\substack{\eta\;(\mathrm{mod}\,N)\\N\ell/d+\eta\in\mathbb Z}}
S_{2,\mathrm{full}}(\eta)
\widehat C_k\!\left(\frac{\ell}{d}+\frac{\eta}{N}\right) \\
&\quad =
\sum_{|h|<X/2}
(X-2|h|)
\sum_{0\le n<X}
\mathbf 1_{\mathcal C_k}(n)e\!\left(\frac{n\ell}{d}\right)
\sum_{\substack{\eta\;(\mathrm{mod}\,N)\\N\ell/d+\eta\in\mathbb Z}}
e\!\left(\frac{(n-h)\eta}{N}\right).
\end{aligned}
\]
To evaluate the inner sum, write
\[
        \eta=t-\frac{N\ell}{d},
        \qquad
        t\in\mathbb Z/N\mathbb Z.
\]
Then
\[
\begin{aligned}
\sum_{\substack{\eta\;(\mathrm{mod}\,N)\\N\ell/d+\eta\in\mathbb Z}}
e\!\left(\frac{(n-h)\eta}{N}\right)
&=
e\!\left(-\frac{(n-h)\ell}{d}\right)
\sum_{t\in\mathbb Z/N\mathbb Z}
e\!\left(\frac{(n-h)t}{N}\right).
\end{aligned}
\]
Thus the inner sum vanishes unless
\[
        n-h\equiv0\pmod N.
\]
Since \(0\le n<X\), \(|h|<X/2\), and \(N>3X\), there is no wraparound, so the
congruence is equivalent to \(n=h\).  In that case the additional phase is
\(1\), and the inner sum is \(N\).  Hence only the terms with \(h=n\ge0\)
survive, and the completed sum equals
\begin{equation}\label{eq:completed-outer-eta-main}
        N
        \sum_{0\le n<X/2}
        \mathbf 1_{\mathcal C_k}(n)(X-2n)
        e\!\left(\frac{n\ell}{d}\right).
\end{equation}

It remains to estimate the tail introduced by replacing the
truncated sum \(|\eta|\le L\) with the completed \(\eta\)-sum.  By
\eqref{eq:S1full-bound} and the quadratic decay
\eqref{eq:S2full-decay} of $|S_{2,\mathrm{full}}(\eta)|$, its contribution to
\eqref{eq:Tk3-good-before-eta-completion} is at most
\[
\frac{X^2}{N}
\sum_{d\mid b}
\sum_{\substack{1\le \ell\le d\\(\ell,d)=1}}
\sum_{\substack{|\eta|>L\\N\ell/d+\eta\in\mathbb Z}}
\frac{
\left|
\widehat C_k\!\left(\frac{\ell}{d}+\frac{\eta}{N}\right)
\right|
}{(1+|\eta|)^2}.
\]
A dyadic decomposition and admissibility give
\[
\begin{aligned}
&\sum_{d\mid b}
\sum_{\substack{1\le \ell\le d\\(\ell,d)=1}}
\sum_{\substack{|\eta|>L\\N\ell/d+\eta\in\mathbb Z}}
\frac{
\left|
\widehat C_k\!\left(\frac{\ell}{d}+\frac{\eta}{N}\right)
\right|
}{(1+|\eta|)^2}\\
&\qquad\ll_{b,\mathcal D}
\sum_{\substack{B>L\\B\ \mathrm{dyadic}}}
B^{-2}|\mathcal C_k|B^{\alpha_{b,\mathcal D}}
\ll_{b,\mathcal D}
|\mathcal C_k|L^{-(2-\alpha_{b,\mathcal D})}.
\end{aligned}
\]
Since \(N\asymp X\), this is
\(O_{b,\mathcal D}(|\mathcal C_k|X
L^{-(2-\alpha_{b,\mathcal D})})\), which is smaller than the error
already present in \eqref{eq:Tk3-good-before-eta-completion}.

Substituting \eqref{eq:completed-outer-eta-main} into
\eqref{eq:Tk3-good-before-eta-completion}, and using the preceding tail
estimate, gives
\[
\begin{aligned}
T_k^{(3)}\!\left(\mathfrak M_{A,\mathrm{good}}^{(N)}\right)
&=
\sum_{0\le n<X/2}
\mathbf 1_{\mathcal C_k}(n)(X-2n)
\sum_{d\mid b}
\sum_{\substack{1\le \ell\le d\\(\ell,d)=1}}
S_{1,\mathrm{full}}\!\left(\frac{\ell}{d}\right)
e\!\left(\frac{n\ell}{d}\right)                       \\
&\qquad
+
O_{b,\mathcal D}\!\left(
|\mathcal C_k|X L^{-(1-\alpha_{b,\mathcal D})}
\right).
\end{aligned}
\]
By the definition of \(\mathfrak S_b^{(3)}(n)\), the inner double sum is
precisely
\[
        \mathfrak S_b^{(3)}(n).
\]
Since every modulus \(d\) in its definition divides \(b\),
\(\mathfrak S_b^{(3)}(n)\) depends only on \(n\bmod b\), and hence only
on the least significant base-\(b\) digit of \(n\).

Therefore
\begin{equation}\label{eq:three-term-good-major-local-factor}
T_k^{(3)}\!\left(\mathfrak M_{A,\mathrm{good}}^{(N)}\right)
=
\sum_{0\le n<X/2}
\mathbf 1_{\mathcal C_k}(n)(X-2n)
\mathfrak S_b^{(3)}(n)
+
O_{b,\mathcal D}\!\left(
|\mathcal C_k|X L^{-(1-\alpha_{b,\mathcal D})}
\right).
\end{equation}

The bad-major-arc estimate gives
\[
T_k^{(3)}\!\left(\mathfrak M_{A,\mathrm{bad}}^{(N)}\right)
\ll_{b,\mathcal D,A}
|\mathcal C_k|X(\log X)^{-A}.
\]
Combining this with \eqref{eq:three-term-good-major-local-factor}, and
choosing the logarithmic parameter sufficiently large in terms of the
prescribed saving \(A\), proves
\eqref{eq:three-term-major-projected-local-factor}.
\end{proof}

\subsubsection{Averaging the three-term local factor}
\label{subsec:averaging-three-term-local-factor}

{It remains to average the arithmetic local factor against the
archimedean weight arising from the requirement
\(m,m+n,m+2n\in[0,X)\):}
\[
        \sum_{0\le n<X/2}
        \mathbf 1_{\mathcal C_k}(n)(X-2n)\mathfrak S_b^{(3)}(n).
\]
As in the prime-pair case, the {arithmetic local factor} depends
only on the final base-\(b\) digit.  The new feature in the three-term problem is that the
cutoff \(n<X/2\) is governed by the leading base-\(b\) digits.
Define
\begin{equation}\label{def:three-term-average-local-factor}
        \overline{\mathfrak S}^{(3)}_{b,\mathcal D}
        :=
        \frac{1}{|\mathcal D|}
        \sum_{a\in\mathcal D}\mathfrak S_b^{(3)}(a).
\end{equation}
{This is the averaged arithmetic factor over the final digit.}
We also define the finite-scale averages
\[
        \kappa_j
        :=
        \frac{1}{b^j|\mathcal C_j|}
        \sum_{\substack{m\in\mathcal C_j\\0\le m<b^j/2}}
        (b^j-2m).
\]
The limit
\begin{equation}\label{def:kappa-bD-three-term}
        \kappa_{b,\mathcal D}
        :=
        \lim_{j\to\infty}\kappa_j
\end{equation}
exists, as proved below.  {We call
\(\kappa_{b,\mathcal D}\) the archimedean factor, since it is the limiting
normalized average of the physical weight \((1-2m/b^j)_+\) over
\(\mathcal C_j\).}

\begin{proposition}\label{prop:weighted-three-term-local-factor} 
One has 
\begin{equation}\label{eq:weighted-three-term-local-factor} 
\sum_{0\le n<X/2} \mathbf 1_{\mathcal C_k}(n)(X-2n) \mathfrak S_b^{(3)}(n) = \kappa_{b,\mathcal D}\, \overline{\mathfrak S}^{(3)}_{b,\mathcal D}\,X|\mathcal C_k| + O_{b,\mathcal D}(|\mathcal C_k|). 
\end{equation} 
Moreover, the leading coefficient 
\[ \kappa_{b,\mathcal D}\, \overline{\mathfrak S}^{(3)}_{b,\mathcal D}>0 \] 
if and only if 
\[ \min\mathcal D<\frac{b-1}{2} \qquad\text{and}\qquad \mathcal D\cap \gcd(b,6)\mathbb Z\neq\emptyset. \] 
\end{proposition}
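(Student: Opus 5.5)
We write $n=a+bm$ with $a\in\mathcal D$ the final digit and $m\in\mathcal C_{k-1}$, exactly as in \cref{prop:weighted-main-term-general}. Since every modulus in \eqref{def:projected-three-prime-local-factor} divides $b$, we have $\mathfrak S_b^{(3)}(n)=\mathfrak S_b^{(3)}(a)$. The condition $n<X/2$ together with the weight $X-2n$ is essentially determined by $m$: $X-2n=X-2a-2bm$ and $bm\le n<bm+b$. Replacing $X-2n$ by $(X-2bm)_+$ therefore costs $O_b(1)$ per element of $\mathcal C_k$, which sums to $O_{b,\mathcal D}(|\mathcal C_k|)$. Hence the left side of \eqref{eq:weighted-three-term-local-factor} equals
\[
\Bigl(\sum_{a\in\mathcal D}\mathfrak S_b^{(3)}(a)\Bigr)\sum_{\substack{m\in\mathcal C_{k-1}\\ m<b^{k-1}/2}}(X-2bm)+O_{b,\mathcal D}(|\mathcal C_k|)
= |\mathcal D|\,\overline{\mathfrak S}^{(3)}_{b,\mathcal D}\; b\,b^{k-1}|\mathcal C_{k-1}|\,\kappa_{k-1}+O(|\mathcal C_k|),
\]
that is, $\overline{\mathfrak S}^{(3)}_{b,\mathcal D}X|\mathcal C_k|\kappa_{k-1}+O(|\mathcal C_k|)$. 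It then remains to show $\kappa_j=\kappa_{b,\mathcal D}+O(b^{-j})$, so that the replacement costs only $O(|\mathcal C_k|)$.

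\textbf{Archimedean factor.} We view $\kappa_j$ as $\mathbb E[(1-2U_j)_+]$, where $U_j=\sum_{i=1}^{j}\delta_i b^{-i}$ and the $\delta_i$ are i.i.d.\ uniform on $\mathcal D$, with $\delta_1$ the leading digit. Let $U_\infty$ be the infinite sum. Since $|U_\infty-U_j|\le b^{-j}$ and $x\mapsto(1-2x)_+$ is $2$-Lipschitz, we get $|\kappa_j-\mathbb E(1-2U_\infty)_+|\le 2b^{-j}$. This proves that the limit exists, with a geometric rate, and yields the asymptotic formula. For positivity, $\kappa_{b,\mathcal D}>0$ if and only if $\Pr(U_\infty<1/2)>0$. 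The infimum of the support of $U_\infty$ is $a_*/(b-1)$ with $a_*=\min\mathcal D$, and this value is attained with positive probability in the sense that every neighborhood of it has positive mass. So if $a_*/(b-1)<1/2$ then $\kappa>0$, while if $a_*\ge(b-1)/2$ then $U_\infty\ge1/2$ almost surely and $\kappa=0$. The finite-$j$ boundary is harmless since only the limit matters.

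\textbf{Arithmetic factor.} This is the main obstacle: we must evaluate $\mathfrak S_b^{(3)}(a)$ explicitly and show it is $\ge0$, vanishing exactly when $a$ fails to be divisible by $\gcd(b,6)$. The plan is to identify $\mathfrak S_b^{(3)}(n)$ with the $b$-projection of the classical three-prime AP singular series. Expanding $S_{1,\mathrm{full}}$ as an Euler product over primes, the definition of $\mathfrak S_b^{(3)}$ should factor as $\prod_{p\mid b}\sigma_p(n)\prod_{p\nmid b}\sigma_p^{0}$. Here $\sigma_p(n)$ is the local density of $(m,m+n,m+2n)$ all coprime to $p$, normalized by $(p/(p-1))^3$. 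For $p\nmid b$ only the $\ell/d$-independent part survives, giving a constant factor that is positive for $p\ge3$. For $p=2$ with $2\nmid b$, this factor vanishes, so a separate check is needed here (or $2$ gets absorbed via $2d$ in $r(d')$). Concretely, $\sigma_p(n)$ equals $p^2(p-3)/(p-1)^3$ times a positive constant when $p\nmid n$, and is positive when $p\mid n$. This vanishes exactly when $p\in\{2,3\}$, $p\mid b$ and $p\nmid n$ (the case $p=2$ forces $2\mid n$). I would carry out this local computation directly from \eqref{def:S1full-three-term}, grouping $d'$ by its $p$-parts using multiplicativity of $\mu/\phi$ and of the reduced denominators $d_1,d_2$, as deferred to \cref{sec:three-term-local-factor}. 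Granting it, each $\mathfrak S_b^{(3)}(a)\ge0$, and the average is positive if and only if some $a\in\mathcal D$ is divisible by every $p\in\{2,3\}$ dividing $b$, i.e.\ by $\gcd(b,6)$. Combined with the archimedean criterion, this gives the stated equivalence.
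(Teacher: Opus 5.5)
Your treatment of the asymptotic formula and of the archimedean factor is correct and follows the paper's route. You split off the last digit and use that $\mathfrak S_b^{(3)}$ depends only on $n \bmod b$. You absorb the boundary into $O_{b,\mathcal D}(|\mathcal C_k|)$, which needs $|\mathfrak S_b^{(3)}(a)|\ll_b 1$; this follows from \eqref{eq:S1full-bound}. Finally you use $\kappa_j=\kappa_{b,\mathcal D}+O(b^{-j})$. Your coupling with $U_\infty$ is a tidier form of the paper's Cauchy-sequence and leading-digit arguments. It identifies $\kappa_{b,\mathcal D}=\mathbb E(1-2U_\infty)_+$ and makes both directions of $\kappa_{b,\mathcal D}>0\iff\min\mathcal D<\frac{b-1}{2}$ immediate.

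The gap is in the arithmetic factor. You assert that for $p=2\nmid b$ the constant local factor vanishes. That is false. If it were true, it would force $\mathfrak S_b^{(3)}\equiv0$ for every odd $b$, and the ``if'' direction of the criterion would fail. For instance, when $\gcd(b,6)=1$ the condition $\mathcal D\cap\gcd(b,6)\mathbb Z\neq\emptyset$ holds automatically. Leaving this as ``a separate check is needed'' does not close the gap.

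The value $0$ comes from inserting $p=2$ into the generic factor $1-(p-1)^{-2}$. That factor rests on $p$ surviving in both reduced denominators $d_1,d_2$ whenever $p\mid d'$ and $p\nmid 2d$, and this breaks down at $p=2$. If $2\Vert d'$, then $\ell d'-2\ell' d$ is even, so $2$ cancels from $d_1$. Meanwhile $\ell' d-\ell d'$ is odd, so $2\Vert d_2$. The contribution from $2\mid d'$ is therefore $\frac{\mu(2)}{\phi(2)}\cdot\frac{\mu(2)}{\phi(2)}=+1$, and the local factor is $1+1=2$. In density terms, a proportion $1/4$ of pairs $(m,n)\bmod 2$ make $m,m+n,m+2n$ all odd, and $2^3/4=2$.

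Similarly, your expression $p^2(p-3)/(p-1)^3$ for $p\nmid n$ is valid only for $p\ge3$. At $p=2\mid b$ the factor is $4\cdot\mathbf 1_{2\mid n}$.

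With these corrections the Euler product gives exactly
$\mathfrak S_b^{(3)}(n)=S_0\prod_{p\mid b}(1+\beta_p c_p(n))$, with $S_0=2\prod_{p>2}(1-(p-1)^{-2})>0$. This is \cref{lem:three-term-local-factor-evaluation}, proved in \cref{sec:three-term-local-factor}, and the paper's proof of the proposition simply invokes it. Once you cite that lemma, or redo the $2$-adic computation correctly, your argument is complete.
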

\begin{proof}
\noindent \textbf{Existence of the archimedean factor.}
Let
\[
        f(x):=
        \begin{cases}
        1-2x, & 0\le x<1/2,\\
        0, & 1/2\le x\le 1.
        \end{cases}
\]
Then
\[
        \kappa_j
        =
        \frac1{|\mathcal C_j|}
        \sum_{m\in\mathcal C_j}
        f\!\left(\frac{m}{b^j}\right).
\]
Every element of \(\mathcal C_{j+1}\) has a unique representation
\[
        n=a+bm,
        \qquad
        a\in\mathcal D,
        \qquad
        m\in\mathcal C_j.
\]
Hence
\[
\kappa_{j+1}
=
\frac{1}{|\mathcal D|\,|\mathcal C_j|}
\sum_{a\in\mathcal D}
\sum_{m\in\mathcal C_j}
f\!\left(
        \frac{m}{b^j}+\frac{a}{b^{j+1}}
\right).
\]
Since \(f\) is Lipschitz and \(0\le a\le b-1\), it follows that
\[
        |\kappa_{j+1}-\kappa_j|
        \ll_b b^{-j}.
\]
Thus \((\kappa_j)_{j\ge1}\) is Cauchy, and its limit
\(\kappa_{b,\mathcal D}\) exists.  

Clearly, we also have
\[
        |\kappa_j-\kappa_{b,\mathcal D}|
        \ll_b b^{-j}.
\]

Define
\[
        H_j
        :=
        \sum_{\substack{m\in\mathcal C_j\\0\le m<b^j/2}}
        (b^j-2m).
\]
By the definition of \(\kappa_j\),
\[
        H_j=b^j|\mathcal C_j|\kappa_j.
\]
Therefore
\begin{equation}\label{eq:Hj-asymptotic}
        H_j
        =
        \kappa_{b,\mathcal D}\,b^j|\mathcal C_j|
        +
        O_{b,\mathcal D}(|\mathcal C_j|).
\end{equation}

\medskip
\noindent\textbf{Weighted average.}
Write
\[
        n=a+bm,
        \qquad
        a\in\mathcal D,
        \qquad
        m\in\mathcal C_{k-1}.
\]
Since every modulus \(d\) appearing in the definition of
\(\mathfrak S_b^{(3)}\) divides \(b\),
\(\mathfrak S_b^{(3)}(n)\) depends only on the least significant
base-\(b\) digit \(a\).  Thus
\[
        \mathfrak S_b^{(3)}(n)
        =
        \mathfrak S_b^{(3)}(a).
\]
Consequently,
\[
\begin{aligned}
&\sum_{0\le n<X/2}
\mathbf 1_{\mathcal C_k}(n)(X-2n)
\mathfrak S_b^{(3)}(n)                                      \\
&\qquad =
\sum_{a\in\mathcal D}
\mathfrak S_b^{(3)}(a)
\sum_{\substack{m\in\mathcal C_{k-1}\\a+bm<X/2}}
(X-2a-2bm).
\end{aligned}
\]

For fixed \(a\in\mathcal D\),
\[
        a+bm<\frac X2
        \quad\Longleftrightarrow\quad
        m<\frac{b^{k-1}}2-\frac ab.
\]
Since \(0\le a/b<1\), this range differs from
\(m<b^{k-1}/2\) by at most one integer.  Moreover,
\[
        X-2a-2bm
        =
        b(b^{k-1}-2m)-2a.
\]
The possible boundary term is \(O_b(1)\), while the contribution of
\(-2a\) is \(O_b(|\mathcal C_{k-1}|)\).  Hence, uniformly in
\(a\in\mathcal D\),
\[
        \sum_{\substack{m\in\mathcal C_{k-1}\\a+bm<X/2}}
        (X-2a-2bm)
        =
        bH_{k-1}
        +
        O_b(|\mathcal C_{k-1}|).
\]
It follows that
\[
\begin{aligned}
&\sum_{0\le n<X/2}
\mathbf 1_{\mathcal C_k}(n)(X-2n)
\mathfrak S_b^{(3)}(n)                                      \\
&\qquad =
bH_{k-1}
\sum_{a\in\mathcal D}\mathfrak S_b^{(3)}(a)
+
O_{b,\mathcal D}(|\mathcal C_k|).
\end{aligned}
\]

By \eqref{eq:Hj-asymptotic},
\[
        bH_{k-1}
        =
        \kappa_{b,\mathcal D}X|\mathcal C_{k-1}|
        +
        O_{b,\mathcal D}(|\mathcal C_{k-1}|).
\]
Also, by the definition of
\(\overline{\mathfrak S}^{(3)}_{b,\mathcal D}\),
\[
        \sum_{a\in\mathcal D}\mathfrak S_b^{(3)}(a)
        =
        |\mathcal D|\,
        \overline{\mathfrak S}^{(3)}_{b,\mathcal D},
\]
while
\[
        |\mathcal C_k|
        =
        |\mathcal D|\,|\mathcal C_{k-1}|.
\]
Combining these identities gives
\[
\begin{aligned}
&\sum_{0\le n<X/2}
\mathbf 1_{\mathcal C_k}(n)(X-2n)
\mathfrak S_b^{(3)}(n)                                      \\
&\qquad =
\kappa_{b,\mathcal D}\,
\overline{\mathfrak S}^{(3)}_{b,\mathcal D}\,
X|\mathcal C_k|
+
O_{b,\mathcal D}(|\mathcal C_k|).
\end{aligned}
\]
This proves \eqref{eq:weighted-three-term-local-factor}.

\medskip
\noindent\textbf{Positivity of the leading coefficient.}
{It remains to determine when the product of the arithmetic factor
$\overline{\mathfrak S}^{(3)}_{b,\mathcal D}$ and the archimedean factor
$\kappa_{b,\mathcal D}$ is positive.  We treat the two factors separately.}

\medskip
\noindent\underline{Arithmetic factor.}
Concerning this factor, we have:
\begin{lemma}[Evaluation of the three-prime local factor]
\label{lem:three-term-local-factor-evaluation}
Let
\[
        S_0:=S_{1,\mathrm{full}}(1).
\]
Then
\begin{equation}\label{eq:S0-three-term}
        S_0
        =
        2\prod_{p>2}
        \left(1-\frac{1}{(p-1)^2}\right)
        >0.
\end{equation}
Moreover,
\[
        S_{1,\mathrm{full}}\!\left(\frac{\ell}{d}\right)=0
\]
unless \(d\) is squarefree, while for squarefree \(d\),
\begin{equation}\label{eq:S1full-local-factorized}
        S_{1,\mathrm{full}}\!\left(\frac{\ell}{d}\right)
        =
        S_0
        \prod_{\substack{p\mid d\\p>2}}
        \frac{2}{(p-1)(p-2)}.
\end{equation}
Consequently,
\begin{equation}\label{eq:S3-Ramanujan-product}
        \mathfrak S_b^{(3)}(n)
        =
        S_0
        \prod_{p\mid b}
        \left(1+\beta_p c_p(n)\right),
\end{equation}
where
\[
        \beta_2:=1,
        \qquad
        \beta_p:=\frac{2}{(p-1)(p-2)}
        \quad (p>2).
\]
In particular,
\[
        \mathfrak S_b^{(3)}(n)\ge0
        \qquad(n\in\mathbb Z),
\]
and
\begin{equation}\label{eq:S3-positivity}
        \mathfrak S_b^{(3)}(n)>0
        \quad\Longleftrightarrow\quad
        \gcd(b,6)\mid n.
\end{equation}
Hence the average 
\[
        \overline{\mathfrak S}_{b,\mathcal D}^{(3)}>0
        \quad\Longleftrightarrow\quad
        \mathcal D\cap\gcd(b,6)\mathbb Z\neq\emptyset.
\]
\end{lemma}
{The proof is deferred to \cref{sec:three-term-local-factor}.}

\medskip \noindent\underline{Archimedean factor.}
Let \[ a_*:=\min\mathcal D. \] Suppose first that \[ a_*\ge\frac{b-1}{2}. \] Every \(m\in\mathcal C_j\) satisfies \[ m\ge a_*\frac{b^j-1}{b-1}. \] 
Thus, if \(m<b^j/2\), then \(m\) can only lie within \(O_b(1)\) of the boundary \(b^j/2\), and hence \[ b^j-2m\ll_b1. \] Consequently, \[ H_j\ll_b|\mathcal C_j|, \qquad \kappa_j\ll_b b^{-j}, \] and therefore \[ \kappa_{b,\mathcal D}=0. \] 
Conversely, suppose \[ a_*<\frac{b-1}{2}. \] Choose \(J\) sufficiently large that \[ \frac{a_*}{b-1}(1-b^{-J})+b^{-J}<\frac12. \] 
For \(j\ge J\), consider those elements of \(\mathcal C_j\) whose \(J\) most significant base-\(b\) digits are all equal to \(a_*\). 
There are \(r^{j-J}\) such elements. For every such \(m\), \[ \frac{m}{b^j} \le \frac{a_*}{b-1}(1-b^{-J})+b^{-J} <\frac12. \] 
Hence there exists \(\sigma=\sigma_{b,\mathcal D}>0\) such that \[ b^j-2m\ge 2\sigma b^j \] for all these \(m\). 
It follows that \[ \kappa_j = \frac{H_j}{b^j|\mathcal C_j|} \ge 2\sigma\,r^{-J}>0 \] uniformly for \(j\ge J\). 
Passing to the limit gives \[ \kappa_{b,\mathcal D}>0. \] Thus \[ \kappa_{b,\mathcal D}>0 \quad\Longleftrightarrow\quad \min\mathcal D<\frac{b-1}{2}. \] 

Combining the arithmetic and archimedean criteria gives \[ \kappa_{b,\mathcal D}\,
\overline{\mathfrak S}^{(3)}_{b,\mathcal D}>0 \] if and only if \[ \min\mathcal D<\frac{b-1}{2} \qquad\text{and}\qquad \mathcal D\cap\gcd(b,6)\mathbb Z\neq\emptyset, \]
as claimed.
\end{proof}

We are now {in a position} to complete the proof of Theorem \ref{thm:intro-three-prime}. 
\begin{proof}[Proof of Theorem \ref{thm:intro-three-prime}]
Recall the following decomposition:
\[
        T_k^{(3)}
        =
        T_k^{(3)}(\mathfrak M_A^{(N)})
        +
        T_k^{(3)}(\mathfrak m_A^{(N)}).
\]
Strictly speaking, for a prescribed logarithmic saving \(A>0\), the major/minor
arc parameter is chosen sufficiently large in terms of this \(A\).

By the three-term minor-arc estimate, \cref{thm:minor-arc-final},
\[
        T_k^{(3)}(\mathfrak m_A^{(N)})
        \ll_{b,\mathcal D,A}
        |\mathcal C_k|X(\log X)^{-A}.
\]
On the major arcs, \cref{prop:three-term-major-projected-local-factor}
gives
\[
        T_k^{(3)}(\mathfrak M_A^{(N)})
        =
        \sum_{0\le n<X/2}
        \mathbf 1_{\mathcal C_k}(n)(X-2n)\mathfrak S_b^{(3)}(n)
        +
        O_{b,\mathcal D,A}\!\left(
        |\mathcal C_k|X(\log X)^{-A}
        \right).
\]
Finally, \cref{prop:weighted-three-term-local-factor} evaluates the
{arithmetic local factor against the archimedean weight}:
\[
        \sum_{0\le n<X/2}
        \mathbf 1_{\mathcal C_k}(n)(X-2n)\mathfrak S_b^{(3)}(n)
        =
        \kappa_{b,\mathcal D}\, \overline{\mathfrak S}^{(3)}_{b,\mathcal D}\,X|\mathcal C_k|
        +
        O_{b,\mathcal D}(|\mathcal C_k|).
\]
The final error term \(O_{b,\mathcal D}(|\mathcal C_k|)\) is absorbed into
\[
        O_{b,\mathcal D,A}\!\left(
        |\mathcal C_k|X(\log X)^{-A}
        \right)
\]
for \(X\) sufficiently large.  Combining the preceding three estimates gives
the stated asymptotic formula.  The non-vanishing criterion for
\(\kappa_{b,\mathcal D}\, \overline{\mathfrak S}^{(3)}_{b,\mathcal D}\) is exactly the one proved in
\cref{prop:weighted-three-term-local-factor}.
\end{proof}

\section{Alternative-range variants}
\label{sec:alternative-range}

For \(j\in\{1,2,3\}\), put
\[
        \Lambda_{jX}(m):=\Lambda(m)\mathbf 1_{[0,jX)}(m),
        \qquad
        \widehat\Lambda_{jX}(\theta)
        :=
        \sum_{0\le m<jX}\Lambda(m)e(m\theta).
\]
All prime exponential-sum estimates used above remain valid uniformly for
\(j\in\{1,2,3\}\).  In particular, if \(N\asymp X\), \(d\sim D\),
\(|\eta|\sim B\), and \(D^2B\ll N\), then
\begin{equation}\label{eq:fixed-dilation-minor}
\left|
\widehat\Lambda_{jX}
\left(\frac{\ell}{d}+\frac{\eta}{N}\right)
\right|
\ll
\left(
X^{4/5}
+\frac{X}{(DB)^{1/2}}
+X^{1/2}(DB)^{1/2}
\right)(\log X)^4.
\end{equation}
The corresponding rational-endpoint estimate, major-arc approximation, and
Parseval bound also hold uniformly in \(j\).  Thus all minor-arc estimates and
all arithmetic major-arc reductions from the preceding sections carry over
without change.

\begin{proof}[Proofs of
\cref{thm:intro-two-prime-alternative-range,%
thm:intro-three-prime-alternative-range}]
For the two-term problem, take \(N=3X\).  Fourier inversion gives
\[
        T_{k,\mathrm{alt}}^{(2)}
        =
        \frac1N
        \sum_{\alpha\in\mathcal G_N}
        \widehat\Lambda_X(-\alpha)
        \widehat\Lambda_{2X}(\alpha)
        \widehat C_k(-\alpha).
\]
There is no wraparound since \(m,n<X\) implies \(m+n<2X<N\).
By the fixed-dilation observation above, the proof of the two-term
major- and minor-arc estimates applies verbatim.  The arithmetic local factor
is therefore still \(\mathfrak S_b^{(2)}(n)\).  The only change is that the
completed archimedean weight is
\[
        \#\{0\le m<X:m+n<2X\}=X
        \qquad(0\le n<X).
\]
{Thus the normalized archimedean factor is \(1\).}
Consequently,
\[
\begin{aligned}
        T_{k,\mathrm{alt}}^{(2)}
        &=
        X\sum_{0\le n<X}
        \mathbf 1_{\mathcal C_k}(n)\mathfrak S_b^{(2)}(n)
        +
        O_{b,\mathcal D,A}\!\left(
        |\mathcal C_k|X(\log X)^{-A}
        \right) \\
        &=
        \overline{\mathfrak S}^{(2)}_{b,\mathcal D}
        X|\mathcal C_k|
        +
        O_{b,\mathcal D,A}\!\left(
        |\mathcal C_k|X(\log X)^{-A}
        \right).
\end{aligned}
\]

For the three-term problem, take \(N=4X\) and define
\[
        \widehat H_{\Lambda,\mathrm{alt}}(\alpha)
        :=
        \frac1N\sum_{\beta\in\mathcal G_N}
        \widehat\Lambda_X(\beta)
        \widehat\Lambda_{2X}(\alpha-2\beta)
        \widehat\Lambda_{3X}(\beta-\alpha).
\]
Then
\[
        T_{k,\mathrm{alt}}^{(3)}
        =
        \frac1N\sum_{\alpha\in\mathcal G_N}
        \widehat H_{\Lambda,\mathrm{alt}}(\alpha)
        \widehat C_k(\alpha).
\]
The proof of the three-point minor-arc lemma depends only on the three
frequencies and the corresponding prime-sum estimates, so it applies
unchanged.  The rational major-arc coefficients are also unchanged, and hence
the arithmetic local factor is again \(\mathfrak S_b^{(3)}(n)\).

The completed archimedean weight is
\[
        W_X(h)
        :=
        \#\left\{
        0\le m<X:
        0\le m+h<2X,\;
        0\le m+2h<3X
        \right\}.
\]
This is a compactly supported piecewise-affine function with
\[
        W_X(n)=X
        \qquad(0\le n<X).
\]
{Thus the normalized archimedean factor is again \(1\).}
Its second discrete difference has bounded \(\ell^1\)-norm, giving the same
quadratic decay needed to complete the major-arc sums.  Therefore
\[
\begin{aligned}
        T_{k,\mathrm{alt}}^{(3)}
        &=
        X\sum_{0\le n<X}
        \mathbf 1_{\mathcal C_k}(n)\mathfrak S_b^{(3)}(n)
        +
        O_{b,\mathcal D,A}\!\left(
        |\mathcal C_k|X(\log X)^{-A}
        \right) \\
        &=
        \overline{\mathfrak S}^{(3)}_{b,\mathcal D}
        X|\mathcal C_k|
        +
        O_{b,\mathcal D,A}\!\left(
        |\mathcal C_k|X(\log X)^{-A}
        \right),
\end{aligned}
\]
as claimed.
\end{proof}

{{
\section{From von Mangoldt weights to genuine prime configurations}
\label{sec:true-prime}

The four asymptotic formulae established above are stated with von Mangoldt
weights.  Since \(\Lambda\) is supported on prime powers, we must remove the
terms in which at least one prime variable is a proper prime power.  To obtain
nontrivial configurations, we must also exclude the diagonal \(n=0\), but only
when it is present.  If \(0\notin\mathcal C_k\), then \(n=0\) does not occur in
any of the four counting functions, and every restricted-digit difference is
already positive.

\begin{lemma}[Removing proper prime powers and the diagonal]
\label{lem:remove-prime-powers}
In either the original or the alternative pair range, the total contribution
from configurations in which at least one of \(m\) and \(m+n\) is a proper
prime power is
\[
        O\!\left(
        |\mathcal C_k|X^{1/2}(\log X)^2
        \right).
\]
In either the original or the alternative three-term range, the total
contribution from configurations in which at least one of
\[
        m,\qquad m+n,\qquad m+2n
\]
is a proper prime power is
\[
        O\!\left(
        |\mathcal C_k|X^{1/2}(\log X)^3
        \right).
\]

If \(0\in\mathcal C_k\), then the diagonal \(n=0\) contributes
\[
        O\!\left(X(\log X)^2\right)
\]
to either pair count and
\[
        O\!\left(X(\log X)^3\right)
\]
to either three-term count.  If \(0\notin\mathcal C_k\), no diagonal term
occurs.  All of these quantities are \(o(X|\mathcal C_k|)\).
\end{lemma}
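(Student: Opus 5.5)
The argument is a direct counting bound: a union bound over which variable is a proper prime power, the trivial estimate \(\Lambda\le\log(3X)\ll\log X\) for the remaining factors, and the sparsity of proper prime powers. The only input is
\[
        \#\{q\le Y: q=p^j,\ j\ge2\}\ll Y^{1/2}.
\]
This holds because squares contribute \(\le Y^{1/2}\), while higher powers contribute \(\ll Y^{1/3}\log Y\).

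\emph{Pair counts.} Fix \(n\in\mathcal C_k\). In both the original and alternative ranges, \(m<X\) and \(m+n<2X\). Suppose first that \(m\) is a proper prime power. There are \(O(X^{1/2})\) choices of \(m\), and for each the weight \(\Lambda(m)\Lambda(m+n)\) is \(\ll(\log X)^2\). Suppose instead that \(m+n\) is a proper prime power. Then \(m+n\) ranges over proper prime powers below \(2X\), and \(m\) is determined by \(m+n\) once \(n\) is fixed. This again gives \(O(X^{1/2}(\log X)^2)\). Summing over the \(|\mathcal C_k|\) values of \(n\) yields the first bound.

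\emph{Three-term counts.} The three-term case is identical. For fixed \(n\), each of \(m\), \(m+n\), \(m+2n\) lies below \(3X\), and each determines \(m\) uniquely. Each of the three cases therefore contributes \(O(X^{1/2})\) choices with weight \(\ll(\log X)^3\). Summing over \(n\in\mathcal C_k\) gives \(O(|\mathcal C_k|X^{1/2}(\log X)^3)\).

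\emph{The diagonal.} If \(0\in\mathcal C_k\), the term \(n=0\) is at most
\[
        \sum_{m<X}\Lambda(m)^2\ll X\log X
\]
for pairs, or \(\sum_{m<X}\Lambda(m)^3\ll X(\log X)^2\) for three-term progressions. Even the cruder bounds \(X(\log X)^2\) and \(X(\log X)^3\) obtained from \(m<X\) and \(\Lambda\ll\log X\) suffice. If \(0\notin\mathcal C_k\), there is nothing to remove.

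\emph{Negligibility.} The prime-power terms are \(o(X|\mathcal C_k|)\), since \(X^{-1/2}(\log X)^3\to0\). For the diagonal, \(|\mathcal C_k|=r^k\to\infty\) because \(r\ge2\). However, \(X(\log X)^3=o(X|\mathcal C_k|)\) requires \(r^k\gg(\log X)^{3+\delta}=(k\log b)^{3+\delta}\). This holds since \(r^k\) grows exponentially in \(k\).

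There is no genuine obstacle in this argument. The only point needing care is the bookkeeping of the ranges in the alternative counts: there \(m+n<2X\) and \(m+2n<3X\), which changes only constants.
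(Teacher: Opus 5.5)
Your argument is correct and is essentially the paper's proof: a union bound over which linear form equals a proper prime power $q<3X$ (so that $m$ is determined by $n$ and $q$), the count $\ll X^{1/2}$ of such $q$, the trivial bound $\Lambda\ll\log X$ on each factor, and, for the diagonal, the fact that $|\mathcal C_k|=r^k=X^{\log_b r}$ beats any fixed power of $\log X$. Your Chebyshev-based diagonal bounds $X\log X$ and $X(\log X)^2$ are slightly sharper than the paper's $X(\log X)^2$ and $X(\log X)^3$, but, as you note, the cruder bounds already suffice.
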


\begin{proof}
All von Mangoldt arguments occurring in the four counting functions are less
than \(3X\).  Let
\[
        \mathcal Q_X
        :=
        \left\{
        p^\nu\le 3X:
        p\ \text{prime},\ \nu\ge2
        \right\}
\]
be the set of proper prime powers in this range.  We have
\[
\begin{aligned}
        |\mathcal Q_X|
        \le
        \sum_{2\le \nu\le \log_2(3X)}
        (3X)^{1/\nu}                      \ll
        X^{1/2}+X^{1/3}\log X
        \ll
        X^{1/2}.
\end{aligned}
\]

Consider first either of the pair counts.  After choosing
\(n\in\mathcal C_k\), a proper prime power \(q\in\mathcal Q_X\), and which
one of the two linear forms
\[
        m,\qquad m+n
\]
is equal to \(q\), the starting point \(m\) is determined.  Ignoring the
range restrictions can only enlarge the count.  Since each von Mangoldt
factor is \(O(\log X)\), the total contribution of such configurations is
\[
        \ll
        |\mathcal C_k|\,|\mathcal Q_X|\,(\log X)^2
        \ll
        |\mathcal C_k|X^{1/2}(\log X)^2.
\]

The three-term case is identical.  After choosing \(n\), a proper prime power,
and which one of
\[
        m,\qquad m+n,\qquad m+2n
\]
takes that value, \(m\) is determined.  Since there are now three von Mangoldt
factors, the total contribution is
\[
        \ll
        |\mathcal C_k|\,|\mathcal Q_X|\,(\log X)^3
        \ll
        |\mathcal C_k|X^{1/2}(\log X)^3.
\]

It remains to consider the diagonal.  If \(0\notin\mathcal C_k\), then the
factor \(\mathbf 1_{\mathcal C_k}(n)\) excludes \(n=0\), so there is nothing
to remove.  If \(0\in\mathcal C_k\), then the diagonal contributions to the
pair and three-term counts are respectively
\[
        \sum_{0\le m<X}\Lambda(m)^2
        \ll
        X(\log X)^2
\]
and
\[
        \sum_{0\le m<X}\Lambda(m)^3
        \ll
        X(\log X)^3.
\]

Finally,
\[
        |\mathcal C_k|
        =
        r^k
        =
        X^{\log_b r},
        \qquad r\ge2.
\]
Hence
\[
        X^{1/2}(\log X)^3=o(X)
\]
and
\[
        X(\log X)^3
        =
        o\!\left(X|\mathcal C_k|\right).
\]
The same conclusions hold with the smaller logarithmic powers appearing
above, and the proof is complete.
\end{proof}

\begin{corollary}[Nontrivial genuine-prime configurations]
\label{cor:genuine-prime-configurations}
Consider either the original or the alternative pair range.  If the leading
constant in the corresponding asymptotic formula is positive, then, for all
sufficiently large \(k\), the number of choices of \((m,n)\) in that range
such that
\[
        n\in\mathcal C_k\setminus\{0\},
        \qquad
        m,\ m+n\ \text{are prime},
\]
is
\[
        \gg_{b,\mathcal D}
        \frac{X|\mathcal C_k|}{(\log X)^2}.
\]

Similarly, consider either the original or the alternative three-term range.
If the leading constant in the corresponding asymptotic formula is positive,
then the number of choices of \((m,n)\) in that range such that
\[
        n\in\mathcal C_k\setminus\{0\},
        \qquad
        m,\ m+n,\ m+2n\ \text{are prime},
\]
is
\[
        \gg_{b,\mathcal D}
        \frac{X|\mathcal C_k|}{(\log X)^3}.
\]
In particular, the three-term results produce nontrivial arithmetic
progressions of primes with positive restricted-digit common difference.
\end{corollary}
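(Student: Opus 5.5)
The plan is to combine the four asymptotic formulae with \cref{lem:remove-prime-powers}, and then pass from von Mangoldt weights to an unweighted count using the trivial bound \(\Lambda\le\log(3X)\). We treat the pair case; the three-term case is identical with three factors in place of two.

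\textbf{Pair case.} First choose the theorem that matches the range: \cref{thm:intro-two-prime} for the original range, or \cref{thm:intro-two-prime-alternative-range} for the alternative range. When the leading constant \(c_{b,\mathcal D}>0\), either theorem gives
\[
T=c_{b,\mathcal D}X|\mathcal C_k|+o(X|\mathcal C_k|).
\]
Let \(T^\ast\) be the subsum of \(T\) restricted to \(n\neq0\) and to \(m,m+n\) both prime. By \cref{lem:remove-prime-powers}, the discarded terms (proper prime powers, and the diagonal when \(0\in\mathcal C_k\)) total \(o(X|\mathcal C_k|)\). Hence
\[
T^\ast\ge\tfrac12c_{b,\mathcal D}X|\mathcal C_k|
\]
for all large \(k\).

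\textbf{Unweighting.} Each surviving pair \((m,n)\) contributes \(\Lambda(m)\Lambda(m+n)\le(\log 3X)^2\) to \(T^\ast\). This uses only that every argument lies below \(3X\), which holds in both ranges. Dividing by this bound shows that the number of such pairs is at least
\[
\frac{c_{b,\mathcal D}X|\mathcal C_k|}{2(\log 3X)^2}\gg_{b,\mathcal D}\frac{X|\mathcal C_k|}{(\log X)^2}.
\]
Each pair counted satisfies the range constraints by construction, since \(T^\ast\) is a subsum of the original count.

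\textbf{Three-term case.} Here we use \cref{thm:intro-three-prime} or \cref{thm:intro-three-prime-alternative-range}, and the three-factor bounds in \cref{lem:remove-prime-powers}. The resulting exponent of \(\log X\) is \(3\).

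\textbf{Nontriviality.} This follows from \(n\neq0\), since \(n\in\mathcal C_k\subset\mathbb Z_{\ge0}\) then gives \(n>0\), so the progression \(m,m+n,m+2n\) has three distinct terms.

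There is no real obstacle. The one point to check is that the \(o(X|\mathcal C_k|)\) bound from \cref{lem:remove-prime-powers} is absorbed by the positive constant uniformly in \(k\). This holds because \(|\mathcal C_k|=X^{\log_b r}\) with \(r\ge2\), so the diagonal term \(X(\log X)^3\) is a genuine power smaller than \(X|\mathcal C_k|\).
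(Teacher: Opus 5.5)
Your proposal is correct and follows the paper's own proof: take the relevant asymptotic formula, use \cref{lem:remove-prime-powers} to discard the proper-prime-power and (when present) diagonal contributions, and divide the remaining weighted count $\gg_{b,\mathcal D} X|\mathcal C_k|$ by the maximal weight $(\log 3X)^2$ or $(\log 3X)^3$. Your observation that $|\mathcal C_k|=X^{\log_b r}$ makes the diagonal term $X(\log X)^3$ negligible is the same point the paper uses at the end of that lemma.
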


\begin{proof}
Suppose first that the relevant pair constant is positive.  The corresponding
asymptotic formula, together with
\cref{lem:remove-prime-powers}, shows that the von Mangoldt-weighted
contribution from configurations with \(n>0\) and both prime variables genuine
primes is
\[
        \gg_{b,\mathcal D}X|\mathcal C_k|.
\]
Each such configuration has weight at most
\[
        (\log(2X))^2\ll(\log X)^2.
\]
Dividing by this upper bound gives the asserted unweighted pair count.

The same argument applies to the three-term counts.  After removing proper
prime powers and the diagonal, the remaining weighted contribution is
\[
        \gg_{b,\mathcal D}X|\mathcal C_k|,
\]
and each genuine-prime progression has weight at most
\[
        (\log(3X))^3\ll(\log X)^3.
\]
This proves the three-term lower bound.
\end{proof}
}}

\section{A transfer-operator bound for the Fourier exponent}
\label{sec:transfer-model}

The main results of the paper are stated in terms of the admissibility exponent
\(\alpha_{b,\mathcal D}\) appearing in the localized hybrid Fourier estimate
\eqref{eq:hybrid-digit-general}.  The purpose of this section is to explain one
standard way of understanding and estimating such exponents.  The material here
should be viewed as a useful supplement to the main argument: it gives intuition
for the size of \(\alpha_{b,\mathcal D}\) and provides a practical way to
approximate admissible exponents in concrete digit examples.

The point of view is to encode the product structure of \(\widehat C_k\) by a
positive transfer operator.  The spectral radius of this operator gives an
upper bound for the growth of the relevant localized \(\ell^1\)-norms.  This
is closely related to the Riesz-product and Fourier analysis of restricted-digit
sets in the work of Maynard
\cite{MaynardRestrictedDigitsInv,MaynardRestrictedDigits}.

Transfer-operator methods are also standard in the study of dynamically defined
Cantor-type sets and fractal dimension estimates
\cite{Baladi,JenkinsonPollicott}.  We include the details here because they give
a convenient way to test the admissibility hypothesis for explicit digit sets.

Let
\[
        \mathcal D\subset\{0,1,\dots,b-1\},
        \qquad
        r:=|\mathcal D|.
\]
Define the one-step digit polynomial
\[
        P_{\mathcal D}(x):=\sum_{a\in\mathcal D}e(ax).
\]
Then the Fourier transform of the fixed-length restricted-digit set satisfies
\[
        |\widehat C_k(\theta)|
        =
        \prod_{j=0}^{k-1}|P_{\mathcal D}(b^j\theta)|.
\]
We associate to \((b,\mathcal D)\) the normalized positive transfer operator $\mathcal{L}_{b,\mathcal{D}}$:
\[
        (\mathcal L_{b,\mathcal D}F)(x)
        :=
        \frac1r
        \sum_{m=0}^{b-1}
        \left|
        P_{\mathcal D}\!\left(\frac{x+m}{b}\right)
        \right|
        F\!\left(\frac{x+m}{b}\right),
        \qquad x\in\mathbb T.
\]
Let $\rho(\mathcal L_{b,\mathcal D})$
denote its spectral radius on \(C(\mathbb T)\).

The goal of this section is to show that, for every \(\varepsilon>0\), the
exponent
\[
        \log_b\!\bigl(\rho(\mathcal L_{b,\mathcal D})\bigr)+\varepsilon
\]
is admissible in the sense of \eqref{eq:hybrid-digit-general}.  We first
record a standard sampling consequence of a uniform grid bound.

\begin{lemma}[A sampling consequence of a uniform grid bound]
\label{lem:sampling-badic-grid}
Let \(M\ge1\) be an integer and let \(0<c\le1\). Let \(F\) be a
trigonometric polynomial whose frequencies have modulus \(<M\).
Suppose that a finite multiset \(\Xi\subset\mathbb T\) has at most
\(K\) points, counted with multiplicity, in every interval of length
\(c/M\). Then
\[
        \sum_{\xi\in\Xi}|F(\xi)|
        \ll_c KM\int_{\mathbb T}|F(u)|\,du
        \ll_c K
        \sup_{x\in\mathbb T}
        \sum_{a=0}^{M-1}
        \left|F\!\left(\frac{x+a}{M}\right)\right|.
\]
\end{lemma}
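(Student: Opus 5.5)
The plan is a standard Marcinkiewicz--Zygmund / large-sieve type sampling argument, done in two steps.

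\emph{First inequality.} Pick a reproducing kernel. Let \(V\) be a de la Vall\'ee Poussin kernel (or any kernel \(\widehat V(n)=1\) for \(|n|<M\)) with \(\widehat V\) supported in \(|n|<2M\) and
\[
        |V(t)|\ll \frac{M}{(1+M\|t\|)^2}.
\]
Then \(F=F*V\), so
\[
        |F(\xi)|\le\int_{\mathbb T}|F(u)|\,|V(\xi-u)|\,du.
\]
Summing over \(\xi\in\Xi\) gives
\[
        \sum_{\xi\in\Xi}|F(\xi)|\le\int_{\mathbb T}|F(u)|\sum_{\xi\in\Xi}|V(\xi-u)|\,du.
\]
It remains to show \(\sup_u\sum_{\xi}|V(\xi-u)|\ll_c KM\). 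Cover \(\mathbb T\) by the intervals \(I_j=u+[jc/M,(j+1)c/M)\), \(|j|\le M/c\). Each \(I_j\) contains at most \(K\) points of \(\Xi\), and on \(I_j\)
\[
        |V|\ll \frac{M}{(1+c|j|-c)_+^2}.
\]
For \(|j|\le 1\) use \(|V|\le M\) (up to constants). Summing over \(j\) gives
\[
        \ll KM\Bigl(1+c^{-2}\sum_{j\ge1}j^{-2}\Bigr)\ll_c KM.
\]

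\emph{Second inequality.} Write
\[
        M\int_{\mathbb T}|F(u)|\,du = M\int_0^{1/M}\sum_{a=0}^{M-1}\left|F\!\left(x'+\frac aM\right)\right|dx'.
\]
Substituting \(x'=x/M\) with \(x\in[0,1)\), the points \(x'+a/M\) become \((x+a)/M\). So the quantity equals \(\int_0^1\sum_a|F((x+a)/M)|\,dx\), which is at most the supremum over \(x\). Hence
\[
        M\int_{\mathbb T}|F(u)|\,du \le \sup_{x\in\mathbb T}\sum_{a=0}^{M-1}\left|F\!\left(\frac{x+a}{M}\right)\right|.
\]

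The only point needing care is the choice of kernel with quadratic decay and frequency support \(O(M)\). The Fej\'er-based de la Vall\'ee Poussin kernel \(2K_{2M}-K_M\) works, with constants absolute. The dependence on \(c\) enters only through the count of intervals near \(u\), which is harmless. Multiplicities are handled automatically because the count \(K\) is already with multiplicity.
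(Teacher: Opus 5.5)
Your proof is correct and is essentially the paper's argument: you reproduce $F$ with a de la Vall\'ee Poussin kernel of quadratic decay, bound $\sup_u\sum_{\xi\in\Xi}|V(\xi-u)|\ll_c KM$ by counting points of $\Xi$ in windows of width $c/M$ around $u$, and unfold $M\int_{\mathbb T}|F|$ as an average over the shifted grids $\{(x+a)/M\}$. The only slip is in the indexing: with $|j|\le M/c$ your intervals $I_j$ wrap around $\mathbb T$, and the wrapped ones lie next to $u$, where your stated decay bound fails. You should therefore represent $\xi-u$ in $[-1/2,1/2)$, so that $|j|\lesssim M/(2c)$, or, as the paper does, use the shells $\{v: jc/M\le\|v-u\|<(j+1)c/M\}$ with $j\ge0$.
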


\begin{proof}
Let \(V_M\) be a de la Vall\'ee Poussin kernel which reproduces all
frequencies of modulus \(<M\) and satisfies
\[
        |V_M(u)|
        \ll
        \frac{M}{1+M^2\|u\|^2}.
\]
Then \(F=F*V_M\). For fixed \(u\in\mathbb T\), decompose \(\mathbb T\) into the sets
\[
        \left\{v\in\mathbb T:
        \frac{jc}{M}\le \|v-u\|<
        \frac{(j+1)c}{M}\right\},
        \qquad j\ge0.
\]
Each such set is the union of at most two intervals of length \(c/M\),
and hence contains at most \(2K\) points of \(\Xi\), counted with
multiplicity. Therefore
\[
\begin{aligned}
        \sum_{\xi\in\Xi}|V_M(\xi-u)|
        &\ll
        KM\sum_{j\ge0}\frac{1}{1+c^2j^2}\\
        &\ll_c KM.
\end{aligned}
\]
Using the convolution representation and interchanging the finite sum
with the integral, we obtain
\[
\begin{aligned}
        \sum_{\xi\in\Xi}|F(\xi)|
        &\le
        \int_{\mathbb T}|F(u)|
        \sum_{\xi\in\Xi}|V_M(\xi-u)|\,du \\
        &\ll_c
        KM\int_{\mathbb T}|F(u)|\,du.
\end{aligned}
\]
Finally, by the change of variables \(u=(x+a)/M\),
\[
\begin{aligned}
        \int_0^1
        \sum_{a=0}^{M-1}
        \left|F\!\left(\frac{x+a}{M}\right)\right|\,dx
        &=
        \sum_{a=0}^{M-1}
        M\int_{a/M}^{(a+1)/M}|F(u)|\,du \\
        &=
        M\int_{\mathbb T}|F(u)|\,du.
\end{aligned}
\]
The integral on the left is bounded above by the corresponding
supremum over \(x\), which proves the second estimate.
\end{proof}

\begin{proposition}[A transfer-operator sufficient condition for admissibility]
\label{prop:transfer-controls-hybrid}
Let \(\rho(\mathcal L_{b,\mathcal D})\) be the spectral radius.
Then, for every \(\varepsilon>0\), the localized hybrid estimate
\begin{equation}\label{eq:hybrid-digit-transfer}
\sum_{\substack{d\sim D}}
\sum_{\substack{1\le\ell\le d\\(\ell,d)=1}}
\sum_{\substack{|\eta|<B\\ N\ell/d+\eta\in\mathbb Z}}
\left|
\widehat C_k\!\left(\frac{\ell}{d}+\frac{\eta}{N}\right)
\right|
\ll_{\varepsilon,b,\mathcal D}
|\mathcal C_k|\,(D^2B)^{\log_b(\rho(\mathcal L_{b,\mathcal D}))+\varepsilon}
\end{equation}
holds uniformly for \(D,B\ge1\) in the range
\[
        3X\le N\le4X,\qquad D^2B\le8X.
\]
Consequently, in the notation of \eqref{eq:hybrid-digit-general}, one may take
\[
        \alpha_{b,\mathcal D}
        =
        \log_b(\rho(\mathcal L_{b,\mathcal D}))+\varepsilon
\]
for any fixed \(\varepsilon>0\).
\end{proposition}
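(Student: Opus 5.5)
We plan to follow Maynard's large-sieve-type argument: split the product $\widehat C_k$ into a short "high" block that sees the spacing of the sample points and a long "low" block that is handled by the transfer operator. Fix $\varepsilon>0$ and write $\rho=\rho(\mathcal L_{b,\mathcal D})$. By Gelfand's formula there is $n_\varepsilon$ with $\|\mathcal L^n_{b,\mathcal D}\|_{C(\mathbb T)}\le (\rho b^{\varepsilon/2})^n$ for $n\ge n_\varepsilon$. Hence $\|\mathcal L^n\|\ll_\varepsilon (\rho b^{\varepsilon/2})^n$ for all $n$. Since $\mathcal L$ is positive, this norm equals $\sup_x (\mathcal L^n\mathbf 1)(x)$. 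Unwinding the definition, $\mathcal L^n\mathbf 1(x)=r^{-n}\sum_{a=0}^{b^n-1}\prod_{j=0}^{n-1}|P_{\mathcal D}(b^j(x+a)/b^n)|$. This gives the key grid bound
\[
\sup_{x\in\mathbb T}\sum_{a=0}^{b^n-1}\bigl|\widehat C_n\bigl(\tfrac{x+a}{b^n}\bigr)\bigr|\ll_\varepsilon r^n(\rho b^{\varepsilon/2})^n .
\]

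Next, choose $n$ with $b^{n}\asymp D^2B$, more precisely $b^{n-1}< 16D^2B\le b^n$; since $D^2B\le 8X$ this gives $n\le k+O(1)$, and we truncate to $n\le k$. Factor $\widehat C_k(\theta)$ as the product over $j<k-n$ times the product over $k-n\le j<k$. The first factor is bounded trivially by $r^{k-n}$. The second equals $|\widehat C_n(b^{k-n}\theta)|$. So it suffices to bound $\sum_{\xi\in\Xi}|\widehat C_n(\xi)|$, where $\Xi$ is the multiset of points $b^{k-n}(\ell/d+\eta/N)$. For this we apply \cref{lem:sampling-badic-grid} with $F=\widehat C_n$, whose frequencies lie in $[0,b^n)$, and $M=b^n$. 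The lemma then yields $\ll K r^n(\rho b^{\varepsilon/2})^n$, and $(\rho b^{\varepsilon/2})^n\ll (D^2B)^{\log_b\rho+\varepsilon/2}$. Multiplying by $r^{k-n}$ gives $|\mathcal C_k|(D^2B)^{\log_b\rho+\varepsilon/2}K$. So the proposition follows once the multiplicity satisfies $K\ll_\varepsilon (D^2B)^{\varepsilon/2}$, or better $K\ll 1$.

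The main obstacle is this spacing/multiplicity count for $\Xi$ in intervals of length $c/b^n\asymp 1/(D^2B)$. Before dilation, the original points $\ell/d+\eta/N$ are well spaced. Distinct fractions with $d\sim D$ are separated by $\gg D^{-2}$. For fixed $(\ell,d)$, the $\eta$'s lie on a unit-spaced shifted lattice, so the points $\eta/N$ are $1/N$-spaced. Since $|\eta|<B$, any two points with distinct $\ell/d$ differ by at least $D^{-2}/4-2B/N>0$ in the relevant range, because $B/N\ll D^{-2}$ when $D^2B\le 8X$. The difficulty is that multiplying by $b^{k-n}$ wraps $\mathbb T$ around itself about $b^{k-n}\asymp N/(D^2B)$ times.

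We would instead count directly: a point $\xi$ lies in a given interval $I$ of length $c/b^n$ if and only if $\theta$ lies in $b^{-(k-n)}(I+j)$ for some integer $0\le j<b^{k-n}$. That preimage is a union of $b^{k-n}$ intervals of length $\asymp 1/N$, equally spaced by $b^{-(k-n)}\asymp D^2B/N$. Each such short interval contains $O(1)$ points $\theta$ from a fixed $(\ell,d)$, and at most one class $\ell/d$ when $D^2B$ is large. So we must bound the number of $(\ell,d,\eta)$ with $b^{k-n}(\ell/d+\eta/N)$ in $I$ modulo $1$. Using $N\ell/d+\eta\in\mathbb Z$, write $\theta=t/N$ with $t\in\mathbb Z$. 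The condition becomes that $b^{k-n}t$ lies in a fixed residue window of length $\asymp N/b^n$ modulo $N$. Because $N/b^{k}\in[3,4]$, this window is $O(1)$ lattice points wide in $t$ modulo $N/b^{k-n}\asymp b^n$ (up to the bounded gcd with $N$). So each window picks out $O(1)$ residue classes of $t$ modulo $\asymp b^n$. The set of $t$ we sample has at most $\sum_{d\sim D}\phi(d)\cdot 2B\ll D^2B\asymp b^n$ elements lying in distinct residues modulo $N$. The remaining task is to show that no residue class modulo $\asymp b^n$ is hit more than $O(b^{\varepsilon n})$ times. We would prove this via the separation above: the sampled $t$'s in one class are $\gg N/b^n$ apart yet must fit into the Farey-type structure $\{N\ell/d+\eta\}$. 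If a clean $O(1)$ bound fails, we would fall back on a divisor-type bound $K\ll_\varepsilon (D^2B)^{\varepsilon/2}$, which still suffices after renaming $\varepsilon$.
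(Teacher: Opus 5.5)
\textbf{The gap is in the reduction step, not the grid bound.} Your first step is correct and is what the paper does: Gelfand's formula, positivity, and unwinding $\mathcal L^n\mathbf 1$ give the $b$-adic $\ell^1$ bound for $\widehat C_n$. The problem comes next. You keep only the top $n$ digits with $b^n\asymp D^2B$ and bound the lowest $k-n$ digits trivially. The dilation $\theta\mapsto b^{k-n}\theta$ then destroys the rational part $\ell/d$ whenever $d$ has a large $b$-smooth factor, so the multiplicity $K$ can be of size $D$, not $(D^2B)^{\varepsilon}$.

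Concretely, take $N=3X$, $B=1$, and $D=b^j$ with $b^{3j}\le c_bX$. For $d=b^j$ one has $d\mid N$, so $\eta\in\mathbb Z$, and $|\eta|<1$ forces $\eta=0$. Since $n=2j+O_b(1)\le k-j$, each of the $\phi(b^j)\asymp_b D$ points $\ell/b^j$ is sent to $\xi=0$. Hence $r^{k-n}\sum_{\xi\in\Xi}|\widehat C_n(\xi)|\ge\phi(b^j)\,r^k\gg_b D|\mathcal C_k|$. This exceeds $|\mathcal C_k|(D^2B)^{\log_b\rho+\varepsilon}$ as soon as $\log_b\rho+\varepsilon<1/2$, which is exactly the range ($<2/5$, $<1/6$) where the proposition is used.

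So no multiplicity bound, clean or divisor-type, can rescue this route. Your claim that each residue class modulo $\asymp b^n$ is hit only $O(b^{\varepsilon n})$ times is false: here every sampled $t=3b^{k-j}\ell$ lies in the class $0\bmod 3b^n$. Separately, your residue-window count presupposes $b^{k-n}\mid N$. For $N\in[3X,4X]$ with $\gcd(N,b)=1$, multiplication by $b^{k-n}$ is a bijection of $\mathbb Z/N\mathbb Z$, and a window of length $\asymp N/b^n$ contains $\asymp b^{k-n}$ residues, not $O(1)$.

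\textbf{The missing idea is that the rational part and the shift must be seen by different digit blocks.} The paper splits off the lowest $k_1$ digits with $b^{k_1}\asymp D^2$ and the highest $k_2$ digits with $b^{k_2}\asymp B$, where $k_1+k_2\le k$, and bounds the middle digits trivially. This gives $|\widehat C_k(\theta)|\le r^{k-k_1-k_2}|\widehat C_{k_1}(\theta)|\,|\widehat C_{k_2}(b^{k-k_2}\theta)|$.

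\emph{Low block.} It is evaluated at the undilated $\theta$. For each $\ell/d$, take the supremum over $|\eta|<B$. The fractions are $\gg D^{-2}$-separated and $B/N\ll D^{-2}$, so the selected points have bounded multiplicity at scale $b^{-k_1}$. Then \cref{lem:sampling-badic-grid} gives $\sum_{d,\ell}\sup_\eta|\widehat C_{k_1}|\ll r^{k_1}b^{k_1(\log_b\rho+\varepsilon)}$.

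\emph{High block.} It only has to see the $\eta$-variation. For fixed $(\ell,d)$, the points $b^{k-k_2}(\ell/d+\eta/N)$ form a progression of step $\asymp b^{-k_2}$ and length $O(b^{k_2})$. So again the multiplicity is bounded, uniformly in $(\ell,d)$.

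\emph{Combining.} Multiplying $\sum_{d,\ell}\sup_\eta$ by $\sup_{d,\ell}\sum_\eta$ gives the factor $(b^{k_1+k_2})^{\log_b\rho+\varepsilon}\asymp(D^2B)^{\log_b\rho+\varepsilon}$. In the example above, the low block $\widehat C_{k_1}(\ell/b^j)$ retains the $j$ nontrivial factors $P_{\mathcal D}(b^i\ell/b^j)$, $i<j$. These are precisely the source of decay that your top-block reduction discards.
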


\begin{proof}
The product formula
\[
        \widehat C_{k_0}(\theta)
        =
        \prod_{j=0}^{k_0-1}P_{\mathcal D}(b^j\theta)
\]
implies that the normalized \(b\)-adic \(\ell^1\)-sums of \(\widehat C_{k_0}\)
are generated by the iterates of \(\mathcal L_{b,\mathcal D}\).  More
precisely, for every \(k_0\ge1\) and every \(x\in\mathbb T\), one has
\begin{equation}
\label{eq:transfer-l1-identity-general}
        \frac1{r^{k_0}}
        \sum_{a=0}^{b^{k_0}-1}
        \left|
        \widehat C_{k_0}\!\left(\frac{x+a}{b^{k_0}}\right)
        \right|
        =
        ((\mathcal L_{b,\mathcal D})^{k_0}\mathbf 1)(x).
\end{equation}
{Taking \(x=0\) in
\eqref{eq:transfer-l1-identity-general} and retaining the term \(a=0\) shows
that \(\|(\mathcal L_{b,\mathcal D})^{k_0}\|_{L^\infty\to L^\infty}\ge1\)
for every \(k_0\), and hence \(\rho(\mathcal L_{b,\mathcal D})\ge1\).}
By the spectral-radius formula, for every \(\varepsilon>0\) there is a
constant \(C_{\varepsilon,b,\mathcal D}\) such that
\[
        \|(\mathcal L_{b,\mathcal D})^{k_0}\|_{L^\infty\to L^\infty}
        \le
        C_{\varepsilon,b,\mathcal D}
        b^{{k_0}(\log_b(\rho(\mathcal L_{b,\mathcal D}))+\varepsilon)}
        \qquad({k_0}\ge1).
\]
Indeed, Gelfand's formula gives
\[
        \lim_{k_0\to\infty}
        \|(\mathcal L_{b,\mathcal D})^{k_0}\|_{L^\infty\to L^\infty}^{1/k_0}
        =\rho(\mathcal L_{b,\mathcal D});
\]
the finitely many remaining values of \(k_0\) are absorbed into
\(C_{\varepsilon,b,\mathcal D}\).  Combining this with
\eqref{eq:transfer-l1-identity-general} yields
\begin{equation}
\label{eq:badic-l1-bound-general}
        \sup_{x\in\mathbb T}
        \sum_{a=0}^{b^{k_0}-1}
        \left|
        \widehat C_{k_0}\!\left(\frac{x+a}{b^{k_0}}\right)
        \right|
        \ll_{\varepsilon,b,\mathcal D}
        r^{k_0} b^{{k_0}(\log_b(\rho(\mathcal L_{b,\mathcal D}))+\varepsilon)}.
\end{equation}

At this point we follow Maynard's argument
\cite[Lemmas 5.1--5.3]{MaynardRestrictedDigits}.  More precisely, the
passage from the \(b\)-adic \(\ell^1\)-estimate
\eqref{eq:badic-l1-bound-general} to the hybrid estimate is precisely the
two-block argument used in Maynard's proof of his hybrid lemma.  The only
difference is that here the \(b\)-adic estimate has been obtained from the
transfer operator.  The sampling lemma above gives the same argument
uniformly for the shifted grids \(3X\le N\le4X\).

Since \(D^2B\le8X=8b^k\), we may choose integers \(k_1,k_2\ge0\), after
changing either of them by at most \(O_b(1)\), such that
\[
        b^{k_1}\asymp_b D^2,
        \qquad
        b^{k_2}\asymp_b B,
        \qquad
        k_1+k_2\le k.
\]
We use the convention \(\widehat C_0\equiv1\).  Splitting off the first
\(k_1\) and last \(k_2\) digit blocks gives
\[
        |\widehat C_k(\theta)|
        \le
        r^{k-k_1-k_2}
        |\widehat C_{k_1}(\theta)|
        |\widehat C_{k_2}(b^{k-k_2}\theta)|.
\]

For fixed \(d,\ell\), the frequencies
\[
        b^{k-k_2}
        \left(\frac{\ell}{d}+\frac{\eta}{N}\right),
        \qquad
        |\eta|<B,\quad \frac{N\ell}{d}+\eta\in\mathbb Z,
\]
have spacing
\[
        \frac{b^{k-k_2}}{N}
        =
        \frac{X}{Nb^{k_2}}
        \asymp b^{-k_2}
\]
and consist of \(O(B)=O_b(b^{k_2})\) points.  Their total unwrapped span
is therefore \(O_b(1)\), so reduction modulo one preserves bounded local
multiplicity at scale \(b^{-k_2}\).  Hence
\cref{lem:sampling-badic-grid} and
\eqref{eq:badic-l1-bound-general} give
\[
\sup_{\substack{d\sim D\\1\le\ell\le d\\(\ell,d)=1}}
\sum_{\substack{|\eta|<B\\N\ell/d+\eta\in\mathbb Z}}
\left|
\widehat C_{k_2}\!\left(
b^{k-k_2}
\left(\frac{\ell}{d}+\frac{\eta}{N}\right)
\right)
\right|
\ll_{\varepsilon,b,\mathcal D}
r^{k_2}
b^{k_2(\log_b(\rho(\mathcal L_{b,\mathcal D}))+\varepsilon)}.
\]

For the first block, for each reduced fraction \(\ell/d\), \(d\sim D\),
choose a point in the closed interval
\[
        \left[\frac{\ell}{d}-\frac{B}{N},
        \frac{\ell}{d}+\frac{B}{N}\right]
\]
at which \(|\widehat C_{k_1}|\) is maximal.
Distinct such fractions are \(\gg D^{-2}\)-separated, while
\[
        \frac{B}{N}\ll D^{-2}.
\]
Thus the selected points have bounded local multiplicity at scale
\(D^{-2}\asymp_b b^{-k_1}\).  A second application of
\cref{lem:sampling-badic-grid} and
\eqref{eq:badic-l1-bound-general} yields
\[
\sum_{d\sim D}
\sum_{\substack{1\le\ell\le d\\(\ell,d)=1}}
\sup_{|\eta|<B}
\left|
\widehat C_{k_1}\!\left(
\frac{\ell}{d}+\frac{\eta}{N}
\right)
\right|
\ll_{\varepsilon,b,\mathcal D}
r^{k_1}
b^{k_1(\log_b(\rho(\mathcal L_{b,\mathcal D}))+\varepsilon)}.
\]

Combining the two block estimates with the trivial bound on the middle
digit factors, we obtain
\[
\begin{aligned}
\sum_{d\sim D}
\sum_{\substack{1\le\ell\le d\\(\ell,d)=1}}
\sum_{\substack{|\eta|<B\\N\ell/d+\eta\in\mathbb Z}}
\left|
\widehat C_k\!\left(
\frac{\ell}{d}+\frac{\eta}{N}
\right)
\right|
&{\ll_{\varepsilon,b,\mathcal D}
r^k
b^{(k_1+k_2)
(\log_b(\rho(\mathcal L_{b,\mathcal D}))+\varepsilon)}}\\
&{\ll_{\varepsilon,b,\mathcal D}
|\mathcal C_k|
(D^2B)^{\log_b(\rho(\mathcal L_{b,\mathcal D}))+\varepsilon}.}
\end{aligned}
\]
This proves \eqref{eq:hybrid-digit-transfer}.
\end{proof}

Thus the transfer-operator model gives a sufficient condition for
admissibility.  To obtain explicit consequences, we do not attempt to compute
\(\rho(\mathcal L_{b,\mathcal D})\) exactly.  Instead, we bound it by
the one-step \(L^\infty\)-operator norm, which positivity turns into an
explicit row sum.

\begin{lemma}[Row-sum bound for the transfer operator]
\label{lem:transfer-row-sum}
Let \(\mathcal D\subset\{0,\dots,b-1\}\), with \(r=|\mathcal D|\). Then
\[
        \rho(\mathcal L_{b,\mathcal D})
        \le
        \frac1r
        \sup_{x\in\mathbb T}
        \sum_{m=0}^{b-1}
        \left|
        P_{\mathcal D}\!\left(\frac{x+m}{b}\right)
        \right|.
\]
\end{lemma}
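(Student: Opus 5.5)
The bound follows from the elementary inequality \(\rho(T)\le\|T\|\) for a bounded operator on a Banach space, together with an explicit computation of the operator norm of \(\mathcal L_{b,\mathcal D}\) on \(C(\mathbb T)\) using positivity.

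\emph{Well-definedness.} First I will check that \(\mathcal L_{b,\mathcal D}\) maps \(C(\mathbb T)\) into itself. For a continuous \(1\)-periodic \(F\), the expression
\[
\sum_{m=0}^{b-1}\Bigl|P_{\mathcal D}\!\bigl(\tfrac{x+m}{b}\bigr)\Bigr|F\!\bigl(\tfrac{x+m}{b}\bigr)
\]
is continuous in \(x\). It is also invariant under \(x\mapsto x+1\), since that shift permutes the preimages \((x+m)/b\) modulo \(1\). Hence \(\mathcal L_{b,\mathcal D}F\in C(\mathbb T)\).

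\emph{Norm computation.} The weights \(|P_{\mathcal D}(\cdot)|\) are nonnegative, so for \(\|F\|_\infty\le1\) the triangle inequality gives
\[
|(\mathcal L_{b,\mathcal D}F)(x)|\le\frac1r\sum_{m=0}^{b-1}\Bigl|P_{\mathcal D}\!\bigl(\tfrac{x+m}{b}\bigr)\Bigr|=(\mathcal L_{b,\mathcal D}\mathbf 1)(x).
\]
Therefore
\[
\|\mathcal L_{b,\mathcal D}\|_{C(\mathbb T)\to C(\mathbb T)}=\|\mathcal L_{b,\mathcal D}\mathbf 1\|_\infty ,
\]
with equality attained at \(F=\mathbf 1\). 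The right-hand side is exactly the displayed supremum \(\frac1r\sup_{x}\sum_{m}|P_{\mathcal D}((x+m)/b)|\).

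\emph{Conclusion.} By Gelfand's formula, as already used in the proof of \cref{prop:transfer-controls-hybrid}, \(\rho(\mathcal L)=\lim_n\|\mathcal L^n\|^{1/n}\). Submultiplicativity gives \(\|\mathcal L^n\|\le\|\mathcal L\|^n\), so \(\rho(\mathcal L)\le\|\mathcal L\|\), which is the claimed bound.

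There is no real obstacle. The only point needing care is to use the same operator norm (the \(L^\infty\), i.e.\ sup-norm, on \(C(\mathbb T)\)) as in \cref{prop:transfer-controls-hybrid}, so that the spectral radius there and here coincide. As a consistency check, \(\rho\ge1\) from that proposition matches the bound, because at \(x=0\) the term \(m=0\) alone already contributes \(|P_{\mathcal D}(0)|/r=1\).
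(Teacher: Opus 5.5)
Your proof is correct and takes essentially the same approach as the paper: you bound \(\rho(\mathcal L_{b,\mathcal D})\) by the sup-norm operator norm, and positivity identifies that norm with \(\|\mathcal L_{b,\mathcal D}\mathbf 1\|_\infty\). Your extra check that \(\mathcal L_{b,\mathcal D}\) preserves \(C(\mathbb T)\) is a harmless addition that the paper leaves implicit.
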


\begin{proof}
For every bounded operator,
\[
        \rho(\mathcal L_{b,\mathcal D})
        \le
        \|\mathcal L_{b,\mathcal D}\|_{L^\infty\to L^\infty}.
\]
Since \(\mathcal L_{b,\mathcal D}\) is positive,
\[
        |\mathcal L_{b,\mathcal D}F|
        \le
        \mathcal L_{b,\mathcal D}|F|
        \le
        \|F\|_\infty\,\mathcal L_{b,\mathcal D}\mathbf 1.
\]
Testing the operator on \(\mathbf 1\) gives the reverse inequality, and
therefore
\[
        \|\mathcal L_{b,\mathcal D}\|_{L^\infty\to L^\infty}
        =
        \|\mathcal L_{b,\mathcal D}\mathbf 1\|_\infty.
\]
By definition,
\[
        (\mathcal L_{b,\mathcal D}\mathbf 1)(x)
        =
        \frac1r
        \sum_{m=0}^{b-1}
        \left|
        P_{\mathcal D}\!\left(\frac{x+m}{b}\right)
        \right|,
\]
which proves the claim.
\end{proof}

The remaining subsections apply this lemma to several natural families of
digit sets.  The resulting estimates are intended as transparent, easily
checkable sufficient bounds rather than sharp evaluations of the spectral
radius.

\subsection{Consecutive digit intervals}
\label{subsec:transfer-consecutive-digit-intervals}

Let
\[
        \mathcal D=\{a,a+1,\dots,a+r-1\}
        \subset\{0,1,\dots,b-1\},
        \qquad 1\le r\le b.
\]
Then
\[
        P_{\mathcal D}(x)
        =e(ax)\sum_{u=0}^{r-1}e(ux),
\]
so the corresponding row-sum bound depends only on the length \(r\), not on
the translate \(a\).  The geometric-sum formula and
\(|\sin(\pi x)|\ge2\|x\|\) give
\[
        |P_{\mathcal D}(x)|
        \le
        \min\!\left(r,\frac1{2\|x\|}\right).
\]

\begin{proposition}[Row-sum bound for consecutive digit intervals]
\label{prop:transfer-consecutive-row-sum}
For every \(b\ge2\) and \(1\le r\le b\),
\[
        \rho(\mathcal L_{b,\mathcal D})
        \le
        \frac{b}{r}\bigl(\log(2r)+2\bigr).
\]
\end{proposition}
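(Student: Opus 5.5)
By \cref{lem:transfer-row-sum}, it suffices to show that, uniformly in \(x\in\mathbb T\),
\[
        \sum_{m=0}^{b-1}\left|P_{\mathcal D}\!\left(\frac{x+m}{b}\right)\right|
        \le b\bigl(\log(2r)+2\bigr).
\]
I will prove this using the pointwise bound \(|P_{\mathcal D}(y)|\le\min(r,1/(2\|y\|))\) recorded just before the statement. The sample points \(y_m=(x+m)/b\), \(0\le m\le b-1\), form a translate of the grid \(b^{-1}\mathbb Z\) inside \(\mathbb T\). Since a translate of the grid is invariant under \(x\mapsto x+1\), we may assume \(0\le x<1\). After reflecting \(y\mapsto -y\), which preserves \(\|y\|\), the distances \(\|y_m\|\) can be listed in increasing order as \(t_0\le t_1\le\cdots\). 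Every interval of length \(1/b\) contains at most one grid point. Hence at most two points have \(\|y\|<1/b\), and more generally at most two points have \(\|y\|\in[j/b,(j+1)/b)\) for each \(j\ge0\).

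I will then split the sum at the threshold \(\|y\|\approx 1/(2r)\), which is where the two bounds \(r\) and \(1/(2\|y\|)\) cross.

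\textbf{Near points.} The points with \(\|y_m\|<1/(2r)\) lie in an interval of length \(1/r\). That interval contains at most \(b/r+1\) grid points. Each contributes at most \(r\), so the total is at most \(b+r\le 2b\).

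\textbf{Far points.} For the points with \(\|y_m\|\ge 1/(2r)\), I group them into shells \(\|y\|\in[j/b,(j+1)/b)\) with \(j\ge0\). Each shell contains at most two points. A point in shell \(j\) contributes at most \(1/(2\max(j/b,1/(2r)))\), which equals \(b/(2j)\) when \(j\ge b/(2r)\). The shells with \(j<b/(2r)\) meet the far region only for \(j\) within one unit of \(b/(2r)\). Each such point contributes at most \(r\), and there are at most two or four of them, so this gives \(O(r)\le O(b)\). The remaining shells, \(b/(2r)\le j\le b/2\), contribute at most
\[
        2\sum_{b/(2r)\le j\le b/2}\frac{b}{2j}
        \le b\bigl(\log r+O(1)\bigr).
\]

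\textbf{Main obstacle.} The only delicate point is getting the constants exactly right, so that the total is \(\le b(\log(2r)+2)\) and not merely \(O(b\log r)\). The issues are the boundary shells and the harmonic-sum estimate \(\sum_{J\le j\le J'}1/j\le\log(J'/J)+1/J\).

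To handle this cleanly I would avoid shells and bound the sum directly by comparison with an integral. The function \(g(t)=\min(r,1/(2t))\) is decreasing in \(t\), and the \(i\)-th smallest distance satisfies \(t_i\ge(i-1)/(2b)\), since each side of \(0\) carries spacing \(1/b\). Comparing the sum with \(2b\int_0^{1/2}g(t)\,dt\) plus the two largest terms gives
\[
        2b\int_0^{1/2}g(t)\,dt
        = 2b\Bigl(\tfrac12+\tfrac12\log r\Bigr)
        = b(1+\log r),
\]
plus a boundary error of at most \(2r\). This would come out as \(b(\log r+1)+2r\le b(\log(2r)+2)\) because \(r\le b\).

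If this integral comparison leaves a slightly larger constant than needed, I would recover the slack from \(\log 2\approx 0.69\) inside \(\log(2r)\) and from the fact that the near region contributes exactly one term of size \(r\) on each side. Dividing by \(r\) then yields \(\rho(\mathcal L_{b,\mathcal D})\le \frac br(\log(2r)+2)\).
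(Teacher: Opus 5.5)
Your approach is the paper's argument in dual form. The paper bounds the row sum by the layer-cake integral
\[
\int_0^r\#\bigl\{m:\min\bigl(r,1/(2\|y_m\|)\bigr)>\tau\bigr\}\,d\tau\le b+\int_1^r\Bigl(\frac b\tau+1\Bigr)d\tau=b+b\log r+r-1,
\]
using only that \(\{\|y\|<1/(2\tau)\}\) contains at most \(b/\tau+1\) sample points. You integrate the same majorant \(g(t)=\min(r,1/(2t))\) in the distance variable instead, which is fine in principle.

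\textbf{The gap.} The step you actually commit to does not close. The inequality \(b(\log r+1)+2r\le b(\log(2r)+2)\) is equivalent to \(2r\le(1+\log 2)b\). This is false once \(r>0.85\,b\) or so, for instance \(r=b\), or \(r=b-1\) with \(b\ge7\). Large \(r\) relative to \(b\) is exactly the regime used by the consecutive-interval corollary and by the remark on one missing extreme digit. Your fallback does not rescue this. The \(\log 2\) is precisely the slack that is already insufficient, and ``one term of size \(r\) on each side'' is exactly the extra \(2r\) that is too much.

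\textbf{The fix.} Your sorted-distance bound is off by one index. With \(t_0\le t_1\le\cdots\), the closed arc \(\{\|y\|\le t\}\) has length \(2t\). It therefore contains at most \(2bt+1\) points of the translated grid \(x/b+b^{-1}\mathbb Z\), so \(t_i\ge i/(2b)\), not merely \((i-1)/(2b)\). Since \(g\) is decreasing, for every \(i\ge1\) one has \(g(t_i)\le g(i/(2b))\le 2b\int_{(i-1)/(2b)}^{i/(2b)}g(t)\,dt\). Only the single term \(g(t_0)\le r\) falls outside the integral comparison, and
\[
\sum_{m=0}^{b-1}\left|P_{\mathcal D}\!\left(\frac{x+m}{b}\right)\right|
\le r+2b\int_0^{1/2}g(t)\,dt
= r+b(1+\log r)\le b(\log r+2)\le b\bigl(\log(2r)+2\bigr).
\]
This matches the paper's \(b+b\log r+r-1\) up to an additive constant. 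With this correction your argument is complete, and the shell decomposition can be dropped.
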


\begin{proof}
By \cref{lem:transfer-row-sum}, it suffices to bound the row sum.  Fix
\(x\in\mathbb T\) and put \(y_m=(x+m)/b\).  The points \(y_m\) are
\(1/b\)-separated modulo one, so for \(1\le \tau\le r\),
\[
        \#\left\{m:\|y_m\|<\frac1{2\tau}\right\}
        \le \frac b\tau+1.
\]
The layer-cake representation therefore gives
\[
\begin{aligned}
        \sum_{m=0}^{b-1}|P_{\mathcal D}(y_m)|
        &\le
        \int_0^r
        \#\left\{m:
        \min\!\left(r,\frac1{2\|y_m\|}\right)>\tau
        \right\}\,d\tau \\
        &\le
        b+\int_1^r\left(\frac b\tau+1\right)d\tau \\
        &=b+b\log r+r-1
        \le b\bigl(\log(2r)+2\bigr).
\end{aligned}
\]
Dividing by \(r\) proves the claim.
\end{proof}

\begin{corollary}[Consecutive intervals in the required exponent ranges]
\label{cor:transfer-consecutive-exponent-ranges}
Let
\[
        \mathcal D=\{a,a+1,\dots,a+r-1\}.
\]
If
\[
        r>b^{3/5}\bigl(\log(2r)+2\bigr),
\]
then one may choose an admissible exponent
$\alpha_{b,\mathcal D}<\frac25$.
If
\[
        r>b^{5/6}\bigl(\log(2r)+2\bigr),
\]
then one may choose an admissible exponent
$\alpha_{b,\mathcal D}<\frac16$.
\end{corollary}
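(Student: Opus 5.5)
The plan is to chain \cref{prop:transfer-consecutive-row-sum} with \cref{prop:transfer-controls-hybrid}. Write
\[
        \rho_0:=\frac{b}{r}\bigl(\log(2r)+2\bigr).
\]
By \cref{prop:transfer-consecutive-row-sum}, \(\rho(\mathcal L_{b,\mathcal D})\le\rho_0\). \cref{prop:transfer-controls-hybrid} then shows that \(\log_b\rho(\mathcal L_{b,\mathcal D})+\varepsilon\) is an admissible exponent for every \(\varepsilon>0\). Since the admissibility estimate \eqref{eq:hybrid-digit-general} only gets weaker as the exponent increases (because \(D^2B\ge1\)), any exponent at least \(\log_b\rho(\mathcal L_{b,\mathcal D})+\varepsilon\) is also admissible.

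It therefore suffices to show \(\log_b\rho_0<2/5\) (respectively \(<1/6\)), and then pick \(\varepsilon\) below the gap. The hypothesis \(r>b^{3/5}(\log(2r)+2)\) can be rewritten as
\[
        \rho_0=\frac{b(\log(2r)+2)}{r}<b\cdot b^{-3/5}=b^{2/5},
\]
so \(\log_b\rho_0<2/5\). Take
\[
        \varepsilon:=\tfrac12\bigl(\tfrac25-\log_b\rho_0\bigr)>0,
\]
which gives \(\alpha_{b,\mathcal D}:=\log_b\rho(\mathcal L_{b,\mathcal D})+\varepsilon<2/5\). The second case is the same with \(5/6\) in place of \(3/5\): the hypothesis yields \(\rho_0<b^{1/6}\), and we choose \(\varepsilon\) the same way.

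One point still needs checking. Definition~\ref{def:alpha-admissible} requires \(\alpha_{b,\mathcal D}>0\). This holds automatically, because \cref{prop:transfer-controls-hybrid} shows \(\rho(\mathcal L_{b,\mathcal D})\ge1\), so \(\log_b\rho\ge0\) and adding \(\varepsilon>0\) makes the exponent positive. Also, the hypotheses themselves force \(r\) to be large relative to \(b\). For \(b\) small they may simply be vacuous, for example if \(r\le b\) is incompatible with the inequality; in that case the statement is vacuously true and there is nothing to prove.

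The main obstacle is small, and it is just bookkeeping. The implied constant \(K_{b,\mathcal D}\) in admissibility may depend on \(\varepsilon\), and this is harmless because \(\varepsilon\) is a fixed function of \(b\) and \(r\). We also need that the strict inequality in the hypothesis leaves a positive gap, and it does since all quantities are fixed.
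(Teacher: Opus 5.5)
Your proposal is correct and is the paper's intended argument. The paper gives no written proof here, but it handles the one-missing-digit corollary the same way: combine the row-sum bound $\rho(\mathcal L_{b,\mathcal D})\le \frac{b}{r}\bigl(\log(2r)+2\bigr)$ from \cref{prop:transfer-consecutive-row-sum} with \cref{prop:transfer-controls-hybrid}, and observe that the hypothesis is exactly $\frac{b}{r}\bigl(\log(2r)+2\bigr)<b^{2/5}$ (resp.\ $<b^{1/6}$). Your additional bookkeeping is sound and only makes explicit what the paper leaves implicit: monotonicity of admissibility in the exponent since $D^2B\ge1$, positivity of the exponent from $\rho\ge1$, and choosing $\varepsilon$ inside the gap.
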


\subsection{One missing digit}
\label{subsec:transfer-one-missing-digit}

Let
\[
        \mathcal D_{a_0}
        :=\{0,1,\dots,b-1\}\setminus\{a_0\},
        \qquad
        P_{\mathrm{full}}(y):=\sum_{a=0}^{b-1}e(ay).
\]
The preceding layer-cake argument, applied with \(r=b\), gives the elementary
full-grid estimate
\begin{equation}\label{eq:full-discrete-lebesgue}
        \sup_{x\in\mathbb T}
        \sum_{m=0}^{b-1}
        \left|P_{\mathrm{full}}\!\left(\frac{x+m}{b}\right)\right|
        \le b(\log b+2).
\end{equation}

\begin{proposition}[One-step estimate for one missing digit]
\label{prop:transfer-one-missing-digit-estimate}
For every \(b\ge2\) and every \(a_0\in\{0,1,\dots,b-1\}\),
\[
        \rho(\mathcal L_{b,\mathcal D_{a_0}})
        \le
        \frac b{b-1}(\log b+3).
\]
Consequently,
\[
        \log_b\!\bigl(\rho(\mathcal L_{b,\mathcal D_{a_0}})\bigr)
        \le
        \log_b\!\left[\frac b{b-1}(\log b+3)\right].
\]
\end{proposition}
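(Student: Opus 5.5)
By \cref{lem:transfer-row-sum}, it suffices to bound the row sum
\[
        \frac1{b-1}\sup_{x\in\mathbb T}\sum_{m=0}^{b-1}\left|P_{\mathcal D_{a_0}}\!\left(\frac{x+m}{b}\right)\right|,
\]
since \(r=|\mathcal D_{a_0}|=b-1\). The plan is to avoid analysing \(P_{\mathcal D_{a_0}}\) directly. Instead, we write it as the full digit polynomial minus a single monomial:
\[
        P_{\mathcal D_{a_0}}(y)=P_{\mathrm{full}}(y)-e(a_0y).
\]

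The triangle inequality then gives, pointwise in \(y\),
\[
        |P_{\mathcal D_{a_0}}(y)|\le|P_{\mathrm{full}}(y)|+1.
\]
Fix \(x\in\mathbb T\) and sum over the \(b\) grid points \(y_m=(x+m)/b\), \(0\le m\le b-1\). The single monomials contribute exactly \(b\). The full-grid sum is controlled uniformly in \(x\) by the discrete Lebesgue estimate \eqref{eq:full-discrete-lebesgue}, namely \(b(\log b+2)\). Together these give
\[
        \sum_{m=0}^{b-1}\left|P_{\mathcal D_{a_0}}\!\left(\frac{x+m}{b}\right)\right|\le b(\log b+2)+b=b(\log b+3).
\]

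Dividing by \(r=b-1\) and taking the supremum over \(x\) yields
\[
        \rho(\mathcal L_{b,\mathcal D_{a_0}})\le\frac{b}{b-1}(\log b+3).
\]
The logarithmic consequence follows because \(\log_b\) is monotone and \(\rho\ge1>0\), as noted in the proof of \cref{prop:transfer-controls-hybrid}.

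The only step that needs any care is \eqref{eq:full-discrete-lebesgue} itself. It is asserted just before the proposition as the \(r=b\) case of the layer-cake argument in \cref{prop:transfer-consecutive-row-sum}. That argument gives \(b+b\log b+b-1\le b(\log b+2)\), so \eqref{eq:full-discrete-lebesgue} holds as stated, and I would simply cite it. The case \(b=2\) needs no separate treatment, since every step is uniform in \(b\ge2\) and \(a_0\).
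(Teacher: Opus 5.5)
Your proposal is correct and follows the paper's proof exactly. You write \(P_{\mathcal D_{a_0}}=P_{\mathrm{full}}-e(a_0y)\), combine the triangle inequality with the full-grid bound \eqref{eq:full-discrete-lebesgue} to get the row sum \(b(\log b+3)\), and then divide by \(r=b-1\) via \cref{lem:transfer-row-sum}; your check that the layer-cake argument with \(r=b\) gives \(b\log b+2b-1\le b(\log b+2)\) is also accurate.
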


\begin{proof}
Since
\(P_{\mathcal D_{a_0}}(y)=P_{\mathrm{full}}(y)-e(a_0y)\),
\eqref{eq:full-discrete-lebesgue} and the triangle inequality give
\[
        \sup_x\sum_{m=0}^{b-1}
        \left|P_{\mathcal D_{a_0}}\!\left(\frac{x+m}{b}\right)\right|
        \le b(\log b+3).
\]
The claim follows from \cref{lem:transfer-row-sum}.
\end{proof}

\begin{corollary}[One missing digit in the required exponent ranges]
\label{cor:transfer-one-missing-digit-thresholds}
Let
\[
        \mathcal D_{a_0}
        =
        \{0,1,\dots,b-1\}\setminus\{a_0\}.
\]
If \(b\ge202\), then one may choose an admissible exponent satisfying
\[
        \alpha_{b,\mathcal D_{a_0}}<\frac25.
\]
If \(b\ge95{,}313{,}426\), then one may choose an admissible exponent
satisfying
\[
        \alpha_{b,\mathcal D_{a_0}}<\frac16.
\]
\end{corollary}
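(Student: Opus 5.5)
The plan is to combine \cref{prop:transfer-controls-hybrid} with the explicit spectral bound of \cref{prop:transfer-one-missing-digit-estimate}. By \cref{prop:transfer-controls-hybrid}, for every fixed \(\varepsilon>0\) the exponent
\[
\log_b\rho(\mathcal L_{b,\mathcal D_{a_0}})+\varepsilon
\]
is admissible. By \cref{prop:transfer-one-missing-digit-estimate}, this exponent is at most
\[
\log_b\Bigl[\tfrac{b}{b-1}(\log b+3)\Bigr]+\varepsilon .
\]
So it suffices to show that, for the stated \(b\), the strict inequality
\[
\log_b\Bigl[\tfrac{b}{b-1}(\log b+3)\Bigr]<\gamma
\]
holds, with \(\gamma=2/5\) or \(\gamma=1/6\) respectively. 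One then chooses \(\varepsilon\) smaller than the gap; the gap depends only on \(b\), which is all the corollary asks for.

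The strict inequality is equivalent to positivity of
\[
g_\gamma(b):=\gamma\log b-\log\tfrac{b}{b-1}-\log(\log b+3),
\]
viewed as a function of a real variable \(b\ge2\). I would first show that \(g_\gamma\) is increasing in the relevant range. Differentiating gives
\[
g_\gamma'(b)=\frac{\gamma}{b}+\frac{1}{b(b-1)}-\frac{1}{b(\log b+3)} .
\]
This is positive as soon as \(\log b+3>1/\gamma\).
\begin{itemize}
\item For \(\gamma=2/5\) the condition is \(\log b>-1/2\), which holds for all \(b\ge2\).
\item For \(\gamma=1/6\) the condition is \(\log b>3\), which holds for \(b\ge21\), and in particular throughout \(b\ge 95{,}313{,}426\).
\end{itemize}
Hence it is enough to check \(g_\gamma>0\) at the threshold value of \(b\).

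The remaining step is the numerical check at the two endpoints, and this is the one delicate point, since both thresholds are essentially sharp.
\begin{itemize}
\item At \(b=202\): \(\log 202\approx5.3083\), so \(\frac{202}{201}(\log 202+3)\approx 8.349\). On the other hand \(202^{2/5}=e^{0.4\log 202}\approx e^{2.1233}\approx 8.359\). Thus \(g_{2/5}(202)>0\), with a margin of about \(10^{-3}\) on the logarithmic scale.
\item At \(b=95{,}313{,}426\): \(\log b\approx 18.3727\), so \(\log b+3\approx 21.3727\). The factor \(b/(b-1)\) changes this only by about \(10^{-8}\) relatively. Meanwhile \(b^{1/6}=e^{(\log b)/6}\approx e^{3.06212}\approx 21.374\). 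Thus \(g_{1/6}(b)>0\) as well, but only by a relative margin of order \(10^{-4}\).
\end{itemize}
Because the margins are so thin, I would carry out these evaluations with certified error bounds (several more digits of \(\log b\) and of the exponential) rather than rounded values, to ensure the sign is genuinely positive. I expect this verification to be the main obstacle, especially in the second case.

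Given positivity at the endpoints, monotonicity gives \(g_\gamma(b)>0\) for every larger \(b\). Choosing \(0<\varepsilon<g_\gamma(b)/\log b\) then produces an admissible exponent \(\alpha_{b,\mathcal D_{a_0}}<\gamma\), which proves both assertions.
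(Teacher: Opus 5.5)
Your reduction is the same as the paper's. \cref{prop:transfer-one-missing-digit-estimate} and \cref{prop:transfer-controls-hybrid} reduce everything to $\tfrac{b}{b-1}(\log b+3)<b^{\gamma}$, and monotonicity of $g_\gamma$ reduces that to the two endpoints. Your derivative computation, the choice $0<\varepsilon<g_\gamma(b)/\log b$, and the check at $b=202$ ($8.3496$ versus $8.3587$) are all fine.

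The problem is the endpoint check at $b=95{,}313{,}426$. It is the entire content of the second assertion, and your numbers do not establish it.
\begin{itemize}
\item First, $e^{3.06212}\approx 21.3728$, not $21.374$.
\item More seriously, the margin is not of order $10^{-4}$. Both thresholds are the least integers for which the inequality holds: it fails at $b=201$ and at $b=95{,}313{,}425$. So the margin at the second endpoint is smaller than the increment of $b^{1/6}-\tfrac{b}{b-1}(\log b+3)$ over one unit step, which is about $2.7\times10^{-8}$.
\item Concretely, $\log b=18.372681240\ldots$ and
\[
\frac{b}{b-1}(\log b+3)=21.3726814644\ldots,\qquad b^{1/6}=21.3726814769\ldots .
\]
The difference is about $1.25\times10^{-8}$, i.e.\ $g_{1/6}(b)\approx6\times10^{-10}$.
\end{itemize}

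Since $\partial g_{1/6}/\partial(\log b)=\tfrac16-\tfrac1{\log b+3}\approx0.12$, knowing $\log b$ only to $\pm5\times10^{-5}$, as in your sketch, leaves an uncertainty of about $6\times10^{-6}$ in $g_{1/6}$. That is four orders of magnitude larger than the true margin, so ``several more digits'' is not enough. To certify the sign you need:
\begin{itemize}
\item $\log b$ to within about $5\times10^{-9}$ (roughly eleven significant figures);
\item $b^{1/6}$ to comparable relative accuracy;
\item rigorous error bounds for both, e.g.\ via $\log b=8\log 10+\log(1-0.04686574)$ with an explicit tail bound on the series.
\end{itemize}
At that precision the inequality does hold. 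So the corollary is true, and your argument goes through once this evaluation is redone correctly.
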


\begin{proof}
By
\cref{prop:transfer-one-missing-digit-estimate,prop:transfer-controls-hybrid},
it suffices to verify
\[
        \frac b{b-1}(\log b+3)<b^{2/5}
\]
and
\[
        \frac b{b-1}(\log b+3)<b^{1/6},
\]
respectively.  The ratio
\[
        \frac{b^\gamma}
        {(b/(b-1))(\log b+3)}
\]
is increasing in \(b\) for \(\gamma\ge1/6\), and direct evaluation at the
two stated endpoints proves the claims.
\end{proof}

\begin{remark}
If \(a_0=0\) or \(a_0=b-1\), then \(\mathcal D_{a_0}\) is a consecutive
interval of length \(b-1\), so
\cref{prop:transfer-consecutive-row-sum} may give a sharper bound.
\end{remark}

\subsection{Several missing digits}
\label{subsec:transfer-several-missing-digits}

Let
\[
        \mathcal S\subset\{0,1,\dots,b-1\},
        \qquad
        |\mathcal S|=s,
        \qquad
        1\le s\le b-2,
\]
and set
\[
        \mathcal D:=\{0,1,\dots,b-1\}\setminus\mathcal S.
\]

\begin{proposition}[One-step estimate for \(s\) missing digits]
\label{prop:transfer-s-missing-digits-estimate}
One has
\[
        \rho(\mathcal L_{b,\mathcal D})
        \le
        \frac{b}{b-s}\left(\log b+2+\sqrt{s}\right).
\]
Consequently,
\[
        \log_b\!\bigl(\rho(\mathcal L_{b,\mathcal D})\bigr)
        \le
        \log_b\!\left[
        \frac{b}{b-s}\left(\log b+2+\sqrt{s}\right)
        \right].
\]
\end{proposition}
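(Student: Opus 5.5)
By \cref{lem:transfer-row-sum}, it suffices to bound the row sum
\[
        \sup_{x\in\mathbb T}\sum_{m=0}^{b-1}\left|P_{\mathcal D}\!\left(\frac{x+m}{b}\right)\right|
        \le b\left(\log b+2+\sqrt s\right),
\]
since dividing by \(r=b-s\) then gives the stated bound on \(\rho(\mathcal L_{b,\mathcal D})\). The logarithmic bound follows by taking \(\log_b\), which is monotone.

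The plan mirrors the one-missing-digit argument. Write
\[
        P_{\mathcal D}(y)=P_{\mathrm{full}}(y)-P_{\mathcal S}(y),
        \qquad
        P_{\mathcal S}(y):=\sum_{a\in\mathcal S}e(ay).
\]
The triangle inequality splits the row sum into two parts. The full-grid part is at most \(b(\log b+2)\) by \eqref{eq:full-discrete-lebesgue}. It remains to show
\[
        \sum_{m=0}^{b-1}\left|P_{\mathcal S}\!\left(\frac{x+m}{b}\right)\right|\le b\sqrt s
\]
uniformly in \(x\).

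For the \(\mathcal S\)-part, apply Cauchy--Schwarz over the \(b\) grid points:
\[
        \sum_{m=0}^{b-1}|P_{\mathcal S}(y_m)|
        \le
        \sqrt b\left(\sum_{m=0}^{b-1}|P_{\mathcal S}(y_m)|^2\right)^{1/2},
        \qquad
        y_m=\frac{x+m}{b}.
\]
Then evaluate the second moment exactly by discrete orthogonality. Expanding the square gives
\[
        \sum_{a,a'\in\mathcal S}e\!\left(\frac{(a-a')x}{b}\right)\sum_{m=0}^{b-1}e\!\left(\frac{(a-a')m}{b}\right).
\]
Because \(a,a'\in\{0,\dots,b-1\}\), we have \(|a-a'|<b\), so the inner sum vanishes unless \(a=a'\), in which case it equals \(b\). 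The second moment is therefore exactly \(bs\), independently of \(x\). Hence the \(\mathcal S\)-part is at most \(\sqrt b\cdot\sqrt{bs}=b\sqrt s\), which is the needed bound.

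There is no serious obstacle. The only point needing care is that the orthogonality requires all digit differences to be less than \(b\) in absolute value, which holds because \(\mathcal S\subset\{0,\dots,b-1\}\). The hypothesis \(s\le b-2\) ensures \(r=b-s\ge2\), which is consistent with the standing assumption \(r\ge2\).
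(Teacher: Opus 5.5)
Your proposal is correct and matches the paper's proof: you write $P_{\mathcal D}=P_{\mathrm{full}}-P_{\mathcal S}$, bound the full-grid part by \eqref{eq:full-discrete-lebesgue}, and bound the $\mathcal S$-part by $b\sqrt{s}$ via Cauchy's inequality and orthogonality on the $b$-point grid. You then divide by $|\mathcal D|=b-s$ and apply \cref{lem:transfer-row-sum}.
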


\begin{proof}
Write
\[
        P_{\mathcal S}(y):=\sum_{a\in\mathcal S}e(ay),
        \qquad
        P_{\mathcal D}=P_{\mathrm{full}}-P_{\mathcal S}.
\]
By \eqref{eq:full-discrete-lebesgue}, Cauchy's inequality, and orthogonality
on the \(b\)-point grid,
\[
\begin{aligned}
        \sum_{m=0}^{b-1}
        \left|P_{\mathcal D}\!\left(\frac{x+m}{b}\right)\right|
        &\le
        b(\log b+2)
        +b^{1/2}
        \left(
        \sum_{m=0}^{b-1}
        \left|P_{\mathcal S}\!\left(\frac{x+m}{b}\right)\right|^2
        \right)^{1/2} \\
        &=b\left(\log b+2+\sqrt{s}\right).
\end{aligned}
\]
Dividing by \(|\mathcal D|=b-s\) and applying
\cref{lem:transfer-row-sum} proves the claim.
\end{proof}

\begin{corollary}[\(s\) missing digits in the required exponent ranges]
\label{cor:transfer-s-missing-digits-thresholds}
Let
\[
        \mathcal D
        =
        \{0,1,\dots,b-1\}\setminus\mathcal S,
        \qquad
        |\mathcal S|=s,
        \qquad
        1\le s\le b-2.
\]
If
\[
        \frac{b}{b-s}
        \left(\log b+2+\sqrt{s}\right)
        <b^{2/5},
\]
then one may choose an admissible exponent satisfying
\[
        \alpha_{b,\mathcal D}<\frac25.
\]
If
\[
        \frac{b}{b-s}
        \left(\log b+2+\sqrt{s}\right)
        <b^{1/6},
\]
then one may choose an admissible exponent satisfying
\[
        \alpha_{b,\mathcal D}<\frac16.
\]
In particular, for every fixed \(0<\delta<1\) and all sufficiently large
\(b\), it is sufficient to assume, respectively,
\[
        s\le(1-\delta)b^{4/5},
        \qquad
        s\le(1-\delta)b^{1/3}.
\]
\end{corollary}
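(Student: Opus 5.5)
This corollary follows by combining \cref{prop:transfer-s-missing-digits-estimate} with \cref{prop:transfer-controls-hybrid}, in the same way as \cref{cor:transfer-one-missing-digit-thresholds}. Write
\[
        R_{b,s}:=\frac{b}{b-s}\left(\log b+2+\sqrt{s}\right).
\]
By \cref{prop:transfer-s-missing-digits-estimate}, \(\rho(\mathcal L_{b,\mathcal D})\le R_{b,s}\). If \(R_{b,s}<b^{\gamma}\) with \(\gamma\in\{2/5,1/6\}\), then \(\log_b\rho(\mathcal L_{b,\mathcal D})<\gamma\) strictly. Choose \(\varepsilon:=\tfrac12(\gamma-\log_b R_{b,s})>0\). \Cref{prop:transfer-controls-hybrid} then shows that \(\alpha_{b,\mathcal D}=\log_b\rho(\mathcal L_{b,\mathcal D})+\varepsilon<\gamma\) is admissible. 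This requires \(\alpha_{b,\mathcal D}>0\), which holds because \(\rho\ge1\), as shown in the proof of \cref{prop:transfer-controls-hybrid}. This proves the first two assertions.

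For the \enquote{in particular} clause, fix \(0<\delta<1\) and take \(\gamma=2/5\) with \(s\le(1-\delta)b^{4/5}\). Since \(s\le(1-\delta)b^{4/5}=o(b)\), we have \(b/(b-s)=1+o(1)\). Also \(\sqrt s\le(1-\delta)^{1/2}b^{2/5}\), while \(\log b+2=o(b^{2/5})\). Hence
\[
        R_{b,s}\le (1+o(1))\bigl((1-\delta)^{1/2}b^{2/5}+o(b^{2/5})\bigr).
\]
Because \((1-\delta)^{1/2}<1\), this is \(<b^{2/5}\) for all sufficiently large \(b\), with the threshold depending only on \(\delta\). The case \(\gamma=1/6\) with \(s\le(1-\delta)b^{1/3}\) is identical: \(\sqrt s\le(1-\delta)^{1/2}b^{1/6}\), \(\log b=o(b^{1/6})\), and \(b/(b-s)=1+O(b^{-2/3})\).

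There is no real obstacle here. The only points needing care are that the strict inequality \(R_{b,s}<b^\gamma\) leaves room for a positive \(\varepsilon\), and that the \(o(1)\) terms are uniform in \(s\) over the stated ranges. The latter holds because every bound used is monotone in \(s\), so it suffices to check the endpoint \(s=(1-\delta)b^{\kappa}\) with \(\kappa\in\{4/5,1/3\}\).
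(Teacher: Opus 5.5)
Your proposal is correct and is essentially the paper's argument. You combine the row-sum bound of \cref{prop:transfer-s-missing-digits-estimate} with \cref{prop:transfer-controls-hybrid}, and you handle the asymptotic clause via \(b/(b-s)=1+o(1)\) and \(\log b=o(b^\gamma)\); your explicit choice of \(\varepsilon\), the positivity \(\alpha_{b,\mathcal D}>0\) from \(\rho\ge1\), and the uniformity in \(s\) by monotonicity simply spell out what the paper leaves implicit.
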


\begin{proof}
This follows from
\cref{prop:transfer-s-missing-digits-estimate,prop:transfer-controls-hybrid}.
The final assertions use
\[
        \frac{b}{b-s}=1+o(1)
\]
and the fact that \(\log b=o(b^\gamma)\).
\end{proof}

\begin{remark}
Application to the main theorems still requires the standing digit
non-resonance condition and the corresponding positivity criterion for the
projected local factor.  The estimate above is uniform in the locations of
the missing digits; for structured complements, the consecutive-interval
bound may be sharper.
\end{remark}

\section*{Acknowledgments}
\addcontentsline{toc}{section}{Acknowledgments}

Rui Han and Yaghoub Rahimi were supported in part by NSF DMS-2143369. 
OpenAI’s ChatGPT was used as an auxiliary tool for editorial assistance and figure preparation.

\appendix

\section{
Bad-denominator decay for the restricted-digit Fourier transform}

\label{app:digit-bad-major-general}

\begin{proof}[Proof of Lemma \ref{lem:digit-bad-major-general}]
Let
\[
        P_{\mathcal D}(t):=\sum_{a\in\mathcal D}e(at),
        \qquad
        g_{\mathcal D}:=\gcd\{a-a':a,a'\in\mathcal D\}.
\]
The product structure of the restricted-digit Fourier transform gives
\[
        \widehat C_k(\theta)
        =
        \prod_{j=0}^{k-1}P_{\mathcal D}(b^j\theta).
\]

By \cref{rem:digit-resonances}, the resonance set of the one-digit polynomial is
\[
        \mathcal R_{\mathcal D}
        :=
        \{t\in\mathbb T: |P_{\mathcal D}(t)|=r\}
        =
        \{t\in\mathbb T: g_{\mathcal D}t\in\mathbb Z\}.
\]
Hence, away from this finite resonance set, the modulus of
\(P_{\mathcal D}\) is strictly smaller than \(r\).  More precisely, for any
fixed
\[
        0<{\nu}<\frac12,
\]
compactness gives a constant
\[
        {
        \vartheta=\vartheta_{b,\mathcal D,\nu}>0
        }
\]
such that
\begin{equation}
\label{eq:digit-polynomial-away-from-resonances}
        \|g_{\mathcal D}t\|\ge {\nu}
        \qquad\Longrightarrow\qquad
        |P_{\mathcal D}(t)|
        \le
        r(1-{\vartheta}).
\end{equation}

We now show that many of the points \(b^j\theta\) are bounded away from the
resonance set.  {Since resonance is characterized by
\(g_{\mathcal D}t\in\mathbb Z\), the relevant quantity is
\[
        \|g_{\mathcal D}b^j\theta\|
        =
        \left\|
        g_{\mathcal D}\frac{b^j\ell}{d}
        +
        g_{\mathcal D}\frac{b^j\eta}{N}
        \right\|.
\]
We first control its rational part.}
Put
\[
        x_j:=\frac{b^j\ell}{d}.
\]
We claim that
\[
        g_{\mathcal D}x_j\notin\mathbb Z
        \qquad(j\ge0).
\]
Indeed, if \(g_{\mathcal D}x_j\in\mathbb Z\), then
\[
        d\mid g_{\mathcal D}b^j\ell.
\]
Since \((\ell,d)=1\), this implies
\[
        d\mid g_{\mathcal D}b^j.
\]
{By the digit non-resonance assumption in
\cref{def:digit-nonresonance}, every prime divisor of
\(g_{\mathcal D}\) divides \(b\).  On the other hand, by hypothesis there
exists a prime \(p\mid d\) with \(p\nmid b\).  It follows that
\(p\nmid g_{\mathcal D}\) and \(p\nmid b^j\), and hence
\[
        p\nmid g_{\mathcal D}b^j.
\]
Therefore \(d\nmid g_{\mathcal D}b^j\), a contradiction.}
Thus \(g_{\mathcal D}x_j\) is never an integer.  Since it is a rational number
with denominator dividing \(d\), we have
\[
        \|g_{\mathcal D}x_j\|\ge \frac1d
        \qquad(j\ge0).
\]

We claim that every block of
\[
        R:=\left\lceil \log_b(10bd)\right\rceil+2
\]
consecutive indices contains some \(j\) such that
\begin{align}\label{eq:gDx_j_large}
        \|g_{\mathcal D}x_j\|
        \ge
        {\frac1{10b}}.
\end{align}
{
Indeed, suppose otherwise that for some \(J\),
\[
        \|g_{\mathcal D}x_{J+i}\|
        <
        \frac1{10b}
        \qquad (0\le i<R).
\]
For \(0\le i<R-1\), the assumption gives
\[
        b\|g_{\mathcal D}x_{J+i}\|
        <
        \frac1{10}
        <
        \frac12.
\]
Recall that
\[
        \|by\|=b\|y\|
        \qquad\text{whenever}\qquad
        b\|y\|<\frac12.
\]
Since \(x_{J+i+1}=bx_{J+i}\), we therefore have
\[
        \|g_{\mathcal D}x_{J+i+1}\|
        =
        b\|g_{\mathcal D}x_{J+i}\|
        \qquad (0\le i<R-1).
\]
Iterating this one-step identity gives
\[
        \|g_{\mathcal D}x_{J+i}\|
        =
        b^i\|g_{\mathcal D}x_J\|
        \qquad (0\le i<R).
\]

Since \(g_{\mathcal D}x_J\) is a nonintegral rational with denominator
dividing \(d\), we have
\[
        \|g_{\mathcal D}x_J\|\ge\frac1d.
\]
Therefore
\[
\begin{aligned}
        \|g_{\mathcal D}x_{J+R-1}\|
        =
        b^{R-1}\|g_{\mathcal D}x_J\|      
        \ge
        \frac{b^{R-1}}{d}
        \ge
        \frac1{10b},
\end{aligned}
\]
where the last inequality follows from the definition of \(R\).  This
contradicts the assumption and proves the claim.
}

We next pass from the rational part \(x_j\) to the full frequency \(\theta\).
Since \(N\asymp X=b^k\) and \(|\eta|\le L\), we have
\[
        \left|
        \frac{g_{\mathcal D}b^j\eta}{N}
        \right|
        \ll_{b,\mathcal D}
        \frac{Lb^j}{b^k}.
\]
Choose an integer \(J_0\) with
\[
        J_0\asymp_{b,\mathcal D,A}\log\log X
\]
large enough so that
\[
        \frac{g_{\mathcal D}Lb^j}{N}
        \le
        {\frac1{20b}}
        \qquad
        \text{for all }0\le j\le k-J_0.
\]
Then, whenever \(0\le j\le k-J_0\) and
\[
        \|g_{\mathcal D}x_j\|
        \ge
        {\frac1{10b}},
\]
we have
\[
\begin{aligned}
        \|g_{\mathcal D}b^j\theta\|
        &=
        \left\|
            g_{\mathcal D}\frac{b^j\ell}{d}
            +
            g_{\mathcal D}\frac{b^j\eta}{N}
        \right\|                          \ge
        {\frac1{20b}}.
\end{aligned}
\]

Applying \eqref{eq:digit-polynomial-away-from-resonances} with
${\nu=1/(20b)}$,
we obtain
\begin{align}\label{eq:PD_small}
        |P_{\mathcal D}(b^j\theta)|
        \le
        r(1-{\vartheta})
\end{align}
for some $\vartheta>0$, for all such indices \(j\).

By the block argument in \eqref{eq:gDx_j_large}, among the indices
$0\le j\le k-J_0$,
there are at least
\[
        \gg_b \frac{k}{\log(2d)}
\]
indices for which
\[
        \|g_{\mathcal D}b^j\theta\|
        \ge
        {\frac1{20b}}.
\]
On these indices we gain
a factor \(1-{\vartheta}\), see \eqref{eq:PD_small}, while on all remaining indices we use
the trivial bound
\[
        |P_{\mathcal D}(b^j\theta)|\le r.
\]
Consequently,
\[
\begin{aligned}
        |\widehat C_k(\theta)|
        &\le
        r^k(1-{\vartheta})^{c_b k/\log(2d)}     
        \le
        |\mathcal C_k|
        \exp\!\left(
            -c_{b,\mathcal D}\frac{k}{\log(2d)}
        \right).
\end{aligned}
\]
Since \(d\le L=(\log X)^A\), we have
\[
        \log(2d)\ll_A \log\log X.
\]
Also \(k=\log_b X\).  Therefore
\[
        \frac{k}{\log(2d)}
        \gg_{b,A}
        \frac{\log X}{\log\log X}.
\]
Hence
\[
        |\widehat C_k(\theta)|
        \ll_{b,\mathcal D,A}
        |\mathcal C_k|
        \exp\!\left(
            -c_{b,\mathcal D,A}\frac{\log X}{\log\log X}
        \right).
\]
This proves the lemma.
\end{proof}

\section{Evaluation of the projected three-prime local factor}
\label{sec:three-term-local-factor}

\begin{proof}[Proof of Lemma \ref{lem:three-term-local-factor-evaluation}]
The absolute convergence established earlier permits rearrangement of
\(S_{1,\mathrm{full}}\) into an Euler product.  The summand is
multiplicative in the squarefree \(p\)-parts of \(d'\), and the sum over
\(\ell'\) factors by the Chinese remainder theorem.

We first compute the local factor when \(p\nmid d\).  If \(p\nmid d'\),
the local contribution is \(1\).  Suppose \(p\mid d'\) and \(p>2\).
Since \((\ell',d')=1\), we have \(p\nmid\ell'\), and
\[
        \ell d'-2\ell'd
        \equiv -2\ell'd\not\equiv0\pmod p,
        \qquad
        \ell'd-\ell d'
        \equiv \ell'd\not\equiv0\pmod p.
\]
Thus \(p\) survives in both reduced denominators \(d_1,d_2\).
Summing over the \(p-1\) possible nonzero residue classes of \(\ell'\)
modulo \(p\), the contribution from \(p\mid d'\) is
\[
        (p-1)\frac{\mu(p)^3}{\phi(p)^3}
        =
        -\frac{1}{(p-1)^2}.
\]
Hence, for \(p>2\) with \(p\nmid d\), the local factor is
\[
        1-\frac{1}{(p-1)^2}.
\]
For \(p=2\) with \(2\nmid d\), the contribution from \(2\mid d'\) is
\(+1\), so the local factor at \(2\) is \(2\).  Taking \(d=1\) gives
\eqref{eq:S0-three-term}.

We next show that no non-squarefree \(d\) contributes.  Suppose that
\(p^e\Vert d\) with \(e\ge2\).  If \(p\nmid d'\), then \(p^e\)
survives in both reduced denominators \(d_1,d_2\).  If \(p\mid d'\),
then \(\mu(d')\neq0\) forces \(d'\) to be squarefree.  Writing
\(d=p^e d_0\) and \(d'=pm\), with \(p\nmid d_0m\), gives
\[
\frac{\ell d'-2\ell'd}{dd'}
=
\frac{\ell m-2p^{e-1}\ell'd_0}{p^e d_0m},
\]
and
\[
\frac{\ell'd-\ell d'}{dd'}
=
\frac{p^{e-1}\ell'd_0-\ell m}{p^e d_0m}.
\]
Both numerators are nonzero modulo \(p\), so again \(p^e\) survives in
both \(d_1\) and \(d_2\).  Thus
\[
        \mu(d_1)\mu(d_2)=0,
\]
and therefore
\[
        S_{1,\mathrm{full}}\!\left(\frac{\ell}{d}\right)=0.
\]

It remains to compute the local factor when \(p\mid d\) and \(d\) is
squarefree at \(p\).  Write
\[
        d=pd_0,
        \qquad
        p\nmid d_0.
\]
First suppose \(p>2\).  If \(p\nmid d'\), then \(p\) survives in both
\(d_1,d_2\), giving the local contribution
\[
        \frac{1}{(p-1)^2}.
\]

If \(p\mid d'\), write
\[
        d'=pm,
        \qquad
        p\nmid m.
\]
After cancelling one common factor \(p\), the remaining \(p\)-divisibility
is determined by
\[
        \ell m-2\ell'd_0,
        \qquad
        \ell'd_0-\ell m.
\]
As \(\ell'\) ranges over the reduced residue classes modulo \(p\), the
quantity
\[
        t:=\frac{\ell'd_0}{\ell m}\pmod p
\]
ranges over \(\mathbb F_p^\times\).  The first reduced denominator loses
its remaining factor \(p\) exactly when \(t=1/2\), while the second loses
its remaining factor \(p\) exactly when \(t=1\).  These two residue
classes contribute
\[
        \frac{1}{(p-1)^2}
\]
each, while the remaining \(p-3\) residue classes contribute
\[
        -\frac{1}{(p-1)^3}
\]
each.  Thus the contribution from \(p\mid d'\) is
\[
        \frac{2}{(p-1)^2}
        -
        \frac{p-3}{(p-1)^3}
        =
        \frac{p+1}{(p-1)^3}.
\]
Adding the contribution from \(p\nmid d'\), the local factor at a prime
\(p>2\) dividing \(d\) is
\[
        \frac{1}{(p-1)^2}
        +
        \frac{p+1}{(p-1)^3}
        =
        \frac{2p}{(p-1)^3}.
\]
The corresponding local factor in \(S_0\) is
\[
        1-\frac{1}{(p-1)^2}
        =
        \frac{p(p-2)}{(p-1)^2}.
\]
Hence the ratio is
\[
        \frac{2p}{(p-1)^3}
        \cdot
        \frac{(p-1)^2}{p(p-2)}
        =
        \frac{2}{(p-1)(p-2)}.
\]

For \(p=2\mid d\), both the cases \(2\nmid d'\) and \(2\mid d'\)
give local contribution \(1\).  Hence the local factor at \(2\mid d\)
is \(2\), the same as the local factor at \(2\) in \(S_0\).  This proves
\eqref{eq:S1full-local-factorized}.

Since only squarefree divisors \(d\mid b\) contribute, the definition of
\(\mathfrak S_b^{(3)}(n)\) and the Ramanujan identity
\[
        c_d(n)
        =
        \sum_{\substack{1\le\ell\le d\\(\ell,d)=1}}
        e\!\left(\frac{n\ell}{d}\right)
\]
give
\[
        \mathfrak S_b^{(3)}(n)
        =
        S_0
        \sum_{d\mid b}
        \mu^2(d)
        \left(\prod_{p\mid d}\beta_p\right)c_d(n).
\]
By multiplicativity of the Ramanujan sums on squarefree moduli, this is
equivalent to \eqref{eq:S3-Ramanujan-product}.

For a prime \(p\),
\[
        c_p(n)
        =
        \begin{cases}
        p-1, & p\mid n,\\
        -1, & p\nmid n.
        \end{cases}
\]
Therefore
\[
1+\beta_pc_p(n)
=
\begin{cases}
2, & p=2,\ p\mid n,\\
0, & p=2,\ p\nmid n,\\[2pt]
3, & p=3,\ p\mid n,\\
0, & p=3,\ p\nmid n,\\[2pt]
\dfrac{p}{p-2}, & p>3,\ p\mid n,\\[6pt]
1-\dfrac{2}{(p-1)(p-2)}, & p>3,\ p\nmid n.
\end{cases}
\]
All these factors are nonnegative, and for \(p>3\) both possibilities
are strictly positive.  Hence
\[
        \mathfrak S_b^{(3)}(n)>0
\]
if and only if every prime among \(2,3\) that divides \(b\) also divides
\(n\), which is equivalent to
\[
        \gcd(b,6)\mid n.
\]
This proves \eqref{eq:S3-positivity}.  Averaging over
\(a\in\mathcal D\) gives
\[
        \overline{\mathfrak S}_{b,\mathcal D}^{(3)}>0
        \quad\Longleftrightarrow\quad
        \mathcal D\cap\gcd(b,6)\mathbb Z\neq\emptyset.
\]
\end{proof}

\end{document}